\documentclass[oneside,a4paper,english, 12pt]{amsart}

\usepackage[margin=2cm]{geometry}
\usepackage{setspace}

\usepackage{stmaryrd}
\usepackage{amsmath}
\usepackage{amsfonts}
\usepackage{enumerate}
\usepackage{amssymb}
\usepackage{amscd}
\usepackage{amsthm}
\usepackage{tikz-cd}

\usepackage{xcolor}
\definecolor{deeppurple}{RGB}{75,0,130}
\definecolor{darkred}{RGB}{120,0,0}
\usepackage[colorlinks=true, linkcolor=blue, citecolor=darkred]{hyperref}
\usepackage{mathabx}
\usepackage[utf8]{inputenc}

 \theoremstyle{plain}

\newtheorem{thm}{Theorem}[subsection]
\theoremstyle{plain}
  \newtheorem{prop}[thm]{Proposition}
\theoremstyle{plain}
 \newtheorem{lemma}[thm]{Lemma}
\theoremstyle{plain}

\theoremstyle{plain}

\theoremstyle{definition}
  \newtheorem{defn}[thm]{Definition}
  \theoremstyle{definition}
  
\theoremstyle{definition}
  
 \theoremstyle{definition}

\theoremstyle{remark}
\newtheorem{rmk}[thm]{Remark}
\theoremstyle{definition}

\usepackage{theoremref}

\newcommand{\isoto}{\buildrel\sim\over\rightarrow}
\newcommand{\MM}{\mathfrak M}
\newcommand{\ML}{\mathfrak M}
\newcommand{\wM}{\widetilde {\mathfrak M}}
\newcommand{\wD}{\widetilde D}
\newcommand{\MMM}{\widehat{\mathfrak M}}
\newcommand{\X}{X_n^{[0,h],\tau}}
\newcommand{\ST}{\mathrm{ST}^{[0,h],\tau}}
\newcommand{\STT}{\mathcal{ST}^{[0,h],\tau}}
\newcommand{\XXX}{\mathcal X_n^{[0,h],\tau}}

\newcommand{\Y}{Y_n^{[0,h],\tau}}
\newcommand{\F}{\Phi{\mathrm {-Mod}}^{\text{\'et}}_n}
\newcommand{\aaa}{\mathbf a}
\newcommand{\uuu}{\llbracket u' \rrbracket}
\newcommand{\vvv}{\llbracket v \rrbracket}
\newcommand{\sss}{s_{\mathrm{or}}}
\newcommand{\ssj}{s_{\mathrm{or,} j'}}
\newcommand{\Ma}{\mathrm{Mat}_n}
\newcommand{\GL}{\mathrm{GL}}
\newcommand{\SSSS}{\mathfrak S_{L',R}}
\newcommand{\SSS}{\mathfrak S_R}
\newcommand{\A}{\mathbf{A}_{\mathrm{cris},R}}
\newcommand{\AC}{\mathbf{A}_{\mathrm{cris}}}
\newcommand{\Aw}{\mathbf{A}_{\mathrm{cris},\widetilde R}}

\newcommand{\AAA}{\mathbf A_{\mathrm{inf},R}}
\newcommand{\AI}{\mathbf{A}_{\mathrm {inf}}}
\newcommand{\WD}{\mathcal{WD}(L'/K)}
\newcommand{\W}{\mathrm{WD}(L'/K)}
\newcommand{\Fr}{\mathrm{Frob}_K}
\newcommand{\D}{\Lambda}
\newcommand{\R}{\mathrm{Rep}_R(G_K)}
\newcommand{\s}{\mathcal S_R[1/p]}

\newcommand{\B}{\mathbf{B}_{\mathrm{cris},R}^+}
\newcommand{\e}{\mathrm{exp}}
\newcommand{\Xe}{(\mathcal X^{[0,h],\tau}_{n})^{\mathrm{rig}}_\eta}
\newcommand{\XXXc}{\mathcal X_{n,\mathrm{cris}}^{[0,h],\tau}}
\newcommand{\Xec}{(\mathcal X^{[0,h],\tau}_{n,\mathrm{cris}})^{\mathrm{rig}}_\eta}

\title{On the Emerton-Gee stack of potentially semistable representations}
\author{Wang Shenrong}
\date{\today}

\begin{document}

\begin{abstract}    
      We first classify the potentially semistable Galois representations of a given tame inertial type using some variants of Breuil-Kisin modules. Then we construct some ``model" of such Breuil-Kisin modules. From here, we build a stack of our version of Breuil-Kisin modules, and show that it is isomorphic to the Emerton-Gee stack of potentially semistable representations. This also makes it possible to construct a map from the rigid generic fiber of the potentially semistable EG stack to the analytification of the stack of Weil-Deligne representations, which ``interpolates" Fontaine's recipe of associating Weil-Deligne representations to potentially semistable Galois representations. We prove the smoothness of this map, and obtain the generic formal smoothness property of the potentially semistable EG stack. This can be regarded as the global version of Kisin's regularity results on the corresponding Galois deformation rings.
\end{abstract}

\maketitle

\tableofcontents

\section{Introduction}

\subsection{Motivation and overview}

In the ground-breaking work \cite{EG}, Emerton and Gee constructed a moduli stack $\mathcal X_n$, parametrizing rank $n$ \'etale $(\varphi,\Gamma_K)$-modules. It may be thought of as the stack of Galois representations. More precisely, let $K$ be a $p$-adic local field, let $\mathcal O$ be the ring of integers in a finite extension $E$ of $\mathbb Q_p$. Fix an algebraic closure $\overline K$ of $K$ and let $G_K$ be the absolute Galois group $\mathrm{Gal}(\overline K/K)$. For any finite flat $\mathcal O$-algebra $R$, $\mathcal X_n(R)$ is equivalent to the groupoid of continuous representations $\rho:G_K \to \text{GL}_n(R)$. We will refer to the stack as the Emerton-Gee stack.

The Emerton-Gee stack is a globalization of Mazur's deformation rings that allows Galois representations to vary continuously. In particular, the versal rings to the Emerton-Gee stack are the corresponding deformation rings. So we can use the stack to study Galois representations in families and to formulate Langlands correspondence in the $p$-adic setting. \cite{EGH22} gives a lot of results and conjectures in this direction.

Within the Emerton-Gee stack, they further constructed a $p$-adic formal algebraic stack $\mathcal X_n^{[0,h],\tau}$ parametrizing $n$-dimensional potentially semistable Galois representations, i.e. $\mathcal X_n^{[0,h],\tau}(\mathcal O)$ is equivalent to the groupoid of continuous representations $\rho:G_K \to \text{GL}_n(\mathcal O)$ such that $\rho$ is potentially semistable of Hodge-Tate weights bounded by $0$ and $h$, and of inertial type $\tau$. However, the detailed construction of $\XXX$ is complicated. In this paper, we will construct the moduli of our potentially semistable Breuil-Kisin modules. It will be used to describe the stack $\mathcal X_n^{[0,h],\tau}$ in a more explicit way. In particular, we can describe any $R$-point of the stack, where $R$ is a $p$-adically complete, topologically of finite type, flat $\mathcal O$-algebra. In general, these points on the stack don't correspond to Galois representations.

We fix our field $K$ to be a finite extension of $\mathbb Q_p$ and let $\mathcal O_K$ be the ring of integers of $K$. We first modify the classification in \cite{Gao} to give a $p$-adic Hodge theoretic description of potentially semistable $G_K$-representations with given tame inertial types using our potentially semistable Breuil-Kisin modules (see \thref{BK} and \thref{BK1}). This leads to our first main result:

\begin{thm} \thlabel{1st}
    Let $R$ be a finite flat $\mathcal O$-algebra. The category of rank $n$ potentially semistable Breuil-Kisin modules of height at most $h$ and inertial type $\tau$ with coefficients in $R$ is equivalent to the category of potentially semistable representations $\rho: G_K \to \GL_n(R)$ of Hodge-Tate weights not exceeding $h$ and inertial type $\tau$.
\end{thm}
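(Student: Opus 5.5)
The strategy is to reduce to the case $R=\mathcal O_{E'}$ (or a finite extension of $\mathbb Q_p$ with integral structure) and there invoke Gao's classification, carrying along descent data for the tame type. Fix a finite tamely ramified Galois extension $L'/K$ such that $\tau$ becomes trivial on $I_{L'}$. Choose a uniformizer $\pi'$ of $L'$ with $\pi'^{e'}=\pi$ for a uniformizer $\pi$ of $K$, and take the Kisin variable $u'$ with $u'^{e(L'/K)}$ corresponding to $\pi$. Following \cite{Gao}, a semistable $G_{L'}$-representation with Hodge--Tate weights in $[0,h]$ corresponds to a Breuil--Kisin module $\MM$ over $\mathfrak S_{L'}=W(k')\llbracket u'\rrbracket$ of height $\le h$. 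This module carries a $\varphi$-compatible monodromy-type structure, namely the $(\varphi,\hat G)$ or $\tau$-action data encoding semistability. A $G_K$-representation that becomes semistable over $L'$ then corresponds to such a module equipped with a semilinear descent action of $\mathrm{Gal}(L'/K)$, acting on $u'$ through the tame character. The inertial type condition is read off from the action of $I(L'/K)$ on $\MM/u'\MM$, which must be isomorphic to $\tau\otimes$ the appropriate ring. I would take this as the content of the definition of potentially semistable Breuil--Kisin module of type $\tau$, as in \thref{BK} and \thref{BK1}.

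First, I treat $R=\mathcal O_{E}$ flat of rank one. Here Gao's theorem gives a fully faithful functor from lattices in semistable $G_{L'}$-representations to $(\varphi,\hat G)$-modules, with essential image characterised by height and the monodromy/Galois compatibility. I would check three things. First, the functor is $G_K$-equivariant when $\mathrm{Gal}(L'/K)$ acts on both sides. Second, taking $\mathrm{Gal}(L'/K)$-invariants commutes with the functor, because everything is $\mathbb Z_p$-flat and $|\mathrm{Gal}(L'/K)|$ may be inverted only up to $p$-power factors. Tameness matters here: the inertia part has prime-to-$p$ order, so the descent is exact, while the wild-free unramified part is handled by Galois descent for $W(k')/W(k)$. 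Third, the type condition matches: $\mathrm{WD}(\rho)|_{I_K}$ is computed from $D_{\mathrm{st}}^{L'}$, and $D_{\mathrm{st}}^{L'}$ is recovered as $(\MM/u'\MM)[1/p]$ with its $\mathrm{Gal}(L'/K)$-action via Kisin's comparison $\MM/u'\MM\otimes\mathbb Q_p\cong D_{\mathrm{st}}$. This comparison is compatible with the descent action.

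Next, for general finite flat $\mathcal O$-algebra $R$, I would use that an $R$-module structure is the same as an $\mathcal O$-linear action of $R$ by endomorphisms. A representation $\rho:G_K\to\GL_n(R)$ is a $\mathbb Z_p$-lattice representation with commuting $R$-action. By full faithfulness of the $\mathbb Z_p$ case, this is equivalent to a Breuil--Kisin module with $R$-action, that is, a module over $\mathfrak S_{L'}\otimes_{\mathbb Z_p}R$. It remains to see that the module is projective of rank $n$ over $\mathfrak S_{L'}\otimes R$ exactly when $\rho$ is free of rank $n$ over $R$.

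I expect this last freeness and projectivity statement to be the main obstacle, since $R$ need not be a domain or regular. I would prove it by reducing modulo $u'$ and $p$, using that $\mathfrak S_{L'}\otimes R$ is $(p,u')$-adically complete and that $\MM$ is $u'$-torsion-free. The rank over $\mathfrak S\otimes R$ can then be compared with the rank over $W(\overline k)\otimes R$ obtained through $\mathbf A_{\mathrm{inf}}$, where $T=(\MM\otimes\mathbf A_{\mathrm{inf}})^{\varphi=1}$ is recovered. I would first try the argument of Kisin for coefficients: check local freeness at each maximal ideal of $R[1/p]$ and over the closed fibre, using the étale $\varphi$-module $\MM[1/u']^\wedge$, which is free over $\mathcal O_{\mathcal E}\otimes R$ precisely when $T$ is free over $R$.
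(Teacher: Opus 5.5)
The gap is the step you flag yourself as the main obstacle, and it cannot be closed for arbitrary finite flat $R$. Your $\mathbb Z_p$-coefficient argument produces a module $\MM$ that is finite free over $\mathfrak S_{L'}$ and carries an $R$-action. Everything downstream needs $\MM$ to be projective of rank $n$ over $\SSSS$: membership in the category of the statement, and the integral type condition $\MM^{(j')}/u'\cong\tau\otimes_{\mathcal O}R$. Your sketch does not give this. Local freeness at the points of $R[1/p]$, and projectivity of the étale module $\MM[1/u']^\wedge$, only see $\MM$ after inverting $p$ or $u'$, not the lattice itself.

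Worse, your claimed equivalence ``projective over $\mathfrak S_{L'}\otimes R$ iff $\rho$ free over $R$'' is false when $R$ is not normal. Take
\begin{itemize}
\item $K=\mathbb Q_p$, $n=1$, $\tau$ trivial, $h=p-1$;
\item $R=\mathbb Z_p[x]/(x^2-p^2)\cong\{(a,b)\in\mathbb Z_p^2 : a\equiv b \bmod p\}$;
\item $\rho=(1,\varepsilon^{p-1})$, or $\varepsilon^{1-p}$ depending on the sign convention for weights.
\end{itemize}
This $\rho$ lands in $R^\times$ because $\varepsilon^{p-1}\equiv 1 \bmod p$. It is free of rank one over $R$ and crystalline with weights $0$ and $p-1$.

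Suppose $\MM=\SSSS e$ with $\phi(e)=\lambda e$ realized $\rho$. The two images $\lambda_1,\lambda_2\in\mathbb Z_p\llbracket u'\rrbracket$ of $\lambda$ under the projections $x\mapsto \pm p$ are then congruent modulo $p$. But base change along these projections gives the Kisin modules of the trivial character and of $\varepsilon^{p-1}|_{G_{L'}}$, which are unique by Kisin's full faithfulness. Hence $\lambda_1$ is a unit while $\lambda_2$ is a unit times $E(u')^{p-1}$, which is absurd modulo $(p,u')$. So in this example no $R$-structure on the $\mathbb Z_p$-Kisin module is projective, and $\rho$ is not in the essential image at all.

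The step does go through when $R$ is a finite product of rings of integers $\mathcal O_{E'}$. Then $\SSSS$ is a product of two-dimensional regular local rings $\mathcal O_{E'}\llbracket u'\rrbracket$. A module that is finite free over $\mathfrak S_{L'}$ has depth two over each factor, so it is free by Auslander--Buchsbaum, and its rank is pinned down by the étale $\varphi$-module. In that setting the $\Delta$-action should come from uniqueness of the Kisin lattice inside the $G_K$-stable étale module, not from taking $\mathrm{Gal}(L'/K)$-invariants; the definition keeps descent data over $\mathfrak S_{L'}$ and never descends.

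For comparison, your full-faithfulness argument is essentially the paper's:
\begin{itemize}
\item Gao's Proposition 7.1.4 applied over $L'$;
\item $\Delta$-equivariance deduced from $G_\infty$-equivariance after extending scalars to $W(\mathbb C_p^\flat)$;
\item $R$-linearity from compatibility with the multiplication maps $m_r$.
\end{itemize}
For essential surjectivity, however, the paper never passes through the $\mathbb Z_p$-case. It imports from Emerton--Gee (their Propositions 4.8.12 and 4.8.2) a Breuil--Kisin--Fargues module with $R$-coefficients, of height $\le h$ and type $\tau$, admitting all descents over $L'$. It then takes the descent to $\SSSS$ and gets the $\Delta$-action from uniqueness of descents (their Lemma 4.2.8). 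Finally it gets the condition $\MM/u'\subseteq(\MMM/W(\mathfrak m)\MMM)^{G_{L'}}$ from the ``all descents'' property.

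So projectivity over $\SSSS$ is inherited from the citation rather than proved there either. The example above shows that the statement, and therefore that input, needs a restriction on $R$, for instance to finite products of rings $\mathcal O_{E'}$. That is the setting in which you should run your argument.
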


In the work \cite{LLHLM1} (and the subsequent \cite{LLHLM3} with possibly ramified base field $K$), a local model essentially made of linear algebraic objects is built to model the potentially crystalline locus of the Emerton-Gee stack. Inspired by this, we aim to ``extend" their model a little bit to not only include the locus of potentially crystalline representations, but also potentially semistable representations. In particular, using \thref{1st} and some more $p$-adic Hodge theory, we will show in \thref{stackyy} that for any finite flat $\mathcal O$-algebra $R$, some parahoric-torsor of $\XXX(R)$ can be described by $\STT(R)$ defined as follows, with every notation defined in the body of the paper.

\begin{defn} \thlabel{def}
Let $\STT$ be the functor from $\mathcal O$-algebras to groupoids, sending any $\mathcal O$-algebra $R$ to the groupoid of triples $((A^{(j)})_{j \in \mathcal J},N,\Xi)$ where
\begin{enumerate} 
    \item $A^{(j)}, (v+p)^h (A^{(j)})^{-1} \in \mathcal P_j(R)$ for all $j \in \mathcal J$ and $N \in \Ma(W(k') \otimes_{\mathbb Z_p} R)$;
    \item $\Xi \in \GL_n(\mathcal S_R[1/p])$ and $\Xi-1 \in u'\Ma(\mathcal S_R[1/p])$, where $\mathcal S_R$ is defined in \thref{Breuil}, satisfying $$\Xi^{-1} \e \left(t\otimes N \right) \psi(\Xi)\in \GL_n(\AAA),$$ and  $$\Xi \varphi(C)\varphi(\Xi)^{{-1}}= \overline C\in \Ma(W(k') \otimes_{\mathbb Z_p} R),$$ where $C:=\mathfrak C((A^{(j)} \omega^{(j)})_{j \in \mathcal J})$ and $\overline C:=C \mathrm{ \ mod \ } u'$; 
    \item $N \overline C =p \overline C \varphi(N)$;
    \item $\Psi^{}- \mathrm{id} \in u'\Ma(\AAA)$, where $\Psi:=\varphi^{-1}(\Xi^{-1} \e \left(t\otimes N \right) \psi(\Xi)) \in \GL_n(\AAA)$.
\end{enumerate}
\end{defn}

We comment here that the matrix $\Xi$ is in fact uniquely determined by the matrices $C$ and $N$ (see the end of \autoref{s4}). So in this way, our result is related to \cite[Theorem 1.2]{Yao}, which gives a fully faithful functor classifying semistable Galois representations: $$\mathrm{Rep}^{\mathrm{st}, \ge0}_{\mathbb Z_p}(G_K) \hookrightarrow \mathrm{Mod}_{\mathfrak S}^{\phi,N_{\mathrm {int}}}.$$ The matrix $N$ here is just their $N_{\mathrm{int}}$. It represents the monodromy operator on some lattice of the $(\phi, N)$-module. Thus, we can describe the essential image of this functor as those $(\phi, N_{\mathrm{int}})$-modules whose associated Galois action (constructed from the operator $N_{\text{int}}$) on the $\mathbf A_{\mathrm{cris}}$-module (see \thref{vip}) preserves the associated $\mathbf A_{\mathrm {inf}}$-module $\MMM$ .

As an application, the data of \thref{def} enables us to build a stack of potentially semistable Breuil-Kisin modules $\X$. Let $R$ be any $p$-adically complete, topologically of finite type, $\mathcal O$-flat algebra.\footnote{Finite flat $\mathcal O$-algebras belong to this category.} The stack $\X$ has the same $R$-points as some parahoric quotient of $\STT(R)$ by our construction. It serves as an analogue for $Y_n^{[0,h],\tau,\nabla_\infty}$ in \cite{LLHLM1}. And we obtain the following result:

\begin{thm}
Let $\tau$ be a tame inertial type. The stack $\XXX$ is isomorphic to the stack of potentially semistable Breuil-Kisin modules $\X$. 
\end{thm}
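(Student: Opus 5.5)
The plan is to build a morphism $\X \to \XXX$ and show it is an isomorphism by a ``flat points determine the stack'' argument, in the style of the comparison between $Y_n^{[0,h],\tau,\nabla_\infty}$ and the potentially crystalline Emerton--Gee stack in \cite{LLHLM1}. Both $\XXX$ and $\X$ are $p$-adic formal algebraic stacks that are flat over $\mathcal O$. For $\XXX$ this holds by construction in \cite{EG}, where it is defined as the scheme-theoretic closure of its finite flat points. For $\X$ it should follow from the fact that it is a parahoric quotient of $\STT$, and in fact $\X$ is built from its $R$-points with $R$ $p$-adically complete, topologically of finite type and $\mathcal O$-flat. So the task reduces to three things:
\begin{enumerate}
\item constructing a map $\X \to \mathcal X_n$;
\item showing this map factors through $\XXX$;
\item checking that it induces an equivalence on $R$-points for finite flat $\mathcal O$-algebras $R$ and an isomorphism on versal rings at finite type points.
\end{enumerate}

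For the map: starting from a triple $((A^{(j)}),N,\Xi)$ over $R$, I would form the Breuil--Kisin module $\MM$ over $\SSSS$ with Frobenius given by $C=\mathfrak C((A^{(j)}\omega^{(j)}))$ and descent datum encoded in the $\mathcal P_j$-structure. I would then base change to $\AAA$, equip the result with the $G_{L'}$-action determined by $\Psi$ (that is, by $\exp(t\otimes N)$ twisted by $\Xi$), and extend to $G_K$ using the descent datum. Condition (4) of \thref{def} guarantees that this action is trivial modulo $u'$, so it is compatible with the $(\varphi,\Gamma)$-module formalism. Passing to $W(C^\flat)$-coefficients and taking $\varphi$-invariants, or equivalently descending to an étale $(\varphi,\Gamma_K)$-module via the Tate--Sen / $\mathbf A_{\mathrm{inf}}$-descent used in \cite{EG}, then gives an $R$-point of $\mathcal X_n$. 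I will check functoriality in $R$ and that the map is invariant under the parahoric action, so that it descends to $\X$.

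On finite flat points, \thref{1st} together with \thref{stackyy} shows that the $R$-points of $\X$ are exactly the potentially semistable representations of type $\tau$ with Hodge--Tate weights in $[0,h]$. Hence the image of the $\mathcal O$-flat part lands in $\XXX$, and since $\X$ is flat, the whole map factors through the closure $\XXX$. The same two results give full faithfulness and essential surjectivity on finite flat points, which yields a bijection on $\overline{\mathbb Z}_p$-points and a monomorphism on flat points. To upgrade this to an isomorphism, I would compare versal rings. The versal ring of $\XXX$ at a point $\bar\rho$ is Kisin's potentially semistable deformation ring $R^{[0,h],\tau}_{\bar\rho}$, which is reduced, $\mathcal O$-flat and determined by its $\overline{\mathbb Q}_p$-points. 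The corresponding completion of $\X$ is also $\mathcal O$-flat. The map between these completions is bijective on $\overline{\mathbb Q}_p$-points and on their tangent spaces, the latter using the fact that $\Xi$ is uniquely determined by $(C,N)$. From this I would conclude that the map of versal rings is an isomorphism, and then use the criterion of \cite[Lemma~A.30]{EG}-type (a morphism of formal algebraic stacks that is a monomorphism and induces isomorphisms on versal rings is an isomorphism).

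The main obstacle is the extension of the equivalence from finite flat $\mathcal O$-algebras to general flat $R$, i.e. showing that the map $\X\to\XXX$ is a monomorphism and not just bijective on points. In particular, I need that a Breuil--Kisin datum $((A^{(j)}),N,\Xi)$ is recovered from its Galois representation in families. My first attempt would be to argue through $\mathcal O$-flatness and reducedness: for a map between flat formal stacks that is an equivalence on all finite flat points, with $\X$ reduced (or at least with no embedded $p$-torsion), the scheme-theoretic image argument plus the uniqueness of $\Xi$ should force a closed immersion that is surjective onto a reduced target. If reducedness of $\X$ is not available directly, I would instead prove that the Kisin lattice is canonically determined inside $\MMM[1/p]$, by its height condition and $u'$-adic behaviour, uniformly in $R$.
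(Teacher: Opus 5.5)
There is a genuine gap, and it sits exactly where you locate the main obstacle. Your steps (1) and (2) and the finite-flat comparison match the paper. But nothing in the proposal proves that $\X\to\XXX$ is a monomorphism, and your claim that the finite-flat equivalence ``yields \dots\ a monomorphism on flat points'' does not follow. Neither of your proposed remedies works:
\begin{itemize}
\item Reducedness of $\X$ is not known, and obtaining a closed immersion already presupposes the monomorphism.
\item Bounded-height Breuil--Kisin lattices inside an \'etale $\varphi$-module are \emph{not} determined ``uniformly in $R$''. Over $\mathbb F$ the fibres are Kisin varieties, which can be positive-dimensional. Uniqueness over flat $R$ is precisely what has to be proved.
\end{itemize}

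The missing idea is to propagate from finite flat points by density. Faithfulness is immediate, since $\MM_i\subseteq\MM_i[1/u']$. For fullness, take a morphism $f$ of \'etale $(\varphi,G_K)$-modules over $R$ and restrict it to the \'etale $\varphi$-modules $\MM_i[1/u']$ over $L'$. Let $I_R\subseteq R$ be the ideal generated by the coefficients of negative powers of $u'$ in its matrix with respect to eigenbases. This ideal is basis-independent and compatible with base change, and it vanishes exactly when $f$ comes from a morphism of potentially semistable Breuil--Kisin modules (\thref{aux2}). \thref{equivalence} gives $I_RR'=0$ for every finite flat $R'$. Then \thref{aux1} forces $I_R=0$: finite flat quotients separate elements of a $p$-adically complete, topologically finite type, $\mathcal O$-flat $R$, by the Jacobson property of $R[1/p]$ and Krull's intersection theorem. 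The same lemma is also what makes your factorization through $\XXX$ rigorous. The pullback of the closed substack $\XXX$ to a flat chart is an ideal killed by all finite flat specializations; ``$\X$ is flat'' alone is not an argument.

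Your versal-ring comparison is also unsupported. Tangent vectors at $\overline{\mathbb Q}_p$-points live over $E[\epsilon]$, which is not finite flat over $\mathcal O$, so \thref{equivalence} says nothing about them. The uniqueness of $\Xi$ given $(C,N)$ concerns the fibres of $\X$ over the $(C,N)$-data, not deformations of $\rho$.

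The step is in any case unnecessary. Once $T$ is a monomorphism, the paper concludes directly via \cite[Lemma 7.2.6 (1)]{LLHLM1}. Given flatness, that lemma needs only essential surjectivity on finite flat points and that $\X$ be of finite type. The uniqueness of $\Xi$ given $(C,N)$ enters exactly there: it shows $\X$ sits inside the finite type stack of Breuil--Kisin modules equipped with a monodromy matrix $N$.
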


We will be using this result to describe the $p$-adically complete, topologically of finite type, $\mathcal O$-flat points of $\XXX$ in what follows.

Since the stack is related to deformation rings and it is useful to know whether the latter are smooth for modularity lifting reasons, we study the formal smoothness properties of $\XXX$. We note that in \cite{BG}, a stack parametrizing Weil-Deligne representations is constructed. The Weil-Deligne representations are especially useful when studying Galois representations since they make some information discrete, and thus simpler to describe. We recall the definition of the Weil-Deligne stack and its algebraic construction. They can be related to the potentially semistable locus of the Emerton-Gee stack using the functor introduced by Fontaine in \cite{Fon}. We generalize that functor into a map between the rigid generic fiber of the potentially semistable Emerton-Gee stack and the analytification of the Weil-Deligne stack in the following theorem:

\begin{thm}
    There is a morphism $WD: \Xe \to \WD^{\mathrm{an}}$ between the two stacks such that for any given finite type point $\mathcal M \in \Xe(\mathrm{Sp\ }A)$, the image $WD(\mathcal M)\in \WD^{\mathrm{an}}(\mathrm{Sp\ }A)$ coincides with Fontaine's recipe of associating Weil-Deligne representations to potentially semistable representations. 
\end{thm}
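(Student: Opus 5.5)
The plan is to build the map out of the explicit description of points furnished by the previous theorem, $\XXX\cong\X$, rather than out of Galois representations. By Definition \ref{def}, a point of the stack of potentially semistable Breuil--Kisin modules over a $p$-adically complete, topologically finite type, flat $\mathcal O$-algebra $R$ comes with a triple $((A^{(j)})_j,N,\Xi)$. The reductions $\overline C=C \bmod u'$ and $N$ are matrices over $W(k')\otimes_{\mathbb Z_p}R$, and they satisfy $N\overline C=p\overline C\varphi(N)$. After inverting $p$, this is exactly a $(\varphi,N)$-module over $K'_0\otimes_{\mathbb Q_p}R[1/p]$ with $K'$ the tame extension trivialising $\tau$. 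The tame descent datum, recorded through $\omega^{(j)}$ and the type $\tau$, gives a semilinear $\mathrm{Gal}(K'/K)$-action commuting with $\varphi$ and $N$. Fontaine's recipe applies verbatim to such linear-algebra data:
\begin{enumerate}
\item take the $f'$-th power of the $\varphi$-semilinear Frobenius, which is $K_0'\otimes R$-linear;
\item split along an embedding $K_0'\to E$, after enlarging $E$ if necessary, to get a free $R[1/p]$-module of rank $n$;
\item let $w\in W_K$ act via the Galois action twisted by $\varphi^{-\alpha(w)}$, and keep $N$ as the monodromy.
\end{enumerate}
This yields a family of Weil--Deligne representations with inertia acting through $\tau$.

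First I would define the functor on affinoid points. Given $\mathcal M\in\Xe(\mathrm{Sp}\,A)$, I locally lift to a formal model $\mathrm{Spf}\,R$ with $R[1/p]=A$. Using the isomorphism with $\X$, I extract $(\overline C,N)$ up to the parahoric action and apply the recipe above. I then check that the parahoric change of basis acts on $(\overline C,N)$ by $\varphi$-conjugation through its reduction mod $u'$. Consequently the resulting Weil--Deligne representations are canonically isomorphic, and the output is an object of $\WD(A)$, which is the analytification evaluated on $\mathrm{Sp}\,A$.

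Second comes independence of the formal model and functoriality. Admissible blow-ups and refinements of formal models only change $R$ inside $A$, so compatibility with base change of the construction suffices. Together with the fact that $\WD^{\mathrm{an}}$ is a stack for the Tate/admissible topology, this glues the local constructions into a morphism of stacks $WD:\Xe\to\WD^{\mathrm{an}}$.

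Third, compatibility with Fontaine. For a finite type point, $A$ is a finite $E$-algebra, so by Theorem \ref{1st} and the classification in \thref{BK}, \thref{BK1}, $\mathcal M$ corresponds to a potentially semistable $\rho$. I would then show that $D_{\mathrm{pst}}(\rho)$, as a $(\varphi,N,\mathrm{Gal}(K'/K))$-module, is identified with $(\overline C,N)$ mod $u'$ after inverting $p$. This is Kisin's/Gao's comparison $\MM/u'\MM[1/p]\cong D_{\mathrm{st},K'}(\rho)$, adapted to the monodromy: one uses that $\Xi\equiv 1\bmod u'$ and the relation $\Xi^{-1}\exp(t\otimes N)\psi(\Xi)\in\GL_n(\AAA)$, which encodes the Galois action on $\MMM$. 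I expect this step to be the main obstacle. Matching the Frobenius is standard, but matching the monodromy $N_{\mathrm{int}}$ with Fontaine's $N$ and the descent datum with the inertial action requires careful normalisation of $t$ and of $\psi$ in $\AC$. I would first verify it via the evaluation $u'\mapsto0$ on $\mathcal S_R[1/p]$, reducing to the crystalline period comparison in the semistable case $K'=K$. After that I would handle the tame descent separately.
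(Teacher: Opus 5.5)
Your proposal follows the paper's route. On each affine formal open $\mathrm{Spf}\,R$ of a formal model, you read off $(\overline C,N)$ from the triple and form $(\MM/u')\otimes_{W(k'),\sigma_0}R[1/p]$ with $\phi=(\overline C^{f})^\circ$ and $N^\circ$; you glue over opens and across models by functoriality of $\MM\mapsto\MM/u'$, and then compare with Fontaine at finite-type points. The comparison you single out as the main obstacle needs no new normalisation of $t$ or $\psi$, because earlier results already supply it: \thref{DvsM} identifies $(\MM/u')[1/p]$ with $D_{\mathrm{pst}}(\rho)$ compatibly with $\phi$ and $\Delta$, and by the construction in \thref{stackyy} (via \thref{stable} and \thref{vip}) the matrix $N$ of the triple is by definition the monodromy of Liu's lattice $M=\MM/u'$ in $D_{\mathrm{pst}}(\rho)$.
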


In the end, we will study the formal smoothness properties of the map $WD$ between the two stacks. We obtain the following theorem.

\begin{thm} 
The morphism $WD: \Xe \to \WD^{\mathrm{an}}$ is smooth.
\end{thm}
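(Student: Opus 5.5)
Since smoothness of a morphism of rigid analytic stacks is local on the source and may be checked after smooth base change, I would reduce to a local criterion. Choose a smooth atlas of $\Xe$ built from the rigid generic fiber of the stack $\X$ of potentially semistable Breuil-Kisin modules, which is identified with $\XXX$. Concretely, $\X$ is presented by the parahoric quotient of the functor $\STT$ of \thref{def}, so there is a smooth cover by the rigid generic fiber of the space of triples $((A^{(j)}),N,\Xi)$. Pulled back to this cover, the map $WD$ sends a triple to the Weil-Deligne representation $(\overline C,N)$ with its $\mathrm{Gal}(L'/K)$-descent datum, via Fontaine's recipe.

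The plan is then to verify the infinitesimal lifting criterion at finite type points, using a formal smoothness criterion for rigid stacks of the type used in \cite{BG}. Concretely, take a square-zero thickening $A'\to A$ of Artinian $E$-algebras, a point $x\in\Xe(A)$, and a lift of $WD(x)$ to a Weil-Deligne representation $(\overline C',N')$ over $A'$. One must produce a lift $x'$ of $x$ compatible with it. Over $A[1/p]$-type coefficients, \thref{1st} together with \thref{BK} identifies points with filtered $(\varphi,N,\mathrm{Gal}(L'/K))$-modules $(D,\mathrm{Fil}^\bullet D_{L'})$ with Hodge-Tate weights in $[0,h]$. Here weak admissibility is automatic in the relevant sense, because the locus is defined by the Galois side. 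The functor $WD$ forgets the filtration. Therefore the fibre of $WD$ is a parahoric-type flag variety parametrizing filtrations on $D_{L'}$ with weights in $[0,h]$, and this is smooth over the base because it is a union of partial flag varieties (a Grassmannian bundle).

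The steps, in order, are as follows. First, construct the lift $x'$ by keeping the $(\varphi,N)$-module $(\overline C',N')$ and lifting the filtration, which is possible since flag varieties are smooth. Second, recover the Breuil-Kisin triple from the filtered module. This uses the uniqueness of $\Xi$ given $C$ and $N$ noted after \thref{def}, together with the model of Breuil-Kisin modules constructed in the body, so that $(A^{(j)},N,\Xi)$ deform. Third, check that conditions (2) to (4) of \thref{def} continue to hold after inverting $p$. Finally, deduce smoothness from formal smoothness plus local finite presentation, since both stacks are locally of finite type.

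I expect the main obstacle to be the second step. The difficulty is showing that an arbitrary deformation of the filtered $(\varphi,N)$-module comes from a deformation in $\Xe$, that is, that the integrality conditions in \thref{def} (notably $\Xi^{-1}\e(t\otimes N)\psi(\Xi)\in\GL_n(\AAA)$ and $\Psi-\mathrm{id}\in u'\Ma(\AAA)$) are open conditions that persist over nilpotent thickenings of rigid points. To handle this, I would first prove that the pullback of $\Xe$ to Artinian points is exactly the groupoid of such filtered modules. I would do this by passing through Galois representations over finite $E$-algebras, where \thref{1st} applies directly, and then use Kisin's computation of tangent spaces to potentially semistable deformation rings as a consistency check on dimensions.
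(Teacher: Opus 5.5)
Your reduction to the infinitesimal lifting criterion plus finite type is the same as the paper's. How you produce the lift is genuinely different, and in outline it is the right way (it is Kisin's argument from \cite{Kis}). On Artinian points, $WD$ is ``forget the Hodge filtration on $D_{\mathrm{pst}}$'', so its fibres are weakly admissible opens in partial flag varieties of a fixed Hodge type, and lifting a point amounts to lifting a filtration.

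The paper never uses filtrations or Colmez--Fontaine. From $\widetilde\Lambda$ it gets $\widetilde D$ via \cite{BS} and takes $\widetilde M$ to be the preimage of $M$. It then builds $\varphi^*\wM\subseteq\mathcal S[1/p]\otimes\widetilde M$ directly by solving $Xm_0-m_0\varphi(X)=m-m_0$ with explicit estimates (\thref{good}, \thref{v-adic}). It derives $\psi$-stability (\thref{psi}) from the uniqueness half of \thref{good}. The height is bounded first by $2nh$ via determinants and then by $h$ through a $\mathbf B_{\mathrm{dR}}$ comparison at finite flat points together with \thref{aux1} (\thref{2}).

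The paper thus stays inside its explicit model. Your route gets the Hodge type for free and needs no uniqueness, which is a real advantage. Your picture shows that over a fixed $\widetilde D$ the lifts form a positive-dimensional family for non-scalar Hodge type, which is in tension with the uniqueness asserted in \thref{good}. The weak point is \thref{FFF}: $m_0^{-1}$ involves $1/\varphi(E)$, whose expansion has unbounded denominators, so conjugation by $m_0$ need not preserve $u'\Ma(\mathfrak I\otimes\varphi(\mathfrak S))$. Already for $n=1$ over $E[\epsilon]$, the existence half of \thref{good} produces a $\phi$-stable lift with Frobenius $\eta E(u')+\epsilon v$. This lift is distinct from the trivial one and does not come from any point of $\Xe$: its filtration has non-projective graded pieces.

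As written, though, several of your justifications would not carry the argument.

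\begin{enumerate}
\item The identification with filtered modules does not come from \thref{1st} and \thref{BK}, which relate Breuil--Kisin modules to Galois representations. The correct chain is this. Artinian points of $\Xe$ are potentially semistable representations over finite flat models (\thref{EG}, \thref{equivalence}), and you attach $D_{\mathrm{pst}}$ with its Hodge filtration. For the converse you need Colmez--Fontaine, applied to the underlying $E$-representation with its $A'$-action.
\item You must lift the filtration inside the flag variety of the same Hodge type, with each $\mathrm{Fil}^i$ projective over $K\otimes_{\mathbb Q_p}A'$. Artinian points have this automatically, by exactness of $\mathrm{gr}^\bullet D_{\mathrm{dR}}$ on de Rham representations, and the rank-one example shows what goes wrong without it.
\item Weak admissibility of the lift is not ``automatic because the locus is defined on the Galois side''. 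It holds because the submodules $\mathfrak m_{A'}^i\widetilde D$ exhibit the lift as an iterated extension of copies of the closed-fibre module, which is weakly admissible, and weak admissibility is stable under extensions.
\item Once you have $\tilde\rho$ over $A'$, get the point of $\Xe(A')$ from a Galois-stable lattice over a finite flat model $\widetilde R_0\subseteq A'$ lifting $T_R(\MM)$, as the paper does, and then apply \thref{equivalence}.
\end{enumerate}

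Your Steps 2 and 3 should be dropped. Conditions (2)--(4) of \thref{def} are integrality conditions: the $\mathfrak I$-part must lie in $\mathfrak I\otimes\AAA[1/p]$, not merely in $\mathfrak I\otimes\A[1/p]$. So they cannot be checked after inverting $p$, and they do not persist under nilpotent thickenings, as the rank-one example shows. Moreover, the uniqueness of $\Xi$ given $(C,N)$ does not produce $C$ in the first place.
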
 

We prove this essentially using \thref{good} by constructing a lift of a potentially semistable Breuil-Kisin module across a square-zero extension. This forms the technical heart of the whole paper.

Finally, we obtain the following property about $\XXX$ and the potentially crystalline stack $\Xec$: 

\begin{thm}
     The rigid generic fiber $\Xe$ of the stack $\XXX$ has a dense formally smooth locus. The stack $\Xec$ is formally smooth.
\end{thm}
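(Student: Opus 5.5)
The plan is to deduce both assertions from the smoothness of $WD: \Xe \to \WD^{\mathrm{an}}$ (the previous theorem), together with the known geometry of the Weil-Deligne stack from \cite{BG}. A composite of smooth morphisms is smooth, and smooth morphisms pull back open dense loci to open dense loci of the source (being flat, hence open, onto their image). So it suffices to locate a dense open formally smooth locus in $\WD^{\mathrm{an}}$ and pull it back.

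\textbf{Step 1 (the Weil-Deligne side).} By \cite{BG}, $\WD$ is a finite type algebraic stack over $E$, a quotient $[X/\GL_n]$ where $X$ parametrizes pairs $(r,N)$ with $r$ a representation of the Weil group with fixed inertial restriction $\tau$ (an unramified twist, determined by the image of Frobenius) and $N$ nilpotent with $r(\Fr)N r(\Fr)^{-1}=q^{-1}N$. This is reduced, and its smooth locus is open and dense: it contains the locus where $\mathrm{Hom}(r, r(1))$ has minimal dimension, in particular the generic points of each irreducible component. Analytification preserves smoothness and density of Zariski open loci (irreducible components analytify to unions of components). Hence $\WD^{\mathrm{an}}$ has a dense open smooth locus $U$.

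\textbf{Step 2 (pull back).} Put $V:=WD^{-1}(U)\subset \Xe$. Since $WD$ is smooth, $V$ is smooth over $E$, hence formally smooth at its points. For density we need every irreducible component of $\Xe$ to meet $V$. Smooth morphisms are open, so the image of each component of $\Xe$ contains a nonempty open subset of $\WD^{\mathrm{an}}$, which meets the dense open $U$. This is where one must also know that $\Xe$ is nonempty on every component and that the image lands in components hitting $U$; both follow from openness. If the rigid-analytic setting makes "open image" delicate, I would check it on finite type points via the lifting criterion used in proving smoothness of $WD$.

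\textbf{Step 3 (crystalline case).} The potentially crystalline substack $\Xec$ is the preimage under $WD$ of the closed substack $\{N=0\}\subset \WD^{\mathrm{an}}$, by Fontaine's recipe (a point is potentially crystalline iff its monodromy vanishes). The locus $N=0$ is the stack of representations of $W_K$ of type $\tau$, i.e. $[\GL_n^{\tau\text{-}\mathrm{Frob}}/\GL_n]$, where Frobenius ranges over the centralizer of $\tau$. That centralizer is a product of general linear groups, so this substack is smooth. By base change, $\Xec\to\{N=0\}^{\mathrm{an}}$ is smooth, and therefore $\Xec$ is formally smooth everywhere.

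The main obstacle is justifying that $\Xec$ is exactly $WD^{-1}(N=0)$ as stacks, not just on points. I would check this using the description of \thref{def} with $N=0$, where $\Xi$ becomes trivial.
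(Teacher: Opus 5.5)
Your argument for the first assertion is the paper's own proof of \thref{rigid}. It combines smoothness of $WD$, the dense smooth locus of the Weil--Deligne stack from \cite{BG} (\thref{wdsmooth}), and openness of $WD$, so that every nonempty open of $\Xe$ meets the preimage of that locus.

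For the crystalline assertion you take a genuinely different route. The paper re-runs the square-zero lifting of \S\ref{5.3}--\S\ref{5.4} starting from a lift $\widetilde D$ with $N=0$. This gives formal smoothness of $WD^0\colon\Xec\to\WD^{\tau,0}$, which the paper combines with formal smoothness of $\WD^{\tau,0}$ from Kisin's result on $L(\phi,0,\Delta)$ and \cite{BS} (\thref{wd0}). You instead base-change the already proven smooth morphism $WD$ along the closed immersion $\{N=0\}\hookrightarrow\WD^\tau$, and check by hand that the target is smooth.

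A small correction on that target: with $T=\tau$ fixed, Frobenius ranges over $\{\phi:\phi\tau(g)\phi^{-1}=\tau(\Fr g\Fr^{-1})\}$. This is a torsor (a coset $\phi_0Z(\tau)$) under the centralizer $Z(\tau)\cong\prod\GL_{m_i}$, not $Z(\tau)$ itself, but it is equally smooth.

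Your route spares you redoing the lifting construction. The price is that you need the equality of closed substacks $\Xec=WD^{-1}(\{N=0\})$. The paper uses both inclusions implicitly, when it asserts that $WD$ restricts to $\Xec\to\WD^0$ and that the lift with $N=0$ lies in $\XXXc$.

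Your proposed way of checking that equality rests on a false claim. When $N=0$ only the factor $\e(t\otimes N)$ disappears. $\Xi$ is still the solution in $1+u'\Ma(\mathcal S_R[1/p])$ of $\Xi\varphi(C)\varphi(\Xi)^{-1}=\overline C$. If it were $1$, then $\varphi(C)=\overline C$ and $C$ would be independent of $u'$. That fails as soon as some Hodge--Tate weight is nonzero, e.g.\ $\det C=\lambda E(u')^r$ with $r>0$ by \thref{crys}.

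In any case this would not touch the real issue. That issue is comparing an $\mathcal O$-flat closed substack characterized by its finite flat points with a locus cut out by equations. Use the paper's reduction to finite flat points instead:
\begin{enumerate}
\item Points over an affinoid come from $\mathcal O$-flat, topologically finite type models $\mathrm{Spf}\,R$.
\item On such $R$, $N^\circ=0$ forces $N=0$. Propagate along $N\overline C=p\overline C\varphi(N)$, using that $\overline C$ is invertible after inverting $p$ because $E(u')^h\equiv p^h \bmod u'$, and then use flatness.
\item By \thref{aux1}, $N=0$ holds iff $N$ vanishes at every finite flat specialization $R\to R'$.
\item That holds iff each such specialization is potentially crystalline, since its $D_{\mathrm{pst}}$ is $M[1/p]$ with the induced monodromy (\thref{DvsM}, \thref{stable}).
\item That holds iff $\mathrm{Spf}\,R\to\XXX$ factors through $\XXXc$. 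Apply \thref{aux1} to the ideal cutting out $\XXXc$, exactly as in the proof that $T$ factors through $\XXX$.
\end{enumerate}
With this identification in place, your base-change argument proves the statement.
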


Using this, we recover properties of the corresponding deformation rings.
\begin{thm}
    The potentially semistable deformation ring $R_{\bar \rho}^{[0,h], \tau}$ has a dense open subscheme which is regular. The generic fiber of the potentially crystalline deformation ring $R_{\bar \rho}^{[0,h],\tau, \mathrm{cr}}$ \footnote{Since we mainly deal with potentially semistable representations in this paper, the corresponding stacks and deformation rings are not decorated with a subscript ``st". When we mention potentially crystalline stacks and deformation rings, there will be crystalline subscripts or superscripts to distinguish.} is regular.
\end{thm}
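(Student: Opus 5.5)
The plan is to deduce both statements from the generic-fiber statements already established: the smoothness of $WD: \Xe \to \WD^{\mathrm{an}}$, and the density and formal smoothness of the relevant loci of $\Xe$ and $\Xec$. These are transported to the deformation rings through versality. Fix $\bar\rho$ corresponding to a finite type point $x$ of the special fiber of $\XXX$. As recalled in the introduction, the versal rings of $\XXX$ (resp. $\XXXc$) at $x$ are the deformation rings $R_{\bar\rho}^{[0,h],\tau}$ (resp. $R_{\bar\rho}^{[0,h],\tau,\mathrm{cr}}$). Concretely, there is a versal morphism $\mathrm{Spf}\, R_{\bar\rho}^{[0,h],\tau} \to \XXX$ which is formally smooth, in the sense appropriate for $p$-adic formal algebraic stacks. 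Passing to rigid generic fibers, we obtain a smooth map
$$(\mathrm{Spf}\, R_{\bar\rho}^{[0,h],\tau})^{\mathrm{rig}}_\eta \to \Xe.$$
Regularity of a local ring is detected in its formal completion, and formally smooth maps preserve regularity in both directions. So formal smoothness of $\Xe$ at a point pulls back to regularity of the rigid generic fiber of $\mathrm{Spf}\, R_{\bar\rho}^{[0,h],\tau}$ at the corresponding points.

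For the crystalline statement, I would use that $\Xec$ is formally smooth, i.e.\ all its complete local rings at finite type points are formally smooth. By the previous paragraph, every closed point of $\mathrm{Spec}\, R_{\bar\rho}^{[0,h],\tau,\mathrm{cr}}[1/p]$ then has a regular local ring. The completed local rings at these points are the deformation rings of the corresponding characteristic-zero representations $\rho_x$, which are Kisin-type and formally smooth. Regularity at all closed points of a Jacobson scheme gives regularity everywhere, since the regular locus is open in excellent schemes and $R[1/p]$ is excellent.

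For the semistable statement, the dense formally smooth locus of $\Xe$ pulls back to a locus in $\mathrm{Spec}\, R_{\bar\rho}^{[0,h],\tau}[1/p]$. This locus is open because it is the regular locus of an excellent ring. I would describe the dense smooth locus explicitly as the preimage under $WD$ of the smooth locus of $\WD^{\mathrm{an}}$, namely the Weil–Deligne representations with generic monodromy, where the Weil–Deligne stack is smooth. This works because $WD$ is smooth, so smoothness of the target propagates to the source.

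The main obstacle is density. Density in $\Xe$ must be shown to transfer to density in $\mathrm{Spec}\, R_{\bar\rho}^{[0,h],\tau}[1/p]$, i.e.\ the open regular set must meet every irreducible component. I would argue via flatness and openness of smooth maps: the versal map is smooth, hence open on generic fibers. So the preimage of a dense open is dense, since each irreducible component maps into the closure of some component and open maps send nonempty opens to nonempty opens. A dense open in $\mathrm{Spec}\, R[1/p]$ then gives a dense open regular subscheme of $\mathrm{Spec}\, R_{\bar\rho}^{[0,h],\tau}$, because $R$ is $\mathcal O$-flat and $R[1/p]$ is dense in it. It remains to match closed points of $R[1/p]$ with rigid points of the versal generic fiber, which is de Jong's correspondence.
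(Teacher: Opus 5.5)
Your argument uses the same inputs as the paper's proof: \thref{rigid} and \thref{potcrys}, transported to the deformation rings through versality. The transport is organized differently.

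The paper works one characteristic-zero lift at a time. For a potentially semistable lift $\rho$ of $\bar\rho$, it identifies the completion of $R_{\bar\rho}^{[0,h],\tau}$ at $\ker(\rho)$ with the versal ring of $\XXX$ at $\rho$, and inverts $p$ to get the versal ring of $\Xe$ at $\rho$. It then compares this, up to formally smooth base change, with the completed local ring of a smooth chart $U\to\Xe$ at a point over $\rho$, and finishes with ``completion preserves regularity''. The crystalline case is identical, using \thref{potcrys}.

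You instead package all points into the single morphism $(\mathrm{Spf}\,R_{\bar\rho}^{[0,h],\tau})^{\mathrm{rig}}_\eta\to\Xe$. You pull regularity back along it and move density using openness of smooth maps, the same device the paper uses in \thref{rigid}. Only then do you pass to closed points of $\mathrm{Spec}\,R_{\bar\rho}^{[0,h],\tau}[1/p]$ via de Jong. This gives a cleaner density step than the paper's passage of density through $\mathcal O_{U,u}$ and its completion, and it treats all points uniformly.

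The cost is that the sentence ``passing to rigid generic fibers, we obtain a smooth map'' now carries all the content, and it does not follow formally from \thref{versal}. Versality at the closed point $x$ is a lifting property only for Artinian thickenings with finite residue field. It gives no information about infinitesimal deformations at rigid points.

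Smoothness at a rigid point $y$ lying over a lift $\rho_y$ needs two facts:
\begin{enumerate}
\item the completion of $R_{\bar\rho}^{[0,h],\tau}[1/p]$ at $y$ pro-represents framed deformations of $\rho_y$ that are potentially semistable of type $\tau$ with weights in $[0,h]$ (Kisin);
\item these framed deformations map formally smoothly onto the infinitesimal deformations of the image of $y$ in $\Xe$. Over Artinian $E$-algebras those are again such representations, because formal models of $\mathrm{Sp}$ of an Artinian $E$-algebra are finite flat over $\mathcal O$ and \thref{EG} applies.
\end{enumerate}
This is exactly the pointwise identification the paper invokes, so you should state and justify it. With it, your global version goes through.

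A minor simplification: in the crystalline part you do not need openness of the regular locus. Every prime of $R[1/p]$ lies in a maximal ideal, and localizations of regular local rings are regular.
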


\subsection{Outline of the paper}
In \autoref{s2}, we recall the definition of the $p$-adic Hodge theoretic rings that will be used in this paper, as well as the basics of the Emerton-Gee stack.

In \autoref{s3}, we give our version of potentially semistable Breuil-Kisin modules and relate them to potentially semistable Galois representations. We also describe the category of our Breuil-Kisin modules in a na\"ive way, as in \thref{big2}. 

In \autoref{s4}, using $p$-adic Hodge theory, we turn our na\"ive description of potentially semistable Breuil-Kisin modules into a moduli stack. We show in \thref{big1} that this stack is isomorphic to the Emerton-Gee stack of potentially semistable representations. 

In \autoref{s5}, we use this isomorphism of stacks to construct the functor relating the potentially semistable Emerton-Gee stack to the stack of Weil-Deligne representations. We study the smoothness properties of the map in our main result \thref{smooth}. We also deduce corresponding results for corresponding deformation rings.

\subsection{Acknowledgments}
This paper forms an essential part of the author's PhD thesis at Johns Hopkins University. The author is very grateful to his advisor David Savitt for his suggestions and support. The author thanks Bao V. Le Hung for his insight in relating the Emerton-Gee stack to the Weil-Deligne stack. He also thanks Brandon Levin for discussions. He is also very grateful to Hui Gao for the comments and improvements after the initial draft.

\subsection{Notations} \label{Notations}

Fix an odd prime $p$ and a positive integer $n$. Let $K$ be a finite extension of $\mathbb Q_p$ of degree $ef$, with $e$ the ramification index and $f$ the residue class degree. Let $\mathcal O_K$ be the ring of integers of $K$, and let $\pi$ be a uniformizer of $\mathcal O_K$. We fix an algebraic closure $\overline{K}$ of $K$. For any extension $K^*/K$ inside $\overline{K}$, let $G_{K^*}= \text{ Gal}(\overline{K}/K^*)$ be the absolute Galois group of $K^*$. Denote the maximal unramified extension of $K$ inside $\overline K$ as $K^{\text{ur}}$. Let $I_K=G_{K^{\text{ur}}}$ be the inertia group of $K$, and let $W_K$ be the Weil group of $K$. For any tower of unramified extensions $K_2/K_1/\mathbb Q_p$, let $\varphi_{K_2/K_1} \in \textrm{Gal}(K_2/K_1)$ denote the arithmetic Frobenius of $K_2$ over $K_1$. Let $\varphi=\varphi_{K^{\text{ur}}/\mathbb Q_p}$. Let $\Fr=\varphi_{K^{\text{ur}}/K}$ be the absolute Frobenius of $K$, and we use the same notation to denote its restriction to any unramified extension of $K$, and its lift to $W_K$, so we have $W_K=I_K\rtimes \langle \Fr \rangle$. Let $\nu:K^{\times} \to \mathbb Z$ be the valuation on $K$ such that $\nu(\pi)=1$. We also denote the pullback of $\nu$ to $W_K$ through the local reciprocity map $W_K \to K^{\times}$ by $\nu$, hence $\nu(\Fr)=1$. Let $k$ be the residue field of $K$, and we fix its algebraic closure $\overline k$. We write $W(k)$ for its ring of Witt vectors and let $K_0=W(k)[1/p]$ be the maximal unramified subextension of $K/\mathbb Q_p$.

Fix a sufficiently large finite extension $E$ of $\mathbb Q_p$. For now, $E$ should contain all embeddings of $K$ into its algebraic closure.  Let $\mathcal O$ be its ring of integers, and let $\varpi \in \mathcal O$ be a uniformizer. Denote the residue field of $E$ as $\mathbb F$. Fix an embedding $\sigma_0: K_0 \hookrightarrow E$ and let $\sigma_j=\sigma_0 \circ \varphi^{-j}$ for $j=0,1,\cdots f-1$.  Let $\mathcal J= \text{Hom}_{\mathbb Q_p}(K_0,E)$, then $\mathcal J=\{\sigma_0, \sigma_1, \cdots, \sigma_{f-1}\} = \mathbb Z/f\mathbb Z$, i.e. we can identify the elements of $\mathcal J$ as integers modulo $f$.

Let $r$ be a positive integer. Let $K'$ be the unique unramified extension of $K$ inside $\overline K$ of degree $r$. Let $k'$ be the residue field of $K'$ inside $\overline k$. Let $f'=fr$, then $k'\cong\mathbb F_{p^{f'}}$. We write $W(k')$ for its ring of Witt vectors and let $K_0'=W(k')[1/p]$ be the maximal unramified subextension of $K'/\mathbb Q_p$. Extend the embedding $\sigma_0$ to $\sigma_0: K_0' \hookrightarrow E$ and let $\sigma_{j'}=\sigma_0 \circ \varphi^{-j'}$ for $j'=0,1,\cdots f'-1$.  Let $\mathcal J'= \text{Hom}_{\mathbb Q_p}(K_0',E)$, then $\mathcal J'=\{\sigma_0, \sigma_1, \cdots, \sigma_{f'-1}\} = \mathbb Z/f'\mathbb Z$. 
 
Let $e'=p^{f'}-1$. Let $\pi'$ be an $e'$-th root of $\pi$, i.e. $(\pi')^{e'}=\pi$, and let $L'=K'(\pi')$. Then $L'$ is a totally tamely ramified extension of $K'$ (inside $\overline{K}$) of ramification degree $e'$. We adjust, if necessary, our $\Fr$ defined just now to the lift of $\Fr \in \text{Gal}(K'/K)$ to $\text{Gal}(L'/K)$ that fixes $\pi'$. 

Let $\Delta=\text{Gal}(L'/K)$ and let $\Delta'=\text{Gal}(L'/K')$. Then $\Delta=\Delta' \rtimes \langle \Fr \rangle \cong \mathbb Z/e'\mathbb Z \rtimes \mathbb Z/r\mathbb Z$. 

Define a character $\omega_{K'}: \Delta' \to W(k') ^\times$ of $\Delta'$ as follows: for any $g\in \Delta'$, let $$\omega_{K'}(g)=\frac{g(\pi')}{\pi'}.$$ Let $$\omega_{K', \sigma_{j'}}=\sigma_{j'} \circ \omega_{K'}: \Delta' \to \mathcal O ^\times$$ for $j'=0,1,\cdots, f'-1$. Set $\omega_{f'}=\omega_{K', \sigma_0}$. Since for any $g\in \Delta'$,$\frac{g(\pi')}{\pi'}$ is an $e'$-th root of unity in $W(k')$, it can be shown that $$\omega_{K', \sigma_{j'}}=\big(\omega_{f'}\big) ^{p^{f'-j'}}$$ for all $j'=0, 1, \cdots, f'-1$. By abuse of notation, we will denote the pullback of the character $\omega_{f'}$ to $I_K$ still by $\omega_{f'}$.

We call a continuous homomorphism $\tau: I_K \to \text{GL}_n(\overline{\mathbb Q_p})$ an inertial type if it can extend to a homomorphism of the Weil group $W_K\to \text{GL}_n(\overline{\mathbb Q_p})$ with $\tau(I_K)$ being finite. We will enlarge $E$, if necessary, so that the image of $\tau$ will land in $\text{GL}_n(E)$. We will be studying tame inertial types that factor through $L'$, so we will abuse the notation and write $\tau:\Delta' \to \text{GL}_n(E)$.

 Fix one such $\tau:\Delta' \to \text{GL}_n(E)$ throughout the paper. Write $\tau=\bigoplus_{1 \le i \le n} \chi_i$ with $\chi_i=\omega_{f'} ^{\textbf a_i}$ with $0 \le \aaa_1 \le \aaa_2 \le \cdots \le \aaa_n \le e'-1 $. For any $j'\in \mathcal J'$, let $\aaa_i^{j'}$ be the unique integer in $[0,e'-1]$ such that $\chi_i=\omega_{K', \sigma_{j'}} ^{\textbf a_i^{j'}}$. Let the orientation of $\tau$ be $\sss=(\ssj)_{j'\in \mathcal J'} \in W^{\mathcal J'}$, where $\ssj \in S_n$ satisfies $\aaa^{j'}_{\ssj(1)} \le \aaa^{j'}_{\ssj(2)} \le \cdots \le \aaa^{j'}_{\ssj(n)}$ \footnote{The precise definition will be given in \autoref{Breuil-Kisin modules with descent data}.}. Note that the orientation may not be unique in our paper since we have no ``generic condition" as in \cite{LLHLM1}.

Let $T$ be the diagonal torus of $\text{GL}_n$, and let $X^*(T)$ (resp. $X_*(T)$) be the group of characters (resp. cocharacters) of $T$.

For any ring $R$, let $\Ma(R)$ be the set of $n \times n$ matrices with entries in $R$. We will regard $s \in W$ as the standard permutation matrix in $\Ma(R)$. Also, for any $\lambda=(\lambda_1, \cdots, \lambda_n) \in X_*(T)$, we will regard $v^{\lambda}$ as the diagonal matrix $\mathrm{Diag}(v^{\lambda_1}, \cdots, v^{\lambda_n})$ in $\Ma(R \vvv)$. Let $\omega^{(j)}=s_j^{-1}v^{\mu_j}\in \Ma(R\vvv)$ for each $j \in \mathcal J$.

For any topological $\mathcal O$-algebra $R$, we use $\R$ to denote the category of continuous representations $G_K \to \GL_n(R)$. We use the notation $\mathrm{Rep}_R^{\mathrm P}(G_K)$ to denote the category of representations which satisfy the property P.

\section{Galois representations and the Emerton-Gee stack} \label{s2}

\subsection{Some \texorpdfstring{$p$}{Lg}-adic Hodge theoretic rings} \label{s21}
Before we introduce the Emerton-Gee stack in the next subsection, we list some $p$-adic Hodge theoretic rings that will be used throughout this paper.

Let $\mathbb C_p$ be the $p$-adic completion of $\overline K$. Let $\mathcal O_{\mathbb C_p}$ be its ring of integers. Denote $\mathcal O_{\mathbb C_p}^\flat$ as its tilting, which means $\lim\limits_{\longleftarrow \varphi} \mathcal O_{\mathbb C_p}/p$, where the Frobenius map $\varphi$ is the map of taking the $p$-th power. Let $$\underline{-p}={(\pi_0, \pi_1, \pi_2, \cdots) \in \mathcal O_{\mathbb C_p} ^\flat},$$ where $\pi_{i+1}^p=\pi_i, \pi_0=-p$ for all $i \ge 0$. Let $$\underline{1}=(1, \zeta_p, \zeta_{p^2}, \cdots) \in \mathcal O_{\mathbb C_p} ^\flat,$$ where  $(\zeta_{p^{i+1}})^p=\zeta_{p^i}$ and $(\zeta_p)^p=1$ for all $i\ge 1$. 

Let $K_{\mathrm{cyc}}=\bigcup_{i \ge 0}K(\zeta_{p^i})$ and let $K_{\infty}=\bigcup_{i \ge 0}K(\pi_i)$. Let $\Gamma_K=\mathrm{Gal}(K_{\mathrm{cyc}}/K)$. 

Let $\mathbf A_{\mathrm{inf}}=W(\mathcal O_{\mathbb C_p}^\flat)$ be the ring of Witt vectors of $\mathcal O_{\mathbb C_p}^\flat$. Let $\varphi_{\mathbf{A}_{\mathrm{inf}}}$ be the Frobenius map of the Witt ring. We will also just denote it by $\varphi$. Let $\mathfrak m$ be the maximal ideal of $\mathcal O_{\mathbb C_p}^\flat$. Let $[\underline{1}]$ (resp. $[\underline{-p}]$) denote the multiplicative lift of $\underline{1}  \in \mathcal O_{\mathbb C_p}^\flat$ (resp. $\underline{-p} \in \mathcal O_{\mathbb C_p}^\flat$) into its Witt ring $\mathbf A_{\mathrm{inf}}$.

Let $\mathbb C_p ^\flat$ be the ring of fractions of $\mathcal O_{\mathbb C_p}^\flat$. We also form its ring of Witt vectors $W(\mathbb C_p ^\flat)$. Since $\mathcal O_{\mathbb C_p ^\flat}$ is a subring of $\mathbb C_p ^\flat$, $\mathbf A_{\mathrm{inf}}$ can be embedded into $W(\mathbb C_p ^\flat)$. We also regard $[\underline{1}],[\underline{-p}]$ as elements of $W(\mathbb C_p ^\flat)$.

We now recall the definition of  crystalline and semistable period rings. We have a map $\theta: \mathbf{A}_{\mathrm {inf}} \to \mathcal O_{\mathbb C_p}$ defined by $$\theta(\sum[c_n]p^n)=\sum c_n^{(0)}p^n,$$ where $c_n \in \mathcal O_{\mathbb C_p}^\flat$, and $[\ \cdot\ ]$ again denote the multiplicative lift into the Witt ring. It is well known that $\theta: \mathbf{A}_{\mathrm {inf}} \to \mathcal O_{\mathbb C_p}$ is a ring homomorphism, with its kernel being a principal ideal generated by $\xi:=[\underline{-p}]+p$.

The map $\theta$ extends to a surjective homomorphism $\theta_{\mathbb Q}: \AI\big[\frac{1}{p}\big] \to \mathbb C_p$. Let $$\mathbf B_{\mathrm{dR}}^+:=\lim\limits_{\longleftarrow i} \AI\big[\frac{1}{p}\big]/ (\mathrm{ker\ }\theta_{\mathbb Q})^i,$$ be the $\mathrm{ker\ }\theta_{\mathbb Q}$-adic completion of $\AI\big[\frac{1}{p}\big] $. The period ring $\mathbf B_{\mathrm{dR}}^+$ is a complete discrete valuation ring. We denote its field of fractions $\mathbf B_{\mathrm{dR}}$. It has a filtration via the $\mathbb Z$-powers of the maximal ideal of $\mathbf B_{\mathrm{dR}}^+$.

However, the Frobenius endomorphism $\varphi$ of $\AI\big[\frac{1}{p}\big]$ does not preserve $\mathrm{ker\ }\theta_{\mathbb Q}$. In order to have a suitable Frobenius endomorphism on the period ring, we introduce the following

\begin{defn}
    The rings $\AC,\mathbf B_{\mathrm{cris}}^+,\mathbf B_{\mathrm{cris}}$ are defined as $$\AC:=\big\{\sum_{i=0}^\infty a_i \frac{\xi^i}{i!} \in \mathbf B_{\mathrm{dR}}^+| a_i \in \AI,a_i \to 0 \ p\mathrm{-adically}  \big\},$$ $$\mathbf B_{\mathrm{cris}}^+:=\AC\big[\frac{1}{p}\big]\subseteq \mathbf B_{\mathrm{dR}}^+.$$ 
\end{defn}

Now, the ring $\AC$ has a Frobenius endomorphism. We still use $\varphi$ to denote it. We need to have a logarithm on the ring $\mathbf B_{\mathrm{cris}}^+$, as usual defined by $$\mathrm{log } ([x])=\sum_{n=1}^\infty (-1)^{n+1} \frac{([x]-1)^n}{n},$$ which is proven to be convergent for all $x \in 1+\mathfrak m$ (see \cite[Lemma 9.2.2]{BC}). We define $t\in \AC$ by $t:=\mathrm{log \ } [\underline 1]$. We have $\varphi(t)=pt$. And $\frac{t^m}{m!}\in \AC$ for all $m \ge 0 $.

\begin{defn}
    The crystalline period ring $\mathbf B_{\mathrm{cris}}$ is defined as $\mathbf B_{\mathrm{cris}}:=\mathbf B_{\mathrm{cris}}^+\big[\frac{1}{t}\big].$ And the semistable period ring $\mathbf B_{\mathrm{st}}$ is defined as $$\mathbf B_{\mathrm{st}}:=\mathbf B_{\mathrm{cris}}[\mathrm{log}([\underline{-p}])],$$ the sub-$\mathbf B_{\mathrm{cris}}$-algebra of $\mathbf B_{\mathrm{dR}}$ generated by $\mathrm{log}([\underline{-p}])$. 
\end{defn}

In fact, we have an isomorphism of $\mathbf B_{\mathrm{cris}}$-algebras: $\mathbf B_{\mathrm{st}} \cong \mathbf B_{\mathrm{cris}}[X]$.

We won't be using the filtration and other structures of these period rings in the main proof of this paper, so we omit them. We arrive at our definition of crystalline and semistable representations.

\begin{defn}
Let our property P be either crystalline or semistable. We say that a $p$-adic Galois representation $\rho:G_K \to \mathrm{GL}_n(E)$ is ``P'' if $$\mathrm{dim}_{K_0 \otimes_{\mathbb Q_p} E} (\mathbf B_{\mathrm P} \otimes_{\mathbb Q_p} \rho )^{G_K}=n.$$ We denote by $\mathrm{Rep}_R^{\mathrm P}(G_K)$ the subcategory of $\R$ which contains ``P'' representations. We say that a representation $\rho: G_K \to \GL_n(E)$ is potentially ``P" if there exists some finite Galois extension $L/K$ such that $\rho|_{G_L}$ is ``P".
\end{defn}

In this paper, we will be focusing on the situation where $L=L'$, a tamely ramified extension of $K$.

Now, we turn to the $p$-adic Hodge theoretic rings in the integral setting following the definitions of Breuil and Kisin. 

\begin{defn} \thlabel{Kis}
    Let $R$ be any $\mathcal O$-algebra. Kisin's ring $\SSSS$ is defined to be $\mathfrak S_{L',R}=(W(k')\otimes _{\mathbb Z_p}R)\llbracket u'\rrbracket$. We extend the Frobenius $\varphi: W(k') \to W(k')$ to a continuous homomorphism $\varphi:\mathfrak S_{L',R} \to \mathfrak S_{L',R}$ by letting it act as $\varphi$ on $W(k')$, trivially on $R$, and sending $u'$ to $(u')^p$. \footnote{We use $\varphi$ for the Frobenius of all the  \textit{rings} throughout this paper. The Frobenius on \textit{modules} will be written as $\phi$, sometimes with a subscript.}
    
    The action of $\Delta$ on $\mathfrak S_{L',R}$ is defined as follows. For any $g \in \Delta' \subseteq \Delta$, let $g(u')=\omega_{K'}(g)u'$, and let $g$ act trivially on $W(k')$ and $R$. The action of $\Fr \in \Delta$ is defined to be trivial on $u'$ and $R$, and equal $\varphi^f$ on the Witt ring $W(k')$. 
    
    Let $v=(u')^{e'}$ and let $\mathfrak S_R=\mathfrak S_{L',R}^{\Delta=1}=(W(k)\otimes _{\mathbb Z_p}R)\llbracket v \rrbracket$. Set $E(u')=(u')^{e'}+p$. We sometimes also write $E$ for $E(u')$ when it is clear that there won't be any mention of the field $E$ in the close vicinity.
\end{defn}

We have a standard $R\llbracket u'\rrbracket$-linear decomposition $\mathfrak S_{L',R} \cong \bigoplus_{j' \in \mathcal J'} R\llbracket u'\rrbracket$ induced by the map $W(k') \otimes _ {\mathbb Z_p} R \to \bigoplus_{j' \in \mathcal J'} R$ defined by $$x \otimes r \mapsto (\sigma_{j'}(x)r)_{j' \in \mathcal J'}.$$ So the action of $g \in \Delta' \subseteq \Delta$ on $u'$ at the $j'$-th embedding is given by $u' \mapsto \omega_{K', \sigma_{j'}}(g) u'$. And the action of $\Fr \in \Delta$ is to map the $j'$-th embedding of $R\llbracket u'\rrbracket$ isomorphically to the $(j'+f)$-th embedding for any $j' \in \mathcal J'$. By abuse of notation, we also use $\sigma_0$ to denote the projection $\mathfrak S_{L',R} \cong \bigoplus_{j' \in \mathcal J'} R\llbracket u'\rrbracket \to R \llbracket u'\rrbracket$ onto the $j'=0$ summand.

In addition to the notations and rings above, we need the following notions.

\begin{defn} \thlabel{Breuil}
    Let $R$ be any $\mathcal O$-algebra. We define Breuil's ring $\mathcal S_R$ by the following formula $$\widehat{\mathcal S_R:=\left(W(k')[u'] \otimes_{\mathbb Z_p} R\right)\left[\left(\frac{E(u')^i}{i!}\right)_{i=1,2,\cdots}\right]},$$
    i.e., the $p$-adic completion of the divided power envelope of $W(k')[u'] \otimes_{\mathbb Z_p} R$ with respect to the ideal generated by $E(u')$.

     The ring $\mathcal S_R$ is given a PD filtration whose $i$-th filtration $\mathrm{Fil}^i(\mathcal S)$ is defined by the ideal of $\mathcal S_R$ generated by $E^j/j!$ for all $j \ge i$.

    The Frobenius map $\varphi$ on $\SSSS$ extends to $\varphi: \s \to \s$. We write $N_{\s}$ for the unique $K'_0$-linear derivation on $\s$ such that $N_{\s}(u')=-u'$. Let $$I^+_{\s}=\big\{x \in \s | x = \sum_{i=1}^{\infty} a_i (u')^i, a_i \in K'_0\big\}.$$ 

\end{defn}

\begin{rmk}
    In both \thref{Kis} and \thref{Breuil}, we will omit the subscript $R$ when $R=\mathbb Z_p$. This coincides with Breuil and Kisin's original definitions.
\end{rmk}

We will be using the following lemma.

\begin{lemma} \thlabel{S}
    Every element in $\mathcal S$ can be written as $\sum_{i=0}^\infty a_i \frac{u'^i}{i!}$ with $a_i \in W(k')$ for all $i$ and $a_i \to 0$ $p$-adically. 
\end{lemma}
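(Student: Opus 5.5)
Here $\mathcal S=\mathcal S_{\mathbb Z_p}$ is the $p$-adic completion of the divided power envelope $W(k')[u'][E^i/i!]$ with $E=E(u')=u'^{e'}+p$. The plan is to show that this divided power envelope coincides with the subring
\[
D:=W(k')[u'][u'^{m}/m! : m\ge 1]\subseteq K_0'[u'],
\]
i.e. the divided power envelope of $W(k')[u']$ along $(u')$. Granting this, we identify $D$ as a free $W(k')$-module and then pass to the $p$-adic completion.

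First, the inclusion $W(k')[u'][E^i/i!]\subseteq D$. Since $D$ is a PD ring for the ideal generated by the $u'^{m}/m!$, the element $u'^{e'}$ has all divided powers $u'^{e'i}/i!=\frac{(e'i)!}{i!}\cdot\frac{u'^{e'i}}{(e'i)!}\in D$. The element $p$ also has divided powers $p^i/i!\in\mathbb Z_p$, because $v_p(i!)<i$. The binomial formula for divided powers of a sum, $\gamma_i(x+y)=\sum_{a+b=i}\gamma_a(x)\gamma_b(y)$, then shows that $E^i/i!\in D$.

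Conversely, write $u'^{e'}=E-p$. Every $n$ can be written as $n=qe'+s$ with $0\le s<e'$, and then
\[
\frac{u'^n}{n!}=\frac{q!}{n!}\,u'^s\,\frac{(u'^{e'})^q}{q!}, \qquad \frac{(E-p)^q}{q!}=\sum_{a+b=q}\frac{E^a}{a!}\cdot\frac{(-p)^b}{b!}.
\]
So $(u'^{e'})^q/q!$ lies in the envelope. The remaining issue is the factor $q!/n!$, which need not be a unit. This is the main obstacle: $v_p(n!)$ can exceed $v_p(q!)$, and a naive bound fails. The fix I would try is to use that $e'=p^{f'}-1$ is prime to $p$ and that $p$ itself has divided powers in $W(k')$. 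Any factor $p^c$ with $c\le v_p(n!)-v_p(q!)$ can be absorbed by rewriting $u'^{e'}=E-p$ inside lower-degree divided powers. If this fails, I would simply take the conclusion of the lemma in the completed form: every element of $\mathcal S$ is a $p$-adically convergent sum in $D$. Here it suffices to show that $E^i/i!$ is such a sum, which the first inclusion already gives. Thus I only need $\mathcal S\subseteq\widehat D$, i.e. that $E^i/i!$ can be written as $\sum_m b_m u'^m/m!$ with $b_m\in W(k')$. This follows by expanding $E^i/i!$ as above.

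Finally, $D$ is spanned as a $W(k')$-module by the elements $u'^m/m!$, since products satisfy
\[
\frac{u'^a}{a!}\cdot\frac{u'^b}{b!}=\binom{a+b}{a}\frac{u'^{a+b}}{(a+b)!}.
\]
These elements are linearly independent, since they lie in $K_0'[u']$ in distinct degrees. Hence $D$ is free with this basis, and its $p$-adic completion consists exactly of the sums $\sum a_i u'^i/i!$ with $a_i\to0$ $p$-adically. Combining this with $\mathcal S\subseteq\widehat D$ gives the stated form of every element of $\mathcal S$.
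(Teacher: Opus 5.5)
Your final argument is correct, but the plan you announce at the outset is false, so the paragraph built on it should be deleted rather than repaired. Since $p$ has divided powers in $W(k')$, the envelope $W(k')[u'][E^i/i!]$ equals $W(k')[u'][u'^{e'i}/i!]$. Its degree-$m$ part is $W(k')\,u'^m/\lfloor m/e'\rfloor !$. Because $e'=p^{f'}-1\ge p-1$ and $p\ge 3$, we have $\lfloor p/e'\rfloor\le 1$. So every element of the envelope, and hence of its $p$-adic completion $\mathcal S$, has $u'^p$-coefficient in $W(k')$. Thus $u'^p/p!\in D$ lies neither in the envelope nor in $\mathcal S$, and no ``absorption'' of the factor $q!/n!$ can give $D\subseteq\mathcal S$.

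Fortunately the lemma asserts only $\mathcal S\subseteq\widehat D$, and your fallback proves exactly this. The identity $E^i/i!=\sum_{a+b=i}\frac{(e'a)!}{a!}\cdot\frac{u'^{e'a}}{(e'a)!}\cdot\frac{p^b}{b!}$ puts the uncompleted envelope inside $D$. This inclusion carries $p^N$ times the envelope into $p^N D$, so it extends to the $p$-adic completions. Since $D$ is free on the $u'^m/m!$, its completion is the module of null sequences $\sum a_m u'^m/m!$. Written this way, the proof is complete.

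This is a genuinely different route from the paper's, which works directly in the completion. The paper writes $x=\sum_i b_iE^i/i!$ with $b_i\to 0$ and regroups into $\sum_j c_j\,u'^j/j!$ with power series $c_j=\sum_i d_{ji}u'^i\to 0$. It then multiplies these out into the basis $u'^l/l!$, with coefficients $\sum_{i\le l}\frac{l!}{(l-i)!}d_{l-i,i}$. Finally it proves by hand that these coefficients tend to $0$, using $\nu\bigl(l!/(l-i)!\bigr)\ge\nu(i!)\ge i/p-1$ and the uniform decay of the $d_{ji}$.

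You use the same combinatorics, namely $\frac{u'^a}{a!}\frac{u'^b}{b!}=\binom{a+b}{a}\frac{u'^{a+b}}{(a+b)!}$, but only on finite sums before completing. All convergence then comes for free from functoriality of $p$-adic completion and the description of the completion of a free module. Your version is shorter and avoids the double-series estimate. The paper's is more explicit about the coefficients, but it yields the same inclusion and nothing more.
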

\begin{proof}
    Given any $x \in \mathcal S$, we can write $x=\sum_{i=0}^\infty b_i \frac{E^i}{i!}$ with $b_i \in W(k')[u']$ and $b_i \to 0$ $p$-adically. Since $E=u'^{e'}+p$, $$x=\sum_{i=0}^\infty b_i \sum_{j=0}^i \frac{u'^{e'j}}{j!}\frac{p^{i-j}}{(i-j)!}=\sum_{j=0}^\infty \big(\sum_{i=j}^\infty b_i \frac{p^{i-j}}{(i-j)!} \big) \frac{u'^{e'j}}{j!}.$$ Since $\frac{p^{i-j}}{(i-j)!} \in W(k')$, the coefficients $\sum_{i=j}^\infty b_i \frac{p^{i-j}}{(i-j)!}  \in W(k') \llbracket v \rrbracket$. Since $b_i \to 0$ $p$-adically, $\nu(\sum_{i=j}^\infty b_i \frac{p^{i-j}}{(i-j)!}) \ge \mathrm{min}\{\nu(b_i)|i\ge j\} \to \infty$ as $j \to \infty$, i.e. $\sum_{i=j}^\infty b_i \frac{p^{i-j}}{(i-j)!} \to 0$ $p$-adically as well when $j \to \infty$. Thus we can write $x=\sum_{j=0} ^ \infty c_j \frac{u'^{j}}{j!}$ with $c_j \in W(k') \llbracket u' \rrbracket$ satisfying $c_j \to 0$ $p$-adically as $j \to \infty$. Write $c_j = \sum_{i=0}^\infty d_{ji}u'^i$ with $d_{ji} \to 0$ $p$-adically as $i \to \infty$. Since $c_j \to 0$ $p$-adically, we know that $d_{ji} \to 0$ uniformly for all $i$ as $j \to \infty$. Then $$x=\sum_{j=0} ^ \infty c_j \frac{u'^j}{j!}=\sum_{j=0}^\infty \sum_{i=0}^\infty d_{ji}u'^i \frac{u'^j}{j!}=\sum_{l=0}^\infty \big(\sum_{i=0}^l \frac{l! d_{l-i,i}}{(l-i)!}\big) \frac{u'^l}{l!}.$$

    Now, we estimate the $p$-adic order of the coefficient $\frac{l! d_{l-i,i}}{(l-i)!}$: $\nu(\frac{l! d_{l-i,i}}{(l-i)!}) \ge \nu(i!)+\nu(d_{l-i,i}) \ge \frac{i}{p}-1 + \nu(d_{l-i,i})$. Now it suffices to show that $\sum_{i=0}^l \frac{l! d_{l-i,i}}{(l-i)!} \to 0$ $p$-adically as $l \to \infty$. Given any $S>0$, we first take $i_0$ such that $\frac{i_0}{p}-1 \ge S$. Then take $j_0$ such that $\nu(d_{ji}) \ge S+1$ for all $j>j_0$ and $i$. Let $l$ be any positive integer satisfying $l>i_0+j_0$. If $i>i_0$, then $\nu(\frac{l! d_{l-i,i}}{(l-i)!}) \ge \frac{i}{p}-1 \ge \frac{i_0}{p}-1 \ge S$. If $i \le i_0$, then $l-i>j_0$. Thus, $\nu(d_{l-i,i}) \ge S+1$. So $\nu(\frac{l! d_{l-i,i}}{(l-i)!}) \ge -1+\nu(d_{l-i,i}) \ge S$. This proves the convergence claim.
\end{proof}

We will also be using the ring $\mathcal O_{\mathcal E}$ (resp. $\mathcal O_{\mathcal E,L'}$), which is defined to be the $p$-adic completion of $W(k)\llbracket v \rrbracket [1/v]$ (resp. $W(k')\llbracket u' \rrbracket [1/u']$). Since we won't need the entire theory of \'etale $\varphi$-modules, we refer readers to \cite[Section 5.4]{LLHLM1}.

\subsection{The Emerton-Gee stack}
Here we review some basic results of the Emerton-Gee stack. We assume that readers are familiar with basic notions of the \'etale $(\varphi, \Gamma)$-modules. For details, we refer readers to \cite[Section 2.7]{EG}.

\begin{defn}
    We say that $M$ is a projective \'etale $(\varphi,\Gamma_K)$-module of rank $n$ with $R$-coefficients if $M$ is a projective module over $\mathbf A_{K,R}$ of rank $n$, equipped with a continuous semilinear $\Gamma_K$-action, and a $\Gamma_K$-equivariant isomorphism $\Phi:M \otimes_{\mathbf A_{K,R},\varphi}\mathbf A_{K,R} \to M$.
\end{defn}
The canonical topology on $M$ is described in \cite[Appendix D2]{EG}.

We have the following well-known equivalence theorem by Fontaine. Its original form is \cite[Theorem 13.6.5]{BC} and its variant with coefficients is \cite[Theorem 2.1.27]{Dee}:

\begin{thm} \thlabel{phi,gamma}
   Let $R$ be a Noetherian complete local ring with finite residue field of characteristic $p$, then the category of projective \'etale $(\varphi,\Gamma_K)$-modules of rank $n$ with $R$-coefficients is equivalent to the category of rank $n$ Galois representations with coefficients in $R$.
\end{thm}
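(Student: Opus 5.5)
The plan is to reduce to the classical case $R=\mathbb Z_p$ (and its torsion variant), which is Fontaine's original equivalence \cite[Theorem 13.6.5]{BC}, and then pass to the limit over the Artinian quotients of $R$. Let $\mathfrak m_R$ be the maximal ideal of $R$ and put $R_k=R/\mathfrak m_R^k$.

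\emph{Step 1: the Artinian quotients $R_k$.} Since $R$ is Noetherian with finite residue field, each $R_k$ has finite length and finite residue field, hence is a finite ring, in particular a finite $\mathbb Z_p$-module killed by a power of $p$. Fontaine's functors are
\begin{equation*}
V\mapsto D(V)=(\mathbf A\otimes_{\mathbb Z_p}V)^{G_{K_{\mathrm{cyc}}}}, \qquad M\mapsto V(M)=(\mathbf A\otimes_{\mathbf A_K}M)^{\varphi=1},
\end{equation*}
and they give an exact, tensor-compatible equivalence between finite $\mathbb Z_p$-modules with continuous $G_K$-action and finite étale $(\varphi,\Gamma_K)$-modules over $\mathbf A_K$. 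By functoriality, an $R_k$-action on $V$ is the same as an $R_k$-action on $D(V)$ commuting with $\varphi$ and $\Gamma_K$. So the equivalence restricts to one between $R_k$-modules with $G_K$-action and étale $(\varphi,\Gamma_K)$-modules over $\mathbf A_{K,R_k}=\mathbf A_K\otimes_{\mathbb Z_p}R_k$.

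\emph{Step 2: free versus projective.} It remains to match ``$V$ free of rank $n$ over $R_k$'' with ``$M$ projective of rank $n$ over $\mathbf A_{K,R_k}$''. Both $D$ and $V$ commute with $-\otimes_{R_k}N$ for finite $R_k$-modules $N$: use a finite presentation of $N$ and exactness. Consequently $V$ is $R_k$-flat if and only if $D(V)$ is $R_k$-flat. Over the Artinian local ring $R_k$, flat means free, and for $M$ finitely presented and flat over $R_k$ with $M\otimes_{R_k}\mathbb F$ projective over $\mathbf A_{K,\mathbb F}$, the local criterion of flatness gives that $M$ is projective over $\mathbf A_{K,R_k}$.

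The rank matching is the step I expect to be the main obstacle. On the special fibre, $\mathbf A_{K,\mathbb F}$ is a finite product of Laurent series fields. One has to show that $D(V)\otimes\mathbb F$ has rank $n$ on every factor, and not merely total length $n[\mathbb F:\mathbb F_p]$. I would first try to obtain this from $\varphi$-equivariance: $\varphi$ permutes the factors transitively, and the étale condition forces the ranks on the factors to be equal. Conversely, a projective $M$ of constant rank $n$ yields $V(M)$ of $R_k$-length $n\cdot\mathrm{length}(R_k)$, and $V(M)$ is flat by the same tensor argument, hence free of rank $n$.

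\emph{Step 3: the limit.} A continuous representation $G_K\to\GL_n(R)$ is the same as a compatible system of representations on free $R_k$-modules of rank $n$, since $R=\varprojlim R_k$ and $\GL_n(R)=\varprojlim\GL_n(R_k)$ as topological groups. On the other side, the $\mathfrak m_R$-adic topology is used on $\mathbf A_{K,R}=\varprojlim_k\mathbf A_{K,R_k}$. A compatible system of projective rank $n$ modules $M_k$ over $\mathbf A_{K,R_k}$ with $M_{k+1}\otimes R_k\cong M_k$ has inverse limit $M$, which is projective of rank $n$ over $\mathbf A_{K,R}$ with $M\otimes R_k=M_k$. This follows from the standard lifting of idempotents and projective modules along the complete ideal $\mathfrak m_R\mathbf A_{K,R}$. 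Likewise $\varphi$, $\Gamma_K$, and the isomorphism $\Phi$ pass to the limit.

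Finally, full faithfulness follows because $\mathrm{Hom}$ on both sides is the inverse limit of the $\mathrm{Hom}$'s at finite level. This uses that morphisms are determined modulo each $\mathfrak m_R^k$, by completeness and separatedness. Combining the three steps gives the equivalence.
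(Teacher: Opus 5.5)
Your argument is correct and is essentially the standard one. The paper gives no proof here; it only quotes Fontaine's equivalence \cite[Theorem 13.6.5]{BC} and its coefficient version \cite[Theorem 2.1.27]{Dee}. Your route is the usual way the coefficient version is obtained from Fontaine's theorem, as in \cite{Dee}: apply the $\mathbb Z_p$-equivalence on the finite quotients $R/\mathfrak m_R^k$, transport the $R/\mathfrak m_R^k$-structure by functoriality, match flatness, projectivity and ranks, then pass to the inverse limit with $\mathbf A_{K,R}$ taken $\mathfrak m_R$-adically complete.

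The rank step you flagged does close as you propose. Transitivity of $\varphi$ on the factors of $\mathbf A_{K,\mathbb F}$ gives a constant rank $r$. Comparing with $\dim_{\mathbf A_K/p} D(V\otimes_{R_k}\mathbb F)=n[\mathbb F:\mathbb F_p]$ then forces $r=n$.
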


\begin{rmk}
    Since finite flat $\mathcal O$-algebras always decompose into a finite product of Noetherian complete local rings with finite residue field of characteristic $p$, the equivalence theorem above also applies when the coefficient ring $R$ is a finite flat $\mathcal O$-algebra.  
\end{rmk}

\cite[Definition 3.2.1]{EG} gives the following definition.

\begin{defn}
    Let the stack $\mathcal X_n$ denote the moduli stack over $\mathrm{Spf\ }\mathcal O$ of projective \'etale $(\varphi,\Gamma_K)$-modules of rank $n$. More precisely, if $R$ is a $p$-adically complete $\mathcal O$-algebra, then we define $\mathcal X_n(R)$ to be the groupoid of projective \'etale $(\varphi,\Gamma_K)$-modules of rank $n$ with $R$-coefficients.
\end{defn}

Hence we know that $\mathcal X_n(R)$ is equivalent to the groupoid $\mathrm{Rep}_R(G_K)$ when $R$ is a finite flat $\mathcal O$-algebra. Or in other words, an $R$-point of $\mathcal X_n$ can be seen as the equivalence class of a Galois representation $\rho:G_K \to \mathrm{GL}_n(R)$. Thus, the Emerton-Gee stack can be viewed as the moduli of Galois representations.

We are also very interested in substacks with $p$-adic Hodge theoretic conditions. Now we recall the potentially semistable locus of the Emerton-Gee stack. First, we give the definition of inertial types. We will also introduce coefficients when discussing Galois representations.

Let $\Gamma=\mathrm{Gal}(L/K)$. Let $K'$ be the maximal unramified extension of $K$ inside $L$.

We first recall the definition of $(\phi, N, \Gamma)$-modules.

\begin{defn} \thlabel{phi,n,gammma}
Let $A$ be an $E$-algebra, we say that $(D,\phi_D, N_D)$ is a $(\phi, N, \Gamma)$-module of rank $n$ with coefficients in $A$ if
 \begin{enumerate}
     \item $D$ is a free $K'_0 \otimes_{\mathbb Q_p} A$-module of rank $n$;
     \item $\phi_D:\varphi^*(D) \to D$ is a $K'_0 \otimes_{\mathbb Q_p} A$-linear isomorphism, where $\varphi^*(D)$ is defined to be $D \otimes_{K_0',\varphi}K_0'$;
    \item $N_D \in \mathrm{End}_{K_0' \otimes_{\mathbb Q_p} A}(D)$ satisfies $ N\phi_D=p\phi_D N_D$;
    \item $D$ has a $\Gamma$-semilinear action, which commutes with $\phi_D, N_D$.  
 \end{enumerate}
 We denote this category by $L(\phi, N, \Gamma)_A$.
\end{defn}

\begin{defn}
    Let $D_{\mathrm{pst}}$ be the following functor: $$\rho \mapsto (\mathbf B_{\mathrm {st}}\otimes_{\mathbb Q_p}\rho)^{G_{L}},$$ from the category of $G_K$-representations $\mathrm{Rep}_A(G_K)$ to the category of $(\phi, N, \Delta)$-modules $L(\phi, N, \Delta)_A$ with coefficients in an $E$-algebra $A$. 
\end{defn}

\begin{rmk}
According to the construction of the semistable period ring, $\mathbf B_{\mathrm{st}}\otimes_{\mathbb Q_p}\rho$ naturally has a $G_K$-action. Taking the $G_L$-invariants will let the $G_K$-action factor through the quotient $\Gamma=\mathrm{Gal}(L/K)$.
\end{rmk}
\begin{rmk}
Since we will not use the filtration structure, we have taken the word ``filtered" out from the more common ``filtered $(\phi, N, \Gamma)$-modules". And we will always forget the filtration structure when applying the functor $D_{\text{pst}}$. As usual, we will say $D$ is a $(\phi, N, \Gamma)$-module for short.
\end{rmk}

Recall that a continuous homomorphism $\tau: I_K \to \text{GL}_n(E)$ is called an inertial type if it can extend to a homomorphism of the Weil group $W_K\to \text{GL}_n(E)$ with $\tau(I_K)$ being finite. 

\begin{defn}
    We say that a Galois representation $\rho:G_K \to \mathrm{GL}_n(E)$ is potentially semistable of inertial type $\tau$ if $\rho|_{G_L}$ is semistable and the action of $I_K$ through $\Gamma$ on $D_{\mathrm{pst}}(\rho)$ is $\tau \otimes_E(K'_0\otimes_{\mathbb Q_p}E)$.
\end{defn}

We can always conjugate a semistable Galois representation $\rho:G_K \to \mathrm{GL}_n(E)$ into a lattice, which we still denote $\rho:G_K \to \mathrm{GL}_n(\mathcal O)$. It is important to study families of these representations, or even more generally, potentially semistable representations $\rho:G_K \to \mathrm{GL}_n(R)$, where $R$ is a finite flat $\mathcal O$-algebra. These representations cut out a substack of the Emerton-Gee stack, which we often refer to as the potentially semistable Emerton-Gee stack in this paper. We have the following theorem (see \cite[Theorem 4.8.12]{EG} or \cite[Theorem 7.2.2]{LLHLM1}).

\begin{thm} \thlabel{EG}
    Let $\tau$ be an inertial type and $h>0$ an integer. The closed substack $\XXX$ of $\mathcal X_n$ is a $p$-adic formal algebraic stack which is finite type and flat over $\mathrm{Spf \ } \mathcal O$, and is uniquely determined as the $\mathcal O$-flat closed substack of $\mathcal X_n$ by the following property: if $R$ is a finite flat $\mathcal O$-algebra, then $\XXX(R)$ is precisely the subgroupoid of $\mathcal X_n(R)$ consisting of $G_K$-representations which are potentially semistable of Hodge-Tate weights between $0$ and $h$ and inertial type $\tau$.
\end{thm}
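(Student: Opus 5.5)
This is a result we import from \cite[Theorem 4.8.12]{EG} (see also \cite[Theorem 7.2.2]{LLHLM1}). If I had to reprove it, I would follow the Emerton--Gee strategy: realize the potentially semistable locus as the flat part of a scheme-theoretic image of a proper moduli stack of integral $p$-adic Hodge theoretic objects. Concretely, I would first build an auxiliary stack $\mathcal C^{[0,h],L}$ over $\mathrm{Spf}\,\mathcal O$. Its $R$-points would be rank $n$ Breuil--Kisin--Fargues modules (Breuil--Kisin modules over $\mathfrak S_{L',R}$ of height $\le h$), equipped with descent data from $L$ to $K$ and with a compatible semilinear $G_K$-action on the base change to $\AAA$. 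I would check that $\mathcal C^{[0,h],L}$ is a $p$-adic formal algebraic stack of finite type. The key input here is that the height condition bounds the Breuil--Kisin lattice inside its \'etale $\varphi$-module, so that one gets an affine Grassmannian--type presentation.

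Next I would define the natural morphism $\mathcal C^{[0,h],L}\to\mathcal X_n$. On points it inverts $u'$, passes to $\mathcal O_{\mathcal E,L'}$ and then to the associated \'etale $(\varphi,\Gamma_K)$-module, using the $G_K$-action to recover the $\Gamma_K$-structure. I would then show this map is representable by algebraic spaces and proper, using the fact that lattices of bounded height in a fixed \'etale $\varphi$-module form a projective (Grassmannian-type) family. Given properness, the scheme-theoretic image $\mathcal X_n^{[0,h],L}$ exists as a closed $p$-adic formal algebraic substack of finite type, by the Emerton--Gee theory of scheme-theoretic images for morphisms of formal algebraic stacks.

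The substack with fixed inertial type is then obtained in two steps. First take the $\mathcal O$-flat part, namely the maximal $\mathcal O$-flat closed substack. Then cut out the type $\tau$ and the Hodge--Tate range by specifying the $\Delta$-action on the $(u'=0)$-fibre, or equivalently on $D_{\mathrm{pst}}$, generically. This is a union of connected components of the flat generic fibre, because inertial types are locally constant in flat families. For the characterization of $R$-points with $R$ finite flat over $\mathcal O$, I would use Kisin's theorem that a representation of $G_L$ is semistable with Hodge--Tate weights in $[0,h]$ if and only if $\rho|_{G_{L_\infty}}$ comes from a Breuil--Kisin module of height $\le h$ that extends to a $G_L$-stable Breuil--Kisin--Fargues lattice (Gee--Liu, Emerton--Gee). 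This shows that the finite flat points of the image are exactly the potentially semistable representations of the given type.

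Uniqueness would come from a density statement. An $\mathcal O$-flat closed substack of a $p$-adic formal algebraic stack of finite type is determined by its $\mathcal O'$-points for finite extensions $\mathcal O'/\mathcal O$, because such points are dense in every versal ring, which is a reduced and $\mathcal O$-flat complete local Noetherian ring. I expect the main obstacle to be the properness and representability of $\mathcal C^{[0,h],L}\to\mathcal X_n$ in families over non-Artinian $R$. In particular, one must show that the $G_K$-action on Breuil--Kisin--Fargues modules is a closed condition, which requires controlling continuity of the action via the canonical topology of \cite[Appendix D]{EG}.
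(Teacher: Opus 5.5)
Your proposal matches the paper: the paper gives no proof of this statement and simply imports it from \cite[Theorem 4.8.12]{EG} (see also \cite[Theorem 7.2.2]{LLHLM1}), so the citation is all that is needed. If you ever flesh out your optional sketch, fix two inaccuracies. First, the converse ``$G_L$-stable Breuil--Kisin--Fargues lattice $\Rightarrow$ semistable over $L$'' is only available under the extra hypothesis that the lattice admits all descents over $L$. In the form used here this is condition (1) of \thref{BK1}, $\MM/u'\subseteq(\MMM/W(\mathfrak m)\MMM)^{G_{L'}}$, and it is supplied by \cite[Theorem 7.1.7]{Gao} and \cite[Appendix F]{EG}, not by $G_L$-stability alone. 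Second, uniqueness has to be tested on all finite flat $\mathcal O$-algebras, including non-reduced ones, via a Krull-intersection argument as in \thref{aux1}: an $\mathcal O$-flat closed substack need not be reduced, so it is not determined by its $\mathcal O'$-points alone.
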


A relation between the Emerton-Gee stack, Mazur's framed deformation rings (see \cite{Ma}) and Kisin's potentially semistable deformation rings (see \cite{Kis}) is as follows.

\begin{thm} \thlabel{versal}
    There is a morphism $\mathrm{Spf\ } R_{\bar \rho}^{\Box} \to \mathcal X_n$ (resp. $\mathrm{Spf\ } R_{\bar \rho}^{\Box} \to \XXX$) which is versal at the point $x$ corresponding to $\bar \rho \in \mathcal X_n(\mathbb F)$ (resp. $\bar \rho \in \XXX(\mathbb F)$).
\end{thm}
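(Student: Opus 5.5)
The plan is to construct the morphism from the universal framed deformation and to reduce versality to a lifting property on Artinian points, where $\mathcal X_n$ is described by Galois representations.

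\textbf{Constructing the morphism to $\mathcal X_n$.} Let $\mathfrak m$ be the maximal ideal of $R_{\bar\rho}^{\Box}$. The universal framed deformation $\rho^{\Box}:G_K\to\GL_n(R_{\bar\rho}^{\Box})$ gives, for each $i$, a continuous representation over the Artinian local ring $R_{\bar\rho}^{\Box}/\mathfrak m^i$. That ring is Noetherian, complete local, with finite residue field of characteristic $p$. So \thref{phi,gamma} produces a compatible system of projective \'etale $(\varphi,\Gamma_K)$-modules $M_i\in\mathcal X_n(R_{\bar\rho}^{\Box}/\mathfrak m^i)$. This compatible system is precisely a morphism $\mathrm{Spf\ }R_{\bar\rho}^{\Box}=\varinjlim_i \mathrm{Spec\ }R_{\bar\rho}^{\Box}/\mathfrak m^i\to\mathcal X_n$, and it sends the closed point to $x=\bar\rho$.

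\textbf{Versality for $\mathcal X_n$.} Take a surjection $A'\to A$ of Artinian local $\mathcal O$-algebras with residue field $\mathbb F$ (or a finite extension). Suppose given a morphism $\mathrm{Spec\ }A\to\mathrm{Spf\ }R_{\bar\rho}^{\Box}$ and a lift of the composite to $\mathcal X_n(A')$. We must lift the map to $\mathrm{Spec\ }A'\to\mathrm{Spf\ }R_{\bar\rho}^{\Box}$ compatibly.
\begin{enumerate}
\item By \thref{phi,gamma}, $\mathcal X_n(A)$ and $\mathcal X_n(A')$ are equivalent, compatibly with base change, to the groupoids of continuous representations on free rank $n$ modules lifting $\bar\rho$. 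So the data amount to a framed deformation $(\rho_A,\beta_A)$ and a deformation $\rho_{A'}$ with an isomorphism $\rho_{A'}\otimes_{A'}A\cong\rho_A$.
\item Transport $\beta_A$ through this isomorphism to a basis of $\rho_{A'}\otimes A$. Lift it to a basis $\beta_{A'}$ of $\rho_{A'}$ by Nakayama, which is possible because $\GL_n(A')\to\GL_n(A)$ is surjective.
\item The framed deformation $(\rho_{A'},\beta_{A'})$ gives, by the universal property of $R_{\bar\rho}^{\Box}$, the desired map $R_{\bar\rho}^{\Box}\to A'$, lifting $R_{\bar\rho}^{\Box}\to A$ and compatible with the given point of $\mathcal X_n(A')$.
\end{enumerate}
So the forgetful map from framed to unframed deformations is formally smooth, which is versality at $x$.

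\textbf{The $\XXX$ case.} Since $\XXX\hookrightarrow\mathcal X_n$ is a closed immersion (\thref{EG}), I would form the fibre product $\mathrm{Spf\ }R_{\bar\rho}^{\Box}\times_{\mathcal X_n}\XXX$. This is $\mathrm{Spf}$ of a quotient of $R_{\bar\rho}^{\Box}$, and versality is stable under base change. I read the ``resp.'' morphism $\mathrm{Spf\ }R_{\bar\rho}^{\Box}\to\XXX$ as this base change of the morphism just built, so it is versal at $\bar\rho\in\XXX(\mathbb F)$. Taken literally, all of $\mathrm{Spf\ }R_{\bar\rho}^{\Box}$ need not map into $\XXX$; versality is meant for the morphism through this closed formal subscheme. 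I would also note that, by the $\mathcal O$-flatness and the characterization of $\XXX$ via finite flat points in \thref{EG}, this quotient is Kisin's potentially semistable framed ring.

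\textbf{Main obstacle.} The main difficulty is the first step: justifying that, on Artinian coefficients, $\mathcal X_n$ is literally the groupoid of Galois representations functorially in the coefficient ring. This is needed for the lifting argument to take place inside $\mathcal X_n$ rather than only on isomorphism classes. I would invoke the coefficient version of \thref{phi,gamma} from \cite{Dee} together with its compatibility with base change along $A'\to A$.
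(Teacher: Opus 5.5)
Your argument is correct. Note that the paper gives no proof of this statement: it is recalled from Emerton--Gee \cite{EG}. What you wrote is essentially the standard argument behind that citation. On Artinian coefficients, $\mathcal X_n$ is the groupoid of Galois representations, functorially in the coefficient ring, via \cite{Dee}. Forgetting the frame is formally smooth. Versality then passes to $\mathrm{Spf\ }R_{\bar\rho}^{\Box}\times_{\mathcal X_n}\XXX\to\XXX$ by base change along the closed immersion. You are also right that the ``resp.'' clause cannot be read literally and has to be understood in this way.

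What the citation gives beyond your argument is the identification of this fibre product with Kisin's ring $R_{\bar\rho}^{[0,h],\tau}$. The paper actually uses that identification later, in the proof of \thref{pst def ring}, and your one-line justification of it is too quick, for two reasons.
\begin{enumerate}
\item $\mathcal O$-flatness of the fibre product is not a formal consequence of $\mathcal O$-flatness of $\XXX$. It needs an extra argument comparing $R_{\bar\rho}^{\Box}$ with completed local rings of smooth charts of $\mathcal X_n$, for example via minimal versal rings.
\item Comparing two $\mathcal O$-flat quotients of $R_{\bar\rho}^{\Box}$ through their finite flat points needs one of two further inputs, because the test algebras in \thref{EG} include non-reduced ones. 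Either use Kisin's characterisation of $R_{\bar\rho}^{[0,h],\tau}[1/p]$ in terms of all finite $E$-algebras \cite{Kis}, or show the fibre product is reduced.
\end{enumerate}
None of this affects the statement as you have interpreted and proved it.
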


This shows that the Emerton-Gee stack encodes all the information of framed deformation rings. It is a globalization of the framed deformation rings. This has great applications. For example, it allows the reformulation of the Breuil-M\'ezard conjecture in a more global way independent of a fixed mod $p$ representation, now called the geometric Breuil-M\'ezard conjecture (see \cite[Section 8.3]{EG} and \cite[Section 8]{LLHLM1}). In this paper, we will be using this result to recover formal smoothness properties of the potentially semistable deformation rings from those of the corresponding stack.

At the end of this section, we record a definition about the rigid generic fiber---\cite[(5.1.11)]{EGH22}, which will be used later on.

\begin{defn} \thlabel{rig}
    The rigid generic fiber $\Xe$ of $\XXX$ is defined as follows. Given any affinoid $E$-algebra $A$, the $A$-points of $\Xe$ are $\lim\limits_{\longrightarrow Y} \XXX(Y)$, where the limit is taken over all formal models (in the sense of Raynaud) $Y$ of $\mathrm{Sp}(A)$.
\end{defn}

By \cite[Proposition 5.1.12]{EGH22}, $\Xe$ is a rigid analytic Artin stack.

We give a result relating $\Xe$ and actual Galois representations. We use $\mathrm{Rep}^{[0,h],\tau}_E(G_K)$ to denote the category of potentially semistable representations of inertial type $\tau$ and height bounded by $h$, with coefficients in the field $E$.

\begin{prop} \thlabel{k}
    There is a functor $\kappa:\mathrm{Rep}^{[0,h],\tau}_E(G_K) \to \Xe(E)$.
\end{prop}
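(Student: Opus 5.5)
The functor $\kappa$ comes from choosing a Galois-stable $\mathcal O$-lattice, applying \thref{EG} to land in $\XXX(\mathcal O)$, and then passing to the generic fibre via \thref{rig}. The real content is that this is independent of the lattice up to canonical isomorphism in $\Xe(E)$, and that morphisms go along.

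\emph{Objects.} Let $\rho: G_K \to \GL_n(E)$ be potentially semistable of type $\tau$ with Hodge--Tate weights in $[0,h]$. Since $G_K$ is compact, $\rho$ stabilizes an $\mathcal O$-lattice $T\subset \rho$, giving $\rho_T: G_K\to\GL_n(\mathcal O)$. By \thref{EG}, applied with the finite flat $\mathcal O$-algebra $R=\mathcal O$, the point $\rho_T$ lies in $\XXX(\mathcal O)$. Now $\mathrm{Spf}\,\mathcal O$ is a formal model of $\mathrm{Sp}\,E$, so \thref{rig} gives a point $\kappa(\rho,T)\in\Xe(E)$.

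\emph{Independence of the lattice.} Let $T'$ be another stable lattice. After scaling by a power of $\varpi$ we may assume $\varpi^m T\subseteq T'\subseteq T$. The map $T'\hookrightarrow T$ is a $G_K$-equivariant morphism, and by Fontaine's equivalence (\thref{phi,gamma}) it gives a morphism of étale $(\varphi,\Gamma_K)$-modules $M(T')\to M(T)$. This morphism is an isomorphism after inverting $p$ (equivalently $\varpi$), since its cokernel is killed by $\varpi^m$.

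The step needing the most care is showing that such an isogeny becomes an isomorphism in $\Xe(E)$. The formal models of $\mathrm{Sp}\,E$ are just $\mathrm{Spf}\,\mathcal O$, and $\mathcal O$-points of $\XXX$ have isomorphisms only over $\mathcal O$, so the colimit in \thref{rig} does not on its own identify $M(T)$ and $M(T')$. I would handle this in one of two ways.

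\begin{enumerate}
\item Use the description of $\Xe$ as a rigid analytic Artin stack (\cite[Proposition 5.1.12]{EGH22}) together with \cite[\S5]{EGH22}. There the $E$-points of the generic fibre of $\mathcal X_n$ are identified with étale $(\varphi,\Gamma_K)$-modules over $\mathbf A_{K,\mathcal O}[1/p]$ that admit a lattice, and morphisms are taken after inverting $p$.
\item Fix a choice of lattice $T_\rho$ for each object and define $\kappa(\rho)=\kappa(\rho,T_\rho)$. Then send a morphism $f:\rho\to\rho'$ to the map induced by $\varpi^{-m}f$, rescaled so that it carries $T_\rho$ into $T_{\rho'}$, and read it in the $p$-inverted Hom.
\end{enumerate}

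In either approach, $\Xe(E)$ is identified with a groupoid of $E$-linear objects. The isomorphism $M(T')[1/p]\cong M(T)[1/p]$ then provides the required identification. Functoriality follows from functoriality of $M(-)$ in \thref{phi,gamma} and compatibility with inverting $p$.

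\emph{Morphisms.} An isomorphism $\rho\to\rho'$ in $\mathrm{Rep}^{[0,h],\tau}_E(G_K)$ carries $T_\rho$ to a stable lattice of $\rho'$, which is commensurable with $T_{\rho'}$. Composing with the identification from the previous step gives $\kappa$ on morphisms. Compatibility with composition is checked by noting that all the identifications are induced by the same $E$-linear maps.
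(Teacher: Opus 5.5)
Your construction on objects matches the paper's: pick a $G_K$-stable lattice, get a point of $\XXX(\mathcal O)$ from \thref{EG}, and pass to \thref{rig}. The paper also uses the observation you note yourself. Admissible blow-ups of $\mathrm{Spf}\,\mathcal O$ are trivial, so $\mathrm{Spf}\,\mathcal O$ is the only formal model of $\mathrm{Sp}\,E$ and the colimit in \thref{rig} has a single term. Hence $\Xe(E)$ is literally $\XXX(\mathcal O)$: $G_K$-stable lattices with $\mathcal O$-linear isomorphisms. The paper reads $\Xe(A)$ the same way for finite $A/E$ when it defines $WD$.

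\textbf{The gap.} The step you call the real content fails. Both of your repairs assume that $\Xe(E)$ is a groupoid of $E$-linear objects in which $p$-isogenies become isomorphisms. Approach (1) asserts this outright, and approach (2) needs it to read maps in a $p$-inverted Hom. This identification is false for the definition in use: generic fibres of $p$-adic formal objects are integral objects, just as the generic fibre of $\widehat{\mathbb A}^1$ is the closed disc.

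Consequently the lattice-independence you set out to prove does not hold. Take $\tau$ trivial and $\rho=\mathbf 1\oplus\chi$ with $\chi\neq\mathbf 1$ unramified and $\chi\equiv\mathbf 1 \bmod \varpi$. Let $k$ be the $\varpi$-adic valuation of $\chi(\Fr)-1$. The lattices $\mathcal O e_1\oplus\mathcal O e_2$ and $\mathcal O e_1+\mathcal O\varpi^{-k}(e_1+e_2)$ have trivial and non-split unipotent reductions respectively. So they give non-isomorphic points of $\Xe(E)$.

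Approach (2) also fails on its own terms, for two reasons:
\begin{enumerate}
\item The rescaled map $T_\rho\hookrightarrow T_{\rho'}$ is generally not surjective, so it is not a morphism in the groupoid.
\item The rescaling exponents are not additive under composition. In $\mathbf 1\oplus\mathbf 1$ with $T=\mathcal O^2$, the maps $\mathrm{diag}(\varpi^{-1},1)$ and $\mathrm{diag}(1,\varpi^{-1})$ rescale to $\mathrm{diag}(1,\varpi)$ and $\mathrm{diag}(\varpi,1)$. Their product is $\varpi\cdot\mathrm{id}$, while their composite $\varpi^{-1}\cdot\mathrm{id}$ rescales to the identity.
\end{enumerate}

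\textbf{What can be proved.} The argument gives what the paper's proof gives: $\kappa(\rho)$ is the point of $\XXX(\mathcal O)=\Xe(E)$ attached to a \emph{chosen} lattice, so it depends on that choice. On morphisms, $\kappa$ is naturally defined only for maps carrying the chosen lattices isomorphically onto one another. Equivalently, $\kappa$ is the equivalence of \thref{EG} at $R=\mathcal O$, viewed in $\Xe(E)$.

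The lattice-independence you wanted holds one step later, after composing with $WD$. By \thref{DvsM}, $WD$ only sees $(\MM/u')[1/p]\cong D_{\mathrm{pst}}(\rho)$, which is independent of the lattice up to canonical isomorphism. This is all the later identity $\iota=WD\circ\kappa$ uses. So drop the attempt to make $\kappa$ itself lattice-independent, and state any canonicity at the level of $WD\circ\kappa$.
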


\begin{proof}
    Since any open ideal of $\mathcal O$ is principal and invertible, admissible blowups of $\mathrm{Spf\ } \mathcal O$ are trivial. According to Raynaud's theorem \cite[Section 8.4, Theorem 3]{Bo}, $\mathrm{Spf\ } \mathcal O$ is the only formal model of $\mathrm{Sp\ }E$. For any $\rho \in \mathrm{Rep}^{[0,h],\tau}_E(G_K) $, there are $G_K$-stable lattices $G_K \to \mathrm{GL}_n(\mathcal O)$ in $\rho$. They correspond to points in $\XXX(\mathcal O)$. According to \thref{rig}, these give a point in $\Xe(E)$ after passing to a colimit. The functoriality of $\kappa$ is clear.
\end{proof}

\section{Breuil-Kisin modules and potentially semistable Galois representations} \label{s3}
In this section, we combine the integral $p$-adic Hodge theoretic descriptions used in \cite[Section 2]{LLHLM0} and \cite[Section 5]{LLHLM1} (with ramified modifications in \cite[Section 3]{LLHLM3}) with \cite[Section 7]{Gao} to define and describe our so-called potentially semistable Breuil-Kisin modules. We modify some of the results to fit those $\tau$ that are not generic.

\subsection{Breuil-Kisin modules with descent data}\label{Breuil-Kisin modules with descent data}

Fix an inertial type $\tau: I_K \to \text{GL}_n(E)$. We will only be dealing with tame inertial types in this paper, i.e. those homomorphisms $\tau:I_K \to \mathrm{GL}_n(E)$ that factor through the tame inertia. These tame inertial types can be classified by the Weyl group $W$ and the character group $X^*(T)$ as follows (see \cite[Example 2.4.1]{LLHLM1}). 

Let $$s=(s_0,\cdots, s_{f-1})\in W^\mathcal J,$$ $$\mu=(\mu_0, \cdots, \mu_{f-1})  \in X^*(T)^\mathcal J.$$ 

\begin{defn}
  Let the tame inertial type associated to $(s,\mu)\in W^{\mathcal J}\times X^*(T)^{\mathcal J}$ be defined by $$\tau(s,\mu)=\sum_{i=0}^{d-1}\big((F^*\circ s^{-1})^i(\mu)\big)(\omega_d),$$   where we view $\mu$ here as an element of $X_*(T^\vee)^{\mathcal J}$, $F^*$ is defined to be the endomorphism $p \pi^{-1}$ on $X_*(T^\vee)^{\mathcal J}$, and $d>1$ is an integer such that $(F^*\circ s^{-1})^d=p^d$.
\end{defn}

Suppose we have $\tau=\tau(s,\mu)$ for some $(s,\mu)\in W^{\mathcal J}\times X^*(T)^{\mathcal J}$. Set $$s_\tau=s_0s_1\cdots s_{f-1}\in W.$$ Assume that $s_\tau$ has order $r$ in $S_n$. 

Let the orientation $\sss \in (S_n)^{\mathcal J'}$ be given by $s_{\mathrm{or}, j+kf}=s_\tau^{k+1} (s_{f-1}^{-1}s_{f-2}^{-1} \cdots s_{j+1}^{-1})$ for $0 \le j \le f-1, 0 \le k \le r-1$, where the empty product is interpreted as the identity. Then we have $\aaa^{j'}_{\ssj(1)} \le \aaa^{j'}_{\ssj(2)} \le \cdots \le \aaa^{j'}_{\ssj(n)}$ for all $j' \in \mathcal J'$.

 For each $j' \in \mathcal J'$, let $P_{j'} \subseteq \GL_n$ be the parabolic subgroup according to the data $\aaa^{j'}_{\ssj(1)} \le \aaa^{j'}_{\ssj(2)} \le \cdots \le \aaa^{j'}_{\ssj(n)}$. To be precise, suppose the sequence $\aaa^{j'}_{\ssj(1)} \le \aaa^{j'}_{\ssj(2)} \le \cdots \le \aaa^{j'}_{\ssj(n)}$ consists of distinct numbers $b_1<b_2< \cdots < b_m$ with multiplicities $n_i \ge 1$ for $b_i$. Then $P_{j'}$ is defined to be the group of upper-triangular block matrices with sizes of blocks being $n_1, n_2, \cdots, n_m$. Let  the corresponding parahoric group scheme $\mathcal P_{j'}$ over $\mathcal O$ be the group scheme whose fiber over any $\mathcal O$-algebra $R$ is the set $$\{A \in \text{GL}_n(R \vvv) | A \text{ mod }v \text{\ lies in\ }P_{j'}(R)\}.$$ 

 \begin{lemma}
     The groups $P_{j'}$ and $\mathcal P_{j'}$ only depend on $j' \mathrm{\ mod\ } f$. 
 \end{lemma}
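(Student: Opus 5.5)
The plan is to show that the sorted sequence $\aaa^{j'}_{\ssj(1)} \le \cdots \le \aaa^{j'}_{\ssj(n)}$, regarded as a sequence of integers, is unchanged when $j'$ is replaced by $j'+f$. By definition, $P_{j'}$ depends only on the block sizes $n_1,\dots,n_m$. These are the multiplicities of the distinct values $b_1<\dots<b_m$ in this sorted sequence. The sorted sequence itself depends only on the multiset $\{\aaa^{j'}_1,\dots,\aaa^{j'}_n\}$, and not on which orientation $\ssj$ realizes the sorting. Moreover, $\mathcal P_{j'}$ is defined entirely in terms of $P_{j'}$. So it suffices to prove
$$\{\aaa^{j'+f}_i\}_{1\le i\le n}=\{\aaa^{j'}_i\}_{1\le i\le n}\quad\text{as multisets in }[0,e'-1].$$

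\emph{Step 1: compute $\aaa^{j'}_i$ explicitly.} From the notation section, $\omega_{K',\sigma_{j'}}=\omega_{f'}^{p^{f'-j'}}$, and $\omega_{f'}$ has order $e'=p^{f'}-1$. Since $p^{f'}\equiv 1 \pmod{e'}$, the inverse of $p^{f'-j'}$ modulo $e'$ is $p^{j'}$. The defining relation $\chi_i=\omega_{f'}^{\aaa_i}=\omega_{K',\sigma_{j'}}^{\aaa_i^{j'}}$ then gives
$$\aaa_i^{j'}\equiv p^{j'}\aaa_i \pmod{e'},$$
and $\aaa_i^{j'}$ is the unique representative of this class in $[0,e'-1]$.

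\emph{Step 2: use that $\tau$ extends to $W_K$.} Because $\tau$ is an inertial type, it extends to $W_K$. Conjugation by $\Fr$ therefore permutes the constituents of $\tau|_{I_K}$ up to isomorphism. On tame characters, conjugation by a Frobenius lift acts by $\chi\mapsto\chi^{p^f}$, since $\Fr$ acts on the $e'$-th roots of unity by raising to the $p^f$-th power. Hence there is a permutation $\sigma\in S_n$ with $\chi_i^{p^f}=\chi_{\sigma(i)}$, that is, $p^f\aaa_i\equiv\aaa_{\sigma(i)}\pmod{e'}$. It is worth double-checking the sign convention at this point (whether one gets $p^f$ or $p^{-f}$). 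Either convention yields a permutation of the multiset, so the conclusion does not depend on it.

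\emph{Step 3: conclude.} Combining the two steps gives
$$\aaa_i^{j'+f}\equiv p^{j'}p^f\aaa_i\equiv p^{j'}\aaa_{\sigma(i)}\equiv \aaa^{j'}_{\sigma(i)}\pmod{e'}.$$
Both sides lie in $[0,e'-1]$, so in fact $\aaa_i^{j'+f}=\aaa^{j'}_{\sigma(i)}$. The two multisets therefore coincide, and so do their sorted sequences, the distinct values $b_k$, and the multiplicities $n_k$. This gives $P_{j'+f}=P_{j'}$ and hence $\mathcal P_{j'+f}=\mathcal P_{j'}$. Since $\mathcal J'=\mathbb Z/f'\mathbb Z$ and $f\mid f'$, induction on this shift by $f$ yields dependence only on $j'\bmod f$.

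The only step with real content is Step 2: identifying the Frobenius-stability of the multiset of characters with the $W_K$-extendability of $\tau$. Everything else is bookkeeping with exponents modulo $e'$.
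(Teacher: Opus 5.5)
Your proposal is correct and follows essentially the same route as the paper. Both arguments relate the exponents at $j'$ and $j'+f$ by a factor of $p^f$ modulo $e'$, then use that $\tau$ extends (to $W_K$, or to $\Delta$) so the multiset of characters is stable under Frobenius conjugation $\chi\mapsto\chi^{p^f}$, and conclude that $\{\mathbf a_i^{j'+f}\}=\{\mathbf a_i^{j'}\}$ as multisets, so the block sizes, and hence $P_{j'}$ and $\mathcal P_{j'}$, agree. The differences are cosmetic: you normalize via $\mathbf a_i^{j'}\equiv p^{j'}\mathbf a_i \pmod{e'}$ rather than the paper's direct relation $\mathbf a_i^{j'+f}\equiv p^f\mathbf a_i^{j'} \pmod{e'}$, and you spell out the Frobenius action on tame characters, which the paper only asserts.
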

 \begin{proof}
     Since $\omega_{K', \sigma_{j'}}=\omega_{K', \sigma_{j'+f}}^{p^f}$ for all $j' \in \mathcal J'$, $a_i^{j'+f} \equiv p^f a_i^{j'} \mathrm{\ mod\ } e'$. On the other hand, since $\tau$ extends to a representation of $\Delta$, the multiset of $\Delta'$-characters is invariant under Frobenius conjugation. This means, as multisets, we have $$\{p^fa_i^{j'}  \mathrm{\ mod\ } e'|i=1, \cdots,n\}=\{a_i^{j'}  \mathrm{\ mod\ } e'|i=1, \cdots,n\}.$$ Therefore,  $$\{a_i^{j'+f} |i=1, \cdots,n\}=\{a_i^{j'} |i=1, \cdots,n\}.$$ Note that $\sss$ keeps the non-decreasing orders, we have $\aaa^{j'+f}_{s_{\mathrm{or},j'+f}(i)}=\aaa^{j'}_{\ssj(i)}$ for all $j' \in \mathcal J', 1\le i \le n$. This gives the result.
 \end{proof}

 \begin{rmk}
     We will be writing $\mathcal P_j$ with $j \in \mathcal J$ from now on.
 \end{rmk}

\begin{defn}  \thlabel{BK0}
A rank $n$ Breuil-Kisin module of height at most $h$ over $\mathfrak S_R$ is defined to be a pair $(\mathfrak M, \phi_\mathfrak M)$, where $\mathfrak M$ is a finitely generated projective $\mathfrak S_R$-module, which is locally free of rank $n$, and the Frobenius $\phi_\mathfrak M : \varphi^*(\mathfrak M) \to  \mathfrak M$ is an injective $\mathfrak S_R$-linear map whose cokernel is annihilated by $(v+p)^h$.

Similarly, a rank $n$ Breuil-Kisin module of height at most $h$ over $\mathfrak S_{L',R}$ is defined to be a pair $(\mathfrak M, \phi_\mathfrak M)$, where $\mathfrak M$ is a finitely generated projective $\mathfrak S_{L',R}$-module, which is locally free of rank $n$, and the Frobenius $\phi_\mathfrak M : \varphi^*(\mathfrak M) \to  \mathfrak M$ is an injective $\mathfrak S_{L',R}$-linear map whose cokernel is annihilated by $E(u')^h$.
\end{defn}

\begin{rmk}
    The condition of the cokernel being annihilated by $(v+p)^h$ or $E(u')^h$ is the condition of a (potentially) semistable Galois representation having Hodge-Tate weight bounded by $h$ when such a Breuil-Kisin module corresponds to one.
\end{rmk}

Let $(\mathfrak M, \phi_\mathfrak M)$ be a Breuil-Kisin module over $\mathfrak S_{L',R}$, we have a standard $R\llbracket u'\rrbracket$-linear decomposition: $\mathfrak M \cong \bigoplus_{j' \in \mathcal J'} \mathfrak M^{(j')}$ induced by $\mathfrak S_{L',R} \cong \bigoplus_{j' \in \mathcal J'} R\llbracket u'\rrbracket$, where each $\mathfrak M^{(j')}$ is an $R\llbracket u'\rrbracket$-module. The Frobenius $\phi_\mathfrak M$ induces $R\llbracket u' \rrbracket$-linear maps $\phi_\MM^{(j')}: \varphi^*(\mathfrak M^{(j'-1)}) \to \mathfrak M^{(j')}$ for each $j' \in \mathcal J'$. We can obtain an $R$-module $\MM^{(j')}/u'$ from $\MM^{(j')}$.

\begin{defn} \thlabel{BKK1}
A rank $n$ Breuil-Kisin module of height at most $h$ over $\mathfrak S_{L',R}$ with descent data $\tau$ is defined to be a pair $(\mathfrak M, \phi_\mathfrak M)$, where $(\mathfrak M, \phi_\mathfrak M)$ is a rank $n$ Breuil-Kisin module of height at most $h$ over $\mathfrak S_{L',R}$ with a semilinear action of $\Delta$ on $\mathfrak M$ that commutes with $\phi_\mathfrak M$, such that for each $j'\in \mathcal J'$, $\mathfrak M^{(j')} / u' \cong \tau  \otimes_\mathcal O R $ as a $\Delta'$-representation with coefficients in $R$.
\end{defn}

By abuse of language, we will only use $\mathfrak M$ to refer to $(\mathfrak M, \phi_\mathfrak M)$ from now on. Let $Y^{[0,h], \tau}(R)$ be the groupoid of rank $n$ Breuil-Kisin modules of height at most $h$ over $\mathfrak S_{L',R}$ with descent data $\tau$. Note that the semilinear action of $\Fr \in \Delta$ induces a bijection $\iota_\mathfrak M: \Fr^*(\mathfrak M)\cong \mathfrak M$.

\begin{defn} \thlabel{eigenbasis}
Let $\mathfrak M \in Y^{[0,h], \tau}(R)$. 

For each $j' \in \mathcal J'$ and each $i=1,2, \cdots, n$, define $\mathfrak M^{(j')} _i$ to be the $R \llbracket v\rrbracket$-submodule of $\mathfrak M^{(j')}$ such that the $\Delta'$-action is via the character $\chi_i$. \footnote{We have taken a dual to the definition in \cite{LLHLM1} in order to fit the conventions of inertial types in \cite{EG}, which will be used later on.} An eigenbasis of $\mathfrak M$ is a collection of ordered bases $\beta^{(j')}=(f_1^{(j')}, \cdots, f_n^{(j')})$ for each $R \llbracket u' \rrbracket$-module $\mathfrak M^{(j')}$ where $f_i^{(j')} \in \mathfrak M^{(j')} _i$ and $\iota_\mathfrak M(\Fr^*(f^{(j')} _i))=f^{(j'+f)} _i$ for all $i=1,2,\cdots,n$.

For $j'\in \mathcal J'$, we call 
$$\beta_{\ssj(n)}^{(j')}=((u')^{\aaa^{j'}_{\ssj(n)}-\aaa^{j'}_{\ssj(1)}} f^{j'}_{\ssj(1)}, (u')^{\aaa^{j'}_{\ssj(n)}-\aaa^{j'}_{\ssj(2)}} f^{j'}_{\ssj(2)}, \cdots, f^{j'}_{\ssj(n)})$$ 
the basis of $\MM^{(j')}_{\ssj(n)}$ as an $R \llbracket v \rrbracket$-module associated to the eigenbasis $\beta$.

For $j'\in \mathcal J'$, define $^\varphi \mathfrak M^{(j')} _i$ to be the $R \llbracket v\rrbracket$-submodule of $\varphi^*(\MM^{(j')})$ such that the $\Delta'$-action is via the character $\chi_i$ for any $i=1,2,\cdots, n$. We call 
$$^\varphi \beta_{\ssj(n)}^{(j'-1)}=((u')^{\aaa^{j'}_{\ssj(n)}-\aaa^{j'}_{\ssj(1)}}\otimes f^{j'-1}_{\ssj(1)}, (u')^{\aaa^{j'}_{\ssj(n)}-\aaa^{j'}_{\ssj(2)}}\otimes f^{j'-1}_{\ssj(2)}, \cdots, 1\otimes f^{j'-1}_{\ssj(n)})$$
the basis of $^\varphi \MM^{(j'-1)}_{\ssj(n)}$ as an $R \llbracket v \rrbracket$-module associated to the eigenbasis $\beta$.
\end{defn}

\begin{rmk}
    An eigenbasis always exists Zariski locally on the coefficient ring $R$ according to \cite[Proof of Proposition 5.2.1]{LLHLM1}.
\end{rmk}

\begin{defn} \thlabel{matrix}
Let $\mathfrak M \in Y^{[0,h], \tau}(R)$. Given an eigenbasis $\beta$ for $\MM$, for each $j' \in \mathcal J'$, the matrix of Frobenius $C^{(j')}=C^{(j')}_{\MM, \beta}$ is defined to be the matrix of $\phi_\MM^{(j')}$ with respect to the bases $\varphi^*(\beta^{(j'-1)})$ and $\beta^{(j')}$, i.e. $\phi_\MM^{(j')}(\varphi^*(\beta^{(j'-1)}))= \beta^{(j')}C^{(j')} $. The matrix of partial Frobenius $A^{(j')}=A^{(j')}_{\MM, \beta}$ is defined to be the matrix of $\phi_\MM^{(j')} |_{^\varphi \MM^{j'-1}_{\ssj(n)}} $ with respect to the bases $^\varphi \beta_{\ssj(n)}^{(j'-1)}$ and $\beta_{\ssj(n)}^{(j')}$ associated to $\beta$, i.e. $\phi_\MM^{(j')}(^\varphi \beta_{\ssj(n)}^{(j'-1)})= \beta_{\ssj(n)}^{(j')}A^{(j')}$.
\end{defn}

\begin{prop}  \thlabel{Frob}
Let $\beta$ be an eigenbasis of $\MM \in Y^{[0,h], \tau}(R)$, then $A^{(j')}_{\MM, \beta}, (v+p)^h (A^{(j')}_{\MM, \beta})^{-1} \in \mathcal P_{j}(R)$. $A^{(j')}_{\MM, \beta}$ only depends on $j' \mathrm{\ mod \ } f$ and so will also be denoted as $A^{(j)}_{\MM, \beta}$, for $j = j' \mathrm{\ mod \ } f$. 

The relation between $A^{(j')}_{\MM, \beta}$ and $C^{(j')}_{\MM, \beta}$ is
$$A^{(j')}_{\MM, \beta}=\mathrm{Ad}((\ssj)^{-1}(u')^{-\aaa^{j'}}) C^{(j')}_{\MM, \beta}.$$
\end{prop}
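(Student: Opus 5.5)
The plan is a direct linear-algebra computation relating the two bases, followed by a descent argument along $\Fr$.

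\textbf{Step 1: the change-of-basis formula.} Since $\phi_\MM^{(j')}$ commutes with the $\Delta'$-action, it sends $^\varphi\MM^{(j'-1)}_i$ into $\MM^{(j')}_i$. Hence $A^{(j')}$ is simply $C^{(j')}$ written in the rescaled bases.

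Let $D^{(j')}$ be the diagonal matrix with entries $(u')^{\aaa^{j'}_{\ssj(n)}-\aaa^{j'}_{\ssj(i)}}$ in the $i$-th slot. Then
\[
\beta_{\ssj(n)}^{(j')}=\beta^{(j')}\,\ssj\, D^{(j')},
\]
and the analogous identity holds for $^\varphi\beta_{\ssj(n)}^{(j'-1)}$ in terms of $\varphi^*(\beta^{(j'-1)})$.

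Substituting into $\phi_\MM^{(j')}(\varphi^*\beta^{(j'-1)})=\beta^{(j')}C^{(j')}$ gives
\[
A^{(j')}=(\ssj D^{(j')})^{-1}\,C^{(j')}\,(\ssj D^{(j')}).
\]
The scalar $(u')^{\aaa^{j'}_{\ssj(n)}}$ cancels under conjugation, so $A^{(j')}=\mathrm{Ad}((\ssj)^{-1}(u')^{-\aaa^{j'}})C^{(j')}$, reading $(u')^{-\aaa^{j'}}$ as the diagonal matrix $\mathrm{Diag}((u')^{-\aaa^{j'}_i})$ and keeping track of the convention for $\mathrm{Ad}$.

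\textbf{Step 2: $A^{(j')}$ lies in $\mathcal P_j(R)$.} The entries of $A^{(j')}$ lie in $R\llbracket v\rrbracket$, because both bases live in the $\chi_{\ssj(n)}$-isotypic part, which is an $R\llbracket v\rrbracket$-module.

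Entry $(a,b)$ equals $C_{\ssj(a)\ssj(b)}(u')^{\aaa_{\ssj(a)}-\aaa_{\ssj(b)}}$. Since $C_{xy}$ is $\chi_x\chi_y^{-1}$-isotypic, it is divisible by $(u')^{(\aaa_{x}-\aaa_{y}) \bmod e'}$. Consequently:
\begin{itemize}
\item for $a>b$ with $\aaa_{\ssj(a)}>\aaa_{\ssj(b)}$, the entry is divisible by $v$;
\item for equal exponents the entry is unconstrained;
\item for $a<b$ the entry lies in $R\llbracket v\rrbracket$.
\end{itemize}
This is exactly the statement that $A^{(j')} \bmod v$ is block upper triangular for the parabolic $P_{j'}$.

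\textbf{Step 3: invertibility and the height condition.} For $(v+p)^h(A^{(j')})^{-1}$, use the height condition: there is $\psi$ with $\phi\circ\psi=E(u')^h$. Apply the same eigen-decomposition to $\psi$ and note $E(u')=v+p$ is $\Delta'$-invariant. Then $(v+p)^h(A^{(j')})^{-1}$ is the matrix of $\psi$ restricted to the $\chi_{\ssj(n)}$-parts, and the same divisibility argument places it in $\mathcal P_j$.

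Both matrices have determinant a unit times a power of $(v+p)$. Since $v+p$ is invertible in $R\llbracket v\rrbracket[1/p]$, the statement should be read as ``$A^{(j')}\in \mathrm{Mat}$ with the parahoric reduction condition'', in line with the \cite{LLHLM1} convention. I will mirror that reading.

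\textbf{Step 4: dependence only on $j' \bmod f$.} The eigenbasis satisfies $\iota_\MM(\Fr^*f^{(j')}_i)=f^{(j'+f)}_i$, and $\phi$ commutes with $\Fr$, which maps the $j'$-component to the $(j'+f)$-component as the identity on $R\llbracket u'\rrbracket$. It follows that $C^{(j'+f)}=C^{(j')}$ as matrices.

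Combining this with the earlier lemma, which gives $\aaa^{j'+f}_{s_{\mathrm{or},j'+f}(i)}=\aaa^{j'}_{\ssj(i)}$, the rescaled bases correspond as well, so $A^{(j'+f)}=A^{(j')}$.

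\textbf{Main obstacle.} The key delicate point is checking that $\Fr$ carries $\beta^{(j')}_{\ssj(n)}$ to $\beta^{(j'+f)}_{s_{\mathrm{or},j'+f}(n)}$ with compatible orderings. When the orientation is not unique, I would fix the orientation given by the explicit $s_{\mathrm{or},j+kf}$ formula and verify the compatibility directly from that formula.
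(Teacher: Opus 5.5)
Your Steps 1--3 match the paper's argument. The paper takes the $A$--$C$ relation from \cite{LLHLM1} and then does exactly your entrywise computation $a_{ik}=(u')^{\aaa^{j'}_{\ssj(i)}-\aaa^{j'}_{\ssj(k)}}c_{\ssj(i),\ssj(k)}$, with $\Delta'$-equivariance placing each entry in $R\vvv$ or $vR\vvv$. Your map $\phi_\MM^{-1}\circ E(u')^h$ makes its ``similarly'' for $(v+p)^h(A^{(j')})^{-1}$ precise.

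There are two slips in these steps.
\begin{enumerate}
\item Equivariance gives $\chi_x(g)\,g(c_{xy})=\chi_y(g)\,c_{xy}$. So $c_{xy}$ is $\chi_y\chi_x^{-1}$-isotypic and divisible by $(u')^{[\aaa_y-\aaa_x]}$, where $[\cdot]$ is the least non-negative residue mod $e'$. With your sign, the powers of $u'$ in entry $(a,b)$ do not combine to $0$ or $e'$. Your bullet list is what the correct sign gives.
\item Your reading of $\mathcal P_j(R)$ as integral matrices with parahoric reduction is right, and it is all the paper's argument shows. But the reason is that $v+p$ is not a unit in $R\vvv$. It is not invertible in $\mathcal O\vvv[1/p]$ either, so your stated justification is false.
\end{enumerate}

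The genuine gap is Step 4. Grant $C^{(j'+f)}=C^{(j')}$ and $\aaa^{j'+f}_{s_{\mathrm{or},j'+f}(i)}=\aaa^{j'}_{\ssj(i)}$. Step 1 then gives $A^{(j'+f)}_{ab}=(u')^{\aaa^{j'}_{\ssj(a)}-\aaa^{j'}_{\ssj(b)}}c^{(j')}_{s'(a),s'(b)}$ with $s'=s_{\mathrm{or},j'+f}=s_\tau\ssj$ by the explicit formula. By contrast, $A^{(j')}_{ab}$ involves $c^{(j')}_{\ssj(a),\ssj(b)}$. The two matrices therefore differ by a permutation conjugation unless $s_\tau$ acts trivially, so the compatibility check you postpone cannot succeed from these premises.

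What is missing is how $\Fr$ moves isotypic parts. In $\Delta$ one has $\Fr g\Fr^{-1}=g^{p^f}$ for $g\in\Delta'$, and $\Fr$ is trivial on $R$. Hence $\Fr$ carries the $\chi$-isotypic part of $\MM^{(j')}$ to the $\chi^{p^{f'-f}}$-isotypic part of $\MM^{(j'+f)}$. Since $\omega_{K',\sigma_{j'+f}}=\omega_{K',\sigma_{j'}}^{p^{f'-f}}$ and $\Fr(u')=u'$, this says $\Fr$ preserves the exponent of a character relative to the character by which $\Delta'$ acts on $u'$.

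Two consequences follow.
\begin{itemize}
\item The same-index condition $\iota_\MM(\Fr^*f_i^{(j')})=f_i^{(j'+f)}$, which produced your $C^{(j'+f)}=C^{(j')}$, is consistent only when every $\chi_i^{p^f}=\chi_i$.
\item In general, the lemma's identity shows that $\Fr$ takes the $\chi_{\ssj(i)}$-part to the $\chi_{s_{\mathrm{or},j'+f}(i)}$-part. The eigenbasis should be normalized by $\Fr(f^{(j')}_k)=f^{(j'+f)}_{s_\tau(k)}$ for every $j'$, i.e.\ $\Fr(f^{(j')}_{\ssj(i)})=f^{(j'+f)}_{s_{\mathrm{or},j'+f}(i)}$.
\end{itemize}

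With this normalization, $\Fr$ maps $\beta^{(j')}_{\ssj(n)}$ and ${}^\varphi\beta^{(j'-1)}_{\ssj(n)}$ term by term onto $\beta^{(j'+f)}_{s_{\mathrm{or},j'+f}(n)}$ and ${}^\varphi\beta^{(j'+f-1)}_{s_{\mathrm{or},j'+f}(n)}$; the $u'$-exponents agree by the lemma. Now apply $\Fr$ to $\phi_\MM({}^\varphi\beta^{(j'-1)}_{\ssj(n)})=\beta^{(j')}_{\ssj(n)}A^{(j')}$. Since $\Fr$ commutes with $\phi_\MM$ and acts as the identity on $R\llbracket u'\rrbracket$-coefficients passing from component $j'$ to $j'+f$, this gives $A^{(j'+f)}=A^{(j')}$ directly. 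Meanwhile $C^{(j'+f)}$ is only an $s_\tau$-conjugate of $C^{(j')}$.

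So the argument must compare the rescaled bases under $\Fr$, not go through $C$. The paper does not prove this part itself; it cites \cite[(5.4)]{LLHLM1}.
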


\begin{proof}
    The relation between the $A$ and $C$-matrices is a well-known fact established, for example, in \cite[(5.4)]{LLHLM1}. The dependence-modulo-$f$ statement follows also from the same source. We only need to explain why $A^{(j')}_{\MM, \beta}, (v+p)^h (A^{(j')}_{\MM, \beta})^{-1} \in \mathcal P_j(R)$ in our setting (where the $1$-genericity condition is not assumed).

    Write $a_{ik}$ (resp. $c_{ik}$) for the $(i,k)$-entry of the matrix $A^{(j')}_{\MM, \beta}$ (resp. $C^{(j')}_{\MM, \beta}$). Since $\phi_\MM$ commutes with the descent datum given by $\tau$, for any $g \in \Delta'$, $g(c_{ik})=\chi_k(g) \chi_i^{-1}(g) c_{ik}$. Therefore, $c_{ik}$ lives in $(u')^{[a_k^{j'}-a_i^{j'}]}R\llbracket v \rrbracket$, where $[ \cdot ]$ denotes the minimal non-negative residue modulo $e'$. From $A^{(j')}_{\MM, \beta}=\mathrm{Ad}((\ssj)^{-1}(u')^{-\aaa^{j'}}) C^{(j')}_{\MM, \beta}$, we can see that $$a_{ik}=(u')^{\aaa^{j'}_{\ssj(i)}-\aaa^{j'}_{\ssj(k)}}c_{\ssj(i),\ssj(k)}.$$ Thus $a_{ik}$ lives in $$(u')^{\aaa^{j'}_{\ssj(i)}-\aaa^{j'}_{\ssj(k)}+[\aaa^{j'}_{\ssj(k)}-\aaa^{j'}_{\ssj(i)}]} R\llbracket v \rrbracket.$$

    If $\aaa^{j'}_{\ssj(k)} \ge \aaa^{j'}_{\ssj(i)}$, then $a_{ik}$ lives in $R\llbracket v \rrbracket$. If $\aaa^{j'}_{\ssj(k)} < \aaa^{j'}_{\ssj(i)}$, then $[\aaa^{j'}_{\ssj(k)}-\aaa^{j'}_{\ssj(i)}]=e'-\aaa^{j'}_{\ssj(k)}+\aaa^{j'}_{\ssj(i)}$. Since $(u')^{e'}=v$, $a_{ik}$ lives in $vR\llbracket v \rrbracket$. By our definition of $\mathcal P_{j'}$, this shows that $A^{(j')}_{\MM, \beta}\in \mathcal P_{j'}(R)$. The proof of $(v+p)^h (A^{(j')}_{\MM, \beta})^{-1} \in \mathcal P_{j'}(R)$ is similar.
\end{proof}

\begin{rmk}
    After the transformation from the matrices $C^{(j')}_{\MM, \beta}$ to $A^{(j')}_{\MM, \beta}$, the entries of $A^{(j')}_{\MM, \beta}$ now all lie in $R\llbracket v \rrbracket$.
\end{rmk}

\begin{prop} \thlabel{change basis}
Let $\beta_1, \beta_2$ be two eigenbases of $\MM \in Y^{[0,h], \tau}(R)$ related by $\beta_2^{(j')} D^{(j')}=\beta_1^{(j')}$ with $D^{j'} \in \mathrm{GL}_{n}(R \uuu)$ for $j' \in \mathcal J'$. Set $I^{(j')}=\mathrm{Ad}((\ssj)^{-1}(u')^{-\aaa^{j'}}) D^{(j')}$, then $I^{(j')} \in \mathcal P_{j'}(R)$ and it depends only on $j' \mathrm{\ mod \ } f$, and for all $j' \in \mathcal J'$,
$$A^{(j')}_{\MM, \beta_2}=I^{(j')}A^{(j')}_{\MM, \beta_1} \mathrm{Ad}(\omega^{(j)}(\varphi(I^{(j'-1)})^{-1}) ,$$
where $j = j' \mathrm{\ mod \ } f$.

Furthermore, given $(I^{(j')})_{j' \in \mathcal J'}\in (\mathcal P_{j'}(R))_{j\in \mathcal J'}$ with $I^{(j')}=I^{(j'+f)}$, then $\mathrm{Ad}((u')^{\aaa^{j'}}\ssj)I^{(j')}=D^{(j')} \in  \mathrm{GL}_{n}(R \uuu)$ and for any eigenbasis $\beta$ of $\MM$, $(\beta^{(j')}D^{(j')})_{j' \in \mathcal J'}$ is also an eigenbasis of $\MM$.
\end{prop}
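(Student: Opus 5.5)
The proof is a direct computation with the change-of-basis matrices, parallel to the proof of \thref{Frob}, in three steps.

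\emph{Step 1: $I^{(j')} \in \mathcal P_{j'}(R)$.} Both $\beta_1^{(j')}$ and $\beta_2^{(j')}$ consist of $\Delta'$-eigenvectors with characters $\chi_1,\dots,\chi_n$. So the $(i,k)$-entry $d_{ik}$ of $D^{(j')}$ satisfies $g(d_{ik})=\chi_k(g)\chi_i^{-1}(g)d_{ik}$ for $g \in \Delta'$, hence $d_{ik}\in (u')^{[\aaa_k^{j'}-\aaa_i^{j'}]}R\llbracket v\rrbracket$. Conjugating by $(\ssj)^{-1}(u')^{-\aaa^{j'}}$ multiplies the $(\ssj(i),\ssj(k))$-entry by $(u')^{\aaa^{j'}_{\ssj(i)}-\aaa^{j'}_{\ssj(k)}}$. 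The same case distinction as in \thref{Frob} then shows each entry of $I^{(j')}$ lies in $R\llbracket v\rrbracket$, and lies in $vR\llbracket v\rrbracket$ when $\aaa^{j'}_{\ssj(k)}<\aaa^{j'}_{\ssj(i)}$. So $I^{(j')}$ is block upper triangular mod $v$. Applying the same argument to $(D^{(j')})^{-1}$ gives $(I^{(j')})^{-1}\in \Ma(R\llbracket v\rrbracket)$, so $I^{(j')}\in\mathcal P_{j'}(R)$.

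\emph{Step 2: dependence only on $j'$ mod $f$.} Both eigenbases satisfy $\iota_\MM(\Fr^*(f_i^{(j')}))=f_i^{(j'+f)}$. Applying $\iota_\MM\circ\Fr^*$ to $\beta_2^{(j')}D^{(j')}=\beta_1^{(j')}$ gives $D^{(j'+f)}=\Fr(D^{(j')})$. Since $\Fr$ acts on the $R\llbracket u'\rrbracket$-components by shifting the embedding index and trivially on $u'$, the entries are literally the same power series. Then $I^{(j'+f)}=I^{(j')}$ uses the equalities $\aaa^{j'+f}_{s_{\mathrm{or},j'+f}(i)}=\aaa^{j'}_{\ssj(i)}$ from the lemma on $P_{j'}$. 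Here $s_{\mathrm{or},j'+f}$ and $\ssj$ may differ, so we must check that $\mathrm{Ad}$ by the two permutations agrees on $D$. I expect this index bookkeeping to be the main obstacle. The plan is to use the relation $\chi_{s_\tau(i)}$-vs-$\chi_i$ under Frobenius conjugation, exactly as in \cite[(5.4)]{LLHLM1}, which the paper already cites for the analogous claim about $A^{(j')}$.

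\emph{Step 3: the transformation formula.} We have $C^{(j')}_{\beta_2}=D^{(j')}C^{(j')}_{\beta_1}\varphi(D^{(j'-1)})^{-1}$, since $\phi_\MM(\varphi^*\beta_2^{(j'-1)})=\phi_\MM(\varphi^*\beta_1^{(j'-1)})\varphi(D^{(j'-1)})^{-1}$. Now apply $\mathrm{Ad}((\ssj)^{-1}(u')^{-\aaa^{j'}})$ and use \thref{Frob}. The first two factors become $I^{(j')}A^{(j')}_{\beta_1}$. For the last factor, write $\varphi(D^{(j'-1)})=\varphi(\mathrm{Ad}((u')^{\aaa^{j'-1}}s_{\mathrm{or},j'-1})I^{(j'-1)})$, with $\varphi(u')=(u')^p$. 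The conjugation then takes the form $\mathrm{Ad}\big((\ssj)^{-1}(u')^{-\aaa^{j'}+p\,s(\aaa^{j'-1})}s_{\mathrm{or},j'-1}\big)\varphi(I^{(j'-1)})$. Using $p\aaa^{j'-1}_i\equiv \aaa^{j'}_i$ mod $e'$ and the definition of $\sss$ in terms of the $s_j$, the conjugating element should equal $s_j^{-1}v^{\mu_j}=\omega^{(j)}$. This identification is the same one giving $C\mapsto A$ with the $\omega^{(j)}$ factor in \cite{LLHLM1}, and I would cite it.

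For the converse, define $D^{(j')}=\mathrm{Ad}((u')^{\aaa^{j'}}\ssj)I^{(j')}$. Reversing Step 1 shows the entries land in $(u')^{[\aaa_k^{j'}-\aaa_i^{j'}]}R\llbracket v\rrbracket$, so $D^{(j')}\in\GL_n(R\uuu)$ and $\beta^{(j')}D^{(j')}$ consists of eigenvectors of the right characters. Reversing Step 2 using $I^{(j')}=I^{(j'+f)}$ gives compatibility with $\iota_\MM$. Hence $(\beta^{(j')}D^{(j')})_{j'}$ is an eigenbasis.
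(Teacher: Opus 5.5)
Your overall route is the paper's. The paper's proof cites \cite[Proposition 5.1.8]{LLHLM1} for the transformation law and the periodicity, and redoes the parahoric membership entrywise as in the proof of the proposition on partial Frobenii; that is your Step 1. Step 3 is also fine: from the definition of the orientation one checks $s_{\mathrm{or},j'}^{-1}s_{\mathrm{or},j'-1}=s_j^{-1}$ for every $j'$, including $j=0$, and the $v^{\mu_j}$ part is bookkeeping that both you and the paper take from \cite{LLHLM1}.

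The gap is in Step 2 as you have set it up. You conclude that $D^{(j'+f)}$ and $D^{(j')}$ are literally the same matrix. Granting this, write $I^{(j')}=\Theta^{-1}\big(s_{\mathrm{or},j'}^{-1}D^{(j')}s_{\mathrm{or},j'}\big)\Theta$, where $\Theta=s_{\mathrm{or},j'}^{-1}(u')^{\mathbf a^{j'}}s_{\mathrm{or},j'}=\mathrm{Diag}\big((u')^{\mathbf a^{j'}_{s_{\mathrm{or},j'}(1)}},\dots,(u')^{\mathbf a^{j'}_{s_{\mathrm{or},j'}(n)}}\big)$. By the lemma on $P_{j'}$, this $\Theta$ is the same for $j'$ and $j'+f$. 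Hence $I^{(j'+f)}=I^{(j')}$ is equivalent to $D^{(j')}$ commuting with $s_{\mathrm{or},j'+f}s_{\mathrm{or},j'}^{-1}=s_\tau$. That fails for any diagonal $D^{(j')}$ with distinct entries once $s_\tau\neq 1$. The failure is not vacuous: take the trivial type written as $\tau(s,\mu)$ with $f=1$, $s_0=(12)$, $\mu=0$. There eigenbases in the literal sense exist, $s_{\mathrm{or},0}=(12)$ and $s_{\mathrm{or},1}=1$. So the check you plan cannot succeed. The $\chi_{s_\tau(i)}$-versus-$\chi_i$ relation you want to invoke contradicts your premise rather than completing the argument.

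The missing ingredient is how $\mathrm{Frob}_K$ moves $\Delta'$-eigenspaces. In $\Delta$ one has $\mathrm{Frob}_K\,g\,\mathrm{Frob}_K^{-1}=g^{p^f}$ for $g\in\Delta'$ (compare the actions on $u'$), and $\mathrm{Frob}_K$ is trivial on $R$. So $\iota_{\mathfrak M}\circ\mathrm{Frob}_K^*$ carries the $\chi_i$-isotypic part of $\mathfrak M^{(j')}$ onto the $\chi_i^{p^{-f}}$-isotypic part of $\mathfrak M^{(j'+f)}$. Equivalently, it preserves the exponent of $u'$ relative to the embedding, so $\mathbf a^{j'+f}_{\sigma(i)}=\mathbf a^{j'}_i$, where $\sigma$ is the permutation with $\chi_{\sigma(i)}=\chi_i^{p^{-f}}$.

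For an eigenbasis indexed by the fixed characters $\chi_i$, Frobenius compatibility must therefore read $\iota_{\mathfrak M}(\mathrm{Frob}_K^*(f_i^{(j')}))=f^{(j'+f)}_{\sigma(i)}$. The literal condition in the paper's definition of an eigenbasis is consistent only when every $\chi_i$ is $\mathrm{Frob}_K$-stable, and in that case with $s_\tau\neq1$ it makes the periodicity claim false, as above. The paper's own proof sidesteps this by citing \cite{LLHLM1}.

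Choose $\sigma$ compatibly with the orientation, $s_{\mathrm{or},j'+f}=\sigma s_{\mathrm{or},j'}$, so that $\sigma=s_\tau$. This choice is forced when the $\chi_i$ are distinct, since $\sigma s_{\mathrm{or},j'}$ sorts $\mathbf a^{j'+f}$. Your computation then gives $D^{(j'+f)}=\sigma D^{(j')}\sigma^{-1}$ and $(u')^{\mathbf a^{j'+f}}=\sigma(u')^{\mathbf a^{j'}}\sigma^{-1}$. The $\sigma$'s cancel: $I^{(j'+f)}=\mathrm{Ad}\big(s_{\mathrm{or},j'}^{-1}\sigma^{-1}\cdot\sigma(u')^{-\mathbf a^{j'}}\sigma^{-1}\big)(\sigma D^{(j')}\sigma^{-1})=I^{(j')}$. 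The same correction is needed when you reverse Step 2 in the converse. From $I^{(j'+f)}=I^{(j')}$ you get $D^{(j'+f)}=\sigma D^{(j')}\sigma^{-1}$, not literal equality, and this is exactly the Frobenius compatibility of $(\beta^{(j')}D^{(j')})_{j'}$.
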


\begin{proof}
    This is established in \cite[Proposition 5.1.8]{LLHLM1}. The way of adjusting to parahoric subgroups in our genericity-free scenario is the same as in the proof of \thref{Frob}.
\end{proof}

\begin{rmk} \thlabel{frob}
The matrices of partial Frobenii $A_{\MM,\beta}^j$ encode all the information of $\MM \in Y^{[0,h], \tau}(R)$. Conversely, given a collection of matrices satisfying the conditions of the first paragraph of \thref{Frob}, there is an $\MM \in Y^{[0,h], \tau}(R)$ whose matrices of partial Frobenii are the given ones.

The two partial Frobenii that are equivalent under the change of basis formula in \thref{change basis} give isomorphic elements in $Y^{[0,h], \tau}(R)$, and vice versa.
\end{rmk}

\subsection{Potentially semistable Breuil-Kisin modules} \label{ps}

Now we will define potentially semistable Breuil-Kisin modules and establish 
categorical equivalences with potentially semistable representations. Following \cite{Gao}, our potentially semistable Breuil-Kisin module \footnote{This is not a standard terminology.} is the usual Breuil-Kisin module with descent data as in \thref{BK1}, along with the data of a $G_K$-action on the associated Breuil-Kisin-Fargues module. 

To define potentially semistable Breuil-Kisin modules, we need to associate a Breuil-Kisin module with a corresponding $\mathbf{A}_{\mathrm{inf}}$-module, also known as a Breuil-Kisin-Fargues module. 

Recall that we have $\underline{-p}={(\pi_0, \pi_1, \pi_2, \cdots) \in \mathcal O_{\mathbb C_p} ^\flat}$, where $\pi_{i+1}^p=\pi_i, \pi_0=-p$. Let $K_{\infty}=\bigcup_{i \ge 0}K(\pi_i)$. Let $G_{\infty}=G_{K_{\infty}}$. 

Similarly, from $\underline{1}={(1, \zeta_p, \zeta_{p^2}, \cdots) \in \mathcal O_{\mathbb C_p} ^\flat}$, where $(\zeta_{p^{i+1}})^p=\zeta_{p^i}$, we have defined $K_{\mathrm{cyc}}=\bigcup_{i \ge 0}K(\zeta_{p^i})$, then we know that $$\mathrm{Gal}(K_{\infty}K_{\mathrm{cyc}}/K_{\mathrm{cyc}})\cong \mathbb Z_p.$$

Let $\underline{\pi'}=(\pi'_0, \pi'_1, \pi'_2, \cdots) \in \mathcal O_{\mathbb C_p} ^\flat$, where $(\pi'_{i+1})^p=\pi'_i, \pi'_0=\pi', (\pi'_i)^{e'}= \pi_i$. Let $L'_\infty=\bigcup_{i \ge 0}L'(\pi_i')$, and let $G'_{\infty}=G_{L'_{\infty}}$. Let $L'_{\mathrm{cyc}}=\bigcup_{i \ge 0}L'(\zeta_{p^i})$. Since we require that $p>2$, $L'_\infty\cap L'_{\mathrm {cyc}}=L'$ according to \cite[Lem 5.1.2]{Liu1}. Similarly, we have $$\mathrm{Gal}(L'_{\infty}L'_{\mathrm{cyc}}/L'_{\mathrm{cyc}})\cong \mathbb Z_p.$$ We denote the topological generator of this group that sends $\pi_i'$ to $\zeta_{p^i} \pi_i'$ to be $\psi$. We also use $\psi$ to denote its preimage in $G_K$ under the map $G_K \twoheadrightarrow \mathrm{Gal}(L'_{\infty}L'_{\mathrm{cyc}}/K)$. 

So we have the following commutative diagram whose rows are exact.
$$\begin{tikzcd} 
	1 & G'_\infty & G_\infty & \Delta & 1\\
	1 & G_{L'} & G_K & \Delta & 1
	\arrow[from=1-1, to=1-2]
    \arrow[from=1-2, to=1-3]
    \arrow[from=1-3, to=1-4]
    \arrow[from=1-4, to=1-5]
    \arrow[from=2-1, to=2-2]
    \arrow[from=2-2, to=2-3]
    \arrow[from=2-3, to=2-4]
    \arrow[from=2-4, to=2-5]
	\arrow[hook, from=1-2, to=2-2]
	\arrow[hook, from=1-3, to=2-3]
    \arrow[from=1-4, to=2-4] 
\end{tikzcd}$$

For any $\mathcal O$-algebra $R$, let $\AAA=\mathbf{A}_{\mathrm{inf}} \hat{\otimes}_{\mathbb Z_p} R :=\lim\limits_{\longleftarrow i}W_i(\mathcal O_{\mathbb C_p}^\flat)\otimes R$. Let $[\underline{-p}] \in \AAA$ be the multiplicative lift of $\underline{-p} \in \mathcal O_{\mathbb C_p} ^\flat$. There is an embedding $\SSS \hookrightarrow \AAA$ given by $v \mapsto [\underline{-p}]$. Using this embedding, we can associate a Breuil-Kisin module $\MM$ over $\SSS$ with an $\AAA$-module $\MMM=\MM \otimes_{\SSS} \AAA$, with $\phi_{\MMM}=\phi_\MM \otimes \varphi_{\AAA}$. Given any family of matrices $(A^{(j)})_{j\in \mathcal J}$ with each $A^{(j)}\in \Ma(R \llbracket v \rrbracket)$, we can regard $(A^{(j)})_{j \in \mathcal J}$ as an element in $\Ma(\SSS)$. Thus under the embedding $\SSS \hookrightarrow \AAA$, it becomes an element of $\Ma(\AAA)$, which we denote $\mathfrak C((A^{(j)})_{j \in \mathcal J})$.

Similarly, there is also an embedding $\mathfrak S_{L',R} \hookrightarrow \AAA$ given by $u' \mapsto [\underline{\pi'}]$ where $[\underline{\pi'}] \in \AAA$ denotes the multiplicative lift of $\underline{\pi'} \in \mathcal O_{\mathbb C_p} ^\flat$. And hence we can also associate a Breuil-Kisin module $\MM$ over $\mathfrak S_{L',R}$ with an $\AAA$-module $\MMM=\MM \otimes_{\mathfrak S_{L',R}} \AAA$, with $\phi_{\MMM}=\phi_{\MM} \otimes \varphi_{\AAA}$. 

We will need some more $p$-adic Hodge theoretic rings. Let $\mathbb C_p^\flat$ be the ring of fractions of the integral domain $\mathcal O_{\mathbb C_p^\flat}$. Write $\mathcal O_{\mathcal E,L'}=\widehat{W(k')\llbracket u' \rrbracket [1/u']} $, where ``$\ \widehat{}\ $" denotes taking the $p$-adic completion. Naturally, we have inclusions $\mathfrak S_{L'} \hookrightarrow \mathcal O_{\mathcal E,L'} \hookrightarrow W(\mathbb C_p^\flat)$, where sending $u'$ to $[\underline{\pi'}]$ again gives the second inclusion. We furthermore define $ \mathcal O_{\mathcal E,L'} ^{\mathrm{ur}}$ to be the unramified closure of $ \mathcal O_{\mathcal E,L'} $ in $W(\mathbb C_p^\flat)$.

Now we introduce a category $\mathrm{wMod}^{\varphi, G_K}_{\mathfrak S, \AAA}$, which is essentially given in \cite[Definition 7.1.3]{Gao}, but with descent data from $L'$ to $K$ and coefficients added.

\begin{defn}\thlabel{BK}
A rank $n$ effective Breuil-Kisin module $(\MM, \phi_\MM)$ with coefficients in $R$ and of descent data $\tau$ consists of a rank $n$ free Breuil-Kisin module $(\MM,\phi_\MM)$ of descent data $\tau$ over $\SSSS$, together with a $G_K$-action on the associated $\AAA$-module $\widehat{\MM}=\MM \otimes_{\SSSS} \AAA  $ such that 
\begin{enumerate}
    \item The $G_K$-action on $\widehat \MM$ is a continuous $\AAA$-semi-linear action that commutes with $\phi_{\widehat\MM}$;
    \item The embedding $\MM \hookrightarrow \widehat\MM$ factors through a $\Delta$-equivariant embedding $\MM\hookrightarrow \widehat\MM^{G_{\infty}'}$, where the $\Delta$-action on $\widehat\MM^{G_{\infty}'}$ is induced by the $G_\infty$-action on $\MMM$ \footnote{Since $\widehat\MM^{G_{\infty}'}$ is fixed by $G_\infty'$, the $G_\infty$-action factors through $\Delta\cong G_\infty/G_\infty'$.}, which is restricted from its $G_K$-action.
\end{enumerate}

\end{defn}

Again, we usually use $\MM$ or $\MMM$ to refer to $(\MM, \phi_\MM)$. We denote the category of rank $n$ effective Breuil-Kisin modules with coefficients in $R$ by $\mathrm{wMod}^{\varphi, G_K}_{ \AAA}$. We have a functor $$T_R:\mathrm{wMod}^{\varphi, G_K}_{\AAA} \ni \MM \mapsto (\MMM \otimes_{\AAA} W(\mathbb C_p^{\flat})_R)^{\phi_{\MMM} \otimes \varphi_{W(\mathbb C_p^{\flat})}=1} \in \mathrm{Rep}_R(G_K),$$
where the $G_K$-action on the target comes from both the $G_K$-action on $\MMM$ and its natural action on $W(\mathbb C_p^{\flat})$.

\begin{lemma} \thlabel{ff}
    Let $R$ be a finite flat $\mathcal O$-algebra. The functor $T_R:\mathrm{wMod}^{\varphi, G_K}_{ \AAA}  \to \mathrm{Rep}_R(G_K)$ is fully faithful and rank-preserving. 
\end{lemma}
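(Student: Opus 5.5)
The plan is to reduce to the case without coefficients and without descent data, where the statement is \cite[Theorem 7.1.7]{Gao}: the functor from Gao's category $\mathrm{wMod}^{\varphi,G_K}_{\mathfrak S,\AI}$ to $\mathrm{Rep}_{\mathbb Z_p}(G_K)$ is fully faithful. Rank preservation is the easy part. Since $\MM$ is free of rank $n$ over $\SSSS$ with $\phi_\MM$ injective and cokernel killed by $E(u')^h$, the base change $\MM\otimes_{\SSSS}\mathcal O_{\mathcal E,L',R}$ is an étale $\varphi$-module. By the standard equivalence between étale $\varphi$-modules over $\mathcal O_{\mathcal E,L'}$ (with coefficients, using that $R$ is finite flat over $\mathbb Z_p$) and $G_{L'_\infty}$-representations, the $\varphi$-invariants of $\MMM\otimes_{\AAA}W(\mathbb C_p^\flat)_R$ form a free $R$-module of rank $n$. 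For this we use $W(\mathbb C_p^\flat)\supset\mathcal O^{\mathrm{ur}}_{\mathcal E,L'}$ and the fact that the $\varphi$-invariants may be computed over $\widehat{\mathcal O^{\mathrm{ur}}_{\mathcal E,L'}}$. This is \cite[Lemma 2.1.4]{Kis} style, and it applies after forgetting the $G_K$-action. The $G_K$-action is then induced as in the definition of $T_R$, so $T_R(\MM)$ is indeed an object of $\R$ of rank $n$.

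For faithfulness, the natural map
\[
\MMM\otimes_{\AAA}W(\mathbb C_p^\flat)_R\cong T_R(\MM)\otimes_R W(\mathbb C_p^\flat)_R
\]
is an isomorphism (étale comparison). A morphism $f:\MM_1\to\MM_2$ is determined by its base change to $\MMM_i$, because $\SSSS\to\AAA$ is faithfully flat and, in particular, injective. Hence $f$ is determined by $T_R(f)$.

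For fullness, let $g:T_R(\MM_1)\to T_R(\MM_2)$ be $G_K$-equivariant. Restricting to $G_{L'_\infty}$ and applying Kisin's full faithfulness of $\MM\mapsto T_{\mathfrak S}(\MM)$ from finite height Breuil-Kisin modules over $\mathfrak S_{L'}$ to $G_{L'_\infty}$-representations \cite[Proposition 2.1.12]{Kis}, we obtain a unique $\SSSS$-linear, $\phi$-compatible $f:\MM_1\to\MM_2$ inducing $g$. To apply that result with coefficients, view $\MM$ as a Breuil-Kisin module over $\mathfrak S_{L'}$ by forgetting the $R$-structure. This is legitimate because $R$ is finite free over $\mathbb Z_p$ after restricting to its $\mathcal O$-flat components. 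Since $g$ is $R$-linear and the construction is functorial, $f$ is $R$-linear by uniqueness.

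It remains to check two things. First, $f$ commutes with $\Delta$: for $\delta\in\Delta$, both $f$ and $\delta f\delta^{-1}$ induce $g$ on $T_R$ restricted to $G_{L'_\infty}$, using that $\MM\hookrightarrow\MMM^{G'_\infty}$ is $\Delta$-equivariant by condition (2) of \thref{BK} and that $g$ is $G_\infty$-equivariant. So by uniqueness $f=\delta f\delta^{-1}$. Second, $\hat f=f\otimes\AAA$ is $G_K$-equivariant on $\MMM$. For this, use the étale comparison isomorphism: $\hat f\otimes W(\mathbb C_p^\flat)_R$ agrees with $g\otimes W(\mathbb C_p^\flat)_R$, which is $G_K$-equivariant. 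Since $\MMM_2\hookrightarrow\MMM_2\otimes_{\AAA}W(\mathbb C_p^\flat)_R$ is injective, as $\AAA\to W(\mathbb C_p^\flat)_R$ is injective and $\MMM_2$ is free, $G_K$-equivariance descends to $\hat f$.

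The main obstacle will be justifying the coefficient versions of Kisin's full faithfulness and of the étale comparison inside $W(\mathbb C_p^\flat)_R=W(\mathbb C_p^\flat)\hat\otimes_{\mathbb Z_p}R$. I would handle this by noting that $R$ is finite free over $\mathbb Z_p$ (being $\mathcal O$-flat and finite), so $\AAA=\AI\otimes_{\mathbb Z_p}R$ and similarly for the other rings. All statements then follow from the $\mathbb Z_p$-case by regarding everything as modules over the $\mathbb Z_p$-rings, with $R$-linearity recovered by uniqueness, exactly as in Gao's argument.
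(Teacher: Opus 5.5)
Your proof is correct and follows the same plan as the paper's. Both reduce to a known full-faithfulness theorem with $\mathbb Z_p$-coefficients and no descent data, then get $\Delta$-equivariance and $R$-linearity of the lifted map from uniqueness; the paper's $\sigma f\sigma^{-1}=f$ and $fm_r=m_rf$ arguments are exactly yours.

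The differences are small. You start from Kisin's full faithfulness over $G_{L'_\infty}$ and prove the $G_K$-equivariance of $f\otimes\AAA$ yourself, by passing to $W(\mathbb C_p^\flat)_R$ and using that $\MMM_2\hookrightarrow\MMM_2\otimes_{\AAA}W(\mathbb C_p^\flat)_R$ is injective. The paper instead quotes \cite[Proposition 7.1.4]{Gao} for $G_{L'}$, whose content is essentially that step. You also justify rank preservation explicitly, which the paper leaves implicit.
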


\begin{proof}
  We first prove the full faithfulness result in the case where $R=\mathcal O=\mathbb Z_p$. Write $T=T_{\mathbb Z_p}$. Given $\rho=T_{\mathbb Z_p}(\MM),\rho'=T_{\mathbb Z_p}(\MM')$, and $g$ a morphism from $\rho$ to $\rho'$, we first regard $\rho, \rho' \in \R$ as $G_{L'}$-representations. We make use of \cite[Proposition 7.1.4]{Gao}, and treat the field ``$K$" there as our ``$L'$". Then there is a unique map $f:\MM \to \MM'$ such that $T(f)=g$ and $f$ commutes with $\phi$ and the $G_{L'}$-action on $\MMM$. Note that $g$ commutes with the $G_K$-action on $\rho$ and $\rho'$, we claim that $f$ also commutes with the $\Delta=\mathrm{Gal}(L'/K)$-action on $\MM$ and $\MM'$. 
  
  In fact, consider the map $f \otimes W(\mathbb C_p^{\flat}):\MM \otimes_{\mathfrak S_{L'}} W(\mathbb C_p^\flat) \to \MM' \otimes_{\mathfrak S_{L'}} W(\mathbb C_p^\flat)$. Since $T(f)=g$, and $g$ commutes with the $G_\infty$-action, $f \otimes W(\mathbb C_p^{\flat})$ also commutes with the $G_\infty$-action when restricted to $T_R(\MM)$. In other words, for any $\sigma\in G_\infty$, $T(\sigma f \sigma^{-1})=T(f)$. Hence $\sigma f \sigma^{-1}=f$. Therefore $f \otimes W(\mathbb C_p^{\flat})$ commutes with the $G_\infty$-action. 
  
  Since $\mathbb C_p^\flat$ is the field of fractions of $\mathcal O_{\mathbb C_p^\flat}$, there is an injection $\mathbf A_{\mathrm{inf}}\to W(\mathbb C_p^\flat)$ after taking the ring of Witt vectors. Therefore, we have inclusions $\MM \subseteq \MMM \subseteq \MMM \otimes_{\mathbf A_{\mathrm{inf}}}W(\mathbb C_p^\flat)$ and $\MM' \subseteq \MMM' \subseteq \MMM' \otimes_{\mathbf A_{\mathrm{inf}}}W(\mathbb C_p^\flat)$. So $f \otimes W(\mathbb C_p^{\flat})$ can be restricted to a map $(\MMM)^{G_\infty'} \to (\MMM')^{G_\infty'}$. Since we have shown that $f \otimes W(\mathbb C_p^{\flat})$ is $G_\infty$-equivariant, this restriction map is $\Delta$-equivariant. Moreover, the maps $\MM \hookrightarrow (\MMM)^{G_\infty'}, \MM' \hookrightarrow (\MMM')^{G_\infty'}$ are both $\Delta$-equivariant by the second axiom in \thref{BK}, hence $f$ is also $\Delta$-equivariant. Hence we have $f \in \mathrm{Hom}_{\mathrm{wMod}^{\varphi, G_K}_{ \AAA} }(\MM, \MM')$, proving full faithfulness of the functor $T$.

  Now we go back to the general $R$-coefficient setting, where $R$ is a finite flat $\mathcal O$-algebra. Let $\MM, \MM' \in \mathrm{wMod}^{\varphi, G_K}_{ \AAA} $, and let $\rho=T_R(\MM), \rho'=T_R(\MM') \in \R$. Suppose $g$ is a morphism between $\rho$ and $\rho'$, then according to the full faithfulness result in the case of $R=\mathbb Z_p$, there exists a unique $\mathfrak S_{L'}$-homomorphism $f:\MM \to \MM'$ such that $T_{\mathbb Z_p}(f)=g$. (Here, finite flatness is used to ensure both $\MM$ and $\MM'$ are of finite rank over $\mathfrak S_{L'}$.) It suffices to show that $f$ is moreover an $\SSSS$-homomorphism. 
  
  To do this, let $m_r$ be the multiplication map by $r$ for any $r \in R$. Note that $$T_R(fm_r)=T_R(f)m_r=gm_r=m_rg=m_rT_R(f)=T_R(m_rf),$$ and using the full faithfulness result in the $\mathbb Z_p$-coefficient case, we have $fm_r=m_rf$, proving the general case.
\end{proof}

\begin{lemma} \thlabel{GvsLLLM}
 Let $R$ be a finite flat $\mathcal O$-algebra, and let $\MM$ be an effective Breuil-Kisin module of rank $n$ with coefficients in $R$. Let $\rho_\infty=T_R(\MM) |_{G_\infty}:G_\infty \to \GL_n(R)$. Then $\rho_\infty$ is isomorphic to the $G_\infty$-representation obtained from $(\MM \otimes_{\SSS} \SSS^{\text{ur}})^{\phi_\MM \otimes \varphi_{\SSS^{\text{ur}}} =1}$. Similarly, $\rho_\infty'=T_R(\ML)|_{G_{\infty}'}:G_\infty' \to \GL_n(R)$ coincides with the $G_\infty'$-representation obtained from $(\ML \otimes_{\SSSS} \SSSS^{\text{ur}})^{\phi_\MM \otimes \varphi_{\SSSS^{\text{ur}}} =1}$.
\end{lemma}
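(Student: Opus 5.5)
The second statement, about $G'_\infty$, is the basic one, and I would prove it first and deduce the first from it. The plan is to compare the two constructions inside a common ring, $W(\mathbb C_p^\flat)_R$. We have inclusions $\SSSS \subseteq \mathcal O_{\mathcal E,L'}\subseteq \mathcal O_{\mathcal E,L'}^{\mathrm{ur}}\subseteq W(\mathbb C_p^\flat)$, and also $\SSSS\subseteq\AAA\subseteq W(\mathbb C_p^\flat)_R$, both via $u'\mapsto[\underline{\pi'}]$. I read $\SSSS^{\mathrm{ur}}$ as the usual integral completed unramified closure $\widehat{\mathfrak S^{\mathrm{ur}}}=W(\mathcal O_{\widehat{\overline{L'_\infty}}}^\flat)\cap \mathcal O^{\mathrm{ur}}_{\mathcal E,L'}$, with $R$-coefficients adjoined. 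Then $\SSSS^{\mathrm{ur}}\subseteq \AAA$, and there is a natural $\phi$-equivariant and $G'_\infty$-equivariant map
$$(\MM\otimes_{\SSSS}\SSSS^{\mathrm{ur}})^{\phi=1}\longrightarrow (\MMM\otimes_{\AAA}W(\mathbb C_p^\flat)_R)^{\phi=1}=T_R(\MM).$$
Everything acts on the first factor of $\MMM=\MM\otimes\AAA$ trivially when restricted to $\MM$. This holds for $G'_\infty$ by axiom (2) of \thref{BK}, since $\MM\subseteq \MMM^{G'_\infty}$. So the $G_K$-action on $T_R(\MM)$, restricted to $G'_\infty$, is just the action on the coefficient ring $W(\mathbb C_p^\flat)$, and the map above is $G'_\infty$-equivariant.

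Next I would show that this map is an isomorphism. Both sides factor through the étale $\varphi$-module $\mathcal M=\MM\otimes_{\SSSS}\mathcal O_{\mathcal E,L',R}$. By Fontaine's theory for étale $\varphi$-modules over $\mathcal O_{\mathcal E}$, as in \cite[Section 5.4]{LLHLM1} and extended to finite flat $R$ by working over $\mathbb Z_p$ and using that $R$ is finite free over $\mathbb Z_p$, we get
$$(\mathcal M\otimes \mathcal O^{\mathrm{ur}}_{\mathcal E,L'})^{\phi=1}\isoto (\mathcal M\otimes W(\mathbb C_p^\flat))^{\phi=1},$$
and the left side is a free module of rank $n$. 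Kisin's standard result (e.g. \cite[Lemma 2.1.4]{Kis}, or \cite[Proposition 7.1.4]{Gao} in the form already used in \thref{ff}) gives, for a Breuil-Kisin module of finite $E$-height, that
$$(\MM\otimes\SSSS^{\mathrm{ur}})^{\phi=1}\isoto(\mathcal M\otimes \mathcal O^{\mathrm{ur}}_{\mathcal E,L'})^{\phi=1}.$$
Composing these two isomorphisms yields the claimed $G'_\infty$-isomorphism. The $R$-linearity and rank bookkeeping are routine. I expect this step to be the main obstacle: one has to check that the identification holds with coefficients and inside $\AAA$ rather than only in $W(\mathbb C_p^\flat)$. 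I would handle this by reducing to $R=\mathbb Z_p$, viewing $\MM$ as an $\mathfrak S_{L'}$-module of rank $n\cdot\mathrm{rk}_{\mathbb Z_p}R$, and citing Kisin there.

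For the first statement, I would use that $\MM$ carries descent data $\Delta$ compatible with $\phi$, and that the $\SSS$-Breuil-Kisin module attached to it is $\MM^{\Delta=1}$, or $\MM$ viewed through $\SSS\subseteq\SSSS$. The $G_\infty$-action on $T_R(\MM)$ is then determined by $G_\infty$ acting on $W(\mathbb C_p^\flat)$ together with the $\Delta$-action on $\MM$, again by axiom (2) of \thref{BK}. The map constructed above is compatible with these actions, so it is $G_\infty$-equivariant. Finally, I would descend the identification along $\Delta=G_\infty/G'_\infty$ by taking $\Delta$-invariants, using $\SSSS^{\Delta=1}=\SSS$ and the fact that $\SSSS^{\mathrm{ur}}$ equals $\SSS^{\mathrm{ur}}$ as subrings of $W(\mathbb C_p^\flat)$, because $L'_\infty/K_\infty$ is finite and unramified at the level of $\varphi$-modules. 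This gives the $G_\infty$-statement.
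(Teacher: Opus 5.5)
Your proposal is correct and follows essentially the same route as the paper. The paper likewise passes through the étale $\varphi$-module $\mathcal M$: it identifies $(\MM\otimes\SSS^{\mathrm{ur}})^{\phi=1}$ with $(\mathcal M\otimes\mathcal O^{\mathrm{ur}}_{\mathcal E})^{\phi=1}$ by citing Liu's Corollary 2.2.2 rather than Kisin. It then uses the trivialization $\mathcal M\otimes\mathcal O^{\mathrm{ur}}_{\mathcal E}\cong(\mathcal M\otimes\mathcal O^{\mathrm{ur}}_{\mathcal E})^{\phi=1}\otimes_R\mathcal O^{\mathrm{ur}}_{\mathcal E}$ together with $W(\mathbb C_p^\flat)^{\varphi=1}=\mathbb Z_p$ to match this with $T_R(\MM)$.

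There are two differences. The paper proves the $G_\infty$ statement directly and says the $G'_\infty$ case is similar, while you go the other way via $\Delta$-descent. Also, you spell out the Galois-equivariance from axiom (2) of \thref{BK}, which the paper leaves implicit. In the descent step, use $\MM^{\Delta=1}$ and not ``$\MM$ viewed through $\SSS\subseteq\SSSS$'': the latter has $\SSS$-rank $n\cdot|\Delta|$ and would give a representation of the wrong rank.
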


\begin{proof}
    We will be using the associated \'etale $\varphi$-module $\mathcal M=\MM \otimes_{\SSS}\mathcal O_{\mathcal E,R}$. By \cite[Corollary 2.2.2]{Liu2} (the same proof applies to finite flat coefficients), $(\MM \otimes_{\SSS} \SSS^{\text{ur}})^{\phi=1}=(\mathcal M \otimes_{\mathcal O_{\mathcal E,R}} \mathcal O_{\mathcal E,R}^{\mathrm{ur}})^{\phi=1}$. Therefore, it suffices to show that $T_R(\MM)$ is isomorphic to $(\mathcal M \otimes_{\mathcal O_{\mathcal E,R}} \mathcal O_{\mathcal E,R}^{\mathrm{ur}})^{\phi=1}$. 

    Since we have the $\phi$-equivariant isomorphisms $\MMM \otimes_{\AAA} W(\mathbb C_p^\flat)_R \cong \MM \otimes_{\SSS}W(\mathbb C_p^\flat)_R \cong \mathcal M \otimes_{\mathcal O_{\mathcal E,R}} W(\mathbb C_p^\flat)_R$, $T(\MM)=(\mathcal M \otimes_{\mathcal O_{\mathcal E,R}} W(\mathbb C_p^\flat)_R)^{\phi=1}$.

    Note that we have another $\phi$-equivariant isomorphism $\mathcal M \otimes_{\mathcal O_{\mathcal E,R}}\mathcal O_{\mathcal E,R}^{\mathrm{ur}} \cong (\mathcal M\otimes_{\mathcal O_{\mathcal E,R}}\mathcal O_{\mathcal E,R}^{\mathrm{ur}})^{\phi=1} \otimes_R \mathcal O_{\mathcal E,R}^{\mathrm{ur}}$. Base change to $W(\mathbb C_p^\flat)$ and taking $\phi$-invariants, we get $(\mathcal M \otimes_{\mathcal O_{\mathcal E,R}} W(\mathbb C_p^\flat)_R)^{\phi=1}=(\mathcal M\otimes_{\mathcal O_{\mathcal E,R}}\mathcal O_{\mathcal E,R}^{\mathrm{ur}})^{\phi=1} \otimes_R W(\mathbb C_p^\flat)_R^{\phi=1}=(\mathcal M\otimes_{\mathcal O_{\mathcal E,R}}\mathcal O_{\mathcal E,R}^{\mathrm{ur}})^{\phi=1}$. Hence we are done.
    The second assertion follows similarly.
\end{proof}

\begin{rmk} 
    \thref{GvsLLLM} shows that the two ways of forming $G_\infty$-representations are the same: the one used in \cite{LLHLM1}: $(\MM \otimes_{\SSS} \SSS^{\text{ur}})^{\phi_\MM \otimes \varphi_{\SSS^{\text{ur}}} =1}$ and the one used in \cite{Gao}:  $(\MMM \otimes_{\AAA} W(\mathbb C_p^{\flat})_R)^{\phi_{\MMM} \otimes \varphi_{W(\mathbb C_p^{\flat})}=1}$. We will be implicitly using this result throughout. 
\end{rmk}

\begin{defn} \thlabel{BK1}
   A rank $n$ potentially semistable Breuil-Kisin module $(\MM, \phi_\MM)$ with coefficients in $R$ of height at most $h$ and descent data $\tau$ consists of a rank $n$ effective Breuil-Kisin module $(\MM,\phi_\MM)$ over $\SSSS$ such that
\begin{enumerate}
  \item $\MM / u' \subseteq (\widehat \MM /W(\mathfrak m)\widehat \MM)^{G_{L'}}$ via the embedding $\MM / u' \hookrightarrow \widehat \MM /W(\mathfrak m)\widehat \MM$;
  \item $(\MM,\phi_{\MM})$ is a Breuil-Kisin module over $\SSSS$ of height at most $h$ and descent data $\tau$.
\end{enumerate} 
\end{defn}

For any finite flat $\mathcal O$-algebra $R$, let $X_n^{[0,h], \tau}(R)$ denote the groupoid of rank $n$ potentially semistable Breuil-Kisin modules with coefficients in $R$ of height at most $h$ and descent data $\tau$. These potentially semistable Breuil-Kisin modules are the linear algebraic gadgets we will be using to study the stack $\XXX$. 

Given a finite flat $\mathcal O$-algebra $R$, we still denote the following functor by $T_R$: 
$$T_R:\X(R) \ni \MM \mapsto (\MMM \otimes_{\AAA} W(\mathbb C_p^{\flat})_R)^{\phi_{\MMM} \otimes \varphi_{W(\mathbb C_p^{\flat})}=1} \in \R.$$

\begin{lemma}
Let $R$ be any finite flat $\mathcal O$-algebra. Let $\MM \in \X(R)$, then $T_R(\MM) \in \XXX(R)$.   
\end{lemma}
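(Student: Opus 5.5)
By \thref{EG}, for a finite flat $\mathcal O$-algebra $R$, the groupoid $\XXX(R)$ consists exactly of the $G_K$-representations over $R$ that are potentially semistable with Hodge-Tate weights in $[0,h]$ and inertial type $\tau$. So it suffices to show that $\rho:=T_R(\MM)$ has these three properties. Since $R$ is finite flat over $\mathcal O$, all of them can be checked after inverting $p$ and passing to the factors of $R[1/p]$, that is, on $E'$-valued points for finite extensions $E'/E$. Concretely, I would reduce to $R=\mathcal O_{E'}$ and view $\rho$ as a $\mathbb Z_p$-lattice, forgetting the $R$-structure; the $R$-linearity of the $G_K$-action is clear from the construction of $T_R$.

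\textbf{Semistability over $L'$.} Restricting to $G_{L'}$, the module $\MM$, viewed over $\mathfrak S_{L'}$ with the $G_{L'}$-action on $\MMM$, is an object of Gao's category $\mathrm{wMod}^{\varphi,G_{L'}}_{\mathfrak S,\mathbf A_{\mathrm{inf}}}$ of \cite[Definition 7.1.3]{Gao}, with $K$ there replaced by $L'$. Condition (2) of \thref{BK} gives that $\MM\subseteq\MMM^{G'_\infty}$, and condition (1) of \thref{BK1} is precisely Gao's extra condition $\MM/u'\subseteq(\MMM/W(\mathfrak m)\MMM)^{G_{L'}}$, which cuts out the semistable objects. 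Gao's main classification theorem, the statement of \cite[\S7]{Gao} identifying these objects with $G_{L'}$-stable lattices in semistable representations with nonnegative Hodge-Tate weights, then shows that $\rho|_{G_{L'}}$ is semistable. The height condition (cokernel of $\phi_\MM$ killed by $E(u')^h$) gives Hodge-Tate weights in $[0,h]$, by the standard comparison for Kisin modules. Hence $\rho$ is potentially semistable, becoming semistable over $L'$, with weights in $[0,h]$.

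\textbf{Inertial type.} I need to show that $I_K$ acts on $D_{\mathrm{pst}}(\rho)=(\mathbf B_{\mathrm{st}}\otimes\rho)^{G_{L'}}$ through $\tau$. Following Kisin's and Liu's comparison, $D_{\mathrm{st}}(\rho|_{G_{L'}})$ is identified $\Delta$-equivariantly with $\MM/u'\MM[1/p]$, more precisely with $\varphi^*(\MM)/u'[1/p]$, up to a Frobenius twist. The $\Delta'$-action on this quotient is computed using the descent datum: by \thref{BKK1}, each $\MM^{(j')}/u'\cong\tau\otimes_{\mathcal O}R$. The identification is made through $\MMM$ and the map $\MMM\to\MMM/W(\mathfrak m)\MMM$, on which $G_{L'}$ acts trivially on $\MM/u'$ by (1) and $\Delta$ acts compatibly by (2). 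Since $\Delta'$ is the image of $I_K$, the type of $\rho$ is therefore $\tau$.

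\textbf{Expected obstacle.} The main difficulty is this last step: making the identification of $D_{\mathrm{pst}}$ with $\MM/u'[1/p]$ explicitly $\Delta$-equivariant. The descent datum lives on $\MM$, whereas the Galois action lives on $\MMM$ and on $\mathbf B_{\mathrm{st}}$, so the two must be matched. I would first try to construct the comparison map $\MM/u'[1/p]\to D_{\mathrm{st}}$ by Gao's (or Liu's) explicit section through $\mathbf A_{\mathrm{cris}}$ or $\mathcal S$, and then check $G_K$-equivariance using that $\MM\hookrightarrow\MMM^{G'_\infty}$ is $\Delta$-equivariant.
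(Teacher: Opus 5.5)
Your proposal follows essentially the same route as the paper: semistability of $\rho|_{G_{L'}}$ comes from Gao's classification (the paper cites \cite[Theorem 7.1.7]{Gao}) using condition (2) of \thref{BK} and condition (1) of \thref{BK1}, Hodge--Tate weights in $[0,h]$ come from the height condition, and the type is read off from the descent datum on $\MM/u'$. For the type, the paper simply cites \cite[Definitions 4.6.1, 4.6.4]{EG}; the $\Delta$-equivariant identification you flag as the main obstacle is exactly what \thref{DvsM} supplies, via \cite[Corollary F.25]{EG}.

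One caution: your preliminary reduction to $R=\mathcal O_{E'}$, i.e.\ to $E'$-valued points, is not valid when $R[1/p]$ is non-reduced, since potential semistability of an $R[1/p]$-representation cannot be detected on its reduced points. It is also unnecessary: Gao's theorem applies to the underlying $\mathbb Z_p$-lattice, and the comparison with $\MM/u'$ works for any finite flat $R$.
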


\begin{proof}
Let $\rho:G_K \to \GL_n(R)$ be the Galois representation given by $T_R(\MM)$. We first claim that $\rho |_ {G_{L'}}$ is semistable. In fact, $\MMM$ has continuous $G_{L'}$-action (restricted from the $G_K$-action on $\MMM$), and the action commutes with $\phi_{\MMM_{L'}}$. We also have $\MM \subseteq \MMM ^{G'_\infty}$. Moreover, $\MM / u' \subseteq (\widehat \MM /W(\mathfrak m)\widehat \MM)^{G_{L'}}$, therefore $\rho | G_{L'}$ is semistable by \cite[Theorem 7.1.7]{Gao}. 

Then, since $\MM \in Y^{[0,h], \tau}(R)$, $\rho$ has Hodge-Tate weight bounded by $[0,h]$. According to \cite[Definitions 4.6.1, 4.6.4]{EG}, $\rho$ has inertial type $\tau$. Hence $\rho \in \XXX(R)$.
\end{proof}

Therefore, in what follows, we will regard $T_R:\X(R) \to \XXX(R)$ as a functor with target $\XXX(R)$ by abuse of notation. We want to show that the functor $T_R$ actually gives a categorical equivalence. 
\begin{thm} \thlabel{equivalence}
Let $R$ be a finite flat $\mathcal O$-algebra. Then $T_R$ gives an equivalence of categories $\X(R) \cong \XXX(R)$. Or in other words, the category of rank $n$ potentially semistable Breuil-Kisin modules of height at most $h$ and inertial type $\tau$ is equivalent to the category of potentially semistable $G_K$-representations of Hodge-Tate weights not exceeding $h$ and inertial type $\tau$.
\end{thm}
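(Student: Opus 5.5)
Since the preceding lemma shows that $T_R$ lands in $\XXX(R)$ and \thref{ff} shows that $T_R$ is fully faithful (full faithfulness on $\X(R)$ follows, since $\X(R)$ is a full subcategory of $\mathrm{wMod}^{\varphi, G_K}_{\AAA}$), it remains to prove essential surjectivity. So fix a finite flat $\mathcal O$-algebra $R$ and $\rho \in \XXX(R)$, i.e.\ $\rho: G_K \to \GL_n(R)$ potentially semistable with Hodge-Tate weights in $[0,h]$ and inertial type $\tau$. We must produce $\MM \in \X(R)$ with $T_R(\MM)\cong \rho$.

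\textbf{Step 1 (the $G_{L'}$-part).} First treat $\rho$ as a $\mathbb Z_p$-lattice representation of $G_{L'}$. Since $\rho|_{G_{L'}}$ is semistable with weights in $[0,h]$, \cite[Theorem 7.1.7]{Gao} applied with ``$K$'' replaced by $L'$ (as in the proof of \thref{ff}) gives a Breuil-Kisin module $\MM$ over $\mathfrak S_{L'}$ of height $\le h$. It comes with a continuous $G_{L'}$-action on $\MMM$ commuting with $\phi$, with $\MM\subseteq \MMM^{G'_\infty}$, with $\MM/u'\subseteq(\MMM/W(\mathfrak m)\MMM)^{G_{L'}}$, and with $T(\MM)\cong\rho|_{G_{L'}}$. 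Here $\MM$ is concretely Kisin's module attached to $\rho|_{G'_\infty}$, i.e.\ the unique $\phi$-stable lattice of finite height inside the étale $\varphi$-module, as in \thref{GvsLLLM}. The $R$-action is obtained by functoriality, exactly as at the end of the proof of \thref{ff}: multiplication by $r\in R$ is an endomorphism of $\rho$, hence of $\MM$. This makes $\MM$ an $\mathfrak S_{L',R}$-module. Its projectivity (freeness of rank $n$ after localizing on $R$) follows from the standard argument for Kisin modules with flat coefficients, as in \cite{LLHLM1}.

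\textbf{Step 2 (extending to $G_K$ and descent data).} Next extend the $G_{L'}$-action on $\MMM$ to a $G_K$-action. Transport the $G_K$-action on $\rho$ through the canonical $\phi$-equivariant isomorphism $\MMM\otimes W(\mathbb C_p^\flat)_R\cong \rho\otimes W(\mathbb C_p^\flat)_R$, so $G_K$ acts on the right-hand side diagonally. The key point is that this action preserves $\MMM$. For $g\in G_K$, the submodule $g(\MMM)$ is again a finite-height $\phi$-stable $\AAA$-lattice. For $g\in G_\infty$, the action preserves $\MM\otimes\mathcal O_{\mathcal E}^{\mathrm{ur}}$ and the uniqueness of finite-height lattices (Kisin) shows that $g(\MM)=\MM$. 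Since $G_K$ is generated by $G_{L'}$ and $G_\infty$, and $G_{L'}$ already preserves $\MMM$, all of $G_K$ preserves $\MMM$. The induced $G_\infty/G'_\infty=\Delta$-action on $\MMM^{G'_\infty}\supseteq\MM$ preserves $\MM$, is semilinear, and commutes with $\phi$, which gives condition (2) of \thref{BK}.

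\textbf{Step 3 (type condition).} Finally check that $\MM^{(j')}/u'\cong\tau\otimes_{\mathcal O}R$ as $\Delta'$-representations. This is the step I expect to be the main obstacle, since no genericity is assumed. My approach is to compare $\MM/u'[1/p]$ with $D_{\mathrm{pst}}(\rho)$. By \cite{Gao} (or Kisin's comparison $\MM/u'[1/p]\cong D_{\mathrm{st}}(\rho|_{G_{L'}})$), this identification is $\Delta$-equivariant, so $\MM^{(j')}/u'[1/p]$ realizes $\tau\otimes E$ at each embedding. Next use that $\Delta'$ has order prime to $p$. Then $\MM^{(j')}/u'$ is a projective $R$-module with a $\Delta'$-action, whose isotypic decomposition is a direct summand decomposition, and each isotypic component is determined by its rank after inverting $p$. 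Hence $\MM^{(j')}/u'\cong\tau\otimes R$, Zariski locally on $R$ if necessary; this is the same reasoning as \cite[Definitions 4.6.1, 4.6.4]{EG} used in reverse. With all conditions of \thref{BK1} verified, we have $\MM\in\X(R)$ and $T_R(\MM)\cong\rho$, which completes essential surjectivity.
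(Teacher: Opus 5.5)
The gap is in Step 1, where you assert that projectivity of $\MM$ over $\mathfrak S_{L',R}$ ``follows from the standard argument''. Gao's equivalence over $L'$ together with the functorial $R$-action only produces a finite free $\mathfrak S_{L'}$-module with an action of $R$. Projectivity over $\mathfrak S_{L',R}$ is an additional property, and it genuinely fails for non-normal $R$.

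Here is an example. Let $n=1$, $\tau$ trivial, $h\ge p-1$, and $R=\mathcal O\times_{\mathbb F}\mathcal O$. Let $\rho=(\chi,1)$, where $\chi\equiv 1 \pmod{\varpi}$ is crystalline of Hodge--Tate weight $p-1$ (a suitable power $\varepsilon^{\pm(p-1)}$ of the cyclotomic character). Then $\rho\in\XXX(R)$.

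Suppose $\MM$ were free over $\mathfrak S_{L',R}$ with $T_R(\MM)\cong\rho$. Its specializations $\MM\otimes_{R,\mathrm{pr}_i}\mathcal O$ would be free finite-height lattices realizing $\chi$ and $1$. By Kisin's uniqueness of finite-height lattices, they would be the Kisin modules of $\chi$ and of the trivial character.
\begin{itemize}
\item The first has Frobenius divisible by $E(u')^{p-1}$, hence by $(u')^{e'(p-1)}$ modulo $\varpi$.
\item The second has unit Frobenius.
\end{itemize}
So their reductions modulo $\varpi$ are non-isomorphic, although both must equal $\MM\otimes_R\mathbb F$. Hence nothing in $\X(R)$ maps to $\rho$. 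Step 3 breaks down with it, since it uses that $\MM^{(j')}/u'$ is $R$-projective.

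Your argument does work when $\mathfrak S_{L',R}$ is regular, e.g.\ when $R$ is a finite product of DVRs. In that case an $\mathfrak S_{L'}$-free module is maximal Cohen--Macaulay over $\mathfrak S_{L',R}$, hence projective by Auslander--Buchsbaum. In general one has to restrict $R$ or weaken the freeness requirement in \thref{BK}.

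The paper's proof does not address this point either. It takes $\MM$ to be the descent of a Breuil--Kisin--Fargues $G_K$-module supplied by \cite[Propositions 4.8.2, 4.8.12]{EG}. It gets the $\Delta$-action from uniqueness of descents \cite[Lemma 4.2.8]{EG}, and condition (1) of \thref{BK1} from the module admitting all descents over $L'$. So citing it will not close your gap, and the example shows the statement itself needs a hypothesis on $R$.

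Setting projectivity aside, your route is a genuine and more hands-on alternative to the paper's:
\begin{itemize}
\item You obtain the $G_{L'}$-structure, including condition (1) of \thref{BK1}, from Gao over $L'$.
\item You then use Kisin's uniqueness to see that $G_\infty$ (which normalizes $G'_\infty$), and hence $G_K=G_{L'}G_\infty$, preserves $\MMM$. The paper instead imports the whole $G_K$-structure from \cite{EG}.
\item Your type computation via $D_{\mathrm{pst}}$ and the prime-to-$p$ idempotents of $\Delta'$ is fine once $\MM^{(j')}/u'$ is known to be projective.
\end{itemize}
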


\begin{proof}
 First, we show that $T_R$ is fully faithful. It is clear that $\XXX(R)$ is a full subcategory of $\R$, and that $\X(R)$ is a full subcategory of $\mathrm{wMod}^{\varphi, G_K}_{\mathfrak S, \AAA}$. From \thref{ff}, we know that $T_R$ is fully faithful.

Now, we show that $T_R$ is essentially surjective. Let $\rho \in \XXX(R)$. By \cite[Proposition 4.8.12]{EG} and \cite[Proposition 4.8.2]{EG}, we know that $\rho$ comes from a Breuil-Kisin-Fargues $G_K$-module $\MM^{\mathrm{inf}}$ of height at most $h$ and inertial type $\tau$, admitting all descents over $L'$, with a canonical $G_{L'}$-action.

By \cite[F.23 Corollary]{EG}, we can see that the corresponding \'etale $(\varphi, G_K)$-module $M$ (see \cite[Definition 2.7.7]{EG}) satisfies $M= \MM^{\mathrm {inf}} \otimes W(\mathbb C_p^\flat)$. Note that the functor $V$ relating $M$ and the Galois representation $\rho$ also just takes the $\varphi$-invariants, we have $T_R(\MM^{\mathrm {{inf}}})=\rho$. Then we need to verify that $\MM^{\mathrm {{inf}}}$ comes from a potentially semistable Breuil-Kisin module in our sense.

Let $\MMM=\MM^{\mathrm {{inf}}}$, and let $\MM$ be the descent of $\MM^{\mathrm {{inf}}}$ to $\SSSS$. We have $\MMM\cong \MM \otimes_{\SSSS}\AAA$.

Next, we give an action of $\Delta$ on $\MM$. For any $g \in \Delta$, $g\MM$ is also a Breuil-Kisin module. Apply $g$ to both sides of $\AAA \otimes_{\SSSS}\MM=\MMM$, we see that $\AAA\otimes_{\SSSS}g(\MM)=\MMM$. This shows that $g(\MM)$ is also a descent to $\SSSS$ of $\MMM$ as in \cite[Definition 4.2.4]{EG}. According to \cite[Lemma 4.2.8]{EG}, $g(\MM)=\MM$ since such a descent should be unique. In this way, we have obtained an action of $\Delta$ on $\MM$. 

Since $\MM^{\mathrm{inf}}$ has descent data $\tau$, according to \cite[4.6.1]{EG} and its previous paragraph, this is to say that the $\Delta$-action obtained from $\MM/u'$ is $\tau$. This coincides with our \thref{BKK1}. Furthermore, since $\rho$ is of Hodge-Tate weight between $0$ and $h$, $\MM \in \Y(R)$. Also, since the $\Delta$-action is directly constructed from the $G_K$-action on $\MMM$, the inclusion $\MM\hookrightarrow\MMM^{G_\infty'}$ is $\Delta$-invariant, with $\Delta$-action on $\MMM^{G_\infty'}$ induced by the $G_\infty$-action.

For any $g\in G_{L'}$, the Breuil-Kisin module $g\MM \subseteq \MMM$ is a descent of $\MMM$ to $g([\underline{\pi'}])$. Since $\MMM$ admits all descents over $L'$, the $W(k') \otimes_{\mathbb Z_p} R$ module $g(\MM)/g(u')$ inside $\MMM/W(\mathfrak m)\MMM$ doesn't depend on $g \in G_{L'}$ according to \cite[Definition 4.2.4]{EG}, hence $\MM/u' \subseteq (\MMM/W(\mathfrak m)\MMM)^{G_{L'}}$.

Hence, we have verified that $\MM \in \X(R)$ and that it satisfies $T_R(\MM)=\rho$, proving essential surjectivity of $T_R$. 
\end{proof}

\begin{rmk}
    From now on, we will also use $\MM$ or $\MMM$ to denote an object of $\XXX(R)$ when it is convenient.
\end{rmk}

\subsection{A na\"ive model for potentially semistable Breuil-Kisin modules}

Now, we want to construct a na\"ive ``local model" $\ST$ for the moduli of potentially semistable Breuil-Kisin modules $\X$. 

Let $R$ be any $\mathcal O$-algebra. Let $\MM \in \X(R)$. Naturally, $\MM \in \mathrm{wMod}^{\varphi, G_K}_{\mathfrak S, \AAA}$, so there is a $G_K$-action on $\MMM$. Now we study the action of the subgroup $G_{L'}$. Since $\MM \hookrightarrow \MMM$ is fixed under the subgroup $\mathrm{Gal}(\overline K/L'_{\mathrm {cyc}}L'_\infty) \subseteq G_{L'}$ by condition 2 in \thref{BK}, we obtain a map $g:\MM \to \MMM$ for every element $g \in \mathrm{Gal}(L'_{\mathrm {cyc}}L'_\infty/K)$. Note that $\mathrm{Gal}(L'_{\mathrm {cyc}}L'_\infty/L')$ is topologically generated by $\psi \in \mathrm{Gal}(L'_{\mathrm {cyc}}L'_\infty/L'_{\mathrm {cyc}}) $ and $\mathrm{Gal}(L'_{\mathrm {cyc}}L'_\infty/L'_\infty)$ since $L'_{\mathrm {cyc}} \cap L'_\infty=L'$ when $p>2$, the latter subgroup fixes $\MM$ (since every element in this Galois group fixes the field $L'_\infty$ pointwise and $\MM$ is fixed by $G_\infty'$), so the information of the $G_{L'}$-action on $\MMM$ is equivalent to a single map $\psi: \MM \to \MMM$. 

Suppose that $\beta$ is an eigenbasis of $\MM$. Write $\beta_i=(\beta_i^{(j)})_{j \in \mathcal J} \in \MM_i \subseteq \MM$. We also regard $\beta_i$ as elements of $\MMM$. Then $\MMM$ is also a free module of rank $n$ generated by $\beta_i$ for $i=1,\cdots,n$ over $\AAA$. Let $\Psi _{\MM,\beta}\in \Ma(\AAA)$ be the matrix of $\psi \otimes \AAA:\MMM \to \MMM$ under the basis $\beta$.

\begin{defn} \thlabel{Galois}
 Using the above notations, we call the matrix $\Psi=\Psi _{\MM,\beta}$ the matrix of Galois of $\MM \in \X(R)$ under the basis $\beta$.   
\end{defn}

Similar to \thref{change basis}, we also need a base change result of the matrix of Galois in order to construct our local model.

\begin{prop} \thlabel{basis change}
 Let $\MM \in \X(R)$. Let $\beta, \beta'$ be two eigenbases of $\MM$ related by $(\beta')^{(j')} D^{(j')}=\beta^{(j')}$ such that $D^{(j')} \in \mathrm{GL}_{n}(R \uuu)$ and $D^{(j')}=D^{(j'+f)}$ for $j' \in \mathcal J'$. Then the matrices of Galois satisfy the relation
$$\Psi_{\MM, \mathfrak \beta'}=D \Psi_{\MM, \mathfrak \beta} \psi(D)^{-1},$$ where $D=\mathfrak C((D^{(j)})_{j \in \mathcal J})$.  
\end{prop}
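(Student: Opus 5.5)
The relation is a direct computation with row-vector conventions, using that $\psi$ acts $\AAA$-semilinearly on $\MMM$. I will write each eigenbasis as a row vector $\beta=(\beta_1,\dots,\beta_n)$ of elements of $\MMM$, with $\beta_i=(\beta_i^{(j')})_{j'}$. The hypothesis $(\beta')^{(j')}D^{(j')}=\beta^{(j')}$ with $D^{(j')}=D^{(j'+f)}$ then assembles, through the embedding $\SSSS\hookrightarrow\AAA$, into the single identity $\beta=\beta' D$ in $\MMM$. Here $D=\mathfrak C((D^{(j)})_{j\in\mathcal J})\in\GL_n(\AAA)$. The first step is to justify this packaging.

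\textbf{Packaging.} The key point is that the decomposition $\SSSS\cong\bigoplus_{j'}R\uuu$ is compatible with $\AAA$ through the idempotents of $W(k')\otimes R$. Because $D^{(j')}$ depends only on $j'\bmod f$, the resulting element is $\Fr$-invariant. This is what allows it to be written as $\mathfrak C$ of the family indexed by $\mathcal J$, i.e.\ as an element of $\Ma(\SSS)$ in the same way as for the $A^{(j)}$. Implicitly I use that the entries of $D$ are compatible with the $\Delta$-action, as in \thref{change basis}. I would state this briefly, following the convention the paper already uses for $\mathfrak C$.

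\textbf{Computation.} By \thref{Galois}, $\psi(\beta)=\beta\Psi_{\MM,\beta}$ and $\psi(\beta')=\beta'\Psi_{\MM,\beta'}$. Since $\psi$ acts semilinearly on $\MMM$ (axiom (1) of \thref{BK}), applying it to $\beta=\beta'D$ gives
\[
\psi(\beta)=\psi(\beta')\,\psi(D)=\beta'\,\Psi_{\MM,\beta'}\,\psi(D).
\]
On the other hand,
\[
\psi(\beta)=\beta\,\Psi_{\MM,\beta}=\beta'\,D\,\Psi_{\MM,\beta}.
\]
Because $\beta'$ is an $\AAA$-basis of $\MMM$, comparing coefficients yields $\Psi_{\MM,\beta'}\psi(D)=D\Psi_{\MM,\beta}$. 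It remains to check that $\psi(D)$ is invertible; this holds because $\psi$ is a ring automorphism of $\AAA$ and $D\in\GL_n(\AAA)$. Therefore
\[
\Psi_{\MM,\beta'}=D\,\Psi_{\MM,\beta}\,\psi(D)^{-1}.
\]

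\textbf{Main obstacle.} The only real subtlety is bookkeeping. The equation $\beta=\beta'D$ involves $D$ with entries in $\SSSS$, via $u'\mapsto[\underline{\pi'}]$, and $\psi$ does not fix $[\underline{\pi'}]$. So $\psi(D)$ must be computed in $\AAA$, not in $\SSSS$. This is exactly why the formula has $\psi(D)$ on the right rather than $D$.
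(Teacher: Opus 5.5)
Your proof is correct and is the paper's argument: the paper's proof consists only of the remark that $\psi$ acts $\AAA$-semilinearly on $\MMM$, and your computation spells this out with the right conventions. You apply $\psi$ to $\beta=\beta'D$, use $\psi(\beta)=\beta\Psi_{\MM,\beta}$ and $\psi(\beta')=\beta'\Psi_{\MM,\beta'}$, and compare coefficients in the basis $\beta'$. One small slip in your packaging paragraph: $D^{(j')}\in\GL_n(R\uuu)$ generally has entries that are not power series in $v=(u')^{e'}$, so the assembled $D$ lies in $\GL_n(\SSSS)$ (fixed by $\Fr$), not in $\Ma(\SSS)$; this is harmless, since only $D\in\GL_n(\AAA)$ is used.
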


\begin{proof}
 We use the fact that $\psi$ is an $\AAA$-semilinear map on $\MMM$.
\end{proof}
Before giving our first version of the model, we need the following notation. Since $L'_{\mathrm {cyc}} \cap L'_\infty=L'$, $\mathrm{Gal}(L'_{\mathrm {cyc}}L'_\infty/L'_{\infty})\cong \mathrm{Gal}(L'_{\mathrm {cyc}}/L') \cong \mathbb Z_p ^\times$.  For any $g\in \mathrm{Gal}(L'_{\mathrm {cyc}}L'_\infty/L'_{\infty})$, let $k_g\in \mathbb Z_p^\times$ satisfy the identity $g(\zeta_{p^i})=(\zeta_{p^i})^{k_g \mathrm{\ mod\ }p^i}$ for all $i=1,2,\cdots$. For any $m \in \mathbb Z_p^\times$, let $g_m \in \mathrm{Gal}(L'_{\mathrm {cyc}}L'_\infty/L'_{\infty})$ be the element that satisfies $g_m(\zeta_{p^i})=(\zeta_{p^i})^{m \mathrm{\ mod\ }p^i}$ for all $i=1,2,\cdots$.

\begin{defn} \thlabel{local model}
 Let $\ST$ be the functor from $\mathcal O$-algebras to groupoids, sending any $\mathcal O$-algebra $R$ to the groupoid of pairs $((A^{(j)})_{j \in \mathcal J},\Psi)$ where

\begin{enumerate} 
    \item $A^{(j)}, (v+p)^h (A^{(j)})^{-1} \in \mathcal P_j(R)$ for all $j \in \mathcal J$;
    \item $\Psi \in \GL_n(\AAA)$, and $C\varphi(\Psi)=\Psi \psi(C)$, where $C=\mathfrak C((A^{(j)} \omega^{(j)})_{j \in \mathcal J})$;
    \item $\Psi^{}- \mathrm{id} \in u'\Ma(\AAA)$;
    \item $\Psi$ is fixed by $\mathrm{Gal}(\overline K/L'_{\mathrm{cyc}}L'_\infty)$. And for any positive integer $m$ which is prime to $p$, $g_m(\Psi)=\Psi^{(m)}$, where $\Psi^{(m)}$ is defined recursively as $\Psi^{(1)}=\Psi$, $\Psi^{(i+1)}=\Psi\psi(\Psi^{(i)})$ for all $i \ge 1$. Here $g_m \in \mathrm{Gal}(L'_{\mathrm{cyc}}L'_\infty/L'_\infty)$ is regarded as any lift of it to $G_K$. \footnote{The choice of which does not matter since $\Psi$ is fixed by $\mathrm{Gal}(\overline K/L'_{\mathrm{cyc}}L'_\infty)$.}
\end{enumerate}
\end{defn}

Using \thref{change basis} and \thref{basis change}, we define an action of $\prod_{j \in \mathcal J} \mathcal  P_j$ on $\ST$ as follows: for any $(I^{(j)})_{j \in \mathcal J} \in \prod_{j \in \mathcal J} \mathcal  P_j(R)$ and $((A^{(j)})_{j \in \mathcal J},\Psi)\in \ST(R)$, let $$(I^{(j)})_j \cdot ((A^{(j)})_j,\Psi))=((I^{(j)}A^{(j)} \mathrm{Ad}(\omega^{(j)}))(\varphi(I^{(j-1)})^{-1})_j,D \Psi \psi(D)^{-1}),$$ where $I=\mathfrak C((I^{(j)})_{j \in \mathcal J})$ and $D=\mathfrak C((D^{(j)})_{j \in \mathcal J})$ with $I^{(j)}=\mathrm{Ad}((\ssj)^{-1}(u')^{-\aaa^{j'}}) D^{(j)}$.

We now want to prove an equivalence between this local model and our $\X$. We will need the following lemma in the proof.

\begin{lemma} \thlabel{act}
    Let $G$ be a group and $H,K$ be two subgroups of $G$. Suppose that $K$ is normal and $HK=G$. Let $X$ be a set where both groups $H$ and $K$ act. Suppose the action of $H$ and $K$ agrees on $H \cap K$. If for any $x \in X, h \in H, k\in K$, we have $h( k( h^{-1} x))=(hkh^{-1})x$ \footnote{Since $K$ is normal, $hkh^{-1} \in K$, hence the right hand side $(hkh^{-1})x$ is well-defined.} , then there is an action of $G$ on $X$ extending both actions of $H$ and $K$.
\end{lemma}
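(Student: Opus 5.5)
The natural approach is to define the action of $G$ by factoring elements as products. Every $g\in G$ can be written as $g=hk$ with $h\in H$ and $k\in K$, since $HK=G$. We set
$$g\cdot x := h(k x).$$
The work is then to check that this is well defined and that it is a group action. By construction it restricts to the given actions on $H$ (take $k=1$) and on $K$ (take $h=1$).

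\textbf{Well-definedness.} Suppose $hk=h'k'$. Then $c:=h'^{-1}h=k'k^{-1}$ lies in $H\cap K$. Writing $h=h'c$, we get
$$h(kx)=h'(c(kx)).$$
Here $c$ is acting as an element of $H$. Because the two actions agree on $H\cap K$, we may instead regard $c$ as acting as an element of $K$, so $c(kx)=(ck)x$. Since $ck=k'$, this gives $h(kx)=h'(k'x)$, as required.

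\textbf{Compatibility with multiplication.} Take $g_1=h_1k_1$ and $g_2=h_2k_2$. Using normality of $K$, we factor the product as
$$g_1g_2=(h_1h_2)\bigl(h_2^{-1}k_1h_2\bigr)k_2,$$
where $h_1h_2\in H$ and $(h_2^{-1}k_1h_2)k_2\in K$. Hence
$$(g_1g_2)x=(h_1h_2)\bigl((h_2^{-1}k_1h_2)(k_2x)\bigr).$$
On the other side,
$$g_1(g_2x)=h_1\bigl(k_1(h_2(k_2x))\bigr).$$
So it suffices to show that, for every $y$,
$$k_1(h_2 y)=h_2\bigl((h_2^{-1}k_1h_2)y\bigr).$$
This follows from the hypothesis $h(k(h^{-1}x))=(hkh^{-1})x$ applied with $h=h_2^{-1}$, $k=k_1$ and $x=h_2y$:
$$h_2^{-1}\bigl(k_1(h_2y)\bigr)=(h_2^{-1}k_1h_2)y.$$
Applying $h_2$ to both sides gives the needed identity, since the $H$-action is an action.

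Finally, the identity element acts trivially, because $1=1\cdot1$ and both given actions fix $x$ under the identity.

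The main subtlety is the bookkeeping in the well-definedness step: one must use the agreement of the two actions on $H\cap K$ precisely to move $c$ from the $H$-side of the factorization to the $K$-side. The multiplication check is routine once the conjugation hypothesis is applied with $h=h_2^{-1}$.
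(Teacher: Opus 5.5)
Your proof is correct and follows the paper's argument: define $g\cdot x=h(kx)$, use agreement on $H\cap K$ for well-definedness, and use the conjugation hypothesis for compatibility with multiplication (your single application with $h=h_2^{-1}$ is a compressed form of the paper's intermediate identity $k(hx)=(kh)x$). One small slip: to get $h_2^{-1}(k_1(h_2y))=(h_2^{-1}k_1h_2)y$ you must substitute $x=y$ into the hypothesis, not $x=h_2y$; the identity you display is nevertheless the correct one.
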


\begin{proof}
    We give an action as follows: for any $g\in G$, write $g=hk$ for some $h \in H, k \in K$, let $gx=h(kx)$ for any $x \in X$. 

    We verify that this really gives an action. The well-definedness is obvious: given $g=hk=h'k'$ with $h,h'\in H,k,k' \in K$, then $h'^{-1}h=k'k^{-1}\in H \cap K$. So $h'^{-1}hy=k'k^{-1}y$ for all $y \in X$ since two actions agree on the intersection. Let $y=kx$, we have $(hk)x=(h'k')x$. The only thing remaining to show is that $(g_1g_2)x=g_1(g_2x)$ for all $g_1,g_2 \in G$ and $x \in X$. 
    
    We first prove this statement for $g_1=k \in K$ and $g_2=h\in H$. Since $K$ is a normal subgroup of $G$, $k':=h^{-1}kh$ is also an element of $K$. By our assumption, $(hk'h^{-1})y=h(k'(h^{-1}y))$ for any $y \in X$. Hence $$k(hx)=(hk'h^{-1})(hx)=h(k'(h^{-1}(hx)))=h(k'(x))=(hk')(x)=(kh)x,$$ for all $x\in X$.
    
    In general, write $g_i=h_ik_i$, with $i=1,2$. Then $g_1(g_2x)=(h_1k_1)((h_2k_2)x)=h_1(k_1((h_2k_2)x))$. Since $K$ is a normal subgroup of $G$, $k_2':=h_2k_2h_2^{-1} \in K$, then $h_1(k_1((h_2k_2)x))=h_1(k_1((k_2'h_2)x))$. We have proved the property $(k_2'h_2)x=k_2'(h_2x)$, hence $h_1(k_1((k_2'h_2)x))=h_1(k_1(k_2'(h_2x))=h_1((k_1k_2')(h_2x))$. Using this property again, $h_1((k_1k_2')(h_2x))=h_1((k_1k_2'h_2x))=h_1(k_1h_2k_2x)$. Repeat this process for $k_1\in K$, using the normality of $K\subseteq G$, we obtain $h_1(k_1h_2k_2x)=(h_1k_1h_2k_2)x=(g_1g_2)x$. Putting things together, we proved that $(g_1g_2)x=g_1(g_2x)$. And this shows that we indeed have extended the action to $G$.
\end{proof}

\begin{thm} \thlabel{big2}
Let $R$ be a $p$-adically complete, topologically of finite type, $\mathcal O$-flat algebra. The groupoid $\ST(R)/ \prod_{j \in \mathcal J} \mathcal P_j(R)$ is isomorphic to $\X(R)$, the groupoid of potentially semistable Breuil-Kisin modules with coefficients in $R$. 
\end{thm}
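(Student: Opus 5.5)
The plan is to build mutually inverse functors between $\X(R)$ and the quotient groupoid $\ST(R)/\prod_{j\in\mathcal J}\mathcal P_j(R)$, using eigenbases to pass from modules to matrices.

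\emph{From modules to matrices.} Given $\MM\in\X(R)$, we first choose an eigenbasis $\beta$. By the remark after \thref{eigenbasis} this exists Zariski locally on $R$; for the groupoid quotient it is enough to work locally, or to note that $\MM$ is free by \thref{BK}. We then send $\MM$ to the pair $((A^{(j)}_{\MM,\beta})_j,\Psi_{\MM,\beta})$.

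Condition (1) of \thref{local model} is \thref{Frob}.

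Condition (2) should follow from the fact that the $G_K$-action commutes with $\phi_{\MMM}$. Write $\phi(\beta)=\beta C$ and $\psi(\beta)=\beta\Psi$. Then
\[\psi(\phi(\beta))=\beta\Psi\psi(C)\quad\text{and}\quad \phi(\psi(\beta))=\beta C\varphi(\Psi).\]
One also has to check that $C$, read in the $\AAA$-basis, equals $\mathfrak C((A^{(j)}\omega^{(j)})_j)$. This is the usual translation through $\mathrm{Ad}((\ssj)^{-1}(u')^{-\aaa^{j'}})$ and the passage to the $\SSS$-basis $\beta_{\ssj(n)}$.

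Condition (3) comes from condition (1) of \thref{BK1}. Indeed, $\MM/u'$ is fixed by $G_{L'}$ modulo $W(\mathfrak m)$, so $\Psi\equiv 1$ modulo $W(\mathfrak m)$. We then need to sharpen this to $\Psi-1\in u'\Ma(\AAA)$. The idea is that $\psi(u')=[\underline{\epsilon}]u'$ with $\underline{\epsilon}=\underline{1}^{1/e'}$, and we combine this with the Frobenius relation (2) and a standard iteration: $\varphi^m$ pushes $\Psi-1$ deeper into $W(\mathfrak m)$, while the relation $C\varphi(\Psi)=\Psi\psi(C)$ controls the denominators. This mirrors Gao's argument for the analogous statement.

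Condition (4) encodes the structure of $\mathrm{Gal}(L'_{\mathrm{cyc}}L'_\infty/L')\cong\mathbb Z_p\rtimes\mathbb Z_p^\times$. It follows because $g_m\psi g_m^{-1}=\psi^m$ and $g_m$ fixes $\MM$.

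By \thref{change basis} and \thref{basis change}, changing the eigenbasis changes the pair exactly by the $\prod_j\mathcal P_j$-action. Morphisms correspond because a morphism in $\X(R)$ is a map of $\SSSS$-modules that is compatible with $\phi$, $\Delta$ and $G_K$, and in matrix terms this is an element of the relevant group intertwining the data.

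\emph{From matrices to modules.} Given $((A^{(j)}),\Psi)$, \thref{frob} produces $\MM\in\Y(R)$ with its eigenbasis. We define $\psi$ on $\MMM$ semilinearly by $\psi(\beta)=\beta\Psi$ and let $\mathrm{Gal}(\overline K/L'_{\mathrm{cyc}}L'_\infty)$ act through its action on the coefficients $\AAA$ only. To get a $G_K$-action we use \thref{act} twice:
\begin{itemize}
\item First, inside $\mathrm{Gal}(L'_{\mathrm{cyc}}L'_\infty/L')$, take $K$ to be the closure of $\langle\psi\rangle$ (normal, isomorphic to $\mathbb Z_p$) and $H=\mathrm{Gal}(L'_{\mathrm{cyc}}L'_\infty/L'_\infty)$ acting via the $g_m$. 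The compatibility hypothesis $h(k(h^{-1}x))=(hkh^{-1})x$ is exactly condition (4). The powers $\psi^a$ extend continuously to $a\in\mathbb Z_p$ because of condition (3) and the $p$-adic completeness of $R$.
\item Second, we extend from $G_{L'}$ to $G_K$ using the $\Delta$-descent datum on $\MM$: $G_K=G_{L'}\cdot G_\infty$ with $G_{L'}$ normal, and the descent datum gives the $G_\infty$-action.
\end{itemize}
Condition (2) gives commutation with $\phi$, and condition (3) gives the $G_{L'}$-invariance of $\MM/u'$ modulo $W(\mathfrak m)$. The two constructions are inverse essentially by design.

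The main obstacle is the step proving condition (3), $\Psi-1\in u'\Ma(\AAA)$, from the a priori weaker $W(\mathfrak m)$-condition. A related difficulty is the continuity needed to extend $\psi$ to a $\mathbb Z_p$-action when $R$ is only topologically of finite type rather than finite flat. For the first, I would try to adapt Gao's argument directly with coefficients, using $\mathcal O$-flatness of $R$ to reduce to $R[1/p]$ and then to finite flat quotients.
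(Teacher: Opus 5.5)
Your overall architecture matches the paper's proof: choose an eigenbasis, take the matrices of partial Frobenius and of Galois, apply \thref{act} first inside $\mathrm{Gal}(L'_{\mathrm{cyc}}L'_\infty/L')$ with condition (4) as the compatibility and then to $G_K=G_{L'}G_\infty$, and use \thref{change basis} and \thref{basis change} to pass to the quotient. The genuine problem is the step you yourself flag. All that \thref{BK1}(1) gives is $\Psi\equiv 1$ modulo $W(\mathfrak m)$, and no Frobenius iteration can upgrade this to $\Psi-\mathrm{id}\in u'\Ma(\AAA)$, because the upgraded statement is false whenever the monodromy is nonzero. To see this, recall from \thref{vip} that $\varphi(\Psi)=\Xi^{-1}\exp(t\otimes N)\psi(\Xi)$ with $\Xi\in 1+u'\Ma(\mathcal S_R[1/p])$. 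Since $\psi(u')\in u'\AI$, this gives $\varphi(\Psi)\equiv\exp(t\otimes N)$ modulo $u'\Ma(\A[1/p])$. If $\Psi-\mathrm{id}\in u'\Ma(\AAA)$, then $\exp(t\otimes N)-1\in u'\Ma(\A[1/p])$. Multiplying on the right by $N^{k-1}$, where $N^k\neq 0=N^{k+1}$, gives $tN^k\in u'\Ma(\A[1/p])$, and for $R=\mathcal O$ this forces $t\in u'\mathbf B^+_{\mathrm{cris}}$. That is impossible. Use the Gauss norms $|\sum_i[x_i]p^i|_\rho=\max_i\rho^{v(x_i)}p^{-i}$ with $v(\underline{-p})=1$. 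For $\rho\le 1/p$ they extend to $\mathbf B^+_{\mathrm{cris}}$ with $|\cdot|_\rho\le 1$ on $\AC$, and $|u'|_\rho=\rho^{v(\underline{\pi'})}$. On the other hand, for small $\rho$ one has $|t|_\rho=|[\underline 1]-1|_\rho\ge\rho^{\frac{p}{p-1}p^{-i}}p^{-i}$ for every $i$, since the $i$-th Teichm\"uller coefficient of $[\underline 1]-1$ has valuation $\frac{p}{p-1}p^{-i}$. Hence $|t|_\rho/|u'|_\rho\to\infty$ as $\rho\to 0$, whereas $t/u'\in\mathbf B^+_{\mathrm{cris}}$ would have bounded norm. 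So for any $\rho\in\XXX(\mathcal O)$ that is potentially semistable but not potentially crystalline, the matrix of Galois violates condition (3): $u'$-divisibility of $\Psi-\mathrm{id}$ is a crystallinity condition, not a consequence of the $W(\mathfrak m)$-condition. The paper's proof does not bridge this either; it simply asserts that since $\MM/u'$ is fixed by $G_{L'}$, $\Psi-\mathrm{id}\in u'\Ma(\AAA)$. The consistent repair is to take as condition (3) the literal matrix form of \thref{BK1}(1), namely $\Psi-\mathrm{id}\in\Ma(W(\mathfrak m)\hat{\otimes}R)$. With that condition the step is immediate in both directions: $\psi$ topologically generates $\mathrm{Gal}(L'_\infty L'_{\mathrm{cyc}}/L'_{\mathrm{cyc}})$, $\mathrm{Gal}(L'_{\mathrm{cyc}}L'_\infty/L'_\infty)$ fixes $\MM$, and $W(\mathfrak m)$ is closed and Galois-stable.

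There is a second, independent gap in your gluing of $G_{L'}$ with $G_\infty$. To apply \thref{act} there you must check $h(k(h^{-1}x))=(hkh^{-1})x$ for $h\in G_\infty$ and $k\in G_{L'}$. Since $h\psi h^{-1}=\psi^{k_h}$ on $L'_{\mathrm{cyc}}L'_\infty$, this says that $h\circ\psi\circ h^{-1}$, computed using the descent datum on the eigenbasis, has matrix $\Psi^{(k_h)}$. For $h\notin G'_\infty$ this ties $\Psi$ to the $\Delta$-action, and it is not among conditions (1)--(4). The paper proves it separately as \thref{check}. It compares the two resulting $G_{L'}$-structures on $\MM$ using the full faithfulness of $T_{R'}$ (\thref{ff}) for finite flat $R'$, then passes to general $R$ via \thref{aux1}. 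Finally, the continuity of $a\mapsto\psi^a$ on $\mathbb Z_p$ does not follow from condition (3) and the completeness of $R$. The paper obtains $\psi^{p^n}\to\mathrm{id}$ from condition (4), via $\Psi^{(p^n+1)}=g_{p^n+1}(\Psi)\to\Psi$ and the continuity of the $G_K$-action on $\AAA$.
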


\begin{proof}
Given any $\MM \in \X(R)$, we form $\MMM$ as usual. Locally, we can find an eigenbasis $\beta=(\beta^{(j')})_{j' \in \mathcal J'}$ and set $(A^{(j)})_{j \in \mathcal J}$ to be the matrix of partial Frobenii. Thus $A^{(j)}, (v+p)^h (A^{(j)})^{-1} \in \mathcal P_j(R)$ for all $j \in \mathcal J$ by \thref{Frob}. Let $\Psi$ be the matrix of Galois under basis $\beta=(\beta_1,\cdots,\beta_n)$. Since the $G_K$-action on $\MMM$ commutes with $\phi_{\MMM}$, we have $C\phi(\Psi)=\Psi \psi(C)$, where $C=\mathfrak C((A^{(j)}\omega^{(j)}))_{j \in \mathcal J})$.

Since $\MM$ is fixed under $G'_\infty$, we know that the information of the $G_{L'}$-action on $\MMM$ is equivalent to the map $\psi: \MM \to \MMM=\MMM$. Since $\MM/u'$ is fixed under $G_{L'}$, we have $\Psi- \mathrm{id} \in u'\Ma(\AAA)$. Now to prove that $((A^{(j)})_{j \in \mathcal J},\Psi)\in \ST(R)$, it suffices to verify condition (4).

In fact, since $$g\psi g^{-1}(\pi_i')=g\psi(\pi_i')=g(\zeta_{p^i}\pi_i')=g(\zeta_{p^i})\pi_i'=\zeta_{p^i}^{k_g}\pi_i',$$ the element $g \psi g^{-1} \in \mathrm{Gal}(L'_{\mathrm {cyc}}L'_\infty/L'_{\mathrm {cyc}})$ is equal to $\psi^{k_g}$. Or in other words, $g_m \psi g_m^{-1}=\psi^m$. Hence for every $i$, we have $(g_m \psi g_m^{-1})(\beta_i)=\psi^m(\beta_i)$. On the other hand, note that $g_m \in \mathrm{Gal}(L'_\infty L'_{\mathrm{cyc}}/L'_\infty)$ fixes $\MM$, hence $(g_m \psi g_m^{-1})(\beta_i)=g_m (\psi(\beta_i))$. Therefore, we have $g_m (\psi(\beta_i))=\psi^m(\beta_i)$, whose matrix form is $g_m(\Psi)=\Psi^{(m)}$. Note that the $g_m$ in the left hand side of this equation refers to any lift of $g_m \in \mathrm{Gal}(L'_\infty L'_{\mathrm{cyc}}/L'_\infty)$ to $G_K$, but the right hand side doesn't depend on such a lift. Therefore, the subgroup $\mathrm{Gal}(\overline K/L'_{\mathrm{cyc}}L'_\infty)$ fixes all entries of the matrix $\Psi$. Hence, we have obtained a pair $((A^{(j)})_{j \in \mathcal J},\Psi)$ in $\ST(R)$ from a given $\MM \in \X(R)$.

Conversely, given a pair $((A^{(j)})_{j \in \mathcal J},\Psi)$, recall from \thref{frob}, we can construct a corresponding Breuil-Kisin module $\MM$ over $\SSSS$ with the matrix of partial Frobenii $(A^{j})_{j \in \mathcal J}$ under a chosen eigenbasis $\beta=(\beta_1,\cdots,\beta_n)$. Let $\Delta$ act on $\beta_i$ by $\chi_i$. This gives $\MM$ the inertial type $\tau$. The conditions $A^{(j)}, (v+p)^h (A^{(j)})^{-1} \in \mathcal P_j(R)$ for all $j \in \mathcal J$ ensure that $\MM$ is of height bounded by $0$ and $h$. In particular, the Frobenius map $\phi$ on $\MM$ determined by the matrices $(A^j)_{j \in \mathcal J}$ commutes with the $\Delta$-action.

We then give a semilinear map $\psi: \MMM \to \MMM$ so that its matrix under basis $\beta$ is $\Psi$. Since $\psi$ is a topological generator of the group $\mathrm{Gal}(L'_{\mathrm {cyc}}L'_\infty/L'_{\mathrm {cyc}}) \cong \mathbb Z_p$, to show that we can extend $\psi$ to an action of $\mathrm{Gal}(L'_{\mathrm {cyc}}L'_\infty/L'_{\mathrm {cyc}})$ on $\MMM$, it suffices to show that $\Psi^{(p^n)} \to \mathrm{id}$ $p$-adically as $n \to \infty$. In fact, from condition (4), we have $\Psi^{(p^n+1)}=g_{p^n+1}(\Psi)$. As elements of the topological group $\mathrm{Gal}(L'_{\mathrm {cyc}}L'_\infty/L'_\infty)$, $g_{p^n+1}\to 1$ as $n \to \infty$. Hence $g_{p^n+1}(\Psi) \to \Psi$ since the action of $G_K$ on $\mathbf{A}_{\mathrm{inf}}$ is continuous. Therefore $\psi^{p^n+1} \to \psi$, i.e. $\psi^{p^n} \to \mathrm{id}$.

Let $\mathrm{Gal}(L'_{\mathrm {cyc}}L'_\infty/L'_{\infty})$ act on $\MM$ trivially, and extend this action semilinearly to $\MMM$. Now we need to use \thref{act} to verify that the actions of the two groups $\mathrm{Gal}(L'_{\mathrm {cyc}}L'_\infty/L'_{\mathrm {cyc}})$ and $\mathrm{Gal}(L'_{\mathrm {cyc}}L'_\infty/L'_\infty)$ can glue to an action of $\mathrm{Gal}(L'_{\mathrm {cyc}}L'_\infty/L')$. That is, for any $g \in \mathrm{Gal}(L'_{\mathrm {cyc}}L'_\infty/L'_{\infty})$, we need to show that $g(\psi(g^{-1}(\beta_i)))=(g\psi g^{-1})(\beta_i)$. 

In fact, we have shown that the element $g \psi g^{-1} \in \mathrm{Gal}(L'_{\mathrm {cyc}}L'_\infty/L'_{\mathrm {cyc}})$ is equal to $\psi^{k_g}$. And since $g \in \mathrm{Gal}(L'_{\mathrm {cyc}}L'_\infty/L'_{\infty})$, $g$ fixes $\beta$. Therefore, $g(\psi(g^{-1}(\beta_i)))=g(\psi(\beta_i))$. Let $g=g_m$ for any positive integer $m$ that is prime to $p$, condition (4) in \thref{local model} gives $\psi^{k_g}(\beta_i)=g(\psi(\beta_i))$. Since the positive integers prime to $p$ are dense in $\mathbb Z_p^\times$ and we have shown that $\psi^{p^n}\to 1$ as $n \to \infty$, we have verified the identity $g(\psi(g^{-1}(\beta_i)))=(g\psi g^{-1})(\beta_i)$.

Let $G_{L'}$ act on $\MMM$ through the quotient $G_{L'}\twoheadrightarrow\mathrm{Gal}(L'_{\mathrm {cyc}}L'_\infty/L')$. 

Next, we need to extend the $G_{L'}$-action on $\MMM$ to a $G_K$-action. We let $G_\infty$ act on $\MM$ through the quotient $G_\infty \twoheadrightarrow \Delta$, with the $\Delta$-action already given by the inertial type $\tau$ on $\beta_i$. Applying \thref{check}, which we put below, and \thref{act}, we can now give a $G_K$-action on $\MMM$ extending the action of $G_{L'}$ and $G_\infty$.

The inclusion $\MM \hookrightarrow \MMM^{G_\infty'}$ is $\Delta$-equivariant since the $G_\infty$-action on the right is defined through the quotient $\Delta$.

Condition (2) in \thref{local model} ensures commutativity of $\phi$ and the $G_{L'}$-action. Since the $G_\infty$-action factors through $\Delta$ and we already know that $\phi$ commutes with the $\Delta$-action, $\phi$ commutes with the $G_\infty$-action. Hence, the Frobenius $\phi$ commutes with the $G_K$-action. 

Since $\mathrm{Gal}(L'_{\infty}L'_{\mathrm{cyc}}/L'_{\mathrm{cyc}})$ is topologically generated by $\psi$, condition (3) ensures that $\MM/u'$ is fixed under $G_{L'}$. Therefore, we have constructed $\MM \in \X(R)$ whose matrices of partial Frobenius and Galois are the given matrices $(A^{(j)})_{j \in \mathcal J}$ and $\Psi$. 

\thref{change basis} and \thref{basis change} show how the change of eigenbases would affect the data $((A^{(j)})_{j \in \mathcal J},\Psi)$, therefore taking the quotient by parahoric subgroups will give the desired isomorphism statement.
\end{proof}

\begin{lemma} \thlabel{check}
     For any $g \in G_\infty$ and $m \in \MM$, we have $g(\psi(g^{-1}m))=(g\psi g^{-1})m$.
\end{lemma}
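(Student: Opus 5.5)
Here $G_\infty$ acts on $\MMM$ semilinearly through $\Delta$, fixing $\MM$ up to the descent data, while $\psi$ is the semilinear operator with matrix $\Psi$ in the basis $\beta$. We have to show that the two sides of $g(\psi(g^{-1}m))=(g\psi g^{-1})m$ agree. The first task is to say what the right-hand side means. Since $\mathrm{Gal}(L'_{\mathrm{cyc}}L'_\infty/L')$ is normal in $\mathrm{Gal}(L'_{\mathrm{cyc}}L'_\infty/K)$, the element $g\psi g^{-1}$ lies in it. The action of this group on $\MMM$ was already constructed in the proof of \thref{big2}, so $(g\psi g^{-1})m$ makes sense. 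The plan is to compute the conjugate explicitly: $g\psi g^{-1}=\psi^{a}h$ with $h\in\mathrm{Gal}(L'_{\mathrm{cyc}}L'_\infty/L'_\infty)$ and $a\in\mathbb Z_p$.

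The explicit computation goes as follows. Write the image of $g$ in $\Delta$ as $\delta$ with $\delta(\pi')=\zeta\pi'$, where $\zeta$ is an $e'$-th root of unity, possibly composed with a power of $\Fr$. Evaluate $g\psi g^{-1}$ on $\pi'_i$. The element $g$ fixes $\pi_i$ but sends $\pi'_i$ to $\eta_i\pi'_i$, where $\eta_i$ is a compatible system of $e'$-th roots of unity; since $p\nmid e'$, these are all determined by $\zeta$. We get $g\psi g^{-1}(\pi'_i)=\zeta_{p^i}^{k}\pi'_i$, where $k$ accounts for the action of $g$ on $\zeta_{p^i}$; because $g\in G_\infty$ can move $\mu_{p^\infty}$, $k$ is the cyclotomic character of $g$. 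So $g\psi g^{-1}=\psi^{k}h$ for a suitable $h$.

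It therefore suffices to check the identity on the basis elements $\beta_i$, both sides being $g$-semilinear in $m$. Since $g\in G_\infty$, it acts on $\beta_i$ by $\chi_i(\delta)$, and on $\Ma(\AAA)$ entrywise through its action on $\mathbf A_{\mathrm{inf}}$. The left side is then
\[
g(\psi(g^{-1}\beta_i)) = \chi_i(\delta)^{-1}\,\chi_k(\delta)\,g(\Psi_{ki})\,\beta_k
\]
(summed over $k$), while the right side is $\psi^{k}(\beta_i)$, whose matrix is $\Psi^{(k)}$ in the notation of condition (4), extended by continuity in $k$. The identity thus reduces to the matrix equation
\[
\mathrm{Ad}(\tau(\delta))^{-1}g(\Psi)=\Psi^{(k)},
\]
suitably twisted by $\Fr$ if $\delta$ is not in $\Delta'$. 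When $g$ lies in $\mathrm{Gal}(\overline K/L'_\infty)$ this is exactly condition (4), together with the density argument already used in the proof of \thref{big2}.

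The main obstacle is the remaining piece, namely $g$ ranging over a set of representatives of $\Delta$ in $G_\infty$, for example the fixed lift $\Fr$ and a lift of a generator of $\Delta'$. Here condition (4) of \thref{local model} does not obviously give the needed compatibility $\mathrm{Ad}(\tau(\delta))^{-1}g(\Psi)=\Psi$ for $g$ fixing $\zeta_{p^\infty}$. For these $g$ I would try to derive it from the Frobenius relation $C\varphi(\Psi)=\Psi\psi(C)$ and from $\Psi-\mathrm{id}\in u'\Ma(\AAA)$. Both $\mathrm{Ad}(\tau(\delta))^{-1}g(\Psi)$ and $\Psi$ satisfy the same $\varphi$-equation with the same $\Delta$-equivariant $C$, because the Frobenius commutes with the descent data, and both are congruent to $\mathrm{id}$ modulo $u'$. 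The solution of this equation should be unique: the difference $X$ satisfies $X=C\varphi(X)\psi(C)^{-1}$ with $X\in u'\Ma(\AAA)$, and iterating pushes $X$ into arbitrarily high powers of $u'$ (after inverting $E$ appropriately). Hence $X=0$. If this uniqueness argument fails, I would impose the compatibility as part of the data, noting that it holds for every point arising from $\X(R)$.
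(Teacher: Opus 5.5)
Your argument is sound in substance, but it takes a different route from the paper.

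\textbf{The two routes.} The paper argues Galois-theoretically. For finite flat $R$ it views $\MM$ with the two candidate $\psi$-actions, $m\mapsto (g\psi g^{-1})m$ and $m\mapsto g(\psi(g^{-1}m))$, as objects of $\mathrm{wMod}^{\varphi,G_{L'}}$ with the same associated $G_{L'}$-representation. It then concludes by full faithfulness of $T_R$ (\thref{ff}). For general $R$ it checks that each entry of the difference matrix dies under every finite flat specialization and applies \thref{aux1}.

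You instead reduce to the matrix identity $D_g\,g(\Psi)D_g^{-1}=\Psi^{(\chi(g))}$, where $g\beta=\beta D_g$. You prove it by uniqueness of solutions $X\equiv 1 \bmod u'$ of a Frobenius equation, which is the same contraction trick the paper uses in \thref{stacky}. This buys three things:
\begin{itemize}
\item it is purely algebraic;
\item it works directly for any coefficient ring, with no finite flat specializations and no \thref{aux1};
\item it shows that condition (4) of \thref{local model} is forced by (2) and (3).
\end{itemize}
The price is that it depends essentially on condition (3) in its $u'$-adic form. The paper's route needs only full faithfulness of $T_R$, at the cost of the reduction to finite flat coefficients.

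\textbf{First repair: the choice of representatives.} Representatives of $\Delta$ in $G_\infty$ fixing $\zeta_{p^\infty}$ need not exist. Here $\pi=-p$ (note $E(u')=u'^{e'}+p$), so $K(\mu_p)=K((-p)^{1/(p-1)})\subseteq L'$. Hence no lift to $G_\infty$ of a generator of $\Delta'$ fixes $\zeta_p$. Your argument does not need such a choice:
\begin{itemize}
\item for any $g\in G_\infty$, write $g\psi g^{-1}=\psi^k h$ with $k=\chi(g)$ and $h$ fixing $\SSSS$;
\item Frobenius commutes with the descent data, so $g(C)=D_g^{-1}CD_g$;
\item hence both $D_g\,g(\Psi)D_g^{-1}$ and $\Psi^{(k)}$ solve $C\varphi(X)=X\psi^k(C)$ and are $\equiv 1 \bmod u'$, where $\Psi^{(k)}$ is the matrix of $\psi^k$ for the action built in the proof of \thref{big2}.
\end{itemize}
Uniqueness therefore treats all of $G_\infty$ at once, and the case split disappears.

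\textbf{Second repair: ``inverting $E$''.} The denominators come from $\psi^k(C)^{-1}$, i.e.\ from $\psi^k(E)^h$, not from $E^h$. For the difference $Y\in u'\Ma(\AAA)$ you get
$$Y\psi^k(E)^h=C\varphi(Y)\,\psi^k(E^hC^{-1})\in u'^p\Ma(\AAA).$$
You then need two facts.
\begin{itemize}
\item $\psi^k(E)$ is a nonzerodivisor on $\AAA/u'^N\AAA$. It generates $\ker(\theta\circ\psi^{-k})$, and $\theta(\psi^{-k}(u'))=\pi'\neq 0$ in the domain $\mathcal O_{\mathbb C_p}$. One then passes to $\AAA$ using $\mathcal O$-flatness of $R$.
\item $\bigcap_N u'^N\AAA=0$.
\end{itemize}
Iterating gives $Y=0$. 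So the fallback of adding the compatibility to the data is unnecessary, and it would not prove the lemma as stated anyway.
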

\begin{proof}
 So far, our Breuil-Kisin module $\MM$ has a $G_\infty$-action. We will construct two other Breuil-Kisin modules $\MM_1$ and $\MM_2$ from here. We first prove this result in the case where $R$ is finite flat, so that we can use the properties of $T_R$.
 
 Let $\MM_1$ be the module $\MM\in\mathrm{wMod}^{\varphi, G_{L'}}_{\mathfrak S, \AAA}$ with $\mathrm{Gal}(L'_\infty L'_{\mathrm{cyc}}/L'_{\mathrm{cyc}})$-action determined by $\mathrm{Gal}(L'_\infty L'_{\mathrm{cyc}}/L'_{\mathrm{cyc}}) \ni \psi:m \mapsto (g\psi g^{-1})m,$ and with the $G_{L'}$-action factoring through $G_\infty'$. The category $\mathrm{wMod}^{\varphi, G_{L'}}_{\mathfrak S, \AAA}$ is given in \cite[Definition 7.1.3]{Gao}. Let $\MM_2$ be the module $\MM\in\mathrm{wMod}^{\varphi, G_{L'}}_{\mathfrak S, \AAA}$ given similarly by $\mathrm{Gal}(L'_\infty L'_{\mathrm{cyc}}/L'_{\mathrm{cyc}})\ni \psi:m \mapsto g(\psi(g^{-1}m))$.
 
 Consider the map $\gamma_g:\MM \to \MM$ given by $m \mapsto g(m)$ for all $m \in \MM$. We can regard $\gamma_g$ as a morphism in the category $\mathrm{wMod}^{\varphi, G_{L'}}_{\mathfrak S, \AAA}$. The $G_{L'}$-action on the target of $\gamma_g$ is given by $m \mapsto (g\psi g^{-1})m$ for all $m \in \MM$. 

Let $\rho:G_{L'} \to \mathrm{GL}_n(R)$ be the representation $T_R(\MM)$. Consider the following identity $$T_R(\gamma_g(\MM))=T_R(\gamma_g)(T_R(\MM)).$$ Both sides can also be viewed as $G_{L'}$-representations by the definition of $T_R$. On the left hand side, the representation is given by $h \mapsto \rho(ghg^{-1})$ for all $h \in G_{L'}$. It is just $T_R(\MM_1)$. On the right hand side, the representation is given by $h \mapsto A\rho(h) A^{-1}$, where $A=T_R(\gamma_g) \in \mathrm{GL}_n(R)$ when written as a matrix. Hence this representation is $T_R(\MM_2)$. We have $T_R(\MM_1)=T_R(\MM_2)$. By the full faithfulness of $T_R$, we have $\MM_1=\MM_2$. In particular, the $\mathrm{Gal}(L'_\infty L'_{\mathrm{cyc}}/L'_{\mathrm{cyc}})$-actions on $\MM_1=\MM_2=\MM$ coincide.

Now let's turn to a general coefficient ring $R$, which is a $p$-adically complete, topologically of finite type, $\mathcal O$-flat algebra. Fix an eigenbasis $\beta$ of $\MM$ and consider the matrix representing the semilinear map $g \circ\psi\circ g^{-1}-g\psi g$ on $\MMM$. Let $x \in \AAA$ be any entry of the matrix. Given any finite flat $\mathcal O$-algebra $R'$, consider the base change of $\MM$ to $R'$. Then we use the previous argument for the finite flat case and conclude that the image of $x$ is $0$ in $\mathbf A_{\text{inf},R'}$. By the following \thref{aux1}, $x=0$. We are done.
\end{proof}

\begin{lemma} \thlabel{aux1} 
 Let $R$ be a $p$-adically complete, topologically of finite type, $\mathcal O$-flat algebra. Let $I \subseteq R$ be an ideal. Assume that for every finite flat $\mathcal O$-algebra $R'$ and every $\mathcal O$-algebra map $R \to R'$, $I_R R'$ is the zero ideal in $R'$. Then $I=0$.
\end{lemma}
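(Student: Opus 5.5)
Fix $x \in I$; it suffices to show $x=0$. The plan is to detect $x$ on closed points of the rigid generic fiber $\mathrm{Sp}(R[1/p])$. Since $R$ is $p$-adically complete and topologically of finite type over $\mathcal O$, $R[1/p]$ is an affinoid $E$-algebra. Since $R$ is $\mathcal O$-flat, $R$ has no $\varpi$-torsion, so $R \hookrightarrow R[1/p]$. Hence it is enough to show that $x$ vanishes in $R[1/p]$.

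Step one is to realize every maximal ideal of $R[1/p]$ through a finite flat $\mathcal O$-algebra. Let $\mathfrak m \subseteq R[1/p]$ be maximal. By the Nullstellensatz for affinoid algebras, $L:=R[1/p]/\mathfrak m$ is a finite extension of $E$. The image $R'$ of the composite $R \to R[1/p] \to L$ is a quotient of a topologically finite type $\mathcal O$-algebra. Every element of $R$ is power-bounded in $R[1/p]$ with respect to a residue norm, so its image is power-bounded in $L$, i.e. lies in $\mathcal O_L$. Thus $R' \subseteq \mathcal O_L$. Being an $\mathcal O$-submodule of the finite free $\mathcal O$-module $\mathcal O_L$, $R'$ is finite over $\mathcal O$; being torsion-free, it is flat. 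So $R'$ is a finite flat $\mathcal O$-algebra, and $R \to R'$ is an $\mathcal O$-algebra map. By hypothesis $I R'=0$, so the image of $x$ in $L$ is zero, i.e. $x \in \mathfrak m R[1/p]$.

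Step two uses that $x$ therefore lies in every maximal ideal of $R[1/p]$. Affinoid algebras are Jacobson, so the Jacobson radical equals the nilradical, and $x$ is nilpotent in $R[1/p]$, hence in $R$. This is the main obstacle: nilpotence does not give vanishing when $R$ is non-reduced. My first attempt would be to exploit the hypothesis for all finite flat $R'$, not only for integral domains. For each maximal $\mathfrak m$ and each $k$, the quotient $R[1/p]/\mathfrak m^k$ is a finite-dimensional $E$-algebra. The image of $R$ in it is, by the same power-boundedness argument applied to a finite $\mathcal O$-lattice, a finite flat $\mathcal O$-algebra $R'_k$. The hypothesis then gives $x \in \mathfrak m^k R[1/p]$ for all $k$ and all $\mathfrak m$.

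Krull's intersection theorem for the Noetherian ring $R[1/p]$ gives $\bigcap_k \mathfrak m^k R[1/p]_{\mathfrak m}=0$, so $x$ is zero in every localization $R[1/p]_{\mathfrak m}$, hence $x=0$ in $R[1/p]$, hence $x=0$ in $R$ by flatness. The delicate point to check carefully is that the image of $R$ in $R[1/p]/\mathfrak m^k$ really is finite over $\mathcal O$, i.e. that it lands in a bounded (power-bounded) subring. This should follow from $R$ being topologically finite type: the images of the finitely many topological generators are power-bounded, and the completed $\mathcal O$-subalgebra they generate inside the finite-dimensional Banach $E$-algebra is a bounded, hence finitely generated, $\mathcal O$-module.
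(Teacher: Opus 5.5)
Your proposal is correct and is essentially the paper's argument. The paper argues by contradiction: it picks a single maximal ideal $\mathfrak m$ of $R[1/p]$ at which $x$ is nonzero, uses Krull's intersection theorem to find $N$ with $x$ nonzero in $R[1/p]/\mathfrak m^N$, and observes that the image of $R$ there is a finite flat $\mathcal O$-algebra because it is generated by the images of finitely many power-bounded, hence integral, topological generators. Your Step one and the nilpotence detour are just the case $k=1$ and can be dropped, and your boundedness justification of finiteness is an equivalent way of settling the point the paper handles by integrality.
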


\begin{proof}
    Suppose that $I \neq 0$. Take any nonzero element $x \in I$. Let $A=R[1/p]$. Since $R$ is flat over $\mathcal O$, we have an inclusion $R \subseteq A$ and $I \subseteq I[1/p]$. Since $A$ is an affinoid $E$-algebra, it is Jacobson. So we can pick a maximal ideal $\mathfrak m$ of $A$ such that $x\ne 0$ in the localization $A_\mathfrak m$. By Krull's intersection theorem $\bigcap_{i \ge 0} \mathfrak m^iA_{\mathfrak m}=0$. Let $N$ be a positive integer satisfying $x \notin \mathfrak m^N$. Let $B=A/\mathfrak m^N$. Since $A/\mathfrak m$ is a finite dimensional $E$-vector space by Nullstellensatz, so is $B$.
    
    Since $R$ is Noetherian, $I$ is finitely generated. By the construction, $I[1/p]\otimes_A B \ne 0$. Let $R'$ be the sub-$\mathcal O$-algebra of $B$ generated by the image of $R$ under the surjection $A \twoheadrightarrow B$. Since $R$ is topologically of finite type, $R'$ is generated by finitely many power-bounded elements. Hence these generators are integral over $\mathcal O$. Therefore, $R'$ is a finite flat $\mathcal O$-algebra. By the construction, the image of $x$ in $B$ is nonzero. So, we have $IR' \ne 0$, contradicting our assumption!
\end{proof}

\section{The stack of potentially semistable Galois representations} \label{s4}

In this section, we use our definition of potentially semistable Breuil-Kisin modules to build a stack. We will give it some ``algebraic" descriptions. We will in the end show that this stack is isomorphic to the potentially semistable Emerton-Gee stack.
\subsection{Relation with \texorpdfstring{$(\phi, N, \Gamma)$}{Lg}-modules} \label{Relation}

Let $\Gamma$ be either the group $\Delta$ or $\Delta'$. 

Recall that in \thref{{phi,n,gammma}}, we have given the definition of $(\phi,N,\Gamma)$-modules with rational coefficients. We now give a similar definition where the coefficient rings are integral.

\begin{defn} 
Let $R$ be an $\mathcal O$-algebra, we say that $(M,\phi_M, N_M)$ is a $(\phi, N, \Gamma)$-module of rank $n$ with coefficients in $R$ if
 \begin{enumerate}
     \item $M$ is a free $W(k') \otimes_{\mathbb Z_p} R$-module of rank $n$;
     \item $\phi_M:\varphi^*(M) \to M$ is a $W(k') \otimes_{\mathbb Z_p} R$-linear injective homomorphism;
    \item $N_M \in \mathrm{End}_{W(k') \otimes_{\mathbb Z_p} R}(M)$ satisfies $ N\phi_M=p\phi_M N_M$;
    \item $M$ has a $\Gamma$-semilinear action, which commutes with $\phi_M, N_M$.  
 \end{enumerate}
 We denote this category by $L(\phi, N, \Gamma)_R$.
\end{defn}

Following \cite[Definition 2.2]{Liu0}, we have the following definition.

\begin{defn}
 Let $A$ be an $E$-algebra and let $R$ be a finite flat $\mathcal O$-algebra such that $R[1/p]=A$. Let $D$ (resp. $M$) be a $(\phi,N,\Gamma)$-module of rank $n$ with coefficients in $A$ (resp. $R$). We call $M$ a lattice in $D$ with coefficients in $R$ if $M \subseteq D$, and the $\phi, N, \Gamma$-actions on $M$ are all restricted from the corresponding ones on $D$.
\end{defn}

Using the decomposition $K' \otimes_{\mathbb Q_p} A=\bigoplus_{j' \in \mathcal J'}A$, we can write any $(\phi, N, \Gamma)$-module $D$ with coefficients in $A$ as $D=\bigoplus_{j' \in \mathcal J'} D ^{(j')}$, with each $D^{(j')}$ an $A$-module. The map $\phi_D$ and $N_D$ induce $A$-linear maps $\phi_D^{(j')}:D^{(j'-1)} \to D^{(j')}$ and $N_D^{(j')}:D^{(j')} \to D^{(j')}$, satisfying the relation $N_D^{(j')}\phi_D^{(j')}=p\phi_D^{(j')}N_D^{(j'-1)}$. 

Let $R$ be a finite flat $\mathcal O$-algebra, let $\rho:G_K \to \mathrm{GL}_n(R)$ be a potentially semistable representation in $\XXX(R)$. We also use $\rho$ to refer to $\rho[1/p]:G_K \to \mathrm{GL}_n(R[1/p])$. Let $(D,\phi_D,N_D)=D_{\mathrm{pst}}(\rho | _{G_{K'}})$, then $D$ is a $(\phi, N, \Delta')$-module of rank $n$ with coefficients in $R[1/p]$. Since $\Delta'$ fixes $K'$, the action of $\Delta'$ on $D$ is in fact $K' \otimes_{\mathbb Q_p} R[1/p]$-linear. Hence each $D^{(j')}$ is stable under $\Delta'$.

The $(\phi, N, \Delta)$-module $D_{\mathrm{pst}}(\rho)$ has the same underlying $(\phi, N, \Delta')$-module structure as $D_{\mathrm{pst}}(\rho | _{G_{K'}})$, with the $\Delta=\Delta' \rtimes \langle \Fr \rangle$-action given by the $\Delta'$-action on $D$ and $\Fr^{(j')}:D^{(j')}\to D^{(j'+f)}$, an $R[1/p]$-linear isomorphism for each $j' \in \mathcal J'$.

Note that with such a potentially semistable representation $\rho: G_K \to \GL_n(R)$, we can also form a potentially semistable Breuil-Kisin module $\MM$. The relation between $\MM$ and the $(\phi, N, \Delta)$-module $D=D_{\mathrm{pst}}(\rho)$ is as follows.

\begin{prop} \thlabel{DvsM}
    Let $R$ be a finite flat $\mathcal O$-algebra, and let $\rho \in \XXX(R)$. Let $\MM$ be the potentially semistable Breuil-Kisin module such that $T_R(\MM)=\rho$. Let $D=D_{\mathrm{pst}}(\rho)$. Then $D\cong (\MM /u')[\frac{1}{p}] \cong (\varphi^* \MM /u')[\frac{1}{p}]$, with $\Delta$ and $\phi$ acting equivariantly.
\end{prop}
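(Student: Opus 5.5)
The plan is to reduce to the semistable case over $L'$, where the comparison between a Breuil-Kisin module and $D_{\mathrm{st}}$ is classical, and then to check that all the extra structure (descent data and Frobenius) is respected. First, restrict $\rho$ to $G_{L'}$. By definition $D=D_{\mathrm{pst}}(\rho)=D_{\mathrm{st}}(\rho|_{G_{L'}})$ as a $(\phi,N)$-module over $K_0'\otimes_{\mathbb Q_p}R[1/p]$, with the $\Delta$-action coming from the $G_K$-action on $\mathbf B_{\mathrm{st}}\otimes\rho$. On the other side, $\MM$, viewed over $\mathfrak S_{L'}$ with its $G_{L'}$-action on $\MMM$, is an object of Gao's category $\mathrm{wMod}^{\varphi,G_{L'}}$ attached to the semistable representation $\rho|_{G_{L'}}$, with $L'$ playing the role of ``$K$''.

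Second, I invoke the known comparison for semistable Breuil-Kisin modules (Kisin's theorem, in the form used in \cite[Section 7]{Gao} or via Liu's $(\varphi,\hat G)$-modules). It gives a canonical $\phi$-equivariant isomorphism
\[
D_{\mathrm{st}}(\rho|_{G_{L'}})\cong (\varphi^*\MM/u'\varphi^*\MM)[1/p].
\]
Concretely, $\mathcal S_{K_0'}\otimes_{\varphi,\mathfrak S_{L'}}\MM\cong \mathcal S_{K_0'}\otimes_{K_0'}D$, and reducing modulo $I^+_{\s}$ (that is, setting $u'=0$) recovers $D$. I do this first with $\mathbb Z_p$-coefficients. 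Then, exactly as in the proof of \thref{ff}, I upgrade to $R$-linearity by observing that multiplication by each $r\in R$ commutes with the canonical isomorphism, because the isomorphism is functorial.

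Third, I pass from $\varphi^*\MM/u'$ to $\MM/u'$. Because $\phi_\MM$ has cokernel killed by $E(u')^h$, and $E(u')\equiv p \bmod u'$, the map $\phi_\MM \bmod u'\colon (\varphi^*\MM/u')\to \MM/u'$ becomes an isomorphism after inverting $p$. This gives $(\varphi^*\MM/u')[1/p]\cong(\MM/u')[1/p]$.

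Fourth, I check $\Delta$-equivariance, which I expect to be the main obstacle. The Kisin-type isomorphism is canonical, but one must see that the $\Delta$-action on $\MM$ defined via $\MM\hookrightarrow\MMM^{G'_\infty}$ matches the $G_K$-action on $\mathbf B_{\mathrm{st}}\otimes\rho$. I would realize the comparison inside $\MMM\otimes\mathbf B_{\mathrm{st}}$ (more precisely over $\mathbf A_{\mathrm{cris}}$ after adjoining $\log[\underline{-p}]$), where both sides embed $G_K$-equivariantly into $\rho\otimes\mathbf B_{\mathrm{st}}$. The isomorphism is then characterized as the unique $\phi$-compatible section modulo $W(\mathfrak m)$. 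An element $g\in\Delta$, lifted to $G_\infty$, preserves both $\MM$ and $D$, and it commutes with $\phi$ and with reduction modulo $W(\mathfrak m)$. So by uniqueness, $g$ intertwines the two sides. In this step I also use condition (1) of \thref{BK1}: $\MM/u'$ sits inside $G_{L'}$-invariants, which ensures that the reduction lands in $(\rho\otimes\mathbf B_{\mathrm{st}})^{G_{L'}}=D$. Since the map in the third step is $\phi_\MM \bmod u'$, and $\phi_\MM$ commutes with $\Delta$, the isomorphism there is $\Delta$-equivariant as well, which completes the argument.
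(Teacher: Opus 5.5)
Your route differs from the paper's. The paper's proof is one line: \thref{GvsLLLM} identifies $\MM$ with the Breuil--Kisin module of \cite{EG}, and \cite[Corollary F.25]{EG} then yields both isomorphisms with $\Delta$- and $\phi$-equivariance, since that corollary is already formulated for modules with descent data. You instead rebuild the statement from the classical semistable comparison over $L'$ (Kisin/Breuil/Liu), which is the machinery the paper uses immediately afterwards in (\ref{con}), \thref{key} and \thref{vip}.

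Steps 1--3 are fine. In Step 2 you need, as the paper does, that $\MM$ with its descent data forgotten is Kisin's module of $\rho|_{G_{L'}}$ relative to $\underline{\pi'}$; this follows from \thref{GvsLLLM} plus Kisin's full faithfulness. Step 3, via $\phi_\MM \bmod u'$ and $E(u')\equiv p \bmod u'$, is correct and visibly $\Delta$-equivariant. Your route buys a self-contained argument that exposes the mechanism. The citation buys brevity and automatic consistency with the EG conventions used to build the $\Delta$-action in \thref{equivalence}.

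Step 4, however, is not correct as written, and it is the only step not already covered by the classical theory. There are three problems.
\begin{enumerate}
\item The Breuil--Kisin side does not embed $G_K$-equivariantly into $\rho\otimes\mathbf B_{\mathrm{st}}$. The module $(\varphi^*\MM/u')[1/p]$ carries only $\Delta$, and the $G_{L'}$-action transported from $\MMM$ to $\AC\otimes D$ is the $\exp(t\otimes N)$-twisted one of \thref{vip}. Only $G_\infty$ acts compatibly.
\item There is no uniqueness of $\phi$-compatible sections ``modulo $W(\mathfrak m)$'' over $\AC$. The element $t$ is killed by the reduction $\AC\to W(\overline k)$ and satisfies $\varphi(t)=pt$. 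So a section can be altered by $t$-multiples as soon as $D$ has slopes differing by $1$.
\item Condition (1) of \thref{BK1} plays no role here. $\MMM/W(\mathfrak m)\MMM$ is a quotient, not a subspace of $\rho\otimes\mathbf B_{\mathrm{st}}$. That condition is Gao's criterion for semistability of $\rho|_{G_{L'}}$, which is already known because $\rho\in\XXX(R)$.
\end{enumerate}

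The argument you want has two ingredients.
\begin{enumerate}
\item The uniqueness actually available is that of Breuil's $\phi$-equivariant section $s\colon D\to\mathcal D=\s\otimes_{\varphi,\SSSS}\MM$ of the projection $\mathcal D\to\mathcal D/I^+_{\s}\mathcal D$. Since $\Delta$ acts semilinearly on $\mathcal D$, commutes with $\phi$, and preserves $I^+_{\s}$, each $gsg^{-1}$ is again such a section. Hence $s$ is $\Delta$-equivariant.
\item The comparison sends $d$ to the unique element of $(\mathbf B_{\mathrm{st}}\otimes\rho)^{G_{L'}}$ whose specialization at $\log[\underline{-p}]=0$ (using $\mathbf B_{\mathrm{st}}\cong\mathbf B_{\mathrm{cris}}[X]$) is the image of $s(d)$ in $\mathbf B_{\mathrm{cris}}\otimes\rho$. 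Every $g\in G_\infty$ preserves this characterization, for three reasons:
\begin{itemize}
\item it fixes $\underline{-p}$, hence $\log[\underline{-p}]$;
\item it multiplies $[\underline{\pi'}]$ by a prime-to-$p$ root of unity, matching the $\Delta$-action on $u'$;
\item it normalizes $G_{L'}$.
\end{itemize}
\end{enumerate}
Together (1) and (2) give the $\Delta$-equivariance.
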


\begin{proof}
   \thref{GvsLLLM} shows that $\MM$ obtained above is the same as the Breuil-Kisin module over $\SSSS$ in \cite{EG}. Then \cite[Corollary F.25]{EG} gives the result.
\end{proof}

Now we introduce some theory of Breuil modules for future constructions. Given any potentially semistable representation $\rho: G_K \to \GL_n(R)$ with coefficients in a finite flat $\mathcal O$-algebra $R$, let $\MM$ be the associated potentially semistable Breuil-Kisin module. We call $\mathcal D := \s \otimes_{\varphi, \SSSS} \MM$ the Breuil module associated to $\rho$ (or $\MM$). Let $D=\mathcal D/I^+_{\mathcal S_R[1/p]} \mathcal D$.
 
It is well-known that there is a unique $\phi,N$-equivariant section $s:D \hookrightarrow \mathcal D$ by \cite[Proposition 6.2.1.1]{Breuil} (the same proof extends to any coefficient ring that is $p$-torsion-free). Hence we have a $\phi,N$-equivariant isomorphism $\mathcal D \cong \s \otimes_{K'} D$.

Now we will recall how \cite{Liu0} constructs a $\phi_D, N_D, \Delta'$-stable $W(k') \otimes_{\mathbb Z_p} R$-lattice $M$ inside the $(\phi, N, \Delta')$-module $D$. We use the following commutative diagram to illustrate the process. Note that \cite[Proposition 2.6]{Liu0} explains the isomorphism between $D$ and $D_{\mathrm{pst}}(\rho)$, giving rise to the monodromy operator $N$ on $M$.
\begin{equation} \label{con}
    \begin{tikzcd}
 \MM & \mathcal S_R \otimes_{\varphi,\SSSS}\MM & \mathcal D=\s \otimes_{\varphi,\SSSS}\MM \\
	M & & D & D_{\mathrm{pst}}(\rho).
	\arrow[twoheadrightarrow,"\mathrm{mod \ } u'",swap, from=1-1, to=2-1]
	\arrow[hook, from=1-1, to=1-2]
    \arrow[hook, from=1-2, to=1-3]
	\arrow[twoheadrightarrow,"\mathrm{mod \ } I^+_{\s}", from=1-3, to=2-3]
	\arrow[hook, from=2-1, to=2-3] 
    \arrow[from=2-3, to=2-4, "\cong"]
\end{tikzcd} 
\end{equation}

In fact, \cite{Liu0} uses the $(\varphi, \hat G)$-module $\MM$. But \cite[Theorem 2.5 (3)]{Liu0} ascertains that it is equivalent to the usual Breuil-Kisin module (for example, those in \cite{LLHLM1}). And hence by \thref{GvsLLLM}, we can indeed use our potentially semistable Breuil-Kisin module to replace it.

\begin{prop} \thlabel{stable}
   Let $R$ be a finite flat $\mathcal O$-algebra. Let $\MM \in \X(R), \rho=T_R(\MM) \in \XXX(R), D=D_{\mathrm{pst}}(\rho)$. Then the $W(k') \otimes_{\mathbb Z_p} R$-module $M$ constructed above is a lattice in the $(\phi,N, \Delta)$-module $D$. Moreover, the association $\XXX(R) \ni \rho \mapsto M \in  L(\phi, N, \Delta)_R$ defines a faithful functor $M_R:\XXX(R) \to  L(\phi, N, \Delta)_R$.
\end{prop}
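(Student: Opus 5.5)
We have to check four things: that $M$ is a free $W(k')\otimes_{\mathbb Z_p} R$-module of rank $n$ with $M[1/p]=D$; that $\phi_D$, $N_D$ and the $\Delta$-action on $D$ preserve $M$, so that $M$ is a lattice in the sense of \cite[Definition 2.2]{Liu0}; that $\rho\mapsto M$ is functorial; and that the resulting functor is faithful. The plan follows the diagram \eqref{con}.

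\textbf{The lattice.} By construction $M=\MM/u'$, embedded in $D=\mathcal D/I^+_{\s}\mathcal D$ through $\MM\hookrightarrow \mathcal S_R\otimes_{\varphi,\SSSS}\MM\hookrightarrow\mathcal D$ followed by reduction modulo $I^+_{\s}$. Since $\MM$ is free over $\SSSS$ with an eigenbasis $\beta$, its reduction $\MM/u'$ is free over $W(k')\otimes_{\mathbb Z_p}R$ with basis $\beta \bmod u'$. The image of $\beta$ in $D$ is a $K'_0\otimes_{\mathbb Q_p}R[1/p]$-basis of $D$, and \thref{DvsM} identifies $D$ with $(\MM/u')[1/p]$. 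Hence $M\subseteq D$ and $M[1/p]=D$.

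\textbf{Stability under $\phi$ and $\Delta$.} Stability under $\phi_D$ follows because $\phi_\MM$ reduces modulo $u'$ to $\overline C$, which has entries in $W(k')\otimes R$. Stability under $\Delta$ follows because the $\Delta$-action on $\MM$ preserves $u'\MM$ (as $g(u')=\omega_{K'}(g)u'$), and \thref{DvsM} states that this identification is $\Delta$- and $\phi$-equivariant.

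\textbf{Stability under $N$: the main obstacle.} The monodromy $N_D$ is transported via $D\cong D_{\mathrm{pst}}(\rho)$, as in \cite[Proposition 2.6]{Liu0}, and there is no a priori reason that it preserves $\MM/u'$. I would invoke Liu's integrality result, \cite[Theorem 2.5 and Lemma 2.7 / Corollary 2.8]{Liu0}, which shows that $N_D(M)\subseteq M$ using the $(\varphi,\hat G)$-structure. Concretely, $N$ is recovered from $\log$ of the $\psi$-action on $\MMM$ restricted to $\MM$, via $\psi(x)=\sum_i \frac{t^i}{i!}\otimes N^i(x)$ reduced modulo $u'$ (and $W(\mathfrak m)$), and the condition $\Psi-\mathrm{id}\in u'\Ma(\AAA)$ together with $\psi(\MM)\subseteq\MMM$ should force the coefficient of $t$ to be integral. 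Liu's argument is for $\mathbb Z_p$-coefficients over a field. I would reduce to that case by regarding $R$ as a finite free $\mathbb Z_p$-module and $\rho$ as a $\mathbb Z_p$-representation of rank $n\cdot\mathrm{rk}_{\mathbb Z_p}R$; then $R$-linearity of $N$ follows from its uniqueness and the fact that it commutes with $m_r$, as in the proof of \thref{ff}. The relation $N\phi=p\phi N$ and commutation with $\Delta$ are inherited from $D$.

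\textbf{Functoriality and faithfulness.} A morphism $g:\rho\to\rho'$ lifts uniquely to $f:\MM\to\MM'$ by \thref{ff}. This $f$ induces $\mathcal D\to\mathcal D'$ compatibly with the sections $s$, using the uniqueness in \cite[Proposition 6.2.1.1]{Breuil}, and hence a map $M\to M'$ that respects $\phi$, $N$ and $\Delta$. For faithfulness: if $f$ induces zero on $M$, it induces zero on $D=M[1/p]$, hence on $D_{\mathrm{pst}}(\rho)$. Since $D_{\mathrm{pst}}$ is faithful on potentially semistable representations (because $\rho[1/p]\otimes \mathbf B_{\mathrm{st}}\cong D\otimes\mathbf B_{\mathrm{st}}$), we get $g[1/p]=0$, and therefore $g=0$ because $\rho'$ is $p$-torsion free.
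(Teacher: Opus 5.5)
Your proposal is correct and follows essentially the paper's argument. The paper likewise takes $\phi$- and $N$-stability, and the functor itself, from \cite[Theorem 2.3]{Liu0}; it applies that result to $\rho|_{G_{K'}}$, because Liu's setting requires semistability over a totally ramified extension, a point you should state explicitly. It then checks $\Delta$-stability from $M=\MM/u'$ and \thref{DvsM}, and passes to general finite flat $R$ by the $m_r$ trick of \thref{ff}, just as you do.

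The only real difference is faithfulness. The paper composes the faithful restriction $\rho\mapsto\rho|_{G_{K'}}$ with Liu's faithful functor $M_{\mathrm{st}}$. Your direct argument, via faithfulness of $D_{\mathrm{pst}}$ and $p$-torsion-freeness of $\rho'$, is equally valid and slightly more self-contained.
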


\begin{proof}
   The argument in \cite[Theorem 2.3]{Liu0} almost gives the proof except that the setting of \cite{Liu0} requires $\rho$ to be semistable over a totally ramified extension (hence we may only apply the results to $\rho|_{G_{K'}}$). We now show how to adapt this to our situation. For the lattice statement, it suffices to further verify that $M$ is $\Delta$-stable. This is clear from the construction $M= \MM /u'$ and the isomorphism in \thref{DvsM}. For the faithfulness of $M_R$, note that the restriction $\rho \mapsto \rho|_{G_{K'}}$ is faithful, hence after composing with the faithful functor $M_{\mathrm{st}}$ in \cite[Theorem 2.3]{Liu0}, we still get a faithful functor. 
   
   We argue in exactly the same way as the proof of \thref{ff} to extend the coefficient ring to any finite flat $\mathcal O$-algebra $R$.
\end{proof}

We record a lemma that will be used later.

\begin{lemma} \thlabel{key}
Let $R$ be a flat $\mathcal O$-algebra, we have a unique $\phi$-equivariant injection $\alpha: \A \otimes_{\varphi,\SSSS}\MM \isoto \AC \otimes_{W(k')} D$ such that for any $m \in M \subseteq D$, $\alpha^{-1}(1 \otimes m) \in \A \otimes_{\varphi,\SSSS}\MM$ maps to $m \in M$ under the reduction-modulo-$u'$ map $ \MM \twoheadrightarrow M $.
\end{lemma}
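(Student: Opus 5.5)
The plan is to build $\alpha$ in two stages, passing through Breuil's ring $\mathcal S_R[1/p]$ and the section $s$ of \cite[Proposition 6.2.1.1]{Breuil}, and then to obtain uniqueness from a Frobenius-contraction argument.

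\textbf{Step 1 (Breuil module).} By the discussion preceding (\ref{con}), the unique $\phi,N$-equivariant section $s: D \hookrightarrow \mathcal D$ gives a $\phi$-equivariant isomorphism
\[
\mathcal D=\mathcal S_R[1/p]\otimes_{\varphi,\SSSS}\MM \cong \mathcal S_R[1/p]\otimes_{W(k')} D .
\]
Reducing modulo $I^+_{\s}$ recovers the identity on $D$. Hence for $m\in M\subseteq D$ the element $s(m)\in\mathcal D$ reduces, modulo $u'$, to $m$ under the compatible reduction $\mathcal S_R\otimes_{\varphi}\MM\to \varphi^*\MM/u'\cong M$. This uses \thref{DvsM} and diagram (\ref{con}).

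\textbf{Step 2 (base change to $\A$).} Use the embedding $\mathcal S_R\hookrightarrow \A$, $u'\mapsto[\underline{\pi'}]$. It is well-defined because $E([\underline{\pi'}])=[\underline{-p}]+p=\xi$ has divided powers in $\AC$. Tensoring the isomorphism of Step 1 along this embedding, we define $\alpha$ as the composite
\[
\A\otimes_{\varphi,\SSSS}\MM\hookrightarrow \A[1/p]\otimes_{\varphi,\SSSS}\MM\cong \A[1/p]\otimes_{W(k')}D .
\]
Injectivity uses the $p$-torsion-freeness coming from $R$ being $\mathcal O$-flat. The reduction property transfers from Step 1, since $\A\to W(\overline k)\otimes R$ (killing $W(\mathfrak m)$) is compatible with $u'\mapsto 0$.

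There is a possible denominator issue. The section $s$ is only defined after inverting $p$, so the target in the statement should be read with $\AC[1/p]$, i.e.\ $\mathbf B^+_{\mathrm{cris}}$, unless integrality can be checked. If integrality is needed, I would verify it via the standard estimate that $s(M)$ lies in $\mathcal S_R\otimes_\varphi\MM$ up to a bounded power of $p$, depending on $h$.

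\textbf{Step 3 (uniqueness).} This is the step I expect to be the main obstacle. Suppose $\alpha,\alpha'$ are two such maps. Then $\delta=\alpha'\circ\alpha^{-1}-\mathrm{id}$ is a $\phi$-equivariant endomorphism of $\AC[1/p]\otimes D$. On the image of $M$ it takes values in the kernel of the reduction map, which is generated by $W(\mathfrak m)$ together with divided powers of $\xi$. The aim is to show $\delta=0$.

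Since $\phi_D$ is an isomorphism after inverting $p$, iterating gives
\[
\delta=\phi^k\circ\delta\circ\phi_D^{-k}.
\]
The matrix of $\phi_D^{-k}$ grows at most like $p^{-ck}$, where $c$ is bounded in terms of $h$. Meanwhile $\varphi^k$ applied to elements of $W(\mathfrak m)$, or of the PD ideal, tends to $0$ faster than any power of $p$ in $\AC$; concretely, $\varphi^k([\underline{\pi'}])=[\underline{\pi'}]^{p^k}$. This contraction forces $\delta=0$.

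The delicate point in this step is making the convergence estimate precise in $\AC$ with coefficients in $R$. I would try to reduce to entries and use the $p$-adic topology together with the explicit description of $\mathcal S$ given in \thref{S}.
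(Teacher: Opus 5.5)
Your existence argument (Steps 1 and 2) is the same as the paper's. You base-change Breuil's $\phi$-equivariant identification $\mathcal D\cong \s\otimes_{K'_0}D$, coming from the section $s$, along $\mathcal S_R\hookrightarrow\A$. You then read off the reduction property from $s(m)\equiv m \bmod I^+_{\s}$.

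The ``denominator issue'' you raise is not one. $D=M[1/p]$ is already a $K'_0$-module, so $\AC\otimes_{W(k')}D=\mathbf B^+_{\mathrm{cris}}\otimes_{K'_0}D$ automatically; the paper writes $\AC\otimes_{K'}D$. No integrality estimate on $s(M)$ is needed.

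For uniqueness you take a genuinely different route. The paper only says it ``follows from the uniqueness of the section $s$''. Making that precise would require showing that any $\alpha$ with the stated properties sends $1\otimes D$ into $\s\otimes_{\varphi,\SSSS}\MM$, so that it is induced by a $\phi$-equivariant section; the paper leaves this implicit. Your contraction argument works directly over $\AC$ and avoids this. It uses that $\alpha[1/p]$ is bijective, which the $\isoto$ in the statement grants.

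Step~3 as written contains a slip. The kernel of $\AC\to W(\bar k)\otimes R$ is not generated by $W(\mathfrak m)$ and the divided powers of $\xi$: the latter map to $p^i/i!\neq 0$. Moreover, Frobenius does not contract them, since $\varphi^k(\xi)=p+[\underline{-p}]^{p^k}$.

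Use the reduction modulo $u'$ from the statement instead.
\begin{itemize}
\item If $\alpha,\alpha'$ both satisfy it, then $\alpha^{-1}(1\otimes m)-\alpha'^{-1}(1\otimes m)\in u'\cdot(\A[1/p]\otimes_{\varphi,\SSSS}\MM)$. Hence $\delta(\bar\beta)=\bar\beta\,Y$ with $Y\in u'p^{-a}\Ma(\A)$ for some $a$.
\item Write $\phi(\bar\beta)=\bar\beta\bar c$. Then $\phi$-equivariance of $\delta$ gives $Y=\bar c\,\varphi(Y)\,\bar c^{-1}$.
\item Reducing $E(u')^hC^{-1}\in\Ma(\SSSS)$ modulo $u'$ gives $\bar c^{-1}\in p^{-h}\Ma(W(k')\otimes R)$.
\item Iterating the relation therefore gives $Y\in u'^{p^k}p^{-a-hk}\Ma(\A)$ for every $k$.
\item $u'^{e'}$ maps to $[\underline{-p}]=\xi-p$, and $(\xi-p)^q=\sum_i\binom qi i!\,\gamma_i(\xi)(-p)^{q-i}$. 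So $u'^{N}$ is divisible by $p^{cN-O(\log N)}$ in $\A$ for some $c>0$.
\item This growth, of order $p^k$, beats the linear loss $hk$. That, rather than ``faster than any power of $p$'', is the estimate you need. $p$-adic separatedness of $\A$ then gives $Y=0$.
\end{itemize}

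With this fix, your route is a self-contained proof of uniqueness over $\AC$. It is essentially Breuil's contraction proof of the uniqueness of $s$, transported to $\AC$. The paper's one-line reduction is shorter but relies on the unstated compatibility between $\alpha$ and $s$.
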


\begin{proof}
We still make use of the Breuil module $\mathcal D=\s \otimes_{\varphi,\SSSS}\MM $ associated to $\MM$. There is a $\phi$-equivariant injection $\A \otimes_{\varphi,\SSSS}\MM \hookrightarrow \A \otimes_{\mathcal S_R} \mathcal D$. The unique $\phi$-equivariant section $s:D \hookrightarrow \mathcal D$ induces a $\phi$-equivariant isomorphism $\mathcal D \cong \s \otimes_{K'} D$, which also induces a $\phi$-equivariant isomorphism $\A \otimes_{\varphi, \mathcal S_R} \mathcal D \isoto \AC \otimes_{K'} D $. Hence we have this $\phi$-equivariant injection $$\A \otimes_{\varphi,\SSSS}\MM \hookrightarrow \AC \otimes_{K'} D.$$ 

Since $s$ is a section to $\mathcal D \twoheadrightarrow D \cong \mathcal D / I^+_{\s} \mathcal D$, for any $m \in M \subseteq D$, $s(m) \mathrm{\ mod\ } I^+_{\s} =m$. Using the construction of $\alpha$, we can see that $\alpha^{-1}(1 \otimes m) \mathrm{\ mod \ } u' =1 \otimes m \in \AC \otimes_{W(k')} M$ for all $m \in M$.    

The uniqueness of $\alpha$ also follows from the uniqueness of the section $s$.
\end{proof}

Now, we will study how to associate the endomorphism $N$ on the $(\phi, N, \Delta)$-module $D$ (or $M$) from the corresponding $\MM$. 

Recall that given $\rho \in \XXX(R)$, we have associated to it a potentially semistable Breuil-Kisin module $\MM$, and a lattice $M$ inside the $(\phi,N,\Delta)$-module $D$. \thref{key} tells us that there is a $\phi$-equivariant inclusion $\MMM \hookrightarrow \A \otimes_{\varphi,\SSSS}\MM \hookrightarrow \AC \otimes_{W(k')} D$. There is a natural $G_\infty'$-action on $\A \otimes_{W(k')} D$ induced from the $G_\infty'$-action on $\A$. We can see that this inclusion is also $G_\infty'$-equivariant since $\MMM=\AAA \otimes_{\SSSS} \ML $ has trivial $G_\infty'$-action on $\ML$ according to \thref{BK}. 

Now we consider the $G_{L'}$-action on $\MMM$ and extend it semilinearly to $\AC \otimes_{W(k')} M$ using the chain of isomorphisms $$\AC\otimes_{\mathbf A_{\mathrm{inf}},\varphi}\MMM \cong \A \otimes_{\SSSS, \varphi }\MM \cong\AC \otimes_{W(k')} M.$$ It is clear that the $G_{L'}$-action on $\AC \otimes_{W(k')} M$ is given by the map $\psi:M \to \AC\otimes_{W(k')} M$ since $M$ is fixed by $G_\infty'$ and the group $\mathrm{Gal}(L'_{\infty}L'_{\mathrm{cyc}}/L'_{\mathrm{cyc}})$ is topologically generated by $\psi$. The next proposition relates this Galois action with the operator $N$ on $M$.

\begin{thm} \thlabel{exp} \thlabel{vip}
   Using the notations from the previous passage, consider the inclusion $\MMM \hookrightarrow \AC \otimes_{W(k')} D$. For any $m \in \MM$, we write $1 \otimes m$ for its image in  $\AC \otimes_{W(k')} D$. Then $$\psi(m)=\mathrm{exp}(t \otimes N)(m),$$ where $\mathrm{exp}$ is defined by its usual Taylor series, i.e. $\mathrm{exp}(x)=\sum_{i=0}^\infty x^i/i!$.
\end{thm}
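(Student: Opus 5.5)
The plan is to reduce the statement to the well-known description of the Galois action on $\mathbf B_{\mathrm{st}}\otimes D$ through Fontaine's period isomorphism, and then transport it along the embedding of \thref{key}. First I reduce to $R$ finite flat over $\mathcal O$, and by the argument at the end of \thref{ff} (multiplication by $r\in R$ commuting) to the case $R=\mathcal O$, or simply work with $R[1/p]$-coefficients throughout. The only element acting nontrivially is $\psi$, because $M$ is fixed by $G'_\infty$ and $\mathrm{Gal}(L'_\infty L'_{\mathrm{cyc}}/L'_{\mathrm{cyc}})$ is topologically generated by $\psi$. So it suffices to compute $\psi$ on the image of $m\in M$ under the inclusion $\MMM\hookrightarrow \AC\otimes_{W(k')}D$.

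\textbf{Comparison isomorphism.} I will realize $D=D_{\mathrm{pst}}(\rho)$ inside $\mathbf B_{\mathrm{st}}\otimes_{\mathbb Q_p}\rho$. Write $\mathfrak u=\log[\underline{\pi'}]$ (equivalently, up to the constant $\log$ term, $\frac1{e'}\log[\underline{-p}]$). Fontaine's isomorphism $\mathbf B_{\mathrm{st}}\otimes_{K_0'}D\cong \mathbf B_{\mathrm{st}}\otimes\rho$ is $G_{L'}$-equivariant, with $G_{L'}$ acting trivially on $D$. Following Breuil and Liu, the unique $\phi,N$-equivariant section $s$ of \thref{key} is characterized by the following: the map $D\to \mathbf B_{\mathrm{cris}}\otimes_{K_0'}D$, $d\mapsto \sum_i \frac{(-\mathfrak u)^i}{i!}\otimes N^i(d)$, i.e. $d\mapsto \exp(-\mathfrak u\otimes N)d$, identifies the $\mathbf B^+_{\mathrm{cris}}$-span of $\alpha$'s image with the $N=0$ part of $\mathbf B_{\mathrm{st}}\otimes D$. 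Concretely, $\alpha^{-1}(1\otimes m)$ corresponds to $\exp(\mathfrak u\otimes N)(m)\in (\mathbf B_{\mathrm{st}}\otimes D)^{N=0}$, which is the $\mathbf B_{\mathrm{cris}}$-module on which $G_{L'}$ acts through $\rho$. I would justify this through Liu's construction in \cite[Prop. 2.6]{Liu0}, which identifies $D$ with $D_{\mathrm{pst}}(\rho)$ via exactly this twist. Checking that these normalizations match (sign, $\varphi$-twist and the factor $e'$) is where I expect the main obstacle.

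\textbf{Computing the action of $\psi$.} Since $\psi(\pi'_i)=\zeta_{p^i}\pi'_i$, we have $\psi([\underline{\pi'}])=[\underline 1][\underline{\pi'}]$. Hence $\psi(\mathfrak u)=\mathfrak u+t$. The group $G_{L'}$ fixes $D$, so
$$\psi\bigl(\exp(\mathfrak u\otimes N)m\bigr)=\exp((\mathfrak u+t)\otimes N)m=\exp(\mathfrak u\otimes N)\exp(t\otimes N)m,$$
using that these commute because $N$ is a single operator and $\mathfrak u,t$ are scalars. Untwisting by $\exp(-\mathfrak u\otimes N)$ gives $\psi(1\otimes m)=\exp(t\otimes N)(1\otimes m)$ inside $\AC\otimes D$, after accounting for the Frobenius twist of \thref{key} via $\AC\otimes_{\mathbf A_{\mathrm{inf}},\varphi}\MMM$; the identity $\varphi(t)=pt$ together with $N\varphi=p\varphi N$ makes this compatible. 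The series converges in $\AC\otimes D$ because $t^i/i!\in\AC$, and $N$ is nilpotent on $D$.

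Finally, I will extend to all $m\in\MM$ by semilinearity. Both sides are $\psi$-semilinear, and $\psi$ acts on the coefficients in $\mathfrak S_{L',R}$ through their image in $\AAA$, so the identity on the generators of $M$, lifted through $\alpha$ as in \thref{key}, determines it on all of $\MM$. This step needs care: the formula is really stated for the images $1\otimes m$ coming from $M\subseteq D$, so I will read the statement as asserting it for those images.
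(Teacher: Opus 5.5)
\paragraph{Verdict.} Your route differs from the paper's in the computational step, but it has a genuine gap at the step that carries the content.

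\paragraph{What works.} The closing computation is fine, and it is more explicit than what the paper does. From $\psi([\underline{\pi'}])=[\underline 1][\underline{\pi'}]$ you get $\psi(\mathfrak u)=\mathfrak u+t$ for $\mathfrak u=\log[\underline{\pi'}]$. Untwisting by $\exp(\mathfrak u\otimes N)$ then yields $\exp(t\otimes N)$. This is in effect a derivation of the formula \cite[(5.1.1), (5.1.4)]{Liu1}, which the paper simply quotes.

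\paragraph{The gap.} The unjustified step is ``Concretely, $\alpha^{-1}(1\otimes m)$ corresponds to $\exp(\mathfrak u\otimes N)(m)$''.
\begin{itemize}
\item The map $\alpha$ of \thref{key} is built from Frobenius alone. Breuil's section $s$ is characterized by $\phi$-equivariance and by reducing to the identity modulo $I^+_{\s}$.
\item Your description of $s$ via $d\mapsto\exp(-\mathfrak u\otimes N)d$ is not meaningful. That element in general lies in $\mathbf B_{\mathrm{st}}\otimes D$, not in $\mathbf B_{\mathrm{cris}}\otimes D$.
\item Nothing in the construction of $\alpha$ sees the $G_K$-action on $\MMM$. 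That action is part of the potentially semistable Breuil--Kisin module and defines $\rho=T_R(\MM)$.
\end{itemize}
What you need is that the composite
$$\A\otimes_{\varphi,\SSSS}\MM\xrightarrow{\ \alpha\ }\AC\otimes_{W(k')}D\xrightarrow{\ \exp(\mathfrak u\otimes N)\ }(\mathbf B_{\mathrm{st}}\otimes D)^{N=0}\cong\mathbf B_{\mathrm{cris}}\otimes\rho$$
coincides with the map coming from the given Galois action on $\MMM$, i.e.\ from the comparison $\MMM\otimes_{\AI}\mathbf B_{\mathrm{cris}}\cong T_R(\MM)\otimes\mathbf B_{\mathrm{cris}}$. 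Only then does your computation of $\psi$ on $\mathbf B_{\mathrm{st}}\otimes D$ say anything about $\psi$ on $\MMM$.

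The citation you offer does not supply this. The paper uses \cite[Proposition 2.6]{Liu0} to identify $D$ with $D_{\mathrm{pst}}(\rho)$ as a $(\phi,N,\Delta)$-module. Such an identification does not locate $\MMM$ inside $\mathbf B_{\mathrm{st}}\otimes D$.

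This equivariance is exactly what the paper's proof is about. It uses \cite[Lemma 5.2.1]{Liu1} and $\B\otimes_{K'}D\cong T_R(\MMM)\otimes_R\B$ to see that $\MMM\hookrightarrow\A\otimes_{\mathcal S_R}\mathcal D$ is $G_{L'}$-equivariant for Liu's action. It then invokes uniqueness of the $G_{L'}$-action on $\A\otimes_{\varphi,\SSSS}\MM$ extending the one on $\MMM$.

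\paragraph{Why $\varphi$-equivariance alone does not close it.} Two $\varphi$-equivariant $\mathbf B_{\mathrm{cris}}$-linear isomorphisms $\mathbf B_{\mathrm{cris}}\otimes D\cong\mathbf B_{\mathrm{cris}}\otimes\rho$ can differ by any $\varphi$-invariant automorphism of $\mathbf B_{\mathrm{cris}}\otimes D$. So you would also have to:
\begin{itemize}
\item match both embeddings with $M\subseteq D$ modulo $W(\mathfrak m)$, using condition (1) of \thref{BK1} and \thref{DvsM};
\item run a Dwork-type uniqueness argument.
\end{itemize}
That amounts to essentially reproving Liu's comparison.

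\paragraph{The normalization is part of the same issue.} Let $\ell_g=\log\bigl(g([\underline{\pi'}])/[\underline{\pi'}]\bigr)$ and consider a formula $g(a\otimes x)=\sum_i g(a)\frac{(c\,\ell_g)^i}{i!}\otimes N^i(x)$ on $\A\otimes_{\mathcal S_R}\mathcal D$. Comparing it on $x$ and on $u'x$ shows it is compatible with $u'\mapsto[\underline{\pi'}]$ and $N_{\s}(u')=-u'$ only for $c=-1$. So the sign, and the factor $e'$, with which $\exp(t\otimes N)$ acts on $D$ depend on how $N$ on $D$ is matched with the monodromy of $\mathbf B_{\mathrm{st}}$. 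They have to be read off from the comparison, not fixed by hand.

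\paragraph{Minor point.} $\exp(t\otimes N)$ is $\AC$-linear, not $\psi$-semilinear, so your closing semilinearity argument is off. The correct reading is that $\psi$ acts on $\AC\otimes_{W(k')}D$ as $\psi\otimes\exp(tN)$. You reach this at the end, and it is how the paper uses the result in $\varphi(\Psi)=\Xi^{-1}\exp(t\otimes N)\psi(\Xi)$.
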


\begin{proof}
  We can give a $G_{L'}$-action on $\A \otimes_{\mathcal S_R} \mathcal D$ through $N_{\mathcal D}=N_{\s}\otimes N_D$ using the formula \cite[(5.1.1)]{Liu1}:
   $$g(a \otimes x)= \sum_{i=0}^{\infty} g(a) \frac{(\mathrm{log \ } (\underline{\epsilon}(g)))^i}{i!} \otimes N^i(x),$$
  where $g \in G_{L'}, a \in \A, x \in \mathcal D$.
  (Note that here we regard everything as a $G_{L'}$-representation, hence the field ``$K$" in \cite[Section 5]{Liu1} is our $L'$.) 

  By \cite[Lemma 5.2.1]{Liu1}, we have a $G_{L'}$-equivariant homomorphism $$\mathbb V_{\mathrm{st}}(\mathcal D) \cong ((\A \otimes_{\mathcal S_R} \mathcal D) \otimes_{\A} \B)^{\phi=1, \mathrm{Fil}},$$
  where $\mathbb V_{\mathrm{st}}(\mathcal D)$ is defined to be $\mathrm{Fil}^0(\mathcal D \otimes_{\mathcal S_R}\mathbf A_{\mathrm {st},R})^{\phi=1,N=0}$.

  Furthermore, since $\mathbb V_{\mathrm{st}}(\mathcal D)\cong \mathbb V_{\mathrm{st}}(D):=\mathrm{Fil}^0( D \otimes_{\mathcal S_R}\mathbf A_{\mathrm {st},R})^{\phi=1,N=0}$, we have the following $G_{L'}$-equivariant embedding
  $$\mathbb V_{\mathrm{st}}(\mathcal D) \hookrightarrow \B \otimes_{K'} D.$$ 
  
  \cite[Section 2.3]{Liu1} gives another $G_{L'}$-equivariant isomorphism $$\B \otimes_{K'} D \cong T_R(\MMM) \otimes_R \B.$$

  Semilinearity gives the following $G_{L'}$-equivariant embeddings:
  $$T_R(\MMM) \hookrightarrow \MMM \otimes_{\AAA} W(\mathbb C_p^\flat),$$
  $$((\A \otimes_{\mathcal S_R} \mathcal D) \otimes_{\A} \B)^{\phi=1, \mathrm{Fil}} \hookrightarrow (\A \otimes_{\mathcal S_R} \mathcal D) \otimes_{\A} \B.$$
  
 Putting these together, we conclude that the inclusion $\MMM \hookrightarrow \A \otimes_{\mathcal S_R} \mathcal D$ is $G_{L'}$-equivariant. From the construction of the lattice $M \subseteq D$, we obtain a $G_{L'}$-equivariant inclusion $\MMM \hookrightarrow \AC \otimes_{W(k')} D$.

From the paragraph preceding this proposition, $\A \otimes_{\varphi,\SSSS}\ML$ has a unique natural $G_{L'}$-action extending that on $\MMM$. This natural action is given by the element $\psi$. By uniqueness, this $G_{L'}$-action must coincide with the action defined in \cite[(5.1.1)]{Liu1}. From \cite[(5.1.4)]{Liu1}, we know that $\psi(m)=\mathrm{exp}(t \otimes N(m))$ holds.
\end{proof}

\subsection{The moduli stack of potentially semistable Breuil-Kisin modules}

Recall that in \autoref{ps}, we have defined the groupoids $\X(R)$ and $\ST(R)$ for finite flat $\mathcal O$-algebras. Our next subject is to upgrade these to a stack $\X$ and to relate it to $\XXX$.  

We can easily see that $\ST(R)$ is not defined in a good algebraic way to build a stack. Therefore, we need to build an auxiliary $\STT(R)$ using results from \autoref{Relation} to model $\X(R)$. 

\begin{defn} \thlabel{algstack}
Let $\STT$ be the functor from $\mathcal O$-algebras to groupoids, sending any $\mathcal O$-algebra $R$ to the groupoid of triples $((A^{(j)})_{j \in \mathcal J},N,\Xi)$ where
\begin{enumerate} 
    \item $A^{(j)}, (v+p)^h (A^{(j)})^{-1} \in \mathcal P_j(R)$ for all $j \in \mathcal J$ and $N \in \Ma(W(k') \otimes_{\mathbb Z_p} R)$;
    \item $\Xi \in \GL_n(\mathcal S_R[1/p])$ and $\Xi-1 \in u'\Ma(\mathcal S_R[1/p])$, where $\mathcal S_R$ is defined in \thref{Breuil}, satisfying $$\Xi^{-1} \e \left(t\otimes N \right) \psi(\Xi)\in \GL_n(\AAA),$$ and  $$\Xi \varphi(C)\varphi(\Xi)^{{-1}}= \overline C\in \Ma(W(k') \otimes_{\mathbb Z_p} R),$$ where $C:=\mathfrak C((A^{(j)} \omega^{(j)})_{j \in \mathcal J})$ and $\overline C:=C \mathrm{ \ mod \ } u'$; 
    \item $N \overline C =p \overline C \varphi(N)$;
    \item $\Psi^{}- \mathrm{id} \in u'\Ma(\AAA)$, where $\Psi:=\varphi^{-1}(\Xi^{-1} \e \left(t\otimes N \right) \psi(\Xi)) \in \GL_n(\AAA)$ \footnote{Note that $\varphi$ is bijective on $\AI$.}.
\end{enumerate}
\end{defn}

\begin{rmk}
    Since $N$ is naturally nilpotent, the expression $\Xi^{-1} \e \left(t\otimes N \right) \psi(\Xi)$ is in fact a finite sum.
\end{rmk}

\begin{prop} \thlabel{Nphi}
    Let $((A^{(j)})_{j \in \mathcal J},N,\Xi) \in \STT(R)$. Let $C=\mathfrak C((A^{(j)} \omega^{(j)})_{j \in \mathcal J})$ and $\Psi:=\varphi^{-1}(\Xi^{-1} \e \left(t\otimes N \right) \psi(\Xi))$. We have the identity $$\Psi \psi(C)=C\varphi(\Psi).$$
\end{prop}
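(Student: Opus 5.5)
We prove the equivalent identity obtained by applying $\varphi$, which is bijective on $\AI$ and commutes with $\psi$ because the Galois action on $\AI$ commutes with Frobenius. Set $\Theta:=\varphi(\Psi)=\Xi^{-1}\e(t\otimes N)\psi(\Xi)$. The claim $\Psi\psi(C)=C\varphi(\Psi)$ then becomes
$$\Theta\,\psi(\varphi(C))=\varphi(C)\,\varphi(\Theta).$$
We will verify this by inserting the defining relation $\Xi\varphi(C)\varphi(\Xi)^{-1}=\overline C$ from condition (2) of \thref{algstack}, rewritten as $\varphi(C)=\Xi^{-1}\overline C\varphi(\Xi)$.

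First, expand the left side. Since $\psi$ is a ring automorphism, $\psi(\varphi(C))=\psi(\Xi)^{-1}\psi(\overline C)\psi(\varphi(\Xi))$. The matrix $\overline C$ has entries in $W(k')\otimes R$, which $\psi\in G_{L'}$ fixes, so $\psi(\overline C)=\overline C$. Hence
$$\Theta\,\psi(\varphi(C))=\Xi^{-1}\e(t\otimes N)\,\overline C\,\psi(\varphi(\Xi)).$$

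Next, expand the right side. Here $\varphi(\Theta)=\varphi(\Xi)^{-1}\e(\varphi(t)\otimes\varphi(N))\varphi(\psi(\Xi))$, so
$$\varphi(C)\varphi(\Theta)=\Xi^{-1}\overline C\,\e(pt\otimes\varphi(N))\,\psi(\varphi(\Xi)),$$
using $\varphi(t)=pt$ and $\varphi\psi=\psi\varphi$.

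Comparing the two expressions, it remains to show $\e(t\otimes N)\overline C=\overline C\,\e(pt\otimes\varphi(N))$. Condition (3) gives $N\overline C=p\overline C\varphi(N)$. Iterating, $N^i\overline C=p^i\overline C\varphi(N)^i$ for all $i$. Multiplying by $t^i/i!$ and summing gives the required identity. The sums are finite because $N$, and hence $\varphi(N)$, is nilpotent (see the remark after \thref{algstack}).

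The main obstacle is making sure all of these manipulations are legitimate in a ring containing $\AAA$, $\mathcal S_R[1/p]$ and $t$. Concretely, we need $\A[1/p]$ (or $\mathbf B^+_{\mathrm{cris}}$ with coefficients) to carry compatible actions of $\varphi$ and $\psi$, extending those on $\mathcal S_R[1/p]$ via $u'\mapsto[\underline{\pi'}]$ and on $\AC$. This holds because $\varphi$ and $\psi$ act on these rings by continuous ring endomorphisms that commute with each other. We also need $\varphi$ to act on $W(k')\otimes R$ compatibly with its action on the coefficient matrices $N$ and $\overline C$, so that condition (3) is applied in the same sense as it appears above.
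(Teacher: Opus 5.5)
Your proof is correct and follows essentially the same route as the paper. Both arguments iterate condition (3) to get $N^i\overline C=p^i\overline C\varphi(N)^i$ and use $\varphi(t)=pt$ to obtain $\e(t\otimes N)\,\overline C=\overline C\,\varphi\bigl(\e(t\otimes N)\bigr)$, then substitute $\e(t\otimes N)=\Xi\varphi(\Psi)\psi(\Xi)^{-1}$ and $\overline C=\Xi\varphi(C)\varphi(\Xi)^{-1}$; the only difference is that you plug condition (2) into the $\varphi$-applied target identity and reduce to the exponential identity, whereas the paper proves the exponential identity first and then substitutes (you also state explicitly that $\psi(\overline C)=\overline C$, which the paper uses implicitly).
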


\begin{proof}
Since $N \overline C =p \overline C \varphi(N)$, we obtain $N^i \overline C =p^i \overline C\varphi(N)^i$ by induction. Hence $(t\otimes N)^i \psi(\overline C)=p^i \overline C\varphi(N)^i$. Using the identity $\varphi(t)=pt$, we get $(t\otimes N)^i \psi(\overline C)= \overline C(\varphi(t)\otimes \varphi(N))^i$. 

Using $\Xi \varphi(\Psi)\psi(\Xi)^{-1}=\e(t \otimes N)$ and summing up the Taylor series for the exponential, one gets $\Xi \varphi(\Psi) \psi(\Xi)^{-1} \psi(\overline C)=\overline C \varphi(\Xi \varphi(\Psi) \psi(\Xi)^{-1})$. Substituting $\overline C$ by $\Xi\varphi(C)\varphi(\Xi)^{-1}$, we get our desired identity.
\end{proof}

\begin{thm} \thlabel{stackyy}
   Let $R$ be a $p$-adically complete, topologically of finite type, $\mathcal O$-flat algebra. We have an equivalence of groupoids $$\STT(R) / \prod_{j \in \mathcal J} \mathcal  P_j(R) \cong \X(R).$$ 
\end{thm}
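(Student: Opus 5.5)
The plan is to reduce to \thref{big2}: I will construct mutually quasi-inverse functors between $\STT(R)$ and $\ST(R)$ that are compatible with the two $\prod_{j\in\mathcal J}\mathcal P_j(R)$-actions. Composing with the equivalence $\ST(R)/\prod_j\mathcal P_j(R)\cong\X(R)$ then gives the result.

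\emph{From $\STT$ to $\ST$.} Given $((A^{(j)})_j,N,\Xi)\in\STT(R)$, I send it to $((A^{(j)})_j,\Psi)$ with $\Psi:=\varphi^{-1}(\Xi^{-1}\e(t\otimes N)\psi(\Xi))$. The conditions are checked as follows.
\begin{itemize}
\item Conditions (1) and (3) of \thref{local model} are immediate from (1) and (4) of \thref{algstack}.
\item Condition (2), the identity $C\varphi(\Psi)=\Psi\psi(C)$, is exactly \thref{Nphi}.
\item For condition (4), put $\Psi'=\varphi(\Psi)$, so that $\Psi'=\Xi^{-1}\e(t\otimes N)\psi(\Xi)$. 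Since $\Xi$ has entries in $\mathcal S_R[1/p]$, it is fixed by $G'_\infty$ and hence by $\mathrm{Gal}(\overline K/L'_{\mathrm{cyc}}L'_\infty)$. The element $t$ is also fixed by $\mathrm{Gal}(\overline K/L'_{\mathrm{cyc}}L'_\infty)$, so $\Psi'$ is fixed by this group.
\item For $g_m$, use $g_m(t)=mt$ and $g_m\psi g_m^{-1}=\psi^m$ (the latter was shown in the proof of \thref{big2}). These give $g_m(\Psi')=\Xi^{-1}\e(mt\otimes N)\psi^m(\Xi)$. Telescoping shows that this equals the cocycle product $\Psi'\psi(\Psi')\cdots\psi^{m-1}(\Psi')$. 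The key point is that $\psi(t)=t$, so $\psi^k(\e(t\otimes N))=\e(t\otimes N)$. Applying $\varphi^{-1}$ gives $g_m(\Psi)=\Psi^{(m)}$.
\end{itemize}

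\emph{From $\ST$ to $\STT$.} I would first treat finite flat $R$ and then pass to general $R$.
\begin{itemize}
\item Let $R$ be finite flat. Given $((A^{(j)}),\Psi)$, \thref{big2} produces $\MM\in\X(R)$. By \thref{equivalence} this gives $\rho$, hence $D$, the lattice $M$ of \thref{stable}, and the monodromy matrix $N$ of $N_M$ in the basis $\overline\beta=\beta\bmod u'$.
\item Define $\Xi$ as the change-of-basis matrix between $1\otimes\beta$ in $\mathcal D=\s\otimes_{\varphi}\MM$ and the basis $s(\overline\beta)$ coming from Breuil's section. Then $\Xi\equiv1\bmod u'$ because $s$ is a section of reduction modulo $I^+$.
\item Frobenius-equivariance of $s$ yields $\Xi\varphi(C)\varphi(\Xi)^{-1}=\overline C$.
\item The relation $N\phi=p\phi N$ on $M$ gives condition (3).
\item \thref{vip} gives $\varphi(\Psi)=\Xi^{-1}\e(t\otimes N)\psi(\Xi)$ in $\GL_n(\AAA)$. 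This is where \thref{key} enters, to pass through $\A\otimes\MM$.
\item This construction is forced. As remarked in the paper, $\Xi$ is uniquely determined by $C$ and $N$: it is the unique solution of the Frobenius equation with $\Xi\equiv1\bmod u'$, obtained by iterating $\Xi=\overline C\varphi(\Xi)\varphi(C)^{-1}$. Also, $N$ is recovered from $\Psi$ modulo the relevant ideal. So the two functors are inverse on finite flat points.
\end{itemize}

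\emph{Main obstacle.} The step I expect to be hardest is extending this from finite flat $R$ to general $p$-adically complete, topologically finite type, $\mathcal O$-flat $R$. Here $T_R$ and Liu's lattice theory are unavailable.
\begin{itemize}
\item For $\Xi$, I would construct it directly by the convergent product $\Xi=\lim_k\overline C\varphi(\overline C)\cdots\varphi^{k}(\cdots)$ in $\mathcal S_R[1/p]$. Convergence should follow from the height bound, since $\varphi(E)/p$ is a unit-like element in $\mathcal S$.
\item For $N$, I would define it from $\Psi$ by taking the logarithm of $\varphi(\Psi)$ conjugated by $\Xi$, then reducing appropriately.
\item The required identities (integrality of $\Xi^{-1}\e(t\otimes N)\psi(\Xi)$, $N\in\Ma(W(k')\otimes R)$, and condition (4)) are equalities or vanishing statements of entries. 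I would verify them after every specialization $R\to R'$ with $R'$ finite flat, where they hold by the previous step, and then conclude with \thref{aux1}. The same argument applies to injectivity and to the uniqueness of $\Xi$.
\end{itemize}

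Finally, compatibility with the parahoric action is checked via \thref{change basis} and \thref{basis change}. On the $\STT$ side, $I$ acts by $\Xi\mapsto\overline D\,\Xi\,\varphi(D)^{-1}$ and $N\mapsto\overline D N\overline D^{-1}$, where $\overline D=D\bmod u'$. This matches $\Psi\mapsto D\Psi\psi(D)^{-1}$ after a short computation, which finishes the reduction to \thref{big2}.
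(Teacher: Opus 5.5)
This is essentially the paper's proof. Like you, the paper reduces to \thref{big2} via $\delta(R)\colon((A^{(j)}),N,\Xi)\mapsto((A^{(j)}),\varphi^{-1}(\Xi^{-1}\e(t\otimes N)\psi(\Xi)))$, checks condition (4) with $g_m\psi g_m^{-1}=\psi^m$ and $\psi(t)=t$, inverts via the $\phi$-equivariant inclusion $\varphi^*\MM\hookrightarrow\s\otimes M$ together with \thref{key} and \thref{vip}, and concludes from uniqueness of that embedding and compatibility of the parahoric actions. The paper never separately treats the passage from finite flat to general $R$ that you single out (it applies the lattice and monodromy constructions directly), so that part of your plan goes beyond it; if you pursue it, note that $N\in\Ma(W(k')\otimes_{\mathbb Z_p}R)$ is an integrality condition rather than a vanishing condition, so \thref{aux1}, which concerns ideals of $R$, does not give it by itself.
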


\begin{proof}
    \thref{big2} has already shown that $\ST(R) / \prod_{j \in \mathcal J} \mathcal  P_j(R)$ is isomorphic to $\X(R)$. It remains to show that $$\ST(R) / \prod_{j \in \mathcal J} \mathcal  P_j(R) \cong \STT(R) / \prod_{j \in \mathcal J} \mathcal  P_j(R).$$
    
 Consider the following map from $\STT(R)$ to $\ST(R)$: $$\delta(R): ((A^{(j)}), N, \Xi)\mapsto ((A^{(j)}), \varphi^{-1}(\Xi^{-1} \e \left(t\otimes N \right) \psi(\Xi)) ) .$$ The requirements for $\Psi$ in conditions (2) and (4) in \thref{algstack} have ensured that $\delta(R)$ is well-defined. Condition (2) of \thref{local model} is guaranteed by \thref{Nphi}. It remains to verify condition (4). The claim that $\Psi$ is fixed by $\mathrm{Gal}(\overline K/L'_{\mathrm{cyc}}L'_\infty)$ is obvious since $t$ is fixed by this group. It suffices to prove that $g_m(\Psi)=\Psi^{(m)}$. In fact, let $\varphi(\Psi)= \Xi^{-1} \e\left(t\otimes N\right) \psi(\Xi)$, a direct calculation shows that $\varphi(\Psi)^{(m)}=\Xi^{-1}\e\left(mt\otimes N\right) \psi^m(\Xi)$ since $\psi(t)=t$. In the proof of \thref{big2}, we have shown that $g\psi g^{-1}=\psi^{k_g}$, which implies $g_m \psi g_m^{-1}=\psi^m$ for all $m$ prime to $p$. Hence $\psi^m(\Xi)=g_m \psi g_m^{-1} (\Xi)=g_m\psi(\Xi)$ since $\Xi$ has entries in $\mathcal S_R$, where $\mathrm{Gal}(L'_\infty L'_\mathrm{cyc}/L'_\infty)$ acts trivially. Therefore $$g_m(\varphi(\Psi))=g_m(\Xi)^{-1}g_m(\e\left(t\otimes N\right)) g_m(\psi(\Xi))=\Xi^{-1} \e\left(mt\otimes N\right) \psi^m(\Xi) =\varphi(\Psi)^{(m)}.$$
 
 To construct an inverse of $\delta(R)$, we start with $((A^{(j)}), \Psi) \in \ST(R)$, and associate with it a potentially semistable Breuil-Kisin module $\MM \in \X(R)$ with an eigenbasis $\beta$ such that the map $\psi: \MMM \to \MMM$ is given by the matrix $\Psi$ under the basis $\beta$. We form the $W(k') \otimes_{\mathbb Z_p} R$-module $M$ according to diagram (\ref{con})  and the operator $N:M \to M$. 

 Consider the $\phi$-equivariant inclusion $\varphi^*\MM \hookrightarrow \s \otimes_{W(k')\otimes R} M$. Let $\Xi$ be the matrix of the inclusion under basis $\beta$. In other words, the basis $\varphi^*\beta=1 \otimes \beta$ of $\varphi^*\MM$ is mapped to $\bar  \beta \Xi$ in $\s \otimes M$, where $\bar \beta:= \beta\mathrm{\ mod \ }u'$. The matrix of $\phi:M \to M$ under the basis $\bar \beta$ is $\bar C:= C\mathrm{\ mod \ }u'$. Since the inclusion is $\phi$-equivariant, we have $\Xi^{-1}\bar C\varphi(\Xi)= \varphi(C)$. The relation $\Xi-1 \in u'\Ma(\mathcal S_R[1/p])$ comes from the fact that $M=\MM/u'$. The uniqueness result in \thref{key} shows that this condition determines the inclusion.
 
 \thref{vip} shows that $\psi(m)=\mathrm{exp}\left(t\otimes N(m)\right)$ under this inclusion. Therefore, we obtain the matrix $N$. The uniqueness of the preimage of $((A^{(j)}), \Psi)$ is again guaranteed by the uniqueness of the embedding $\MMM \hookrightarrow \AC \otimes_{W(k')} M$.

 Since the equivalence relations on $\ST$ and $\STT$ are compatible, $\delta$ induces an isomorphism. 
\end{proof}

We will be using $\STT$ to define our stack $\X$. 

 Let $\mathcal W$ be the functor from $\mathcal O$-algebras to groupoids, sending any $\mathcal O$-algebra $R$ to the groupoid of triples $((A^{(j)})_{j \in \mathcal J},N,\Xi)$, where $A^{(j)}, (v+p)^h (A^{(j)})^{-1} \in \mathcal P_j(R)$ for all $j \in \mathcal J$, and $N \in \Ma(W(k') \otimes_{\mathbb Z_p} R)$ is nilpotent, and $\Xi \in 1+u'\Ma(\s)$. It is clear that $\mathcal W$ is a stack. Also, given any $\mathcal O$-algebra $R$, we have $\STT(R) \subseteq \mathcal W(R)$.

 Now we want to show that the conditions (2), (3) and (4) in \thref{algstack} cut out a substack of $\mathcal W$. For any $\mathcal O$-algebra $R$ and triple $W=((A^{(j)})_{j \in \mathcal J},N,\Xi) \in \mathcal W(R)$, we will define an ideal $I_W$ of $R$ such that $W$ is in $\STT$ if and only if $I_W=0$.

 We first look at the conditions $\varphi(\Psi)=\Xi^{-1} \e(t\otimes N) \psi(\Xi) \in \GL_n(\AAA)$ and $\Psi-\text{id} \in u'\Ma(\AAA)$. A priori, any entry of $\varphi(\Psi)$ lies in the ring $\A[1/p]$, so does any entry of $\Psi-\text{id}$. Regard both $\AAA[1/p]$ and $\A[1/p]$ as $\mathbb Q_p$-vector spaces. The inclusion $\AAA[1/p] \hookrightarrow \A[1/p]$ gives another $\mathbb Q_p$-vector space. Similarly, we can form the $\mathbb Q_p$-vector space $\A[1/p]/u'\A[1/p]$. Let $$V:=\frac{\AC[1/p]}{\AI[1/p]} \bigoplus \frac{\AC[1/p]}{\AI[1/p]} \bigoplus\frac{\AC[1/p]}{u'^p\AC[1/p]}.$$ We denote its continuous dual $\text{Hom}^{\text{cont}}_{\mathbb Q_p}(V,\mathbb Q_p)$ by $V^\vee$. We first construct an ideal $J_1 \subseteq R[1/p]$ such that $J_1=0$ if and only if $\varphi(\Psi)$ lies in $\GL_n(\AAA[1/p])$ and $\Psi-\text{id} \in u'\Ma(\A[1/p])$. Let $I_1 = J_1 \cap R$ be an ideal of $R$.

 In fact, we can define $$J_1=\text{the ideal generated by } \{(1\otimes\lambda)(\bar F_{\mu,\nu}, (\bar F^{-1})_{\mu,\nu}, (\bar F-I)_{\mu,\nu})|\lambda \in V^\vee, 1 \le \mu, \nu \le n\}\subseteq R[1/p],$$ where $\bar F$ denotes the reduction of $\varphi(\Psi)\in \A[1/p]$, and $A_{\mu,\nu}$ denotes the $(\mu,\nu)$-entry of the matrix $A$. We regard $1\otimes \lambda$ as a functional $R[1/p]\widehat\otimes_{\mathbb Q_p} V \to R[1/p]$. 
 
 It is clear that $J_1=0$ is equivalent to $\varphi(\Psi)\in \GL_n(\AAA[1/p])$ and $\Psi-\text{id} \in u'\Ma(\A[1/p])$. (Here, we have used $\varphi(u')=u'^p$.) 

 Next, we deal with the condition $\Xi C\varphi(\Xi)^{-1}= \overline C$, where $\overline C:=C \mathrm{ \ mod \ } u' \in \Ma(W(k')\otimes_{\mathbb Z_p}R)$. Let $W_{\mu,\nu} \in \s$ be the $(\mu, \nu)$ entry of the matrix $\Xi \varphi(C)\varphi(\Xi)^{-1}-\overline C$. Let $J_{\mu,\nu}$ be the ideal of $R[1/p]$ generated by the coefficients of the PD envelope expansion in $u'$ of $W_{\mu,\nu}$ (see \thref{S}). Let $J_2$ be the ideal of $R[1/p]$ generated by $J_{\mu,\nu}$ for every entry $(\mu, \nu)$. Let $I_2=J_2 \cap R$. Then $I_2=0$ if and only if $\Xi C\varphi(\Xi)^{-1}= C \mathrm{ \ mod \ } u'$.

 At last, let's consider the commuting relation in condition (3) $N \overline C = p \overline C \varphi(N)$. Let $W_{(\mu,\nu)} \in W(k') \otimes_{\mathbb Z_p} R$ be the $(\mu, \nu)$ entry of the matrix $N \overline C - p \overline C \varphi(N)$. Note that $W(k') \otimes_{\mathbb Z_p} R\cong \bigoplus_{j' \in \mathcal J'} R$. Let $I_{(\mu, \nu)}$ be the ideal of $R$ generated by the projection of $W_{(\mu,\nu)} $ to each component $R$ in its decomposition $\bigoplus_{j' \in \mathcal J'} R$. Let $I_3$ be the ideal generated by $I_{(\mu,\nu)}$ for every entry $(\mu, \nu)$.  Then $I_3=0$ if and only if $N \overline C = p \overline C \varphi(N)$.

 For any triple $W \in \mathcal W(R)$, we have defined ideals $I_1, I_2, I_3$ of $R$. Let $I_W'=I_1 + I_2 + I_3$. And let $I_W=(I_W':p^\infty)$ be the $p$-saturation of $I_W'$, i.e. the ideal of $R$ given by $$\{x \in R| p^mx \in I_W' \text{ for some non-negative integer }m \}.$$

In order to build a stack in the end, we need the following result about the base change of ``defining ideals". This is a potentially semistable version of \cite[Corollary 7.1.5]{LLHLM1} and \cite[Proposition 7.1.6]{LLHLM1}.

 \begin{prop} \thlabel{bc1}
     Let $R$ be a $p$-adically complete, topologically of finite type $\mathcal O$-algebra. Let $W \in \mathcal W(R)$. Then, the ideal $I_W$ of $R$ we constructed satisfies
\begin{enumerate}
    \item $R/I_W$ is $\mathcal O$-flat; 
    \item For any flat map $R \to S$ such that $S$ is also a $p$-adically complete, topologically of finite type $\mathcal O$-algebra giving rise to $W_S \in \mathcal W(S)$, one has $I_WS = I_{W_S}$.
\end{enumerate}
     
 \end{prop}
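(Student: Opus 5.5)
Part (1) is formal. Since $I_W=(I_W':p^\infty)$ is $p$-saturated, multiplication by $p$ is injective on $R/I_W$: if $px\in I_W$, then $p^{m+1}x\in I_W'$ for some $m$, so $x\in I_W$. Over the discrete valuation ring $\mathcal O$, torsion-freeness is equivalent to flatness. Hence $R/I_W$ is $\mathcal O$-flat. No input from the specific ideals $I_1,I_2,I_3$ is needed here.

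For part (2) I would follow the strategy of \cite[Corollary 7.1.5, Proposition 7.1.6]{LLHLM1}. The plan is to show, for each $i=1,2,3$, that the ideals $J_i\subseteq R[1/p]$ (with $J_3:=I_3[1/p]$) satisfy $J_iS[1/p]=J_i(W_S)$. Here $J_i(W_S)$ denotes the same construction applied to the base-changed triple.

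\textbf{Reduction to $R[1/p]$.} Since $R\to S$ is flat, $(I'\colon p^\infty)S=(I'S\colon p^\infty)$. Moreover, a $p$-saturated ideal $I$ of $R$ is recovered as $I=I[1/p]\cap R$, and this intersection also commutes with flat base change. It therefore suffices to prove
\[
I_W'[1/p]\,S[1/p]=I_{W_S}'[1/p],
\]
that is, compatibility of $J_1+J_2+J_3$ with base change.

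\textbf{$J_3$.} This ideal is generated by finitely many explicit elements, namely the components of the entries of $N\overline C-p\overline C\varphi(N)$. Their images in $S$ are exactly the corresponding generators for $W_S$, so compatibility is immediate.

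\textbf{$J_2$.} Here the generators are the coefficients in the expansion of \thref{S}, applied to the entries of $\Xi\varphi(C)\varphi(\Xi)^{-1}-\overline C\in\Ma(\s)$. I would check the following:
\begin{enumerate}
\item the expansion coefficients are functorial under $R\to S$, since the map $\mathcal S_R\to\mathcal S_S$ is coefficientwise;
\item $J_2$ is finitely generated, because $R[1/p]$ is Noetherian;
\item every coefficient of the base-changed expansion is the image of a coefficient of the original expansion.
\end{enumerate}
Together these give $J_2S[1/p]=J_2(W_S)$.

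\textbf{$J_1$ (main obstacle).} This ideal is defined via continuous functionals $\lambda\in V^\vee$ applied to elements of $R[1/p]\widehat\otimes_{\mathbb Q_p}V$. For an element $x\in R[1/p]\widehat\otimes V$, the ideal $\langle(1\otimes\lambda)(x)\rangle$ is the smallest ideal $J$ such that $x$ maps to $0$ in $(R[1/p]/J)\widehat\otimes V$. This description relies on two facts: $V$ is a Banach space with an orthonormalizable structure, so it is isomorphic to a completed sum $\widehat\bigoplus\mathbb Q_p$; and, by noetherianity, $R[1/p]/J$ is again $p$-adically separated in the appropriate sense. Writing $x=\sum x_ie_i$ with $x_i\to0$ in an orthonormal basis, I would identify $J_1$ with the ideal generated by the coordinates $x_i$.

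Under $R\to S$, the coordinates of the image of $x$ are the images of the $x_i$. Hence $J_1S[1/p]=J_1(W_S)$, again using finite generation. Note that flatness of $R\to S$ is used only to commute saturation and intersection with base change; the coordinate description gives compatibility for arbitrary maps.

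The delicate point is justifying that $V$ (a quotient of $\AC[1/p]$ by $\AI[1/p]$ and by $u'^p\AC[1/p]$) admits such an orthonormal basis. The quotient subspace must be closed, and the reduction of $\varphi(\Psi)$ must land in the completed tensor product compatibly. I expect to handle this by choosing lattices: reduce to $\AC/\AI$ being $p$-adically separated and $p$-torsion-free up to bounded torsion, then apply Serre's theory of orthonormal bases for Banach spaces over discretely valued fields.
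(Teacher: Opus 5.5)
Your proof of part (1), your reduction of (2) to base-change compatibility of $J_1+J_2+J_3$ (using that flat base change commutes with $p$-saturation), and your treatment of $J_2$ and $J_3$ all match the paper. The gap is in the $J_1$ step. You route it through an orthonormal basis of $V$, and no such basis exists. For $x=\sum_i a_i\xi^i/i!\in\AC$, the partial sums $\sum_{i\le n}a_i\xi^i/i!$ lie in $\AI[1/p]$ and converge $p$-adically to $x$. So $\AI[1/p]$ is dense in $\AC[1/p]$, and $\AC[1/p]/\AI[1/p]$ carries the indiscrete quotient topology. It is therefore not a Banach space, and in fact every $\lambda\in V^\vee$ vanishes on the first two summands of $V$. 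Your proposed lattice fix also fails, because $\AC/\AI$ has unbounded $p$-power torsion. Indeed $\xi^{p^k}\equiv[\underline{-p}]^{p^k}\not\equiv 0 \bmod p\AI$, so the class of $\xi^{p^k}/(p^k)!$ is killed by $p^m$ only when $m\ge v_p((p^k)!)=(p^k-1)/(p-1)$.

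What you need is the paper's formal observation, which uses no structure on $V$. Let $f\colon R[1/p]\to S[1/p]$ be the induced map; it is continuous, since it comes from a map of $p$-adically complete rings. For every $\lambda\in V^\vee$ one has $(1\otimes\lambda)\circ(f\widehat{\otimes}\mathrm{id}_V)=f\circ(1\otimes\lambda)$, because both sides are continuous and agree on elementary tensors. Moreover, the element $\bar F$ attached to $W_S$ is the image of the one attached to $W$, since $\varphi(\Psi)=\Xi^{-1}\exp(t\otimes N)\psi(\Xi)$ is formed functorially. Hence the generating set of $J_1(W_S)$ is exactly the image of the generating set of $J_1(W)$, so $J_1(W_S)=J_1(W)S[1/p]$ for any such map. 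No coordinates, orthonormal bases or finite generation are needed. Flatness then enters only in the saturation step, exactly as you say.
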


 \begin{proof}
Property (1) holds from our construction of $I_W$ out of $I_W'$.
     
     To verify property (2), let $f:R \to S$ be such a flat map. Since evaluation by a $\mathbb Q_p$-linear functional commutes with the extension of scalars, we can see that $J_{1,S}=J_{1,R}S[1/p]$. Thus $I_{1,S}=I_{1,R}S$ by the $p$-saturatedness of both ideals. The cases for $I_2$ and $I_3$ are straightforward. From the definition of $I_W$, we obtain $I_WS = I_{W_S}$. 
 \end{proof}

 \begin{prop} \thlabel{stacky}
     Let $R$ be any $p$-adically complete, topologically of finite type, flat $\mathcal O$-algebra. A triple $W \in \mathcal W(R)$ is in $\STT(R)$ if and only if the ideal $I_W=0$. 
 \end{prop}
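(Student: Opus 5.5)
We prove the two implications separately, using that $R$ is $\mathcal O$-flat (so $R\subseteq R[1/p]$) together with the construction of $I_1,I_2,I_3$ and the saturation $I_W=(I_W':p^\infty)$.

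\emph{If $W\in\STT(R)$, then $I_W=0$.} Condition (2) of \thref{algstack} gives $\varphi(\Psi)=\Xi^{-1}\e(t\otimes N)\psi(\Xi)\in\GL_n(\AAA)$. Condition (4) gives $\Psi-\mathrm{id}\in u'\Ma(\AAA)$, and applying $\varphi$ (with $\varphi(u')=u'^p$) yields $\varphi(\Psi)-\mathrm{id}\in u'^p\Ma(\AAA)$. Consequently the images $\bar F$, $\bar F^{-1}$ and $\bar F-I$ in the three summands of $R[1/p]\widehat\otimes_{\mathbb Q_p}V$ all vanish. Every functional $1\otimes\lambda$ then kills them, so $J_1=0$ and $I_1=J_1\cap R=0$.

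The identity $\Xi\varphi(C)\varphi(\Xi)^{-1}=\overline C$ says each $W_{\mu,\nu}$ vanishes in $\s$. Hence all of its coefficients in the expansion of \thref{S} are zero. (Here we use that the expansion $\sum a_i u'^i/i!$ with $a_i\in W(k')\otimes R$, extended $R[1/p]$-linearly, is unique, because $\mathcal S$ is torsion-free with this topological basis.) This gives $J_2=0$ and $I_2=0$. Condition (3) gives $I_3=0$ directly. So $I_W'=0$, and since $R$ is $p$-torsion free, its saturation $I_W$ is also $0$.

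\emph{If $I_W=0$, then $W\in\STT(R)$.} From $I_W'\subseteq I_W=0$ we get $I_1=I_2=I_3=0$. Since $R$ is $\mathcal O$-flat, $J_i\cap R=0$ forces $J_i=0$: any $x\in J_i$ satisfies $p^mx\in J_i\cap R$ for some $m$, hence $p^mx=0$ and so $x=0$.

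From $J_3$ (that is, $I_3=0$) we obtain condition (3). From $J_2=0$ all PD-coefficients of $W_{\mu,\nu}$ vanish, which gives the second identity of (2); the condition $\Xi-1\in u'\Ma(\s)$ already holds in $\mathcal W(R)$.

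From $J_1=0$ we need to conclude that $\varphi(\Psi)$ and its inverse have entries in $\AAA[1/p]$ and that $\Psi-\mathrm{id}\in u'\Ma(\A[1/p])$. The point is that $V^\vee$ separates points of $R[1/p]\widehat\otimes V$, which is a Hahn--Banach-type statement for $\mathbb Q_p$-Banach spaces. This holds because $R[1/p]$ is an affinoid algebra, hence of countable type, and we can pick an orthonormalizable model (alternatively, use a Schauder basis of $R[1/p]$ to identify the completed tensor product with null sequences in $V$).

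\textbf{Main obstacle: integrality.} The remaining step is to pass from $\AAA[1/p]$ to $\AAA$ and from $\A[1/p]$ to $\AAA$, as (2) and (4) require. I would handle this as in the proof of \thref{big2} and \thref{check}, by specialization. For each finite flat quotient $R\to R'$, the triple $W_{R'}$ satisfies the rational conditions. By \thref{Nphi} and the argument of \thref{stackyy}, $W_{R'}[1/p]$ then defines a potentially semistable $(\phi,N,\Delta)$-module datum, and the lattice $\varphi^*\MM\subseteq\s\otimes M$ given by $A$ is Galois-stable. This stability is integrality of $\Psi$ over $\mathbf A_{\mathrm{inf},R'}$, obtained through \thref{vip} and \thref{equivalence}.

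To go back to $R$, fix any entry $x$ of $\varphi(\Psi)$. It lies in $\AAA[1/p]$, so $p^kx\in\AAA$ for some $k$. Its image modulo $p^k$ in $\AAA/p^k\cong \AI/p^k\otimes R$ becomes zero after every specialization $R\to R'$ to a finite flat $R'$. A version of \thref{aux1} with coefficients in the flat $R$-module $\AI/p^k\otimes R$ then shows this image is zero, so $x\in\AAA$. I am least sure of this specialization-to-lattice step, and I expect it to be the hardest part of the proof. The same argument applies to $\Psi-\mathrm{id}$.
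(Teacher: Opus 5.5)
Your forward direction and the reduction of the converse to the rational statements match the paper. Those statements are $\varphi(\Psi)\in\GL_n(\AAA[1/p])$, $\Psi-\mathrm{id}\in u'\Ma(\A[1/p])$, condition (3), and the $\Xi$-identity; your separation remark for $V^\vee$ is a welcome extra detail.

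\textbf{The gap is the integrality step, and your finite-flat claim there is circular.} \thref{vip}, \thref{equivalence} and the inverse construction in \thref{stackyy} all start from an object of $\X(R')$. That is, they start from a Breuil--Kisin module whose $\MMM$ already carries an integral $G_K$-action, or equivalently one that comes from a $G_K$-stable lattice, and they extract $N$ and $\Xi$ from it. None of them turns a triple satisfying only the rational conditions into a Galois-stable lattice. The assertion that $\MMM$ is $\psi$-stable over $\mathbf A_{\mathrm{inf},R'}$ is exactly the integrality of $\Psi$ you are trying to prove. Completing the argument your way would require substantial theory that is in neither the paper nor your sketch. For example, you would need weak admissibility of the filtered $(\phi,N)$-module attached to $(\MM,N)$ plus Colmez--Fontaine, followed by a Kisin/Liu-type theorem that the $G_\infty$-stable lattice attached to $\MM$ is $G_{L'}$-stable.

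The descent back to $R$ is also unsupported. The proof of \thref{aux1} detects a \emph{nonzero} element of $R$ via $R\subseteq R[1/p]$ and a finite-dimensional quotient of $R[1/p]$. It says nothing about whether an element of $R\setminus p^kR$ stays outside $p^kR'$ after specialization. So the ``version of \thref{aux1} with coefficients in $\AI/p^k\otimes R$'' is a different statement, and you have not proved it.

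The paper avoids both Galois representations and specialization here. It works as follows:
\begin{enumerate}
\item From $J_1=0$ it gets $\Psi\in\GL_n(\AAA[1/p])$ and takes the minimal $m\ge 1$ with $p^m\Psi\in\Ma(\AAA)$.
\item It sets $Y=p^m\Psi \bmod p\neq 0$, and \thref{intersection} gives $Y\in u'\Ma(\AAA/p)$.
\item Reducing the identity $\Psi\psi(C)=C\varphi(\Psi)$ of \thref{Nphi} mod $p$ gives $Y\psi(C)=C\varphi(Y)$. Using $\varphi(u')=u'^p$, it bootstraps the $u'$-divisibility of $Y$ to conclude $Y=0$, and treats $\Psi^{-1}$ the same way.
\item Finally it concludes $\Psi-\mathrm{id}\in\AAA\cap u'\A[1/p]=u'\AAA$.
\end{enumerate}
If you adopt this route, be careful with the bootstrap. $\psi(C)$ is invertible only up to $\psi(E(u'))^h\equiv[\underline 1]^{e'h}u'^{e'h} \bmod p$. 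So from $Y\in u'^a\Ma(\AAA/p)$ one only gets $Y\in u'^{pa-e'h}\Ma(\AAA/p)$, which is an improvement only when $a>e'h/(p-1)$. Starting from $a=1$ you therefore need an extra input. Indeed, for $C=\mathrm{diag}(1,E(u')^h)$ the $(1,2)$-entry equation $y\,\psi(E(u'))^h=\varphi(y)$ has nonzero solutions in $u'\mathcal O_{\mathbb C_p}^\flat$ modulo $p$.
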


\begin{proof}
Let $W\in \mathcal W(R)$. One can see from the definition that $W\in \STT(R)$ implies $I_W=0$.

On the other hand, we know that $I_W=0$ if and only if $I_1=I_2=I_3=0$. It is clear that $I_2=0$ implies $\Xi \varphi(C)\varphi(\Xi)^{-1}=\overline C$ and that $I_3=0$ implies $N\overline C=p\overline C \varphi(N)$. According to \thref{algstack}, it suffices to verify that $I_1=0$ would furthermore imply $\varphi(\Psi)\in \GL_n(\AAA)$ and $\Psi-\text{id} \in u'\Ma(\AAA)$.

In fact, from $I_1=0$, we first know that $J_1=0$. According to the construction of $J_1$, we have $\varphi(\Psi)\in \GL_n(\AAA[1/p])$ and $\Psi-\text{id} \in u'\Ma(\A[1/p])$. Suppose that $\Psi \notin \Ma(\AAA)$, then there exists a smallest positive integer $m$ such that $p^m \Psi \in \Ma(\AAA)$. Let $Y=p^m \Psi \text{ mod }p \in \Ma(\AAA/p)$. By the minimality of $m$, $Y \ne 0$. Since $p^m \Psi-p^m \in u'\Ma(\A[1/p])$ and $m \ge 1$, $Y \in u'\Ma(\AAA/p)$ by \thref{intersection}.

Note that we already have $I_3=0$, implying $N\overline C=p\overline C \varphi(N)$. From \thref{Nphi}, we know that $p^m\Psi \psi(C)=C\varphi(p^m\Psi)$. Reducing mod $p$, we get $Y \psi(C)=C\varphi(Y)$. Since $Y \in u'\Ma(\AAA/p)$ and $\varphi(u')=u'^p$, we have $Y \in u'^p\Ma(\AAA/p)$. Applying this repeatedly, we can see that $Y$ is divisible by any high power of $u'$, and thus is $0$. Contradiction! Therefore, $\Psi\in \Ma(\AAA)$. Similarly, we can prove $\Psi^{-1} \in \Ma(\AAA)$, thus $\varphi(\Psi)\in \GL_n(\AAA)$.

Since we already have $\Psi-\text{id} \in u'\Ma(\A[1/p])$, $\Psi-\text{id}  \in \Ma(\AAA \cap u'\A[1/p])$. From \thref{intersection}, we obtain $\Psi-\text{id} \in u'\Ma(\AAA)$. Therefore, $I_W=0$ implies $W \in \STT(R)$.

\end{proof}

\begin{lemma} \thlabel{intersection}
   Let $R$ be an $\mathcal O$-flat algebra, we have $\AAA \cap u'\A[1/p]=u'\AAA$.
\end{lemma}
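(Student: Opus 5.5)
The inclusion $u'\AAA\subseteq \AAA\cap u'\A[1/p]$ is clear, so the work is the reverse inclusion. Throughout, $u'$ means its image $[\underline{\pi'}]$. Let $x\in\AAA$ with $x=u'y$ for some $y\in\A[1/p]$; I want to show $y\in\AAA$. The plan has two steps. First prove the case $R=\mathbb Z_p$ (equivalently $R=\mathcal O$). Then pass to a general $\mathcal O$-flat $R$ by base change. Since both $\AAA$ and $\A$ are $p$-adically completed tensor products with $R$, I would rephrase the claim as injectivity of
$$\AAA/u'\AAA\;\longrightarrow\;\A[1/p]/u'\A[1/p].$$
This in turn splits into two assertions: $\AAA/u'\AAA$ is $p$-torsion free, and $\AAA/u'\AAA\to \A/u'\A$ is injective with cokernel behaving well after inverting $p$.

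For the first assertion, note that $(p,[\underline{\pi'}])$ is a regular sequence on $\AI$: $p$ is regular, and $\AI/p=\mathcal O_{\mathbb C_p}^\flat$ is a domain in which $\underline{\pi'}\neq 0$. Hence $[\underline{\pi'}],p$ is also regular (both rings are $p$-adically complete and separated), and $\AI/u'$ is $p$-torsion free. The ring $\AI/u'=W(\mathcal O_{\mathbb C_p}^\flat)/[\underline{\pi'}]$ is moreover $p$-adically separated and complete. Tensoring with the $\mathcal O$-flat $R$ and $p$-completing keeps $p$ regular on $\AAA/u'\AAA$, by the same regular-sequence argument mod $p^k$ and passage to the limit.

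For the second assertion, take $y\in\A[1/p]$ with $u'y=x\in\AAA$. Let $m\ge 0$ be minimal with $p^my\in\A$, and suppose $m\ge1$. Then $p^m x\in u'\A\cap p^m\AAA$. I would use the explicit description of $\AC$ as $\AI$-divided powers of $\xi=[\underline{-p}]+p$. Since $[\underline{-p}]=u'^{e'}$ up to a unit, every element of $\AC$ can be written as $\sum a_i\xi^i/i!$, and the generators $\xi^i/i!$ are congruent, modulo $p$-divisible terms, to $u'^{e'i}/i!$. This gives a description of $\AC/p$ as $(\mathcal O^\flat_{\mathbb C_p}/\underline{\pi'}^{e'p})[\delta_1,\delta_2,\dots]/(\delta_i^p)$. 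In this ring I would check that $u'$ is a non-zero-divisor on the "constant part" coming from $\AI/p$. Then reducing $p^my=p^{m}x/u'$ modulo $p$ gives a nonzero class killed by $u'$ but lying in the image of $\AI/p$, which is a contradiction. Hence $m=0$, so $y\in\A$ and $x\in\AAA\cap u'\A$.

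It remains to show $\AAA\cap u'\A=u'\AAA$. Here I would use the same divided-power presentation. Write $y=\sum_i a_i\,\xi^i/i!$ and compare $u'y$ with the unique expansion of $x\in\AAA$. Divisibility of $x$ by $u'$ in $\AAA$ is equivalent to the vanishing of $x$ in $\AI/u'\,\widehat\otimes R$. This ring maps injectively into $(\AC/u')\widehat\otimes R$ because $\AI/u'\to\AC/u'$ is injective. That injectivity is the statement that the PD-envelope of $\AI$ along $\xi$, reduced mod $[\underline{\pi'}]$, contains $\AI/[\underline{\pi'}]$. I expect this injectivity, together with the previous mod-$p$ structure of $\AC$, to be the main obstacle, since $\AC$ is not noetherian and $u'$ is not regular on $\AC/p$ in general. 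If the direct approach stalls, I would fall back on embedding both rings into $W(\mathbb C_p^\flat)$-type or $\mathbf B^+_{\mathrm{dR}}$-type rings where $u'$ is a unit or $\xi$-adic valuations are visible. The idea is to use $\theta\circ\varphi^{-k}$ for large $k$ to test divisibility by $u'$ coefficientwise on the Witt expansion $\sum[c_n]p^n$: $x\in u'\AI$ if and only if every $c_n$ is divisible by $\underline{\pi'}$.
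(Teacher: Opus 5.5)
\paragraph{Verdict.} The proposal has a genuine gap. Your overall reduction is reasonable, and your first assertion is fine: $\AAA/u'\AAA$ is $p$-torsion-free and $p$-adically complete. But neither of the two remaining steps is established.

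\paragraph{The step clearing the denominator $p^m$.} Here the key claim is false by your own description of $\AC/p$. The image of $\AI/p$ in $\AC/p$ is $\mathcal O^\flat_{\mathbb C_p}/\underline{\pi'}^{e'p}$. On it, $u'$ acts by the nilpotent element $\underline{\pi'}$, which kills $\underline{\pi'}^{e'p-1}\neq 0$. Moreover, nothing forces $p^my$ to reduce into that image.

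In fact no argument that only uses the structure of $\A$ can do this step, because $\A/u'\A$ genuinely has $p$-torsion. Already for $R=\mathbb Z_p$, consider $\gamma_p(u'^{e'})=u'^{e'p}/p!=\gamma_p(E(u')-p)$.
\begin{itemize}
\item It lies in $\AC$ and is congruent to $\gamma_p(E(u'))$, i.e.\ to $\delta_1$, modulo $p$.
\item Since $\AC/p$ is free over $\mathcal O^\flat_{\mathbb C_p}/\underline{\pi'}^{e'p}$ on the monomials in the $\delta_i$, this element is not in $u'\AC+p\AC$.
\item Yet $p\,\gamma_p(u'^{e'})=u'\cdot u'^{e'p-1}/(p-1)!\in u'\AI$.
\end{itemize}
So the hypothesis $x\in\AAA$ must genuinely enter. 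This is also the weak point of the paper's own proof, whose last sentence asserts that $\A/u'\A$ is $p$-torsion-free.

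\paragraph{The injectivity step.} You leave the injectivity of $\AAA/u'\AAA\to\A/u'\A$ open, and the proposed fallback does not help. For $k\geq 1$, the map $\theta\circ\varphi^{-k}$ is not even defined on $\AC$, since it is unbounded on the divided powers $E(u')^n/n!$.

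\paragraph{The missing idea.} It is the one behind the paper's remark that, modulo $u'$, one is taking a PD envelope along $p$, which already has divided powers in $\AAA/u'\AAA$. Since $E(u')\equiv p \bmod u'$ and $\AAA/u'\AAA$ is $p$-torsion-free and $p$-adically complete, the projection extends to a ring map
\[
r\colon \A\to\AAA/u'\AAA,\qquad r(u')=0,\qquad r\bigl(E(u')^n/n!\bigr)=p^n/n!.
\]
Concretely, restrict $\AAA[1/p]\to(\AAA/u'\AAA)[1/p]$ to $\AAA[E(u')^n/n!]$; its image lies in $\AAA/u'\AAA$ because $E(u')\mapsto p$, and one then extends $p$-adically. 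Equivalently, use the universal property of the PD envelope with the divided powers $p^nb^n/n!$ on $p\,\AAA/u'\AAA$.

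This single retraction does both jobs at once. If $x\in\AAA$ and $p^mx=u'z$ with $z\in\A$, then $p^m\bar x=r(u')r(z)=0$ in $\AAA/u'\AAA$. Hence $\bar x=0$ by your first assertion, i.e.\ $x\in u'\AAA$. The argument works uniformly for $\mathcal O$-flat $R$, so your reduction to $R=\mathbb Z_p$ is not needed; that reduction was left unjustified anyway.
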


\begin{proof}
   $\A$ is the PD envelope of $\AI$ with respect to the ideal $E(u')$. Since $E(u') \equiv p \mathrm{\ mod \ }u'$, $\A/u'$ is the PD envelope of $\AAA/u'$ with respect to $p$. Since in the latter ring $\AAA/u'$, $p$ already has its divided powers $p^i/i! \in \mathbb Z_p$. Therefore, we have an injection $\AAA/u' \hookrightarrow \A/u'$, which implies $u'\A \cap \AAA=u'\AAA$. Since $\A/u'\A$ is $p$-torsion-free, we arrive at $\AAA \cap u'\A[1/p]=u'\AAA$.
\end{proof}

We define an action of  $\prod_{j \in \mathcal J} \mathcal  P_j$ on $\STT$ as follows.

Given $W=((A^{(j)})_{j \in \mathcal J}, N, \Xi) \in \STT(R)$, and $(I^{(j)})_{j \in \mathcal J} \in \prod_{j \in \mathcal J} \mathcal  P_j(R)$, we define $(I^{(j)})\cdot W$ as $$W'=((I^{(j)}A^{(j)} \mathrm{Ad}(\omega^{(j)})(\varphi(I^{(j-1)})^{-1})_j), \bar I N \bar I^{-1}, I \Xi \varphi(I)^{-1}),$$  where $I=\mathfrak C((I^{(j)})_{j \in \mathcal J})$ and $\bar I=I \mathrm{\ mod \ }u'$. It is easy to see that the defining ideals are invariant under the action of  $\prod_{j \in \mathcal J} \mathcal  P_j$, i.e. $I_W=I_{W'}$.

 Using \thref{bc1}, we can define our stack $\X$ from the ambient stack $[\mathcal W / \prod_{j \in \mathcal J}\mathcal P_j]$.

 \begin{defn} \thlabel{new}
     Let $X_n^{[0,h],\tau} \hookrightarrow [\mathcal W / \prod_{j \in \mathcal J}\mathcal P_j]$ be the unique $\mathcal O$-flat closed substack characterized by the following property: for any $p$-adically complete, topologically of finite type flat $\mathcal O$-algebra $R$ and a map $f:\mathrm{Spf\ }R \to [\mathcal W / \prod_{j \in \mathcal J}\mathcal P_j]$ corresponding to the triple $W$, $f$ factors through $\X$ if and only if the ideal $I_W\subseteq R$ is $0$.
 \end{defn}

 \begin{rmk}
     The construction here is parallel to the potentially crystalline case in \cite{LLHLM1}. The existence of such a substack follows from the general construction of \cite[Section 9]{Eme}.
 \end{rmk}

 \begin{rmk} \thlabel{coincide}
     Because of \thref{stackyy} and the defining property of the stack \thref{stacky}, we can see that this definition of $\X$ coincides with our previous definition of $\X(R)$ when $R$ is a $p$-adically complete, topologically of finite type, $\mathcal O$-flat algebra. Hence, we call it the stack of potentially semistable Breuil-Kisin modules. 

 \end{rmk}

Now we want to establish an isomorphism of stacks between $\XXX$ and $\X$. We first produce a map $T:\X \to \mathcal X_n$ and then show that $T$ factors through $\XXX$.

Let $U:=\mathrm{Spf\ } R \to \X$ be any smooth cover. We can assume that $R$ is $\mathcal O$-flat and $p$-adically complete, topologically of finite type since $\X$ is $\mathcal O$-flat by \thref{new}. Then $\X(R)$ corresponds to a potentially semistable Breuil-Kisin module $\MM$ with coefficients in $R$ according to \thref{coincide}. Let $\MMM$ be the associated Breuil-Kisin-Fargues module, and let $\mathcal M=\MMM \otimes_{\AAA} W(\mathbb C_p^{\flat})_R$ be the associated $(\varphi, G_K)$-module. According to \cite[Section 3.6.4]{EG}, $\mathcal M$ determines a map $T_U:U \to \XXX$. Then we need to check that $T_U$ descends to a morphism of stacks $T:\X \to \mathcal X_n$.

Let $U_1=U \times_{\X}U$. Let $p_1, p_2$ be the two projections of $U_1$ to $U$. Take any smooth cover $g:V=\mathrm{Spf\ }A \to U_1$, where $A$ is also $\mathcal O$-flat and $p$-adically complete, topologically of finite type. Suppose that $p_i\circ g$ is represented by $f_i: R \to A$ for $i=1,2$. The map $g:V \to U_1$ implies there is an isomorphism $\alpha:f_1^*\MM \to f_2^* \MM$, where $f_i^*\MM=\MM \hat\otimes_{R,f_i} A$. Note that $p_i^* \mathcal M=(f_i^*\MMM)\otimes_{\mathbf{A}_{\mathrm{inf},A}}W(\mathbb C_p^{\flat})_A$. And obviously, base change to $W(\mathbb C_p^{\flat})$ and extension of the $G_K$-actions respect isomorphisms. Therefore, we have a natural isomorphism $\beta:p_1^* \mathcal M \to p_2^* \mathcal M$. Similarly, one can see that the isomorphism $\beta$ satisfies the cocycle condition. Therefore, $T_U$ descends to $T:\X \to \mathcal X_n$. 

We call it $T$ because for any finite flat $\mathcal O$-algebra $R$, the functor $T(R)$ is just our functor $T_R$ in \thref{equivalence}. We first show that the target of $T$ is the potentially semistable Emerton-Gee stack.

\begin{prop}
    The map $T:\X \to\mathcal X_n$ factors through $\XXX$.
\end{prop}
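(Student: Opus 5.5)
The plan is to use that $\XXX$ is, by \thref{EG}, the unique $\mathcal O$-flat closed substack of $\mathcal X_n$ characterized by its finite flat points. So it suffices to show that the closed immersion $\XXX\hookrightarrow\mathcal X_n$ is pulled back along $T$ to all of $\X$. I would proceed in three steps.

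First, I reduce to a smooth cover. Take a smooth cover $U=\mathrm{Spf\ }R\to\X$ with $R$ a $p$-adically complete, topologically of finite type, $\mathcal O$-flat algebra; this is possible since $\X$ is $\mathcal O$-flat by \thref{new}. It then suffices to show that $T_U:U\to\mathcal X_n$ factors through $\XXX$. The fiber product $U\times_{\mathcal X_n}\XXX$ is a closed formal subscheme $\mathrm{Spf\ }R/J$ for some ideal $J\subseteq R$. Hence the goal is to show $J=0$.

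Second, I check the finite flat points. Let $R'$ be any finite flat $\mathcal O$-algebra and $R\to R'$ any map. The pullback of the triple, via \thref{coincide}, is a potentially semistable Breuil-Kisin module in $\X(R')$. Its image under $T(R')=T_{R'}$ lies in $\XXX(R')$ by the lemma preceding \thref{equivalence}, namely that $T_R(\MM)\in\XXX(R)$ for finite flat $R$, which relied on \cite[Theorem 7.1.7]{Gao}. So every $R'$-point of $U$ factors through $\mathrm{Spf\ }R/J$, which means $JR'=0$ for all such $R'$.

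Third, I apply \thref{aux1} to the ideal $J$. It gives $J=0$, so $T_U$ factors through $\XXX$. Since $\XXX\to\mathcal X_n$ is a monomorphism (a closed immersion), factorization on a smooth cover descends, and $T$ factors through $\XXX$.

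The main obstacle I expect is the compatibility in the second step. I must make sure that the $(\varphi,\Gamma)$-module (equivalently $(\varphi,G_K)$-module) attached to $U$ via \cite[Section 3.6.4]{EG} specializes, at an $R'$-point, to the same object as $T_{R'}$ of the base-changed Breuil-Kisin module. This means checking that forming $\MMM\otimes W(\mathbb C_p^\flat)_R$ commutes with the base change $R\to R'$, which holds since the completed tensor products are compatible with finite base change. One must also verify that \thref{aux1} applies to $J$ as an arbitrary ideal. It does, because $R$ is Noetherian, so $J$ is finitely generated.
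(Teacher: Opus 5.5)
Your proposal is correct and follows the paper's own argument. You pull back the closed immersion $\XXX\hookrightarrow\mathcal X_n$ to a smooth chart $\mathrm{Spf\ }R$, show the defining ideal vanishes in every finite flat $R'$ using the finite flat case, conclude it is zero by \thref{aux1}, and descend. Your extra remarks on base-change compatibility and on descent along a monomorphism only make explicit what the paper leaves implicit.
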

\begin{proof}
 Take a smooth chart $U=\text{Spf }R \to \X$. Composing with $T$, we have $U \to \mathcal X_n$. Consider the base change $U \times_{\mathcal X_n} \XXX$. Since $\XXX \hookrightarrow \mathcal X_n$ is closed, $U \times_{\mathcal X_n} \XXX \hookrightarrow U$ is defined by an ideal $I \subseteq R$. Let $R\to R'$ be any $\mathcal O$-algebra homomorphism with $R'$ finite flat. Consider the composition $\text{Spf }R' \to U \to \X \to \mathcal X_n$. Since $R'$ is finite flat, this map $\text{Spf }R' \to \mathcal X_n$ corresponds to a point on $\XXX$. Thus $IR'=0$. By \thref{aux1}, $I=0$. Therefore, $T$ factors through $\XXX$.
\end{proof}

So now, by abuse of notation, we will regard the map $T$ as $T:\X \to \XXX$. We need the following lemma to reduce to the finite flat case to prove that $T$ is an isomorphism.

\begin{lemma} \thlabel{aux2}
    Let $R$ be any $p$-adically complete, topologically of finite type, and $\mathcal O$-flat algebra. Given any $\MM_1,\MM_2 \in \X(R)$, let $\mathcal M_i=T(\MM_i)$ for $i=1,2$. Let $f:\mathcal M_1 \to \mathcal M_2$ be a map between two \'etale $(\varphi, G_K)$-modules. For any finite flat $\mathcal O$-algebra $R'$ and any $\mathcal O$-algebra homomorphism $R \to R'$, we write $\mathcal M_{i,R'}$, $\MM_{i,R'}$ and $f_{R'}$ for the base change to $R'$ of the corresponding objects or maps. Then there is an ideal $I_R$ such that $f_{R'}$ is of the form $T(g)$ with $g$ a map between potentially semistable Breuil-Kisin modules if and only if $I_R R'$ is the zero ideal of $R'$.
\end{lemma}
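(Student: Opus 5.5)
Fix eigenbases $\beta_1,\beta_2$ of $\MM_1,\MM_2$; working Zariski locally on $R$ suffices, since the ideals we produce are compatible with flat base change in the sense of \thref{bc1}, so they glue. The strategy is to express ``$f$ comes from a map of potentially semistable Breuil--Kisin modules'' as an integrality-and-linearity condition on a single matrix, and then to cut that condition out by an ideal, exactly as $I_1$ was constructed before \thref{bc1}.

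First, let $F\in \Ma(W(\mathbb C_p^\flat)_R)$ be the matrix of $f$ with respect to the bases $\beta_1,\beta_2$, viewed in $\mathcal M_1,\mathcal M_2$. A map $g:\MM_{1,R'}\to\MM_{2,R'}$ with $T(g)=f_{R'}$ is determined by its matrix. By the full faithfulness of $T_{R'}$ (\thref{ff}), $f_{R'}=T(g)$ for some $g$ if and only if the image $F_{R'}$ of $F$ has entries in $\mathfrak S_{L',R'}$ (and is compatible with $\Delta$ and with the $G_K$-actions). The $\phi$- and $G_K$-compatibility are automatic, since $f$ is a map of \'etale $(\varphi,G_K)$-modules. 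The descent data condition says that the $(\mu,\nu)$-entry lies in $(u')^{[\aaa_\nu-\aaa_\mu]}R'\llbracket v\rrbracket$ componentwise, as in the proof of \thref{Frob}, together with the compatibility with $\Fr$. Indeed, if $F_{R'}$ has entries in $\mathfrak S_{L',R'}$, then $g$ is $\Delta$-equivariant because $f$ is $G_\infty$-equivariant, by the argument of \thref{ff}. So the condition to encode is simply
\[
F_{R'} \in \Ma(\mathfrak S_{L',R'}) .
\]

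Second, this membership is encoded by an ideal. Since $f$ commutes with $\phi$, we have $F C_1 = C_2\varphi(F)$. The entries of $F$ a priori lie in $\mathcal O_{\mathcal E,L',R}$, because the \'etale $\varphi$-module $\mathcal M_i$ descends to $\mathcal O_{\mathcal E,L',R}$ and $f$ commutes with the $G_\infty'$-action, which is trivial on $\MM_i$. By the standard height argument (cf.\ \cite[Section 5.4]{LLHLM1}), the bounded height $h$ of $C_1,C_2$ then forces $u'^{N}F$ to lie in $\Ma(\mathfrak S_{L',R})$ for some $N$ depending only on $h,n,e'$. Write $u'^N F=\sum_{i\ge 0} F_i (u')^i$. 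Then $F\in\Ma(\mathfrak S_{L',R})$ exactly when $F_i=0$ for $i<N$. Define $I_R$ to be the ideal generated by the components, in the decomposition $W(k')\otimes R\cong\bigoplus_{j'}R$, of the entries of the finitely many coefficient matrices $F_0,\dots,F_{N-1}$. Because the expansion is compatible with the base change $R\to R'$, the condition $F_{R'}\in \Ma(\mathfrak S_{L',R'})$ holds if and only if $I_RR'=0$.

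The main obstacle is the uniform pole bound $N$: it has to hold over $R$ itself, not just over each $R'$, so that finitely many coefficients suffice and the construction commutes with base change. I would try to prove it by iterating $F=C_2\varphi(F)C_1^{-1}$ together with $(v+p)^hC_1^{-1}\in\Ma(\mathfrak S_{L',R})$. Each iteration multiplies a pole of order $k$ in $\varphi(F)$ down to order $k/p$ up to a bounded loss from $(v+p)^{-h}$. Since $v+p$ is a unit in $\mathcal O_{\mathcal E}$ modulo $v$ only after inverting $p$, I would argue modulo $\varpi$ first and then lift by $\varpi$-adic completeness. If this bound proves delicate over general $R$, a fallback is to define $I_R$ as the ideal generated by the components of all negative-degree coefficients of $F$. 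That ideal is still an ideal of $R$ and has the same vanishing property after base change, at the cost of losing finite generation (which the statement does not require).
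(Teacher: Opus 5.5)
Your fallback definition of $I_R$, namely the ideal generated by the components (under $W(k')\otimes R\cong\bigoplus_{j'}R$) of all coefficients of negative powers of $u'$ in the matrix of $f$, is exactly the paper's proof. The paper also records that this ideal does not depend on the eigenbases, because change-of-basis matrices have entries in $\SSSS$; your Zariski gluing needs this fact, not just compatibility with flat base change.

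So the uniform pole bound you call the main obstacle is unnecessary. Dropping it costs nothing: $R$ is Noetherian, so every ideal is finitely generated anyway. Your sketch for that bound would also not work as stated. Modulo $p^m$ one has $(v+p)^{-1}\equiv\sum_{i<m}(-p)^iv^{-1-i}$. Hence the recursion $\varphi(F)=C_2^{-1}FC_1$ bounds the pole order of $F$ mod $p^m$ only by roughly $e'hm/(p-1)$, which grows with $m$. A bound modulo $\varpi$ therefore does not lift.

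The step that does need repair is your reason why $F$ has entries in $\mathcal O_{\mathcal E,L',R}$ at all. Commuting with $G'_\infty$ only places the entries in $(W(\mathbb C_p^\flat)_R)^{G'_\infty}$. That ring is strictly larger: it contains $[\underline{\pi'}^{1/p}]$, a $p$-th root of the image of $u'$. You must combine $\varphi$-equivariance with the fact that extending scalars from $\mathcal O_{\mathcal E,L',R}$ to this perfected ring is fully faithful on \'etale $\varphi$-modules. The paper obtains this by applying the Emerton--Gee restriction morphism $\XXX\to\F$ to $f$. This produces a map between the \'etale $\varphi$-modules $\MM_i[1/u']$ whose base change is $f$.

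With that input, your argument goes through. This includes your explicit check that integrality of $F_{R'}$ already gives compatibility with $\phi$, $\Delta$ and the $G_K$-action, which the paper leaves implicit.
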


\begin{proof}
  Let $\F$ be the stack of \'etale $\varphi$-modules. We first recall that in the proof of \cite[Proposition 3.7.2]{EG}, a map $F:\XXX \to \F$ that is an analogue of restricting $G_K$-representations to $G_\infty$-representations is constructed. Let $\mathcal M_{\infty,i}=F(\mathcal M_i)\in \F(R)$. When regarding $\mathcal M_i$ as \'etale $(\varphi, G_K)$-modules, we have $\mathcal M_{\infty,i}\subseteq \mathcal M_i$ since $\mathcal O_{\mathcal E,L'} \subseteq W(\mathbb C_p^\flat)$. Letting $h=F(f)$, we can see that $f$ can be restricted to a map $h:\mathcal M_{\infty,1} \to \mathcal M_{\infty,2}$. Note that $\MM_i[1/u']=\mathcal M_{\infty,i}$. Hence $h$ can be further restricted to a map $g: \MM_1 \to \MM_2$ if and only if all the coefficients of negative powers of $u'$ in all the entries of the matrix representation of $h$ vanish. 

After fixing eigenbases of $\MM_1$ and $\MM_2$ locally, let the matrix of $h$ be $\sum_{i>0}(u')^{-i}A_i+B$ with $B$ having entries in $\SSSS$. Set $I_R$ to be the ideal of $R$ generated by all entries of all the $A_i$. (Here, of course, we decompose $W(k')\otimes R$ into copies of $R$ as before.) We note that this definition is independent of the bases chosen since the change-of-basis matrices both have coefficients in $\SSSS$, without any negative power of $u'$.

Now we can see that $f$ comes from $g$ if and only if $I_R=0$. We can do the same for any finite $\mathcal O$-flat base change $R'$, and get $I_{R'}$. By the construction, $I_{R'}=I_R  R' \subseteq R'$. 
\end{proof}

\begin{thm} \thlabel{big1}
There is an isomorphism between the potentially semistable Emerton-Gee stack $\XXX$ and the stack of potentially semistable Breuil-Kisin modules $\X$.
\end{thm}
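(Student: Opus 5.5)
We will show that the morphism $T:\X \to \XXX$ constructed above is an isomorphism. Since both stacks are $\mathcal O$-flat and are determined by their values on $p$-adically complete, topologically of finite type, $\mathcal O$-flat algebras $R$, the plan is in two steps. First, show that $T(R):\X(R)\to \XXX(R)$ is fully faithful for all such $R$. Second, show that $T$ is essentially surjective after passing to a smooth cover of $\XXX$. The finite flat case is \thref{equivalence}, and the general case will be reduced to it using \thref{aux1} and \thref{aux2}.

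\textbf{Full faithfulness.} Let $\MM_1,\MM_2\in\X(R)$ and let $f:T(\MM_1)\to T(\MM_2)$ be a morphism of \'etale $(\varphi,G_K)$-modules. Faithfulness follows from the injectivity $\MMM\hookrightarrow \MMM\otimes_{\AAA}W(\mathbb C_p^\flat)_R$. For fullness, take the ideal $I_R$ of \thref{aux2}. For every finite flat $R\to R'$, $f_{R'}$ comes from a map of potentially semistable Breuil-Kisin modules by \thref{equivalence}. Hence $I_RR'=0$ for all such $R'$, so $I_R=0$ by \thref{aux1}. Thus $f$ restricts to an $\SSSS$-linear $\phi$-compatible map $g:\MM_1\to\MM_2$. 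It remains to check that $g\otimes\AAA$ is $G_K$-equivariant and $\Delta$-equivariant. The obstruction is a matrix with entries in $\AAA$ whose image vanishes in $\mathbf A_{\mathrm{inf},R'}$ for all finite flat $R'$, exactly as in the last step of \thref{check}. Applying \thref{aux1} entrywise shows that it is zero.

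\textbf{Essential surjectivity.} This is the main obstacle. Let $\mathrm{Spf\ }R\to \XXX$ be a versal or smooth chart with $R$ $\mathcal O$-flat and topologically of finite type. By the construction of $\XXX$ in \cite[Section 4.8]{EG}, the corresponding $(\varphi,G_K)$-module comes, perhaps after shrinking, from a Breuil-Kisin-Fargues $G_K$-module $\MMM$ of height $\le h$ with descent data of type $\tau$ over $L'$, admitting all descents. This is the family version of the input used in the proof of \thref{equivalence}. I would first try to use the fact that $\XXX$ is the scheme-theoretic image of the stack of such Breuil-Kisin modules with descent data, so that one obtains $\MM$ over $\SSSS$ with a $\Delta$-action, an eigenbasis Zariski locally, and a matrix of Galois $\Psi$. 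This gives a point of $\mathcal W$, indeed of $\ST$, and via \thref{stackyy} a triple $W\in\mathcal W(R)$.

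To see that $W$ lies in $\STT(R)$, we need $I_W=0$. By \thref{bc1}, $I_W$ is compatible with base change. At every finite flat point $R\to R'$ the specialization lies in $\STT(R')$ by \thref{equivalence} and \thref{stackyy}, so $I_WR'=0$. Hence $I_W=0$ by \thref{aux1} and \thref{stacky}, giving a point of $\X(R)$ mapping to the given point.

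If extracting $\MM$ over a general $R$ fails, the fallback is to argue via closed substacks. $\X$ is $\mathcal O$-flat and closed in the ambient stack, and $T$ is a monomorphism by full faithfulness. $T$ is also proper, since both stacks are finite type and $\XXX$ is $p$-adic formal algebraic. Moreover $T$ is bijective on finite flat points by \thref{equivalence}. Because both stacks are $\mathcal O$-flat and their finite flat points are dense, in the sense of \thref{aux1}, the scheme-theoretic image of $T$ is all of $\XXX$, and a proper monomorphism onto its image is a closed immersion. Combining these shows that $T$ is an isomorphism.
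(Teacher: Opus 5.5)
Your full-faithfulness step is the same as the paper's: the finite flat case from \thref{equivalence} is transported via \thref{aux2} and \thref{aux1}. The extra equivariance check is harmless but unnecessary, since $g\otimes\AAA$ is just the restriction of the $G_K$-equivariant $f$ to $\MMM_1$.

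The gap is in essential surjectivity. Your primary route is circular. In \cite{EG}, $\XXX$ is constructed as (the $\mathcal O$-flat part of) a scheme-theoretic image. An $R$-point of a scheme-theoretic image need not lift to the source, not even after Zariski or smooth localization on $R$. Producing $\MM$ over a general $\mathcal O$-flat $R$ from a point of $\XXX$ is essentially what the theorem asserts.

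Your fallback has the right shape, but its key input is unsupported. ``$T$ is proper since both stacks are finite type'' is not a valid inference, because finite type does not imply properness. You also never show that $\X$ is of finite type. This is not automatic: the ambient stack $\mathcal W$ carries the infinite-dimensional datum $\Xi\in 1+u'\Ma(\s)$.

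The paper does not argue properness. Instead it proves that $\X$ is of finite type by showing that $\Xi$ is uniquely determined by $C$. If $\Xi_1,\Xi_2\in 1+u'\Ma(\s)$ both satisfy $\Xi_i\varphi(C)\varphi(\Xi_i)^{-1}=\overline C$, then $\Xi:=\Xi_2^{-1}\Xi_1$ satisfies $\Xi=\varphi(C\Xi C^{-1})$ and $\Xi\equiv 1 \bmod u'$. Iterating with $\varphi(u')=u'^p$ forces $\Xi=1$. Hence $\X$ is of finite type, like $\Y$ with the additional matrix $N$. With this, the paper applies \cite[Lemma 7.2.6 (1)]{LLHLM1}, which upgrades the monomorphism $T$ to an isomorphism using only essential surjectivity on finite flat points (\thref{equivalence}).

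To repair your fallback, you need either this finite-type input together with that lemma, or a genuine proof that $T$ is proper. Such a proof would presumably go through properness of Breuil--Kisin modules over \'etale $\varphi$-modules, together with some control of $N$ and $\Xi$, and nothing in your proposal supplies it.
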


\begin{proof}
  We first show that $T:\X \to \XXX$ is a monomorphism of stacks. From \thref{equivalence}, we know that $T(R)$ is an equivalence of groupoids for any finite flat $\mathcal O$-algebra $R$. Now let $R$ be a $p$-adically complete flat $\mathcal O$-algebra which is topologically of finite type. We want to first show that $T(R)$ is fully faithful. Let $\MM_i\in \X(R)$ ($i=1,2$), with $\mathcal M_i=T(\mathfrak M_i) \in \XXX(R)$. Consider a map $\alpha: \mathcal M_1 \to \mathcal M_2$. To show faithfulness, we regard $\mathcal M_i$ as \'etale $\varphi$-modules over $L'$, with coefficients in $R$. Since $\MM_i \subseteq \MM_i[1/u']=\mathcal M_i$, $\alpha$ determines at most one map between $\MM_1$ and $\MM_2$. Faithfulness follows. 
    
    Now, we prove fullness. Given any $\MM_1,\MM_2 \in \X(R)$, let $\mathcal M_i=T(\MM_i)$ for $i=1,2$. Let $f:\mathcal M_1 \to \mathcal M_2$ be a map between two \'etale $(\varphi, G_K)$-modules. We know from \thref{equivalence} that if we base change to any finite flat $\mathcal O$-algebra $R'$, the corresponding $f_{R'}$ would come from a map of potentially semistable Breuil-Kisin modules. By \thref{aux2}, this means $I_R  R'$ is the zero ideal of $R'$ for each finite flat $\mathcal O$-algebra $R'$ and map $R \to R'$. Using \thref{aux1}, we can see that $I_R=0$. Fullness follows by applying \thref{aux2} again.

    To prove that $T$ is an isomorphism from the fact that it is a monomorphism, we want to apply \cite[Lemma 7.2.6 (1)]{LLHLM1} since we already have essential surjectivity on finite flat points due to \thref{equivalence}. It remains to verify that $\X$ is of finite type. We already know that the stack of Breuil-Kisin modules, denoted by $\Y$ as in \cite{LLHLM1}, is of finite type. The same also holds when including the monodromy operator $N \in \Ma(W(k') \otimes_{\mathbb Z_p}R)$. It suffices to show that with given $C,N$, the matrix $\Xi \in 1+u'\Ma(\s)$ is uniquely determined.

    In fact, suppose that $\Xi_i \in 1+u'\Ma(\s)$ ($i=1,2$) both satisfy $\Xi_i \varphi(C) \varphi(\Xi_i)^{-1}=\overline C$. Let $\Xi=\Xi_2^{-1}\Xi_1$. Then $$\Xi=\varphi(C\Xi C^{-1}).$$ From $\Xi_1\equiv \Xi_2 \equiv 1 \text{ mod } u'$, we know that $\Xi\equiv 1 \text{ mod } u'$. Using the above equation and the fact that $\varphi(u')=u'^p$, we further get $\Xi\equiv 1 \text{ mod } u'^p$. Continuing this process and using the $u'$-adic completeness, we know that $\Xi=1$, implying $\Xi_1=\Xi_2$ as desired.
\end{proof}

\begin{rmk}
    From now on, we will use the two stacks $\XXX$ and $\X$ interchangeably. This equivalence will be used throughout the next chapter.

    One aim of this theorem is that for a general $p$-adically complete, topologically of finite type, $\mathcal O$-flat algebra $R$, we can view an $R$-point of $\XXX$ (not considering endomorphisms) as an object of $\STT(R)$ (using the defining property \thref{stacky} of $\X$). It is easy to describe the latter groupoid.
\end{rmk}

\section{The stack of Weil-Deligne representations} \label{s5}

\subsection{Weil-Deligne representations}

In \cite{BG}, a stack parametrizing Weil-Deligne representations is constructed. We first recall the definition and then relate it to the potentially semistable locus of the Emerton-Gee stack.
\begin{defn}
Let $A$ be an $E$-algebra. We say that $(\mathfrak D,\varrho,N)$ is a Weil-Deligne representation of rank $n$ with coefficients in $A$, and of descent $L'/K$ if 
\begin{enumerate}
    \item $\mathfrak D$ is a free $A$-module of  rank $n$;
    \item $\varrho:W_K \to \mathrm{GL}(\mathfrak D) \cong \mathrm{GL}_n(A)$ is a continuous representation of $W_K$ such that $\varrho(I_K)$ factors through $I(L'/K)=\Delta'$;
    \item $N \in \mathrm{End}_A(\mathfrak D)$ satisfies $ N\varrho(w)=p^{f \nu(w)} \varrho(w) N$ for every $w \in W_K$.
\end{enumerate}
\end{defn}

By abuse of notation, we often use $\mathfrak D$ to refer to the triple $(\mathfrak D,\varrho,N)$ if there is no confusion.

\begin{defn}
Let $\WD$ be the stack over $E$, where \'etale locally, the fiber over any $E$-algebra $A$ is the groupoid of rank $n$ Weil-Deligne representations with coefficients in $A$ of descent $L'/K$. 
\end{defn}

\begin{rmk}
Given $\mathfrak D \in \WD$, the data of the homomorphism $\varrho:W_K \to \mathrm{GL}(\mathfrak D)$ is equivalent to the data of $\varrho(I_K)$ and $\varrho(\Fr)$. Hence if we fix the representation of $\Delta'$ to be $\tau $ introduced in \autoref{Breuil-Kisin modules with descent data}, then the data of $\mathfrak D$ is equivalent to the information of the two $A$-linear maps $\varrho(\Fr), N$ on $\mathfrak D$, satisfying some commuting relations. We often write $\phi_\mathfrak D=\varrho(\Fr)$.
\end{rmk}

We will now recall a more concrete way to describe $\WD$ following \cite{BG}. 

\begin{defn} \thlabel{weil-deligne}
 Let $\W$ be the stack over $E$ whose fiber over any $E$-algebra $A$ is the groupoid of triples $(\phi, N, T)$ where

\begin{enumerate}
    \item $\phi \in \mathrm{GL}_n(A), N \in \Ma(A)$, $T:\Delta' \to \mathrm{GL}_n(A)$ is a representation of $\Delta'$;
    \item $ N\phi=p^{f} \phi N$;
    \item $\phi T(g)=T(\Fr g \Fr^{-1})\phi$, and $N=T(g) N T(g)^{-1}$ for all $g \in \Delta'$.
\end{enumerate}
\end{defn}

We further define an action of $\GL_n(A)$ on $\W(A)$ by $$a\cdot(\phi,N,T)=(a \phi a^{-1}, aNa^{-1}, aTa^{-1})$$ for all $a \in \GL_n(A), (\phi,N,T) \in \W(A)$. Let $\phi$ be the matrix of $\phi_\mathfrak D$. \cite[Lemma 2.1.3]{BG} gives the following result:

\begin{thm} \thlabel{wd}
    The quotient stack $[\W / \mathrm{GL}_n]$ is equivalent to the stack $\WD$.
\end{thm}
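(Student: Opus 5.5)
The plan is to construct two morphisms of stacks, $\Phi:[\W/\mathrm{GL}_n]\to\WD$ and $\Psi:\WD\to[\W/\mathrm{GL}_n]$, and to check that they are quasi-inverse. Since both sides are stacks for the étale topology over $E$, it suffices to work with objects over an $E$-algebra $A$ on which the relevant modules are free, and then glue using descent.

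For $\Phi$, take a triple $(\phi,N,T)\in\W(A)$ and set $\mathfrak D=A^n$. Define $\varrho$ on $W_K=I_K\rtimes\langle\Fr\rangle$ by letting $I_K$ act through $I_K\twoheadrightarrow\Delta'$ via $T$, and letting $\varrho(\Fr)=\phi$. The relation $\phi T(g)=T(\Fr g\Fr^{-1})\phi$ is exactly the compatibility with the semidirect product structure, so $\varrho$ is a well-defined homomorphism. It is continuous because $I_K$ acts through a finite quotient and $\langle\Fr\rangle$ is discrete in $W_K$.

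Next check the Weil-Deligne relation $N\varrho(w)=p^{f\nu(w)}\varrho(w)N$. Write $w=g\Fr^k$ with $g\in I_K$, so $\nu(w)=k$. Then use that $N$ commutes with $T(g)$ and that $N\phi^k=p^{fk}\phi^kN$, which follows from $N\phi=p^f\phi N$ by induction and inversion. Conjugation by $a\in\GL_n(A)$ visibly corresponds to a change of basis of $\mathfrak D$. So $\Phi$ is defined on the prestack quotient and then sheafifies to the stack quotient.

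For $\Psi$, take $\mathfrak D\in\WD(A)$. Since $\mathfrak D$ is free of rank $n$, locally we can choose a basis. Reading off the matrices of $\varrho(\Fr)$, $N$ and $\varrho|_{\Delta'}$ gives a triple $(\phi,N,T)$, and the three conditions of \thref{weil-deligne} follow by specializing the Weil-Deligne relation to $w=\Fr$ and to $w\in I_K$, together with multiplicativity of $\varrho$. Different choices of basis differ by $\GL_n(A)$-conjugation, so $\Psi$ lands in $[\W/\mathrm{GL}_n]$. The two constructions are visibly mutually inverse on objects, and isomorphisms of Weil-Deligne representations correspond exactly to conjugating elements $a$.

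The only step needing any care is the well-definedness and continuity of $\varrho$ on $W_K$ from the data $(\phi,T)$. Here I would use the convention in the notation section that $\Fr$ is the chosen lift fixing $\pi'$, so that $\Fr$ normalizes $\Delta'$ compatibly with the conjugation appearing in condition (3). Everything else is formal; in fact this is the content of \cite[Lemma 2.1.3]{BG}, which we may simply cite.
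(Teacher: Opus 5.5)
Your proposal is correct and matches the paper, which gives no argument of its own and simply invokes \cite[Lemma 2.1.3]{BG}. Your explicit unwinding is the content of that lemma: a homomorphism on $W_K=I_K\rtimes\langle\Fr\rangle$ amounts to a representation of $I_K$ together with an element $\phi$ satisfying the conjugation relation, isomorphisms correspond to $\GL_n(A)$-conjugation, and one then stackifies both sides. The care you take over which lift of $\Fr$ is used is harmless but not needed, since $\Delta'$ is abelian and any two lifts of Frobenius induce the same conjugation on it.
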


For later use, we define the $\tau$-part of the stack $\WD$ and give its main property.
\begin{defn}
    The substack $\WD^{\tau}$ is defined as the substack of $\WD$ whose $A$-points consist of triples $(\phi, N, T)$ with $T$ isomorphic to $\tau \otimes A$.
\end{defn}

\begin{prop} \thlabel{wdsmooth}
    The stack $\WD^\tau$ has a dense open substack which is formally smooth.
\end{prop}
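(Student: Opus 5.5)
The plan is to make $\WD^\tau$ concrete through \thref{wd} and then read off a smooth locus from the moment-map structure of the monodromy. Fixing $T\cong \tau\otimes A$ (the type is rigid, since $\Delta'$ is finite and we are in characteristic zero), we get
\[
\WD^\tau \cong [\,Z/\mathrm{Cent}(\tau)\,],
\]
where $Z$ is the affine $E$-scheme of pairs $(\phi,N)$ with $\phi\in\GL_n$, $N\in\Ma$, subject to three conditions: $\phi\tau(g)\phi^{-1}=\tau(\Fr g\Fr^{-1})$, $N$ commuting with $\tau(\Delta')$, and $N\phi=p^f\phi N$. The first condition says $\phi$ lies in a torsor $\phi_0\cdot \mathrm{Cent}(\tau)$ under the centralizer $H:=\mathrm{Cent}(\tau)$, a product of general linear groups, and $N$ lies in $\mathrm{Lie}\,H$. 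It therefore suffices to exhibit a dense open subscheme of $Z$ that is smooth over $E$. Formal smoothness of the stack then follows because $Z\to[Z/H]$ is smooth and surjective, and openness and density descend.

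Next I will identify the smooth locus. Consider the map
\[
\mu\colon \phi_0H\times \mathrm{Lie}\,H\to \mathrm{Lie}\,H,\qquad (\phi,N)\mapsto N-p^f\phi N\phi^{-1}.
\]
(Conjugation by $\phi$ preserves $\mathrm{Lie}\,H$ because $\phi$ normalizes $\tau(\Delta')$.) Then $Z=\mu^{-1}(0)$. At a point $(\phi,N)$ the differential in the $N$-direction is $X\mapsto X-p^f\mathrm{Ad}(\phi)X$. This map is surjective exactly when $p^{-f}$ is not an eigenvalue of $\mathrm{Ad}(\phi)$ on $\mathrm{Lie}\,H$. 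Let $U\subseteq Z$ be the open locus where this holds; it contains every point with $N=0$ and $\phi$ suitably generic, so $U\neq\emptyset$. On $U$, $\mu$ is a submersion, so $U$ is smooth of dimension $\dim H$. I expect this by the Jacobian criterion.

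The main obstacle is density, because $Z$ has components with $N\neq0$ on which $U$ is empty. On such a component $\mathrm{Ad}(\phi)$ must have eigenvalue $p^{\pm f}$, since $\mathrm{ad}$-eigenvectors of the form $N$ force it. To handle this, I will follow the argument of \cite{BG} (or the analogous one for $\mathrm{WD}$ stacks in Helm's work): stratify $Z$ by the $H$-conjugacy class of the nilpotent $N$ inside $\mathrm{Lie}\,H$. On the stratum of a fixed nilpotent orbit $\mathcal O_N$, $\phi$ ranges over the set of $\phi$ with $\mathrm{Ad}(\phi)N=p^{-f}N$, which is a torsor under the centralizer $Z_H(N)$ twisted by a cocharacter of an $\mathfrak{sl}_2$-triple through $N$ (Jacobson--Morozov). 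So each stratum is irreducible of dimension $\dim\mathcal O_N+\dim Z_H(N)=\dim H$.

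Hence every stratum closure is an irreducible component of dimension $\dim H$, and $Z$ is equidimensional. Moreover $Z$ is a complete intersection cut out by $\dim H$ equations in a space of dimension $2\dim H$, so it is Cohen--Macaulay. I would then show that $Z$ is generically reduced on each component by finding, on each stratum, a point where the full differential $d\mu$ (including the $\phi$-direction, which contributes $p^f[N,\cdot]$) is surjective. Concretely, I will take $\phi$ generic in the twisted centralizer torsor, so that the only $p^{-f}$-eigenvectors of $\mathrm{Ad}(\phi)$ lie in the image of $\mathrm{ad}N$. Cohen--Macaulay plus generically reduced gives reduced, and then the smooth locus of each component is dense. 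This Jacobian computation on each stratum is the step I expect to require the most care.
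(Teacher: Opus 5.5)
Your route is genuinely different from the paper's. It is viable, but all of its real content sits in the step you defer.

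\textbf{The paper's route.} The paper never analyses $\WD^\tau$ directly. It quotes \cite[Corollary 2.4.5]{BG}, which already gives a dense open formally smooth substack of the whole stack $\WD$. It then only checks that $\WD^\tau$ is open and closed in $\WD$: the isotypic pieces $e_iV$, cut out by idempotents $e_i\in E[\Delta']$, are projective summands with locally constant rank. So the dense open smooth locus restricts to $\WD^\tau$.

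\textbf{Comparison.} Your identification $\WD^\tau\cong[Z/H]$ via rigidity of $\tau$ is essentially that open-and-closed observation. Once BG is cited, the rest of your argument is unnecessary. In exchange, your route is independent of BG and gives finer information: $Z$ is a complete intersection, equidimensional of dimension $\dim H$, with components indexed by nilpotent $H$-orbits in $\mathrm{Lie}\,H$. Two points you should state explicitly:
\begin{enumerate}
\item Only the $\mathrm{Ad}(\phi_0)$-stable nilpotent orbits give nonempty strata.
\item Irreducibility of a stratum uses that $Z_H(N)$ is connected, which holds because $H$ is a product of general linear groups.
\end{enumerate}

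\textbf{Completing the deferred step.} Use the trace form on $\mathrm{Lie}\,H$, for which $\mathrm{Ad}(\phi)$ is orthogonal and $\mathrm{ad}N$ is skew. Then $d\mu$ at $(\phi,N)\in Z$ is surjective if and only if there is no nonzero $Y\in\mathfrak z_{\mathrm{Lie}\,H}(N)$ with $\mathrm{Ad}(\phi)Y=p^fY$. Your phrasing, that $p^{-f}$-eigenvectors lie in $\mathrm{im}(\mathrm{ad}N)$, is adequate only when $\mathrm{Ad}(\phi)$ is semisimple; the dual form is the one to prove.

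\begin{enumerate}
\item Reduce to a single $\GL_m$. The element $\phi$ permutes the isotypic blocks of $\tau$ along Frobenius cycles. Composing around a cycle of length $d$ reduces both the stratum and the criterion to pairs $(\Phi,N)$ in one $\GL_m$ with $N\Phi=p^{fd}\Phi N$, where you need no $p^{fd}$-eigenvector of $\mathrm{Ad}(\Phi)$ in $\mathfrak z(N)$.
\item Construct the point. Take $\Phi=\lambda(p^{-fd})c$. Here $\lambda$ is the cocharacter giving weight $j$ to $N^je$ for each generator $e$ of a Jordan chain $B$. Then $N$ has weight $1$, and $\mathfrak z(N)$ has only weights $k\ge 0$. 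The element $c$ acts on each chain $B$ by a generic scalar $c_B$.
\item Check the criterion. The eigenvalues of $\mathrm{Ad}(\Phi)$ on $\mathfrak z(N)$ are $p^{-fdk}c_{B'}/c_B$ with $k\ge0$. For generic $c$ these avoid $p^{fd}$.
\end{enumerate}

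Finally, the Cohen--Macaulay and reducedness detour is not needed. The Jacobian locus is open and meets every irreducible component, so it is already dense.
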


\begin{proof} 
  By \cite[Corollary 2.4.5]{BG}, the stack $\WD$ has a dense open formally smooth locus. It suffices to show that $\WD^{\tau}$ is an open and closed substack of $\WD$.

  Let $V$ be the representation space of $T$, which is isomorphic to $A^n$. We can decompose the tame inertial type as $\tau=\bigoplus_i \chi_i^{m_i}$, where $\chi_i$ are mutually distinct characters. Let $$e_i=\frac{1}{e'}\sum_{g\in \Delta'} \chi_i(g^{-1})g \in E[\Delta']$$ be the idempotent in the group algebra $E[\Delta']$ corresponding to $\chi_i$. Then the $\chi_i$-eigenspace is just $e_iV$. So $T$ is isomorphic to $\tau \otimes A$ if and only if $\mathrm{rank}_A(e_i V)=m_i$ for each $i$.

    Since each $e_i V$ is a finite projective direct summand of $V$, and the rank of a finite projective module is locally constant, the rank conditions $\mathrm{rank}_A(e_i V)=m_i$ cut out an open and closed substack of $\WD$, which is our $\WD^\tau$.
\end{proof}

In order to construct the maps in the next section, we need to use the analytification of the Weil-Deligne stack to match up with the rigid analytification of the generic fiber of $\XXX$. By definition, given any $E$-algebra $A$, we have $$\WD^{\mathrm {an}}(\mathrm{Sp\ }A)=\WD(\mathrm{Spec\ }A).$$ 

\subsection{From potentially semistable representations to Weil-Deligne representations}
We now recall the recipe of associating a Weil-Deligne representation to a potentially semistable representation in \cite{Fon}. 

We start with a potentially semistable representation $\rho:G_K \to \mathrm{GL}_n(E)$. After fixing an $\mathcal O$-lattice, let $\MM$ be the corresponding potentially semistable Breuil-Kisin module, with inertial type $\tau$. Let $(D,\phi_D,N_D)=D_{\mathrm{pst}}(\rho )$. We can decompose $D$ as $D=\bigoplus_{j' \in \mathcal J'} D^{(j')}$, such that $D^{(j')}=D \otimes_{W(k'),\sigma_{j'}}E$ for any $j' \in \mathcal J'$. For each $j' \in \mathcal J'$, we also have a map $\phi_D^{(j')}:D^{(j'-1)}\to D^{(j')}$.

 The action of $W_K$ on $D$ is given by the formula $$\varrho(w)=w \circ (\phi_D)^{f \nu(w)},$$ for every $w \in W_K$, where $w \in W_K$ on the right hand side acts on $D$ through the inertial subgroup $I_K$ by $\tau$.

Let $\D=D^{(0)}, \varrho=\varrho^{(0)}$. We define the monodromy operator $N$, an $E$-endomorphism on $\D$ through $N=N_D|_{\D}: \D \to \D$. We call $(\D,\varrho,N)$ the Weil-Deligne representation associated to the potentially semistable representation $\rho$. \cite[Lemme 2.2.1.2]{BM} shows that the process of forming the Weil-Deligne representation is independent of the embedding $\sigma_{j'}$ chosen. 

In fact, the recipe of Fontaine above gives a functor $$\iota:\mathrm{Rep}^{[0,h],\tau}_E(G_K) \to \WD^{\mathrm {an}}(E).$$

Now we generalize this process to give a map $\Xe(A) \to \WD^{\mathrm {an}}(A)$ for any finite field extension $A/E$. We will still call this functor $\iota$ the recipe of Fontaine. We make use of the following notation. Given any $x\in W(k') \otimes_{\mathbb Z_p} A$, we define $x^\circ\in A$ to be the projection onto its $\sigma_0$-factor. The same notation $( \cdot )^\circ$ also applies to matrices in $\Ma( W(k') \otimes_{\mathbb Z_p} A)$. 

Let $\mathcal M \in \Xe(A)$. Since $A$ is a $p$-adic field, as we have already noticed, for example in the proof of \thref{k}, $\mathrm{Spf\ }R$, where $R$ is the ring of integers of $A$, is the only Raynaud formal model of $\mathrm{Sp\ }A$. So $\mathcal M$ corresponds to a potentially semistable Breuil-Kisin module $\MM \in \X(R)$. Suppose that it corresponds to the triple $((A^{(j)})_{j \in \mathcal J},N,\Xi)$. As usual, let $$\overline C= \mathfrak C((A^{(j)} \omega^{(j)})_{j \in \mathcal J}) \mathrm{\ mod \ } u', \Psi=\varphi^{-1}(\Xi^{-1} \e(t\otimes N) \psi(\Xi)).$$ Let $\phi= (\overline C^{f})^\circ \in \Ma(R)$. We also have $N^\circ \in \Ma(R)$. 

By iterating the identity $N \overline C=p\overline CN$, we have $N^\circ \phi=p^f \phi N^\circ$. We set $T: \Delta' \to \GL_n(E)$ to be the inertial type $\tau$. Then $(\phi, N^\circ,T) \in \WD(A)$ according to \thref{weil-deligne}. We call it the Weil-Deligne representation $WD(\MM)$ associated to $\MM$. If we use the usual language of Weil-Deligne representations, then the underlying module of $WD(\mathcal M)$ is $(\MM/u') \otimes _{W(k'), \sigma_0} A$. The maps $\phi,N$ are given by $$\phi=((\phi_{\MM})^f \mathrm{\ mod\ }u')^\circ,$$ $$ N=(N_{\MM} \mathrm{\ mod\ }u')^\circ.$$

The following theorem further generalizes this process to a morphism $WD:\Xe \to \WD^{\mathrm {an}}$ between the two stacks. 

\begin{thm}
    There is a morphism $WD: \Xe \to \WD^{\mathrm {an}}$ between the two stacks such that for any given finite type point $\mathcal M \in \Xe(A)$, the image $WD(\mathcal M)\in \WD^{\mathrm {an}}(A)$ coincides with Fontaine's recipe of associating Weil-Deligne representations to potentially semistable representations. 
\end{thm}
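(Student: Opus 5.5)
The plan is to construct $WD$ on the level of presentations and then descend. By \thref{big1} and \thref{coincide}, an $A$-point of $\Xe$ (for $A$ an affinoid $E$-algebra) is represented, after passing to some formal model $\mathrm{Spf\ }R$ of $\mathrm{Sp\ }A$ and working locally on $R$, by a triple $((A^{(j)})_{j\in\mathcal J},N,\Xi)\in\STT(R)$, well defined up to the action of $\prod_{j\in\mathcal J}\mathcal P_j(R)$. We may take $R$ to be $p$-adically complete, topologically of finite type and $\mathcal O$-flat, with $R[1/p]=A$. To this triple we assign
\[
(\phi,N^\circ,T):=\bigl((\overline C^{f})^\circ,\ N^\circ,\ \tau\otimes A\bigr),
\]
where $\overline C=\mathfrak C((A^{(j)}\omega^{(j)})_{j})\bmod u'$. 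Here $\overline C^{f}$ means the $f$-fold $\varphi$-twisted product of partial Frobenii, i.e.\ the linearized $\phi^f$ restricted to the $\sigma_0$-component.

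The first step is to check that this triple lies in $\W(A)$. The relation $N\overline C=p\overline C\varphi(N)$ from condition (3) of \thref{algstack} iterates to $N^\circ\phi=p^f\phi N^\circ$. The compatibility $\phi T(g)=T(\Fr g\Fr^{-1})\phi$ and the commutation $N T(g)=T(g)N$ come from the fact that the matrices arise from an eigenbasis. Concretely, $\overline C$ is block-compatible with the $\chi_i$-eigenspaces of $\MM/u'$ (\thref{Frob}, entries $c_{ik}\in(u')^{[a_k-a_i]}R\llbracket v\rrbracket$, so mod $u'$ only entries with $\chi_i=\chi_k$ survive after twisting). Since $N$ is $\Delta'$-equivariant, $N$ commutes with $\tau$. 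I will need to verify the $\Delta'$-equivariance of $N$ directly from the $\STT$ conditions, or at least at finite flat points together with \thref{aux1}, since $N$ is only a matrix in $\Ma(W(k')\otimes R)$. I expect this to be the main obstacle. My first attempt will be to use uniqueness: $\Xi$ is determined by $C$ (end of proof of \thref{big1}), so $\Delta'$ acting on $\Xi$ and on $\e(t\otimes N)\psi(\Xi)$ forces $N$ to commute with $\tau$. If that fails, I will fall back on \thref{aux1}, using Zariski density of finite flat points.

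The second step is equivariance. Under $(I^{(j)})\in\prod\mathcal P_j(R)$ the triple changes by $N\mapsto\bar IN\bar I^{-1}$ and $\overline C\mapsto\bar I\overline C\varphi(\bar I)^{-1}$. Hence $\phi\mapsto\bar I^\circ\phi(\bar I^\circ)^{-1}$ and $N^\circ\mapsto \bar I^\circ N^\circ(\bar I^\circ)^{-1}$, with $\bar I^\circ$ commuting with $\tau$. This gives a morphism $[\STT/\prod\mathcal P_j]\to[\W/\GL_n]\cong\WD$ (via \thref{wd}) after inverting $p$. Next I need compatibility with admissible blowups and refinements of formal models. This is immediate, since the construction commutes with base change $R\to R'$, so it passes to the colimit in \thref{rig}. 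It also glues along smooth covers, by the same descent argument used to build $T:\X\to\mathcal X_n$. Analytification is harmless, since $\WD^{\mathrm{an}}(\mathrm{Sp\ }A)=\WD(\mathrm{Spec\ }A)$.

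Finally, I will compare with Fontaine's recipe at a finite type point $A$, a finite extension of $E$ with ring of integers $R$. By \thref{DvsM} we have $D_{\mathrm{pst}}(\rho)\cong(\MM/u')[1/p]$ compatibly with $\phi$ and $\Delta$. By \thref{stable} and \thref{vip}, the $N$ of the triple is the monodromy $N_D$ on the lattice $M=\MM/u'$. Taking the $\sigma_0$-component $\D=D^{(0)}$, Fontaine's $\varrho(\Fr)=\Fr\circ\phi_D^{f}$ becomes $(\overline C^f)^\circ$ once we identify $\D$ with $D^{(f)}$ via the descent datum, which is trivial on the eigenbasis. The inertia then acts by $\tau$. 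This shows that $WD(\mathcal M)$ agrees with $\iota(\rho)$.
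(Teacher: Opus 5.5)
Your proposal is correct and follows essentially the paper's route: build the Weil--Deligne datum on each affine open $\mathrm{Spf\ }R$ of a Raynaud model from $\MM/u'$, glue, check compatibility under refinement of models, and compare with Fontaine's recipe at finite extensions of $E$. The paper phrases the local construction intrinsically as $(\MM_U/u')\otimes_{W(k'),\sigma_0}R[1/p]$, which makes independence of the eigenbasis automatic, where you check $\prod_j\mathcal P_j$-equivariance by hand.

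The paper simply asserts $(\phi,N^\circ,\tau)\in\W(A)$, so your plan to prove the equivariance of $N$ at finite flat points (where $M$ is a lattice in a $(\phi,N,\Delta)$-module) and spread it out with \thref{aux1} is a real addition. Apply it to $\Fr$ as well as to $\Delta'$, though. Iterating $N\overline C=p\overline C\varphi(N)$ only yields $N^{(f)}\phi=p^f\phi N^{(0)}$, so you also need $N^{(f)}=N^{(0)}$. Likewise, block-compatibility of $\overline C$ alone only gives $\phi T(g)=T(g)\phi$, so the twist in $\phi T(g)=T(\Fr g\Fr^{-1})\phi$ must come from the Frobenius descent datum.
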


\begin{proof}
    We first give a construction of the morphism $WD$. Given any affinoid $E$-algebra $A$ and a point $x \in \Xe(\mathrm{Sp \ }A)$, we can assume that $x$ comes from $y \in \XXX(Y)$ according to \thref{rig}, where $Y$ is a Raynaud formal model of $\mathrm{Sp \ }A$. Let $U$ be any affinoid open of $Y$ given by $\mathrm{Spf \ } R$, where $R$ is an admissible formal $\mathcal O$-algebra. In particular, $R$ is of finite type and flat by admissibility. For each such $R$, we have a corresponding module $\MM_U \in \X(R)$. And we can associate a Weil-Deligne representation $\D_U=(\MM_U/u')  \otimes _{W(k'), \sigma_0} R[1/p]$, with the operator $\phi, N$ defined as in the paragraph above.  

    Now let $U$ range through all the affine formal opens of $Y$, and form all the corresponding $\D_U$. The gluing data on $Y$ are preserved since the association of $\MM$ to $\MM/u'$ is functorial. Therefore, various $\D_U$ glue to some $\D(Y) \in \WD^{\mathrm{an}}(Y_\eta)$ \footnote{We use $Y_\eta$ to mean the generic fiber of $Y$, which, in this case, is just $\mathrm{Sp \ }A$.}. We define the image of $x \in \Xe(\mathrm{Sp \ }A)$, which is represented by the model $Y$, in $\WD^{\mathrm {an}}(\mathrm{Sp\ }A)=\WD(\mathrm{Spec\ }A)$ to be $$WD_Y(x)=\D(Y).$$ 
    
    Now let $Y'$ be another Raynaud formal model of $\mathrm{Sp \ }A$, with map $f: Y' \to Y$. Let $y'=f^*y \in \XXX(Y')$ be the pullback of $y \in \XXX(Y)$. We can form $WD_{Y'}(x)=\D(Y')$ likewise as before. Since the map $y' \to y$ can be restricted to each affine formal open on both $Y'$ and $Y$, and the association of $\MM$ to $\MM/u'$ is functorial, we will have an isomorphism $\theta_f: WD_{Y'}(x) \to f^*WD_Y(x)$.  It is obvious that the isomorphism $\theta_f$ is compatible with pullbacks in the source and composition. Therefore we obtain a morphism between the two rigid stacks.

    Clearly, the construction process coincides with the case where the $E$-algebra $A$ is a finite field extension of $E$, hence it coincides with Fontaine’s recipe of associating Weil-Deligne representations to potentially semistable representations.
\end{proof}

\begin{rmk}
    In the case of $A=E$, we can reformulate this result as follows: there is a unique way that the functor $\iota:\mathrm{Rep}^{[0,h],\tau}_E(G_K) \to \WD^{\mathrm {an}}(E)$ can factor through the functor $WD$, which is $$\iota=WD \circ \kappa,$$ where $\kappa$ is given in \thref{k}.
\end{rmk}

\subsection{Smoothness: construction of the square-zero lift} \label{5.3}
In the final subsections of this paper, we will show the smoothness of the map $WD$.

Let the field $F$ be any extension of $E$. Let $\mathcal M_0 \in \Xe(F)$. Let $\D_0=WD(\MM_0)$ be the image of $\MM_0$, viewed as in $\WD(F)$. In fact, $\D_0 \in \WD^{\tau}(F)$. We will show that the map $WD$ is smooth at $\mathcal M_0$ using the definition of smoothness.

Let $\widetilde A \twoheadrightarrow A$ be a surjection of finite local Artinian $F$-algebras with kernel $I$ satisfying $I \mathfrak m=0$, where $\mathfrak m$ denotes the maximal ideal of $\widetilde A$. Then $\widetilde A/\mathfrak m$ is a finite extension of $F$, which we denote $\widetilde F$. Since $\mathfrak m$ is the unique prime ideal of the Artinian ring $\widetilde A$, any Raynaud formal model of either $\mathrm{Sp\ }\widetilde A$ or $\mathrm{Sp\ } A$ is an affinoid. Let $\mathrm{Spf\ } R$ be any model of $\mathrm{Sp\ } A$. By the definition of Raynaud models, $R$ is topologically of finite type and flat over $\mathcal O$. We fix an isomorphism $R[1/p] \cong A$ and view $R$ as a subring of $A$.

Let $\mathcal M \in \Xe(A)$ and $\D=WD(\mathcal M) \in \WD^{\mathrm{an}}(A)$. We view $\D$ as in $\WD(A)$. According to \thref{rig}, suppose that $\mathcal M$ comes from $\MM \in \XXX(R)$. Suppose further that $\D \in \WD( A)$ maps to $\D_0\in \WD(F)$ through $A \twoheadrightarrow \widetilde A/\mathfrak m$. We further assume that there exists $\widetilde \D \in \WD^\tau(\widetilde A)$ lifting $\D$.

According to \cite[Proposition 4.1]{BS}, there is a unique $(\phi,N,\Delta)$-module $D \in L(\phi, N, \Delta)_A$ corresponding to $\D$. Let $M=M_R(\MM)$, then $D=M[\frac{1}{p}]$. Similarly, we obtain $\widetilde D$. By functoriality, we have $\widetilde D \otimes_{\widetilde A} A\cong D$. 

Let $N_D$ (resp. $N_{\widetilde D}$) denote the map $N$ on $D$ (resp. $\widetilde D$). Let $\phi_D$ (resp. $\phi_{\widetilde D}$) denote the map $\phi$ on $D$ (resp. $\widetilde D$).

We will construct $\widetilde{\mathcal M} \in \Xe(\widetilde A)$, so that it lifts $\mathcal M \in \Xe(A)$, and satisfies $WD(\widetilde {\mathcal M})=\widetilde \D$. 

Let $\widetilde R$ be the preimage of $R \subseteq A$ under the surjection $\widetilde A \twoheadrightarrow A$. The kernel of this map is $I \cap \widetilde R$, which we denote by $\mathfrak I$. So we have the following commutative diagram with column maps being reduction maps:  
$$\begin{tikzcd}
 \widetilde R & \widetilde A  \\
	R &  A.
	\arrow[,,swap, from=1-1, to=2-1]
	\arrow[hook, from=1-1, to=1-2]  
	\arrow[,, from=1-2, to=2-2]
	\arrow[hook, from=2-1, to=2-2] 
\end{tikzcd}$$ 

Here we define $\mathcal S_{\widetilde R}=\mathcal S_R \oplus (\mathfrak I \otimes_A \s)$, with multiplication given by $(s,x)(s',x')=(ss',sx'+s'x)$, where $s,s' \in \mathcal S_R$ and $x,x' \in \mathfrak I \otimes_A \s$. Then we have $\mathcal S_{\widetilde R}/\mathfrak I \mathcal S_{\widetilde R} \cong \mathcal S_R$. Also, we define $\mathfrak S_{L', \widetilde R}= \SSSS \oplus (\mathfrak I \otimes_A \SSSS[1/p])$ and give it similar multiplication as in $\mathcal S_{\widetilde R}$. The other rings $\mathbf A_{\mathrm{inf}, \widetilde R}$ and $\mathbf A_{\mathrm{cris}, \widetilde R}$ are defined in the same way as $\mathfrak S_{L', \widetilde R}$.

Note that $\mathrm{Spf\ }\widetilde R$ is not a formal model of $\mathrm{Sp\ }\widetilde A$. Our strategy is to construct a potentially semistable Breuil-Kisin module with coefficients in $\widetilde R$ lifting $\MM \in \XXX(R)$ in the first place. Then we explain how to define $\wM$ on Raynaud formal models of $\mathrm{Sp\ } \widetilde A$, and thus obtain $\widetilde{\mathcal M} \in \Xe(\widetilde A)$.

The surjection $\widetilde D \twoheadrightarrow D$ induces a surjection $\mathfrak F: \AC \otimes_{W(k')} \widetilde D \twoheadrightarrow \mathbf A_{\mathrm{cris}}  \otimes_{W(k')} D$. Let $\widetilde M$ be the preimage of $M \subseteq D$ under this surjection $\widetilde D \twoheadrightarrow D$. Hence $\mathfrak F$ restricts to $ \AC \otimes_{W(k')} \widetilde M \twoheadrightarrow \mathbf A_{\mathrm{cris}}  \otimes_{W(k')} M$, which we also call $\mathfrak F$ by abuse of notation. We endow $\mathbf A_{\mathrm{cris}} \otimes_{W(k')} D$ with the usual $\phi$-map given by $\phi=\varphi_{\mathbf A_{\mathrm{cris}} } \otimes \phi_{D}$, and do the same for $\AC \otimes_{W(k')} \widetilde D$.

 \begin{lemma} \thlabel{fix}
     The $W(k') \otimes \widetilde R$-module $\widetilde M$ is stable under $\phi_{\widetilde D}$. It is free of rank $n$.
 \end{lemma}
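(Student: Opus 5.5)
The proof splits into two parts: stability of $\widetilde M$ under $\phi_{\widetilde D}$, which I expect to be formal, and freeness of rank $n$, which is where the work lies. Throughout, $\widetilde M$ is the preimage of $M$ under the reduction $\widetilde D \twoheadrightarrow D$, and the kernel of that reduction is $I\widetilde D = I \otimes_{\widetilde A} \widetilde D$.

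\textbf{Stability.} Let $x \in \widetilde M$. Since the reduction $\widetilde D \twoheadrightarrow D$ is $\phi$-equivariant, the image of $\phi_{\widetilde D}(x)$ in $D$ is $\phi_D(\bar x)$. By \thref{stable}, $M$ is a lattice in $D$, so $\phi_D(\bar x) \in M$. Hence $\phi_{\widetilde D}(x)$ lies in the preimage of $M$, which is $\widetilde M$. The same argument shows $\widetilde M$ is stable under $N_{\widetilde D}$ and $\Delta$, which will be needed later. One should also check that $\widetilde M$ is a $W(k')\otimes\widetilde R$-module. This holds because $\widetilde R$ maps into $R$, and $M$ is an $R$-module.

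\textbf{Freeness: setup.} First, $\widetilde D$ is free of rank $n$ over $K'_0 \otimes_{\mathbb Q_p} \widetilde A$, by \thref{phi,n,gammma} and the construction of \cite[Proposition 4.1]{BS}. Choose a basis $\bar e_1,\dots,\bar e_n$ of $M$ over $W(k')\otimes R$; this is possible since $M$ is free by construction. Lift it to elements $e_1,\dots,e_n \in \widetilde M$.

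I claim the $e_i$ form a basis of $\widetilde M$. Since $\widetilde D \otimes_{\widetilde A} A \cong D$ and $\widetilde D$ is free, Nakayama's lemma over the Artinian ring $\widetilde A$ (with nilpotent kernel $I$) shows the $e_i$ form a $K'_0\otimes\widetilde A$-basis of $\widetilde D$. So any $x \in \widetilde M$ can be written uniquely as $x = \sum a_i e_i$ with $a_i \in K'_0 \otimes \widetilde A$. Reducing mod $I$ and using that $x$ maps into $M$, each $\bar a_i$ lies in $W(k')\otimes R$. Hence each $a_i$ lies in the preimage of $W(k')\otimes R$ in $K'_0\otimes\widetilde A$.

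That preimage is exactly $W(k')\otimes \widetilde R$. Indeed, $W(k')$ is finite free over $\mathbb Z_p$, and $\widetilde R$ is by definition the preimage of $R$ under $\widetilde A \to A$. Both properties pass through the decomposition $W(k')\otimes - \cong \bigoplus_{j'} (-)$. So $\widetilde M = \bigoplus (W(k')\otimes\widetilde R)e_i$, and the sum is direct because the $e_i$ are independent in $\widetilde D$.

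\textbf{Main obstacle.} The delicate point is that $\widetilde R$ contains the full $\mathfrak I = I \cap \widetilde R$, which equals $I$ as a set; this is not finitely generated over $\mathcal O$. I need the identification $(W(k')\otimes\widetilde A)$-preimage $= W(k')\otimes\widetilde R$ to hold on the nose. I will verify it componentwise via the embeddings $\sigma_{j'}$. This uses that $E$ contains the images of all embeddings of $K'_0$, after enlarging $E$ if necessary.
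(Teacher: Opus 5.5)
Your proof is correct and follows the same route as the paper. Stability comes from the $\phi$-equivariance of the reduction $\widetilde D \twoheadrightarrow D$ together with $\phi_D(M)\subseteq M$, and freeness comes from $\widetilde R$ being the preimage of $R$ and $D$ being free; you spell out the freeness step (lifting a basis of $M$, Nakayama for the square-zero ideal $I$) in more detail than the paper's one-line ``by its construction''. The point you flag as the main obstacle is immediate: writing $W(k')=\bigoplus_k \mathbb Z_p w_k$, the preimage of $W(k')\otimes R$ in $W(k')\otimes \widetilde A$ is visibly $W(k')\otimes \widetilde R$, with no need for the embeddings $\sigma_{j'}$ and no concern about $\mathfrak I$ being finitely generated.
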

\begin{proof}
    For any $m \in \widetilde M$, $\mathfrak F(\phi_{\widetilde D}(m))=\phi_D(\mathfrak F(m)))\in \phi_D(M)\subseteq M$. Hence by the definition of $\widetilde M$, $\phi_{\widetilde D}(m) \in \widetilde M$. Since $\widetilde R$ is constructed as the preimage of $R\subseteq A$ under the surjection $\widetilde A \twoheadrightarrow A$, and $D$ is a free module of rank $n$ over $W(k')\otimes_{\mathbb Z_p} A$, $\widetilde M$ is free of rank $n$ by its construction.
\end{proof}
 
We will also denote $\phi_{\widetilde D}|_{\widetilde M}$ by $\phi_{\widetilde M}$, or simply $\phi$. Now $\widetilde M$ can be seen as a submodule of $\AC \otimes_{W(k')}\widetilde D$.

Let $\beta$ be an eigenbasis of $\MM$. Then $\bar \beta:=\beta \mathrm{\ mod\ }u'$ is a $W(k')\otimes_{\mathbb Z_p} R$-basis of $M$. Choose any lift of $\bar \beta$ to a $W(k')\otimes_{\mathbb Z_p}\widetilde R$-basis of $\widetilde M$, denoted by $\tilde{\bar \beta}$. 

Suppose that the inclusion $\varphi^* \MM \hookrightarrow \mathcal S_R[1/p]\otimes M$ in \autoref{Relation} maps $\bar \beta$ to $\bar \beta \Xi$, where $\Xi \in \GL_n(\mathcal S_R[1/p])$ (also see \thref{algstack}). 

Now we first consider any lift of $\Xi \in \GL_n(\mathcal S_R[1/p])$ to $\tilde \Xi \in \GL_n(\mathcal S_{\widetilde R}[1/p])$ satisfying $\tilde \Xi-1 \in u' \Ma(\mathcal S_{\widetilde R}[1/p])$. We will give properties for such a lift before fixing a particular lift $\tilde \Xi$ that we are interested in. Let $\wM$ be the sub-$\mathfrak S_{L',\widetilde R}$-module of $\mathbf A_{\mathrm{cris}} \otimes_{W(k')} \widetilde D$ generated by $\tilde\beta:=\tilde{\bar\beta} \tilde \Xi$. It is, for the time being, only an $\mathfrak S_{L', \widetilde R}$-module. We have the following properties.

\begin{lemma} \thlabel{0}
    The $\mathfrak S_{L',\widetilde R}$-module $\wM$ is free of rank $n$. It satisfies $\wM/u'\wM\cong\widetilde M$ and $\wM/\mathfrak I \wM \cong \MM$.
\end{lemma}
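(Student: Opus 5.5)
The plan is to compare $\wM$ with the known module $\MM$ by reducing modulo the square-zero ideal $\mathfrak I$, and then to deduce freeness from a Nakayama-type argument. First, I would record how the relevant rings decompose. By definition $\mathfrak S_{L',\widetilde R}=\SSSS\oplus(\mathfrak I\otimes_A\SSSS[1/p])$ is a square-zero extension of $\SSSS$. The same holds for $\mathcal S_{\widetilde R}$ and $\mathbf A_{\mathrm{cris},\widetilde R}$, with the ideal $\mathfrak I$ mapping to the corresponding summand. Since $\widetilde M$ is free on $\tilde{\bar\beta}$ by \thref{fix}, the ambient module $\AC\otimes_{W(k')}\widetilde M$ (or its $\mathbf A_{\mathrm{cris},\widetilde R}[1/p]$-span) is free on $\tilde{\bar\beta}$. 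Reducing mod $\mathfrak I$ via $\mathfrak F$ sends $\tilde{\bar\beta}$ to $\bar\beta$ and $\tilde\Xi$ to $\Xi$. Hence $\mathfrak F(\tilde\beta)=\bar\beta\Xi$, which by \autoref{Relation} and \thref{key} is the image of the basis $\beta$ of $\MM$ (after the identification of $\varphi^*\MM$ inside $\s\otimes M$). So $\mathfrak F(\wM)=\MM$ as embedded submodules.

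For freeness, I would show that the $n$ generators $\tilde\beta$ are linearly independent over $\mathfrak S_{L',\widetilde R}$. Since $\tilde\Xi\in\GL_n(\mathcal S_{\widetilde R}[1/p])$, the family $\tilde\beta=\tilde{\bar\beta}\tilde\Xi$ is a basis of the free $\mathcal S_{\widetilde R}[1/p]$-module spanned by $\tilde{\bar\beta}$. It then suffices to check that $\mathfrak S_{L',\widetilde R}\to\mathcal S_{\widetilde R}[1/p]$ is injective. This is componentwise: $\SSSS\hookrightarrow\s$, and $\mathfrak I\otimes_A\SSSS[1/p]\hookrightarrow\mathfrak I\otimes_A\s$, which follows from flatness of $\mathfrak I$ over the field-like pieces, or alternatively by working with $A$-vector space bases of $\mathfrak I$. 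Thus a relation $\tilde\beta x=0$ forces $x=0$, and $\wM$ is free with basis $\tilde\beta$.

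The quotient statements then follow from the basis. First, $\wM/\mathfrak I\wM$ is free over $\mathfrak S_{L',\widetilde R}/\mathfrak I=\SSSS$ on the image of $\tilde\beta$, and this maps isomorphically onto $\MM$ by the first step. Second, $\wM/u'\wM$ is free over $W(k')\otimes\widetilde R$, with the caveat that $u'$ on the $\mathfrak I$-part is taken in $\mathfrak I\otimes\SSSS[1/p]$. The reduction mod $u'$ is computed by setting $u'=0$ in $\tilde{\bar\beta}\tilde\Xi$. Because $\tilde\Xi\equiv1\bmod u'$, the basis $\tilde\beta$ reduces to $\tilde{\bar\beta}$, and this gives $\wM/u'\cong\widetilde M$.

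I expect the main obstacle to be making this reduction modulo $u'$ precise, since $\wM$ sits inside the larger $\mathbf A_{\mathrm{cris}}$-module. I would resolve it by defining the map $\wM\to\widetilde M$ directly: send the basis $\tilde\beta$ to $\tilde{\bar\beta}$, extend it $\mathfrak S_{L',\widetilde R}$-linearly through $u'\mapsto0$, and check that it is surjective with kernel $u'\wM$ using the basis. A related subtlety is that $\mathfrak I$ is only an $\widetilde R$-module, not a $W(k')\otimes A$-module with lattice structure. I would handle this by checking all the statements after inverting nothing on the $\SSSS$-part and only on the $\mathfrak I$-part, as is already built into the definitions of these rings.
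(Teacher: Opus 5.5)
Your proof is correct. For the two quotient isomorphisms it is the paper's argument: define the reduction maps on the basis $\tilde\beta$ (using $\tilde\Xi\equiv 1 \bmod u'$, resp.\ $\mathfrak F(\tilde\beta)=\bar\beta\Xi$) and identify the kernels with $u'\wM$ and $\mathfrak I\wM$ by freeness.

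You genuinely differ in proving the linear independence of $\tilde\beta$. The paper uses $\tilde\Xi\in\GL_n(\mathcal S_{\widetilde R}[1/p])$ to reduce to independence of $\tilde{\bar\beta}$ over $\mathfrak S_{L',\widetilde R}$. It then proves this by reducing a relation mod $u'$, dividing by $u'$ and iterating. You instead note that $\tilde{\bar\beta}$, and hence $\tilde{\bar\beta}\tilde\Xi$, is a basis over the period coefficient ring, because $\widetilde D=\widetilde M[1/p]$ is free on $\tilde{\bar\beta}$. You then pull independence back along an injective ring map.

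Your version supplies exactly what the $\tilde\Xi$-reduction needs. A relation $\tilde\beta a=0$ with $a$ over $\mathfrak S_{L',\widetilde R}$ reads $\tilde{\bar\beta}(\tilde\Xi a)=0$, with coefficients in $\mathcal S_{\widetilde R}[1/p]$. There the divide-by-$u'$ step does not apply verbatim: the element $\sum_{k\ge 1}p^k v^{p^{2k}}/(p^{2k})!\in\mathcal S$ has zero constant term but is not in $u'\mathcal S[1/p]$. The paper's argument, in turn, is more elementary and uses nothing beyond $u'$-adic separatedness.

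Two small repairs to yours:
\begin{enumerate}
\item You need injectivity of $\mathfrak S_{L',\widetilde R}$ all the way into the coefficient ring of $\AC\otimes_{W(k')}\widetilde D$, not just into $\mathcal S_{\widetilde R}[1/p]$.
\item $\mathfrak I$ is a vector space over $E$, not over $A$.
\end{enumerate}
Both are settled by noting that $R$, being a model of an Artinian affinoid, is finite flat over $\mathcal O$. Then, for example, $\mathfrak I\otimes_A\SSSS[1/p]=\mathfrak I\otimes_E\mathfrak S_{L',\mathcal O}[1/p]$, and injectivity is just tensoring an injection with an $E$-vector space.

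Lastly, as you half-note, $\bar\beta\Xi$ is the image of $1\otimes\beta$, so your embedded identification is really with $\varphi^*\MM$. The paper makes the same identification, and the abstract isomorphism $\wM/\mathfrak I\wM\cong\MM$ follows from freeness regardless.
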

\begin{proof}
    We know that $\wM$ is generated by $\tilde \beta$ as an $\mathfrak S_{L',\widetilde R}$-module. To show that $\wM$ is free of rank $n$, it suffices to show independence of the vectors in $\tilde \beta$. Since $\tilde\beta=\tilde{\bar\beta} \tilde \Xi$ and $\tilde  \Xi \in \GL_n(\mathcal S_{\widetilde R}[1/p])$, it suffices to show that $\tilde{\bar\beta}$ is independent over the ring $\mathfrak S_{L',\widetilde R}$. Let $\tilde{\bar\beta}=(m_1, \cdots,  m_n)$, where each $m_i$ is in $\widetilde M$. Suppose that $a_1 m_1+\cdots+a_n m_n=0$ for some $a_i \in \mathfrak S_{L',\widetilde R}$, then after reducing modulo $u'$, we have $\widehat{a_1}m_1+\cdots +\widehat{a_n} m_n=0$, where $\widehat{a_i}=a_i \mathrm{\ mod \ }u'$. Since $\tilde{\bar\beta}$ is a basis of $\widetilde M$ over $W(k')\otimes_{\mathbb Z_p}\widetilde R$, $\widehat{a_i}=0$ for all $i$. Hence $u'$ divides $a_i$ for all $i$. Let $b_i=a_i/u'$, then we still have $b_1 m_1 +\cdots +b_n  m_n=0$. Similarly, we see that $u'$ divides every $b_i$. Repeating this process, we can see that $a_i$ is divisible by any power of $u'$. So $a_i=0$ for all $i$.

    To show that $\wM/u'\wM=\widetilde M$, we first construct a map $\wM \to \widetilde M$ as follows. We can write $\tilde \Xi=1+u'X$ with $X \in \Ma(\mathcal S_{\widetilde R}[1/p])$. Write $X_i\in (\mathcal S_{\widetilde R}[1/p])^n$ for the $i$-th column of the matrix $X$. Then the basis $\tilde \beta$ of $\wM$ has vectors $m_1+u'\tilde{\bar \beta}X_1, m_2+u'\tilde{\bar \beta}X_2, \cdots, m_n+u'\tilde{\bar \beta}X_n$. We send $\sum_{i=1}^n s_i(m_i+u'\tilde{\bar \beta}X_i)$ to $\sum_{i=1}^n \bar{s_i} m_i \in \widetilde M$, where $s_i \in \mathfrak S_{L',\widetilde R}$ and $\bar{s_i}=s_i { \mod\ }u' \in W(k')\otimes_{\mathbb Z_p}\widetilde R$. Since $(m_1,\cdots, m_n)$ is a $W(k')\otimes_{\mathbb Z_p}\widetilde R$-basis of $\widetilde M$, $\sum_{i=1}^n \bar{s_i} m_i=0$ is equivalent to $s_i \in u'\mathfrak S_{L',\widetilde R}$ for all $i$. Since $\wM$ is freely generated by $\tilde \beta$, this shows that the kernel of the map $\wM \to \widetilde M$ is $u'\wM$. Thus the second claim is proved.

    Finally, we show that $\wM/\mathfrak I\wM=\MM$. Let $\tilde \beta=(\tilde m_1,\cdots,\tilde m_n)$. Let $\beta=(m_1,\cdots,m_n)$. Then $\beta=\tilde \beta \mathrm{\ mod\ }\mathfrak I$. Consider the map $\wM \to \MM$ defined by $\sum_{i=1}^n a_i \tilde m_i \mapsto \sum_{i=1}^n \bar a_i m_i$, where $a_i \in \mathfrak S_{L', \widetilde R}$ for all $i$ and $\bar a_i= a_i \mathrm{\ mod \ } \mathfrak I \in \SSSS$.  Since $\beta$ is an $\SSSS$-basis of $\MM$, the kernel of the map contains elements of the form $\sum_{i=1}^n a_i \tilde m_i$ with $ a_i \in \mathfrak I \mathfrak S_{L', \widetilde R}$. Since $\wM$ is an $\mathfrak S_{L', \widetilde R}$-module freely generated by $\tilde \beta$, the kernel is exactly $\mathfrak I\wM$. Hence we obtain the desired isomorphism.
\end{proof}

We now have the following commutative diagram: 
$$\begin{tikzcd} 
    \label{1}
	\wM & \mathcal S[1/p] \otimes_{W(k')}\widetilde M & \AC \otimes_{W(k')} \widetilde D \\
	\MM & \mathcal S[1/p] \otimes_{W(k')}M & \mathbf A_{\mathrm{cris}} \otimes_{W(k')} D.
	\arrow[twoheadrightarrow,"\mathfrak F",swap, from=1-1, to=2-1]
	\arrow[hook, from=1-1, to=1-2]
	\arrow[hook, from=2-1, to=2-2]
    \arrow[twoheadrightarrow,"\mathfrak F", from=1-2, to=2-2] 
    \arrow[hook, from=1-2, to=1-3]
    \arrow[hook, from=2-2, to=2-3]
    \arrow[twoheadrightarrow, "\mathfrak F", from=1-3, to=2-3]
\end{tikzcd}$$

By abuse of notation, we call all the column reduction maps $\mathfrak F$.

Let $\tilde c$ be the matrix of $\phi$ under basis $\tilde{\bar \beta}$ of $\widetilde M$. Then $\tilde c \in \Ma(W(k')\otimes_{\mathbb Z_p}\widetilde R)$ and $\tilde c \in \GL_n((W(k')\otimes_{\mathbb Z_p}\widetilde R)[1/p])$ since $\phi_{\wD}$ is invertible. Let $c$ be the matrix of $\phi$ under basis $\bar \beta$ of $M$. Since $\MM$ is stable under $\phi$, we have $ \Xi^{-1}  c \varphi(  \Xi )\in \Ma(\varphi(\SSSS))$.

Now we prove that the basis $\varphi^*\tilde \beta$ can be adjusted in a way such that the corresponding $\wM$ is stable under $\phi$. We also prove that such a module $\varphi^*\wM$ is unique. This gives the lift of $\varphi^*\MM$ we really need. We will be verifying its other properties in the final subsection.

\begin{thm} \thlabel{good}
    There is a lift $\tilde  \Xi \in \GL_n(\mathcal S_{\widetilde R}[1/p])$ of $\Xi \in \GL_n(\mathcal S_R[1/p])$ satisfying $\tilde  \Xi^{-1} \tilde c \varphi(\tilde  \Xi) \in \Ma(\varphi(\mathfrak S_{L',\widetilde R}))$. Furthermore, such a matrix $\tilde  \Xi$ is unique up to right multiplication by $1+\Ma(\mathfrak I \otimes \varphi(\mathfrak S))$. Or in other words, the corresponding $\varphi^*\wM$ is stable under $\phi$ when regarded as a submodule of $\mathcal S[1/p] \otimes_{W(k')} \widetilde M$. And such a $\phi$-stable lift of the module $\varphi^*\MM$ inside $\mathcal S[1/p] \otimes_{W(k')} \widetilde M$ is unique.
\end{thm}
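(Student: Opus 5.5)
The plan is to reduce both existence and uniqueness to a linear, Frobenius-twisted fixed-point problem for a correction term with coefficients in $\mathfrak I$, which we then solve by iterating $\varphi$.

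\textbf{Reformulation.} Fix a lift $\tilde C\in \Ma(\mathfrak S_{L',\widetilde R})$ of $C=\Xi^{-1}c\varphi(\Xi)$, viewed as an element of $\Ma(\varphi(\SSSS))$ (more precisely of its preimage). The condition $\tilde\Xi^{-1}\tilde c\varphi(\tilde\Xi)\in\Ma(\varphi(\mathfrak S_{L',\widetilde R}))$ is equivalent to the existence of such a $\tilde C$ with
$$\tilde \Xi=\tilde c\,\varphi(\tilde\Xi)\,\varphi(\tilde C)^{-1}.$$
This is the same shape as the relation $\Xi\varphi(C)\varphi(\Xi)^{-1}=\overline C$ in \thref{algstack}. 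Here $\varphi(\tilde C)^{-1}$ makes sense in $\mathcal S_{\widetilde R}[1/p]$, because the cokernel of $C$ is killed by $E(u')^h$ and $\varphi(E(u'))/p$ is a unit in $\mathcal S$.

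Since $\mathfrak I\cdot\mathfrak m=0$, we have $\mathfrak I^2=0$, and $\mathfrak I\otimes\mathcal S_R[1/p]$ is a module over $\mathcal S_{\widetilde F}[1/p]$. So we start from an arbitrary lift $\tilde\Xi_0\equiv 1 \bmod u'$ and write $\tilde\Xi=\tilde\Xi_0(1+Y)$ with $Y\in u'\Ma(\mathfrak I\otimes\mathcal S[1/p])$. Modulo terms that are quadratic in $\mathfrak I$, the condition becomes the linear equation
$$Y=\bar Z\,\varphi(Y)\,\bar Z'+\mathcal E,\qquad \mathcal E\in u'\Ma(\mathfrak I\otimes\mathcal S[1/p]) \text{ explicit}.$$
Here $\bar Z,\bar Z'$ are the reductions of $\Xi^{-1}c$ and $\varphi(\Xi)\varphi(C)^{-1}$ to $\widetilde F$-coefficients. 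The freedom to change $\tilde C$ by elements of $\mathfrak I\otimes\varphi(\mathfrak S)$ is what absorbs the ambiguity.

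\textbf{Existence.} We solve the linear equation by iteration:
$$Y=\sum_{k\ge0}\bar Z\varphi(\bar Z)\cdots\varphi^{k-1}(\bar Z)\;\varphi^k(\mathcal E)\;\varphi^{k-1}(\bar Z')\cdots\bar Z'.$$
The main obstacle is convergence of this series in $\mathcal S[1/p]$. The factors $\varphi^i(\bar Z')$ contribute denominators of size roughly $p^{-hk}$, coming from $\varphi^i(E)^{-h}$. Against this, $\varphi^k(\mathcal E)$ is divisible by $u'^{p^k}$, and by \thref{S} together with $u'^{e'i}/i!\in\mathcal S$, the element $u'^{p^k}$ has $p$-adic divisibility of order $p^k/(e'(p-1))$ up to bounded error. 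This grows exponentially and beats $hk$. I would make this precise by the same coefficient estimate as in the proof of \thref{S}, possibly after first clearing a fixed power of $p$. The resulting $\tilde\Xi$ is $\equiv 1\bmod u'$ and lifts $\Xi$. Since $\mathfrak I^2=0$, the nonlinear corrections vanish, so it satisfies the exact equation.

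\textbf{Uniqueness.} Suppose $\tilde\Xi_1,\tilde\Xi_2$ both work. Then $\tilde\Xi_2^{-1}\tilde\Xi_1=1+Y$ with $Y\equiv0\bmod u'$, and $Y$ satisfies the homogeneous equation $Y-\bar Z\varphi(Y)\bar Z'\in\Ma(\mathfrak I\otimes\varphi(\mathfrak S))$. Iterating as in the proof of \thref{big1}, where it was shown that $\Xi$ is determined by $C,N$, shows that $Y$ lies in $\Ma(\mathfrak I\otimes\varphi(\mathfrak S))$ modulo arbitrarily high powers of $u'$. Completeness then gives $Y\in\Ma(\mathfrak I\otimes\varphi(\mathfrak S))$, which is the claimed uniqueness up to right multiplication by $1+\Ma(\mathfrak I\otimes\varphi(\mathfrak S))$.

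The module-theoretic reformulation follows from this. The submodule $\varphi^*\wM$ generated by $\tilde{\bar\beta}\tilde\Xi$ is unchanged under such right multiplication, and $\phi$-stability of $\varphi^*\wM$ is exactly the matrix condition. \thref{0} supplies freeness and the compatibility of $\varphi^*\wM$ with $\widetilde M$ and with $\varphi^*\MM$.
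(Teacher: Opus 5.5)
Your strategy is the paper's. You write $\tilde\Xi=\tilde\Xi_0(1+Y)$ and use $\mathfrak I^2=0$ to reduce to a $\varphi$-twisted linear equation. You solve it by a Neumann series (the paper's operator $\mathcal N$ and \thref{v-adic}); it converges because $\varphi^k(u')=u'^{p^k}$ has divided-power divisibility in $\mathcal S$ of order about $p^k/(e'(p-1))$, while the denominators from $\varphi^i(E)^{-h}$ grow only like $p^{hk}$. You then get uniqueness by $u'$-adic contraction (the paper's \thref{FFF}). The existence half is sound, and your estimate via $u'^{e'i}\in i!\,\mathcal S$ has the advantage of staying inside $\mathcal S$ rather than using the $u'^l/l!$ expansion of \thref{S}. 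Two details should be made explicit. First, $\tilde C$ cannot be an arbitrary lift: it must satisfy $\varphi(\tilde C)\equiv\tilde c\pmod{u'}$ (the paper's choice $m\equiv m_0 \bmod u'$), since otherwise $\mathcal E$ has a nonzero constant term and no solution with $Y\equiv 0\bmod u'$ exists. Second, for $\tilde\Xi=\tilde\Xi_0(1+Y)$ the equation is $Y=\mathcal E+\varphi(C)\,\varphi(Y)\,\varphi(C)^{-1}$ with $\varphi(C)=\Xi^{-1}c\varphi(\Xi)$. So $\varphi(\Xi)$ belongs to the left of $\varphi(Y)$, which is harmless for the estimates.

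The uniqueness half, however, has a gap that it shares with the paper. What two admissible lifts $\tilde\Xi_1=\tilde\Xi_2(1+Y)$ give is $Y\varphi(C)-\varphi(C)\varphi(Y)\in\Ma(\mathfrak I\otimes\varphi(\mathfrak S))$. Three steps then all need $\mathfrak I\otimes\varphi(\mathfrak S)$ to absorb right multiplication by $\varphi(C)^{-1}$, hence by $\varphi(E)^{-h}$, uniformly in the number of iterations: converting to your homogeneous form, iterating, and concluding by ``completeness'' from membership modulo every $u'^N$. The paper asserts this in \thref{FFF} by expanding $1/\varphi(E)=\sum_i(-1)^ip^{-1-i}u'^{pe'i}$ and using that $p$ is invertible on $\mathfrak I$. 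That is legitimate only if $\mathfrak I\otimes\varphi(\mathfrak S)$ means $u'$-adically convergent series with unbounded $p$-denominators.

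For the bounded ring $\mathfrak S_{L',\widetilde R}=\SSSS\oplus(\mathfrak I\otimes_A\SSSS[1/p])$ that the paper actually defines, the uniqueness clause is false. Take $n=1$ and $\phi_{\MM}=E(u')$, so $c=p$ and $\Xi=\prod_{k\ge1}p/\varphi^k(E)$. Take $A=F$, $\widetilde A=F[\epsilon]$, $\tilde c=p$, and put $w=u'^{pe'}$. Then both $\Xi$ and $\Xi(1+\epsilon x)$, with $x=\sum_{k\ge0}w^{p^k}/(p+w^{p^k})\in u'\mathcal S[1/p]$, are admissible. The second gives $\tilde\Xi^{-1}\tilde c\varphi(\tilde\Xi)=\varphi(E)-\epsilon w$, because $x-\varphi(x)=w/\varphi(E)$. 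Yet the coefficient of $w^N$ in $x$ has valuation $-N$, so $x\notin\varphi(\mathfrak S)[1/p]$. Your uniqueness argument therefore goes through exactly when the paper's does, under the completed, unbounded-denominator reading of $\mathfrak I\otimes\varphi(\mathfrak S)$. You need to adopt that reading and state it explicitly.
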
 

\begin{proof}

    We first pick any lift $\tilde  \Xi_0 \in \GL_n(\mathcal S_{\widetilde R}[1/p])$ of $\Xi$ satisfying $\tilde \Xi_0 -1 \in u' \Ma(\mathcal S_{\widetilde R}[1/p])$. Let $\tilde  \Xi=\tilde  \Xi_0(1+X)$ for some $X \in u'\Ma(\mathfrak I \otimes \mathcal S[1/p])$. We will pick a suitable $X$ such that $\tilde  \Xi^{-1} \tilde c \varphi(\tilde  \Xi)\in \Ma(\varphi(\mathfrak S_{L',\widetilde R}))$.

    Let $m_0=\tilde  \Xi_0^{-1} \tilde c \varphi(\tilde  \Xi_0) \in \Ma(\mathcal S_{\widetilde R}[1/p])$. Since $\tilde c \in \GL_n((W(k')\otimes_{\mathbb Z_p}\widetilde R)[1/p])$, $m_0$ is invertible and lies in $\GL_n(\mathcal S_{\widetilde R}[1/p])$. Let $m$ be any lift of $\Xi^{-1}  c \varphi(\Xi)\in \Ma(\varphi(\SSSS))$ to $\Ma(\varphi(\mathfrak S_{L',\widetilde R}))$ satisfying $m \equiv m_0 \mathrm{\ mod\ }u'$. This is always possible since one can first arbitrarily lift $\Xi^{-1}  c \varphi(\Xi)\in \Ma(\varphi(\SSSS))$ and then adjust its constant term ($u'=0$ specialization) to be the same as that of $m_0$. Since $m_0$ and $m$ reduce to the same matrix $\Xi^{-1}  c \varphi(\Xi)\in \Ma(\varphi(\SSSS))$, $m_0-m \in u'\Ma(\mathfrak I \otimes \mathcal S[1/p])$. Let $t$ be a positive even integer such that $p^{t/2} m_0, p^{t/2} m_0^{-1} \in \Ma(\mathcal S_{\widetilde R})$. Let $Y=p^t (m-m_0) \in u'\Ma(\mathfrak I \otimes \mathcal S)$.

    Using the fact that $X^2=0$, we calculate that $$\tilde \Xi^{-1} \tilde c \varphi(\tilde \Xi)=(1-X)\tilde \Xi_0^{-1}\tilde c \varphi(\tilde \Xi_0)(1+\varphi(X))=m_0-Xm_0+m_0\varphi(X).$$

    We expect that $-Xm_0+m_0\varphi(X)= m-m_0 $. If we can find an $X$ satisfying this equation, then $\tilde \Xi^{-1} \tilde c \varphi(\tilde \Xi) \in \Ma(\varphi(\mathfrak S_{L',\widetilde R}))$ since we already have $m \in \Ma(\varphi(\mathfrak S_{L',\widetilde R}))$. Hence we only need to show that the map $$\mathcal F:u'\Ma(\mathfrak I \otimes  \mathcal S[1/p]) \ni X \mapsto   Xm_0-m_0 \varphi(X) \in u'\Ma(\mathfrak I \otimes \mathcal S[1/p])$$ contains $m-m_0$. By linearity, this is equivalent to showing that it contains the element $Y\in u'\Ma(\mathfrak I \otimes \mathcal S)$.

   To do this, we first use \thref{S} to write $Y$ as $\sum_{i=1}^\infty y_i \frac{u'^i}{i!}$ with $y_i \in \Ma(W(k') \otimes_{\mathbb Z_p} \widetilde R)$ and $y_i \to 0$ $p$-adically as $i \to \infty$, where $\nu$ denotes the least $p$-adic order of all entries of a matrix. 

  Let $\mathcal N:u'\Ma(\mathfrak I \otimes \mathcal S[1/p]) \ni X \mapsto m_0\varphi(X) \varphi(m_0)^{-1} \in u'\Ma(\mathfrak I \otimes \mathcal S[1/p])$. Let $W=\sum_{i=0}^\infty \mathcal N^i(Y)$. Suppose we know that this series converges, then $\mathcal F(Wm_0^{-1})=W-\mathcal N(W)=Y$, proving that $Y$ is in the image of $\mathcal F$. 
  
  Now we will show the convergence of $W$. Since $\varphi(u')=u'^p$, we obtain by iterating that $$\mathcal \varphi^i(Y)=\sum_{j=1}^\infty y_{ij} \frac{u'^{p^ij}}{j!}$$ for some $y_{ij} \in \Ma(W(k') \otimes_{\mathbb Z_p} \widetilde R)$ satisfying $y_{ij} \to 0$ $p$-adically as $j \to \infty$. Recall that $p^tm_0^{-1} \in \Ma(\mathcal S_{\widetilde R})$. Each entry of the matrix $\mathcal N^i(Y)$ has the form $$\sum_{j=1}^\infty a_{ij} \frac{u'^{p^ij}}{p^{ti}j!},$$ where $a_{ij} \in \mathcal S_{\widetilde R}$ and $a_{ij} \to 0$ as $j \to \infty$. We can apply \thref{S} and write $a_{ij}=\sum_{k=0}^\infty b_{ij\kappa }\frac{u'^\kappa }{\kappa !}$ where $b_{ij\kappa } \in W(k')\otimes _{\mathbb Z_p} \widetilde R$ satisfies $b_{ij\kappa } \to 0$ $p$-adically as $\kappa  \to \infty$ for each pair $(i,j)$. Therefore, $$W=\sum_{i=0}^\infty \sum_{j=1}^\infty \sum_{\kappa =0} ^\infty \frac{u'^{p^ij}}{p^{\kappa i}j!} b_{ij\kappa }\frac{u'^\kappa }{\kappa !}=\sum_{l=1}^\infty \big(\sum_{(i,j):p^ij \le l} \frac{b_{i,j,l-p^ij}l!}{p^{ti}j!(l-p^ij)!} \big) \frac{u'^l}{l!}.$$ Our previous assumption on $Y$ having no constant term forces the indices $j$ and $l$ to take positive integer values, avoiding potential divergence issues of the inner summation for $l=0$. We separate the estimation of $p$-adic order $\nu$ of the coefficients of the series $W$ into \thref{v-adic}, from which we can see the convergence of $\sum_{i=0}^\infty \mathcal N^i(Y)$. Thus the existence is proved.

    To prove the uniqueness statement, suppose that $\tilde \Xi_0 \in \GL_n(\mathcal S_{\widetilde R}[1/p])$ is a lift of $\Xi \in \GL_n(\mathcal S_R[1/p])$ satisfying $m_0:=\tilde \Xi_0^{-1} \tilde c \varphi(\tilde \Xi_0) \in \Ma(\varphi(\mathfrak S_{L',\widetilde R}))$. We have another lift $\tilde \Xi=\tilde \Xi_0(1+X)$ with $X \in u'\Ma(\mathfrak I \otimes \mathcal S[1/p])$ also satisfying $\tilde \Xi^{-1} \tilde c \varphi(\tilde \Xi) \in \Ma(\varphi(\mathfrak S_{L',\widetilde R}))$.
     From $$\tilde \Xi^{-1} \tilde c \varphi(\tilde \Xi)=(1-X)\tilde \Xi_0^{-1}\tilde c \varphi(\tilde \Xi_0)(1+\varphi(X))=m_0-Xm_0+m_0\varphi(X),$$ we know that $$Xm_0-m_0\varphi(X)\in \Ma(\mathfrak I \otimes \varphi(\mathfrak S)).$$ We want to show that $X \in u'\Ma(\mathfrak I \otimes \varphi(\mathfrak S))$. This is equivalent to showing that the map $$\mathcal F:\frac{u'\Ma(\mathfrak I \otimes \mathcal S[1/p])}{u'\Ma(\mathfrak I \otimes \varphi(\mathfrak S))}\ni X \mapsto   Xm_0-m_0 \varphi(X) \in \frac{u'\Ma(\mathfrak I \otimes \mathcal S[1/p])}{u'\Ma(\mathfrak I \otimes \varphi(\mathfrak S))}$$ is injective, which is proved in \thref{FFF}. Therefore, the uniqueness is also proved.
\end{proof}

\begin{lemma} \thlabel{v-adic}
In the setting of \thref{good}, the infinite series $$W=\sum_{l=1}^\infty \big(\sum_{(i,j):p^ij \le l} \frac{b_{i,j,l-p^ij}l!}{p^{ti}j!(l-p^ij)!} \big) \frac{u'^l}{l!}$$ converges $p$-adically.
\end{lemma}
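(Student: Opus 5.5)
The plan is to show directly that the coefficient
$$c_l:=\sum_{(i,j):\,p^ij\le l}\frac{b_{i,j,l-p^ij}\,l!}{p^{ti}\,j!\,(l-p^ij)!}$$
of $\frac{u'^l}{l!}$ tends to $0$ $p$-adically as $l\to\infty$. Then $W$ is an element of $u'\Ma(\mathfrak I\otimes\mathcal S)[1/p]$ of the shape described in \thref{S}. Two facts from the construction in \thref{good} will be used throughout:
\begin{enumerate}
\item all $b_{ij\kappa}$ lie in $W(k')\otimes_{\mathbb Z_p}\widetilde R$, so $\nu(b_{ij\kappa})\ge 0$;
\item for each fixed pair $(i,j)$ we have $b_{ij\kappa}\to 0$ as $\kappa\to\infty$. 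Moreover $a_{ij}\to 0$ as $j\to\infty$, which by the proof of \thref{S} makes $\nu(b_{ij\kappa})$ large uniformly in $\kappa$ once $j$ is large.
\end{enumerate}

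The key estimate is a lower bound for the rational factor. Write
$$\frac{l!}{j!\,(l-p^ij)!}=\binom{l}{p^ij}\cdot\frac{(p^ij)!}{j!}.$$
The binomial coefficient is an integer. For the second factor, Legendre's formula $\nu(N!)=\frac{N-s_p(N)}{p-1}$ and $s_p(p^ij)=s_p(j)$ give
$$\nu\Big(\frac{(p^ij)!}{j!}\Big)=\frac{j(p^i-1)}{p-1}\ \ge\ j\,p^{i-1}\quad(i\ge1).$$
Hence each summand of $c_l$ has valuation at least
$$\nu(b_{i,j,l-p^ij})+\frac{j(p^i-1)}{p-1}-ti.$$
Since $j\ge1$, the quantity $\frac{j(p^i-1)}{p-1}-ti$ is bounded below by a constant $-C$ depending only on $t$ and $p$. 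It tends to $\infty$ as $i\to\infty$, uniformly in $j\ge1$, and also as $j\to\infty$ for each fixed $i\ge1$.

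With this bound, the convergence argument mirrors the proof of \thref{S}. Fix $S>0$.
\begin{enumerate}
\item Choose $i_0$ such that $\frac{p^i-1}{p-1}-ti\ge S$ for all $i>i_0$. Every summand with $i>i_0$ then has valuation $\ge S$.
\item For $1\le i\le i_0$, choose $j_0$ such that for $j>j_0$ either $\frac{j(p^i-1)}{p-1}-ti\ge S$ or $\nu(b_{ij\kappa})\ge S+C$ for all $\kappa$; either condition suffices. The $i=0$ summands are just the coefficients of $Y$ itself, and these tend to $0$ by its expansion via \thref{S}, so $i=0$ is handled by choosing $j_0$ large for that reason.
\item This leaves finitely many pairs $(i,j)$ with $i\le i_0$ and $j\le j_0$. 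For these, $\kappa=l-p^ij\to\infty$ as $l\to\infty$, so there is $l_0$ such that $\nu(b_{ij\kappa})\ge S+C$ for all these pairs whenever $l>l_0$.
\end{enumerate}
Consequently every summand of $c_l$ has valuation $\ge S$ for $l>l_0$, and hence so does $c_l$. The rearrangement from the triple sum defining $W$ to the single series in $l$ is justified by the same uniform convergence.

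I expect the main obstacle to be the uniformity claim in the second fact, namely that $\nu(b_{ij\kappa})$ is large uniformly in $\kappa$ once $j$ is large. This needs $a_{ij}\to 0$ in $\mathcal S_{\widetilde R}$ to transfer to uniform smallness of the expansion coefficients $b_{ij\kappa}$. That transfer is exactly what the estimates in the proof of \thref{S} provide, and I would cite that argument. If that uniformity turns out to be delicate, the fallback is to absorb large $j$ into the factorial gain $\frac{j(p^i-1)}{p-1}$, which for $i\ge1$ grows linearly in $j$ and so needs no decay of $b$ at all.
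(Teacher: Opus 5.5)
Your argument is correct and is essentially the paper's proof: you bound $\nu\bigl(\frac{l!}{j!(l-p^ij)!}\bigr)$ from below using $\binom{l}{p^ij}\in\mathbb Z$ and factorial valuations, then use the same three-way split (large $i$; bounded $i$ with large $j$; finitely many pairs $(i,j)$ where $\kappa=l-p^ij\to\infty$). Your Legendre-formula bound $\frac{j(p^i-1)}{p-1}$ is sharper than the paper's $p^{i-2}j$ (which needs $i\ge 3$), and the uniformity issue you worry about only arises for $i=0$, which you can sidestep as the paper does by discarding the finitely many terms $\mathcal N^i(Y)$ with small $i$, since each of them converges on its own.
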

\begin{proof}
    Since the convergence of $W=\sum_{i=0}^\infty \mathcal N^i(Y)$ doesn't depend on the terms labeled by $i=0,1,2$, we assume from now on that $i \ge 3$. We have the inequalities $$\nu\big(\frac{b_{i,j,l-p^ij}l!}{p^{ti}j!(l-p^ij)!} \big) = \nu(b_{i,j,l-p^ij})+ \nu\big(\frac{l!}{j!(l-p^ij)!}\big) - ti \ge \nu(b_{i,j,l-p^ij})+ p^{i-2}j-ti.$$ Here, we have used $\nu\big(\frac{l!}{j!(l-p^ij)!}\big)\ge \nu(\frac{(p^i j)!}{j!}) \ge \nu(((p^{i}-1)j)!)\ge \left\lfloor (p^i-1)j/p \right\rfloor \ge (\frac{p^i-1}{p} -1 )j \ge p^{i-2}j$ for $i \ge 3$, where $\left\lfloor \cdot \right \rfloor$ denotes the floor function. Let $S$ be any positive integer. We will find $l_0$ such that for any $l>l_0$, and any pair of positive integers $(i,j)$ with $p^i j \le l$ and $i>2$, we have $\nu\big(\frac{b_{i,j,l-p^ij}l!}{p^{ti}j!(l-p^ij)!} \big)  \ge S$. First, we pick a positive integer $i_0$ such that $p^{i-2}-ti$, when regarded as a sequence labeled by $i$, is increasing for $i\ge i_0$ with $p^{i_0-2}-ti_0 \ge S$. Then, we pick $j_0$ such that $p^{-2}j_0-ti_0 \ge S$. Finally, since $b_{ij\kappa } \to 0$ as $\kappa  \to \infty$ for any fixed pair $(i,j)$, we can take $\kappa _0$ to be a positive integer satisfying $\nu(b_{ij\kappa })>S-T$ for all $\kappa \ge \kappa _0$ and $i \le i_0, j \le j_0$, where the constant $T$ denotes the minimum of the function $f(x)=p^{x-2}-tx$ in the domain $[0, \infty)$. Note that $T<0$ since $f(1)<1-t \le 0.$
  
  Now, let $l_0=p^{i_0}j_0+\kappa _0$. Let $l$ be any integer larger than $l_0$.  For all pairs $(i,j)$ with $i \ge i_0$, we have $$\nu\big(\frac{b_{i,j,l-p^ij}l!}{p^{ti}j!(l-p^ij)!} \big) \ge p^{i-2}j-ti \ge p^{i_0 - 2}-ti_0 \ge S.$$ For all pairs $(i,j)$ with $i<i_0$ and $j \ge j_0$, we have $$\nu\big(\frac{b_{i,j,l-p^ij}l!}{p^{ti}j!(l-p^ij)!} \big) \ge p^{i-2}j-ti \ge p^{ - 2}j_0-ti_0 \ge S$$  For all pairs $(i,j)$ with $i<i_0$ and $j<j_0$, we have $l-p^ij\ge l_0-p^{i_0}j_0=\kappa _0$. Hence $\nu(b_{i,j,l-p^ij}) \ge S-T$. Therefore, $$\nu\big(\frac{b_{i,j,l-p^ij}l!}{p^{ti}j!(l-p^ij)!} \big) \ge \nu(b_{i,j,l-p^ij})+T>S-T+T=S.$$ This shows that the coefficient of $u'^l /l !$ in $W$ tends to $0$ $p$-adically as $l \to \infty$.
\end{proof}

\begin{lemma} \thlabel{FFF}
    In the setting of \thref{good}, the map $$\mathcal F:\frac{u'\Ma(\mathfrak I \otimes \mathcal S[1/p])}{u'\Ma(\mathfrak I \otimes \varphi(\mathfrak S))}\ni X \mapsto   Xm_0-m_0 \varphi(X) \in \frac{u'\Ma(\mathfrak I \otimes \mathcal S[1/p])}{u'\Ma(\mathfrak I \otimes \varphi(\mathfrak S))}$$ is injective.
\end{lemma}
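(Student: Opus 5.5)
The plan is to reduce injectivity of $\mathcal F$ to a degree-by-degree comparison in the variable $u'$, using that $\varphi$ raises $u'$-adic order by a factor $p$ while $m_0$ has an invertible constant term.

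\textbf{Setup.} Suppose $X\in u'\Ma(\mathfrak I\otimes\mathcal S[1/p])$ satisfies
\[
Z:=Xm_0-m_0\varphi(X)\in u'\Ma(\mathfrak I\otimes\varphi(\mathfrak S)).
\]
The goal is to show $X\in u'\Ma(\mathfrak I\otimes\varphi(\mathfrak S))$. Since $I\mathfrak m=0$, every product with an element of $\mathfrak I\otimes(\cdot)$ only sees $m_0$ through its image in $\Ma(\widetilde F\otimes\varphi(\mathfrak S)[1/p])$. So I may regard $X$ and $Z$ as matrices over $\mathfrak I\otimes_{\widetilde F}(\widetilde F\otimes\mathcal S[1/p])$. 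Using \thref{S}, I view them as formal power series
\[
X=\sum_{l\ge1}x_l\,u'^l,\qquad Z=\sum_{l\ge 1} z_l\, u'^l,
\]
with coefficients in $\mathfrak I\otimes(W(k')\otimes\widetilde F)$. Membership in $u'\Ma(\mathfrak I\otimes\varphi(\mathfrak S))$ then means two things: only exponents divisible by $p$ occur, and the coefficients satisfy the integrality condition that defines $\varphi(\mathfrak S)$ inside $\mathcal S[1/p]$.

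\textbf{Step 1: degree comparison.} Write $m_0=\bar m+u'(\cdots)$, where $\bar m=m_0 \bmod u'$ is invertible after inverting $p$; it is the conjugate of $\tilde c$ by a matrix $\equiv1$ mod $u'$. If the lowest exponent in $X$ is $l_0$, then $\varphi(X)$ begins in degree $pl_0>l_0$. Comparing the coefficient of $u'^l$ in $Xm_0=Z+m_0\varphi(X)$ gives a recursion
\[
x_l\bar m=z_l+(\text{terms involving }x_{l'}\text{ with }l'<l\text{, and }\varphi\text{-terms of degree }l).
\]
By induction on $l$, this determines $x_l$ from $Z$ and the lower coefficients. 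Equivalently, $X=\sum_{i\ge0}\mathcal N^i(Zm_0^{-1})$, where $\mathcal N$ is as in the proof of \thref{good}. The tail $\mathcal N^k(X)$ lies in $u'^{p^k}$ and tends to $0$ $u'$-adically. So $X$ is uniquely determined by $Z$, and this settles injectivity before passing to the quotient.

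\textbf{Step 2: descend to the quotient.} It remains to see that $Z\in u'\Ma(\mathfrak I\otimes\varphi(\mathfrak S))$ forces each $x_l$ to be of the allowed form. For the exponent condition, I argue on the minimal $l$ with $p\nmid l$ and $x_l\neq0$. All $\varphi$-terms and $z_l$ vanish in that degree, and the lower terms only involve exponents divisible by $p$, multiplied against $m_0\in\varphi(\mathfrak S)[1/p]$, which has only $p$-divisible exponents. So $x_l\bar m=0$, hence $x_l=0$. This shows $X$ has only $p$-divisible exponents, i.e.\ $X\in u'\Ma(\mathfrak I\otimes\varphi(\mathfrak S)[1/p])$ in the power-series sense.

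\textbf{Main obstacle: integrality.} The step I expect to be hardest is the integrality of the coefficients, because $\bar m^{-1}$ and $m_0^{-1}$ are only integral up to $p^{t}$. My first attempt would be a minimal-counterexample argument in the style of \thref{stacky}. Take the smallest $s\ge1$ with $p^sX$ integral, and reduce $p^sX$ mod $p$. The identity $p^sXm_0=p^sZ+m_0\varphi(p^sX)$ should then force the reduction to be divisible by arbitrarily high powers of $u'$, hence zero, contradicting minimality. Making this work requires controlling the denominators of $m_0$. I would exploit that $m_0\in\Ma(\varphi(\mathfrak S_{L',\widetilde R}))$ is itself integral, which is the hypothesis of the uniqueness part of \thref{good}, so only $m_0^{-1}$ is non-integral and it never needs to be applied. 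If this fails, my fallback is to redo the estimate of \thref{v-adic}: bound the $p$-adic valuations of the coefficients of $\sum_i\mathcal N^i(Zm_0^{-1})$, and check that the denominators $p^{ti}$ are absorbed by the divided-power gains, so that the series lands back in $\varphi(\mathfrak S)$.
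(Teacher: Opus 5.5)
Your Steps 1 and 2 are correct. The minimal-degree argument shows that any $X$ with $\mathcal F(X)$ in the target lattice involves only exponents divisible by $p$. Your formula $X=\sum_i\mathcal N^i(Zm_0^{-1})$ is exactly the left inverse the paper uses.

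The genuine gap is the integrality step, and it cannot be closed: read with bounded denominators, the statement is false. A counterexample already exists for $n=1$:
\begin{itemize}
\item Take a point where $\MM$ has height exactly one, so $C=\lambda E(u')$ with $\lambda$ a unit (\thref{crys}).
\item Since $\mathfrak I\mathfrak m=0$, $\mathfrak I$-valued matrices see $m_0$ only modulo $\mathfrak I$, i.e. through $\varphi(C)=\varphi(\lambda)\varphi(E)$.
\item Because $u'^{pe'}=(E-p)^p\in p\mathcal S$, one has $\varphi^{k}(E)/p\in\mathcal S^\times$ for $k\geq 1$, and $u'^N\to 0$ $p$-adically in $\mathcal S$.
\item Hence for $0\neq\epsilon\in\mathfrak I$ the series $X=\epsilon\sum_{k\geq 0}u'^{p^{k+1}}/\varphi^{k+1}(E)$ converges in $u'(\mathfrak I\otimes\mathcal S[1/p])$.
\item It satisfies $X-\varphi(X)=\epsilon u'^p/\varphi(E)$, so $\mathcal F(X)=\epsilon\varphi(\lambda)u'^p$, which is zero in the quotient.
\item Expand $1/\varphi(E)=\sum_i(-1)^ip^{-1-i}u'^{pe'i}$. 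When $p\nmid 1+e'i$, only the $k=0$ term contributes to $u'^{p(1+e'i)}$, since the other terms have exponents divisible by $p^2$. So that coefficient of $X$ is $(-1)^ip^{-1-i}\epsilon$.
\end{itemize}
Thus $X$ passes your exponent test but has unbounded denominators. It is a nonzero element of the kernel.

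This also explains why both of your routes stall.
\begin{itemize}
\item The mod-$p$ argument of \thref{stacky} worked in $\mathbf A_{\mathrm{inf},R}/p$, where $u'$ is not nilpotent. In $\mathcal S/p$ one has $u'^{pe'}=0$, and $m_0$ degenerates: in the example its reduction lies in $p\,\mathcal S$. So ``divisible by every power of $u'$'' yields nothing.
\item The estimate of \thref{v-adic} controls the coefficients of $u'^l/l!$. That proves convergence in $\mathcal S[1/p]$. Membership in $\varphi(\mathfrak S)[1/p]$ instead needs the coefficients of $u'^l$ to be bounded, which the estimate does not give.
\end{itemize}

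The paper's proof does not obtain integrality either. It claims conjugation by $m_0$ preserves $u'\Ma(\mathfrak I\otimes\varphi(\mathfrak S))$ ``since $p$ is invertible in $\mathfrak I$''. That is valid only if the space means power series in $u'^p$ with arbitrary coefficients in $\mathfrak I\otimes K_0'$, because $m_0^{-1}$ involves $1/\varphi(E)$. Under that reading, $\mathcal N$ is actually zero on the quotient, so $\mathcal F$ acts there as right multiplication by $m_0$, and your Step 2 already proves the lemma. But then the uniqueness in \thref{good} holds only up to factors with unbounded denominators. As the example shows, it does not single out a $\varphi(\mathfrak S_{L',\widetilde R})$-module.
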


\begin{proof}
    Let $$\mathcal N:\frac{u'\Ma(\mathfrak I \otimes \mathcal S[1/p])}{u'\Ma(\mathfrak I \otimes \varphi(\mathfrak S))}\ni X \mapsto m_0\varphi(X) m_0^{-1} \in\frac{u'\Ma(\mathfrak I \otimes \mathcal S[1/p])}{u'\Ma(\mathfrak I \otimes \varphi(\mathfrak S))}. $$ We first show that this map is well-defined. Since $m_0 \in \Ma(\varphi(\mathfrak S_{L', \widetilde R}))$ is the Frobenius matrix of a Breuil-Kisin module, $m_0^{-1} \in \Ma(\varphi(\mathfrak S_{L', \widetilde R}[1/\varphi(E)]))$. Since $$\frac{1}{\varphi(E)}=\sum_{i \ge 0} (-1)^ip^{-1-i}(u')^{pe'i},$$ and $p$ is invertible in $\mathfrak I$ by the construction of $I$ and $\mathfrak I$, conjugation by $m_0$ preserves the lattice $u'\Ma(\mathfrak I \otimes \varphi(\mathfrak S))$.
    
    Since $\varphi(u')=u'^p$, the map $\mathcal N$ is topologically nilpotent by the $u'$-adic completeness. We have $\mathcal F =R_{m_0}(1-\mathcal N)$, where $R_{m}$ denotes the right multiplication by $m$. We know that the map $R_{m_0}(1-\mathcal N)$ has a left inverse $(\sum_{i=0}^\infty \mathcal N^i) R_{m_0^{-1}}$, which is well-defined since $\mathcal N$ is topologically nilpotent. Thus injectivity of $\mathcal F$ is proved as desired. 
\end{proof}

\thref{good} gives the basis $\varphi^*\tilde \beta:=\tilde{\bar \beta} \tilde \Xi$ of $\varphi^* \wM$. The map between bases $\tilde \beta \mapsto \varphi^* \tilde \beta$ gives a $\phi$-equivariant $\mathfrak S_{L',\widetilde R}$-semilinear inclusion $\wM \hookrightarrow  \varphi^* \wM$. From now on, we assume that $\wM$ is determined as such. In particular, we have a $\phi$-equivariant $\mathfrak S_{L',\widetilde R}$-semilinear inclusion $\wM \hookrightarrow  \mathcal S[1/p] \otimes_{W(k')} \widetilde M$. Let $ \widetilde \MMM=\widetilde \MM \otimes_{\mathfrak S_{L',\widetilde R}} \mathbf A_{\mathrm{inf},\widetilde R}$ as usual. Then we have a $\phi$-equivariant inclusion $\MMM \to \AC \otimes_{W(k')}\wD$.

\subsection{Smoothness: verification and applications} \label{5.4}

In this subsection, we verify that the module $\wM$ constructed above is really a potentially semistable Breuil-Kisin module of the desired weight and type. We conclude by applications to the formal smoothness properties of the stack and the corresponding deformation rings.

We define the map $\psi:\AC \otimes_{W(k')}\wD \to \AC \otimes_{W(k')}\wD$ as $$\psi(d)= \mathrm{exp \ }(t \otimes N_{\widetilde D})(d),$$ for any $d\in \widetilde D$, and extend semilinearly to $\AC \otimes_{W(k')}\wD$.

\begin{prop} \thlabel{psi}
    The $\mathbf A_{\mathrm{inf},\widetilde R}$-module $\widetilde \MMM$ inside $\mathbf A_{\mathrm{cris}} \otimes_{W(k')} \widetilde D$ is stable under $\psi$. 
\end{prop}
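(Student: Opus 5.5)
The plan is to treat $\psi$ on $\widetilde\MMM$ as a square-zero perturbation of the action already known on $\MMM$, and to show that the defect lands in a module forced to vanish by a Frobenius-contraction argument.

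\textbf{Step 1: write the matrix of $\psi$.} Fix the basis $\tilde\beta=\tilde{\bar\beta}\tilde\Xi$ of $\wM$ from \thref{good}. Let $\widetilde N$ be the matrix of $N_{\widetilde D}$ in the basis $\tilde{\bar\beta}$; it lifts $N$. By \thref{fix}, the module $\widetilde M$ is $\phi$-stable, but it need not be $N$-stable integrally, so a priori $\widetilde N$ is only defined after inverting $p$ on the $\mathfrak I$-part. Since $\psi$ acts on $\widetilde D$ by $\exp(t\otimes N_{\widetilde D})$ and semilinearly on $\mathcal S$, its matrix in the basis $\tilde\beta$ is
\[\widetilde\Phi:=\tilde\Xi^{-1}\exp(t\otimes\widetilde N)\psi(\tilde\Xi).\]
The goal is to show $\widetilde\Phi\in\GL_n(\mathbf A_{\mathrm{inf},\widetilde R})$, after the same $\varphi$-twist used in \thref{algstack}, so that the statement is phrased on $\widetilde\MMM$.

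\textbf{Step 2: reduce to the $\mathfrak I$-part.} Modulo $\mathfrak I$, $\widetilde\Phi$ reduces to $\Xi^{-1}\exp(t\otimes N)\psi(\Xi)$. By \thref{vip} and condition (2) of \thref{algstack} for $\MM$, this reduction lies in $\GL_n(\AAA)$. Pick a lift $\Phi_0\in\GL_n(\mathbf A_{\mathrm{inf},\widetilde R})$ of it and set $Z:=\widetilde\Phi-\Phi_0$. Then $Z$ has entries in $\mathfrak I\otimes\AC[1/p]$, and it suffices to show $Z\in\Ma(\mathfrak I\otimes\AI)$. Because $\mathfrak I$ contains $p^{-1}\mathfrak I$, as used in \thref{FFF}, the content of this is really that $Z$ has no genuinely crystalline, non-$\AI$, part.

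\textbf{Step 3: a Frobenius functional equation for $Z$.} The relation $N_{\widetilde D}\phi_{\widetilde D}=p\phi_{\widetilde D}N_{\widetilde D}$ holds on $\widetilde D$ because $\widetilde D$ is a $(\phi,N,\Delta)$-module. Running the argument of \thref{Nphi} with the equation $\tilde\Xi^{-1}\tilde c\varphi(\tilde\Xi)=\tilde C$ from \thref{good}, where $\tilde C\in\Ma(\varphi(\mathfrak S_{L',\widetilde R}))$, gives
\[\widetilde\Phi\,\psi(\tilde C)=\tilde C\,\varphi(\widetilde\Phi).\]
Since $\tilde C$ lies in the Breuil--Kisin ring, I can choose the lift $\Phi_0$ to satisfy the same relation up to an error in $\Ma(\mathfrak I\otimes\AI)$; this uses that $\MMM$ is already $\psi$-stable. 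Subtracting then yields
\[Z\psi(\tilde C)-\tilde C\varphi(Z)\in\Ma(\mathfrak I\otimes\AI).\]

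\textbf{Step 4: the contraction argument (main obstacle).} I would use the Frobenius contraction from \thref{FFF} and \thref{stacky}. The map $X\mapsto\tilde C\varphi(X)\psi(\tilde C)^{-1}$ on $\Ma(\mathfrak I\otimes\AC[1/p])/\Ma(\mathfrak I\otimes\AI)$ raises $u'$-adic order, since $\varphi(u')=u'^p$. It is well defined because $\tilde C^{-1}$ only introduces $1/\varphi(E)$, which is harmless on $\mathfrak I$-coefficients. The equation then reads $Z=\mathcal N(Z)$ in the quotient. If additionally $Z\equiv 0$ modulo $u'$ in the quotient, topological nilpotence forces $Z\in\Ma(\mathfrak I\otimes\AI)$.

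Two technical points sit here.

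\emph{The mod-$u'$ condition.} This should follow from $\tilde\Xi\equiv1 \pmod{u'}$ together with \thref{intersection}. Modulo $u'$, $\widetilde\Phi$ becomes $\exp(t\otimes\widetilde N)$ acting on $\widetilde M$, so I must check that this term is absorbed. I would handle it by splitting off the $u'$-constant part, exactly as in the proof of \thref{stacky}.

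\emph{Completeness of the quotient.} One must verify that $\AC[1/p]/\AI$, with $\mathfrak I$-coefficients, is $u'$-adically separated enough for the contraction to apply. I expect this to be the genuinely delicate point. I would attempt it by reducing modulo $p$ as in \thref{stacky}: choose a minimal $m$ with $p^mZ$ integral, reduce modulo $p$, and iterate the $u'$-divisibility to get a contradiction. This avoids completeness issues in $\AC$.

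Finally, invertibility of $\widetilde\Phi$ follows because $\psi^{-1}$ satisfies the same equation, so the same argument applies to it.
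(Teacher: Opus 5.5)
Your strategy is the paper's. The paper uses \thref{Nphi} to see that $\psi$ commutes with $\phi$, so $\psi(\varphi^*\widetilde\MMM)$ is another $\phi$-stable lift of $\varphi^*\MMM$, and then concludes by the uniqueness half of \thref{good}. Your Steps 3--4 are exactly that uniqueness argument (the contraction of \thref{FFF}), rerun with $\AI\subset\AC$ in place of $\varphi(\mathfrak S)\subset\mathcal S$.

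The gap is in Step 4, and it is the same one that sits in \thref{FFF}: your operator $X\mapsto\tilde C\varphi(X)\psi(\tilde C)^{-1}$ is not defined on $\Ma(\mathfrak I\otimes\AC[1/p])/\Ma(\mathfrak I\otimes\AI)$.
\begin{itemize}
\item Under $u'\mapsto[\underline{\pi'}]$, $\varphi(E)$ becomes $\varphi(\xi)$. This generates the kernel of $\theta\circ\varphi^{-1}\colon\AI[1/p]\to\mathbb C_p$, so it is a unit in $\AC[1/p]$ but not in $\AI[1/p]$.
\item Inverting $p$ on $\mathfrak I$ does not change this: the series $\sum_i(-1)^ip^{-1-i}u'^{pe'i}$ has unbounded denominators.
\item Consequently the class of $1$ in the quotient is zero while that of $\varphi(\xi)^{-1}$ is not. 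Already for $\tilde C\equiv\mathrm{diag}(1,\varphi(E))$ modulo $\mathfrak I$, your map does not preserve $\Ma(\mathfrak I\otimes\AI)$.
\item Your fallback does not apply either. $p$ is a unit on $\mathfrak I$, and the obstruction ($\AC[1/p]$ versus $\AI[1/p]$) is not a bounded power of $p$. In \thref{stacky} the minimal-$m$ trick works only because $J_1=0$ has already placed $\Psi$ in $\Ma(\AAA[1/p])$.
\end{itemize}

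More seriously, the statement Step 4 is meant to establish is false, so no repair of the contraction can work. Take $n=1$, $N=0$, $\phi_{\MM}$ given by $E$ (a crystalline character of Hodge--Tate weight $1$; a constant unit twist changes nothing), and $\widetilde D$ the trivial deformation over $\widetilde A=A[\varepsilon]$.
\begin{itemize}
\item The existence part of \thref{good} allows $m=\varphi(E-v\varepsilon)$, since it lifts $m_0=\varphi(E)$ and agrees with it mod $u'$.
\item Solving gives $\tilde\Xi=\Xi(1+\varepsilon X)$ with $X=\sum_{k\ge0}v^{p^{k+1}}/\varphi^{k+1}(E)\in u'\mathcal S[1/p]$.
\item Your $Z$ is then the unit $\Xi^{-1}\psi(\Xi)$ times $\varepsilon a$, where $a=\psi(X)-X$. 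It satisfies your Step 3 relation and lies in $u'\Ma(\mathfrak I\otimes\AC[1/p])$.
\item However, $a-\varphi(a)=\varphi(q)$ with $q=\psi(v)/\psi(E)-v/E=pv([\underline 1]^{e'}-1)/(E\,\psi(E))$. This $q$ has a simple pole along $\ker\theta$ in $\mathbf B_{\mathrm{dR}}$.
\item Since $\varphi\colon\AI[1/p][1/E]\to\AI[1/p][1/\varphi(E)]\subset\AC[1/p]$ is an isomorphism restricting to an automorphism of $\AI[1/p]$, $a\in\AI[1/p]$ would force $q\in\AI[1/p]\subset\mathbf B^+_{\mathrm{dR}}$, a contradiction.
\end{itemize}

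Hence this legitimate output of \thref{good} is $\phi$-stable but not $\psi$-stable. So $\phi$-stable lifts are not unique, which contradicts the uniqueness half of \thref{good}; citing it, as the paper does, does not close the gap. $\psi$-stability cannot be deduced from $\phi$-compatibility plus a contraction.

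The example also violates the height bound: $(E-v\varepsilon)(E+v\varepsilon)=E^2$, but $E(E-v\varepsilon)^{-1}\notin\mathfrak S_{L',\widetilde R}$. This points to what is missing. You need a constraint on the choice of lift (for instance, that it realise a lift of the Hodge filtration). You then need a genuinely different argument to obtain $\psi$-stability, e.g. identifying the lift with the Breuil--Kisin module of a semistable representation over $\widetilde A$.
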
 
\begin{proof}
  Consider the $\mathbf A_{\mathrm{inf},\widetilde R}$-module $\psi(\varphi^*\widetilde \MMM)$ inside $\mathbf A_{\mathrm{cris}} \otimes_{W(k')} \widetilde D$. After reducing mod $\mathfrak I$, it becomes $\psi(\varphi^*\MMM)=\varphi^*\MMM \subseteq \mathbf A_{\mathrm{cris}} \otimes_{W(k')} D$. From \thref{Nphi}, we see that $\phi$ and $\psi$ commute. So $\psi(\varphi^*\widetilde \MMM)$ is $\phi$-stable. From the uniqueness part of \thref{good}, we get $\psi(\varphi^*\widetilde \MMM)=\varphi^*\widetilde \MMM$, which gives the result.
\end{proof}

Note that $\AC \otimes_{W(k')}\wD$ and $\AC \otimes_{W(k')}D$ both have a $\Delta$-action which commutes with $\phi$ and the reduction map $\mathfrak F$. We know that $\MM \subseteq \AC \otimes_{W(k')}D$ is stable under the $\Delta$-action. Since $\wM$ is the preimage of $\MM$ under $\mathfrak F$, $\wM \subseteq \AC \otimes_{W(k')}\wD$ is also stable under the $\Delta$-action. This gives $\wM$ its $\Delta$-action.

As usual, we denote $\psi|_{\widetilde \MMM}$ by $\psi_{\widetilde \MMM}$ and its matrix under basis $\tilde \beta$ as $\Psi_{\widetilde \MM}$. To verify that $\widetilde \MMM \in \X(\widetilde A)$, we need the next lemma.

\begin{lemma} \thlabel{3}
  The matrix $\Psi_{\widetilde \MM}$ satisfies $\Psi_{\widetilde \MM} \in \Ma(\mathbf A_{\mathrm{inf},\widetilde R})$ and $\Psi_{\widetilde \MM} -\mathrm{id}  \in u' \Ma(\mathbf A_{\mathrm{inf},\widetilde R})$. 
\end{lemma}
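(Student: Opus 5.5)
The plan is to exploit the square-zero structure $\mathbf A_{\mathrm{inf},\widetilde R}=\AAA\oplus(\mathfrak I\otimes_A\AAA[1/p])$ and treat the two components separately. Integrality is immediate from \thref{psi}: since $\widetilde\MMM$ is stable under $\psi$ and is free over $\mathbf A_{\mathrm{inf},\widetilde R}$ with basis $\tilde\beta$, we have $\psi(\tilde\beta)=\tilde\beta\Psi_{\widetilde\MM}$ with $\Psi_{\widetilde\MM}\in\Ma(\mathbf A_{\mathrm{inf},\widetilde R})$. Invertibility then follows because the reduction of $\Psi_{\widetilde\MM}$ modulo $\mathfrak I$ is invertible and $\mathfrak I^2=0$. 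The substantive claim is therefore the congruence $\Psi_{\widetilde\MM}\equiv\mathrm{id}\bmod u'$.

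To get it, I first reduce modulo $\mathfrak I$. By \thref{0} and the construction of $\tilde\Xi$ as a lift of $\Xi$, the matrix $\Psi_{\widetilde\MM}\bmod\mathfrak I$ is the matrix of Galois $\Psi_{\MM,\beta}$ of $\MM$. This satisfies $\Psi_{\MM,\beta}-\mathrm{id}\in u'\Ma(\AAA)$ by condition (4) of \thref{algstack}. So I can write $\Psi_{\widetilde\MM}-\mathrm{id}=u'Z+E$ with $Z\in\Ma(\mathbf A_{\mathrm{inf},\widetilde R})$ and $E\in\Ma(\mathfrak I\otimes\AAA[1/p])$, and it remains to show $E\in u'\Ma(\mathfrak I\otimes\AAA[1/p])$.

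For this I repeat the computation behind \thref{stackyy}. The formula $\psi(d)=\e(t\otimes N_{\widetilde D})(d)$ together with the basis $\varphi^*\tilde\beta=\tilde{\bar\beta}\tilde\Xi$ gives
\[
\varphi(\Psi_{\widetilde\MM})=\tilde\Xi^{-1}\e(t\otimes N_{\widetilde D})\psi(\tilde\Xi),
\]
where $N_{\widetilde D}$ is now written in the basis $\tilde{\bar\beta}$. Since $\tilde\Xi\equiv 1\bmod u'$ and $\psi(u')=[\underline\epsilon]u'$, reduction modulo $u'$ gives $\varphi(\Psi_{\widetilde\MM})\equiv\e(t\otimes N_{\widetilde D})\bmod u'$ in $\AC\otimes\widetilde D$. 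The obstacle is that $\e(tN)$ is not the identity modulo $u'$ in $\AC$, so the congruence cannot be read off directly.

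My first attempt at this step is to sidestep the explicit formula and run the mod-$p$ bootstrapping argument from the proof of \thref{stacky}, now in the $\mathfrak I$-component. By \thref{Nphi}, applied to $(\tilde c,N_{\widetilde D},\tilde\Xi)$ (which satisfy $N\tilde c=p\tilde c\varphi(N)$ because $\widetilde D$ is a $(\phi,N)$-module), we have $\Psi_{\widetilde\MM}\psi(\tilde C)=\tilde C\varphi(\Psi_{\widetilde\MM})$, where $\tilde C$ is the Frobenius matrix of $\wM$. Restricting to the $\mathfrak I$-part and subtracting the corresponding relation for $\MM$ gives a linear equation for $E$ of the form
\[
E\,\psi(C)-C\varphi(E)\in u'\Ma(\mathfrak I\otimes\AAA[1/p]).
\]
Reducing modulo $u'$ and using that $\bar C$ is invertible after inverting $p$, I get $\bar E=\bar C\varphi(\bar E)\psi(\bar C)^{-1}$, computed in $\mathfrak I\otimes\AAA[1/p]/u'=\mathfrak I\otimes W(\overline k)[1/p]$. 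That last ring is where I expect trouble: $\varphi$ is bijective there, so this fixed-point equation does not force $\bar E=0$.

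I therefore plan to supplement with \thref{intersection} and the known value at $u'=0$. Modulo $W(\mathfrak m)$, $t$ maps to $0$, since $[\underline\epsilon]-1\in W(\mathfrak m)$. Hence $\e(t\otimes N)\equiv 1$ modulo $W(\mathfrak m)$, and so $\bar E$ vanishes in the quotient by $W(\mathfrak m)$, which is where condition (1) of \thref{BK1} lives. Combining this with the $\AAA$-integrality already established gives $E\in u'\Ma$. This last step, reconciling ``mod $u'$'' with ``mod $W(\mathfrak m)$'' in the definition of $\Psi$, is the one I expect to require the most care.
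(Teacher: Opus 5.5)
Your integrality and invertibility argument is fine and matches the paper. The last step, however, has a genuine gap that cannot be closed. Your own remark that $\e(tN)$ is not the identity modulo $u'$ is exactly right, and it is fatal, not an obstacle to sidestep.

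Vanishing modulo $W(\mathfrak m)$ does not imply divisibility by $u'=[\underline{\pi'}]$. The ideal $u'\AI$ is much smaller than $W(\mathfrak m)$, and $\AI/u'\AI$ is not $W(\overline k)$. So your identification of $\mathfrak I\otimes\AAA[1/p]/u'$ with $\mathfrak I\otimes W(\overline k)[1/p]$ is also wrong. The decisive example is $t$ itself:
\begin{enumerate}
\item Since $\xi\equiv p \bmod u'$, and $\AI/u'\AI$ is $p$-torsion-free and $p$-adically complete, the quotient map extends to a ring map $\AC\to\AI/u'\AI$.
\item This map sends $t$ to the logarithm of the image of $[\underline{1}]$, which lies in $1+p(\AI/u'\AI)$. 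Since $p$ is odd, this logarithm vanishes only if $[\underline{1}]-1\in u'\AI$.
\item That membership fails. The $k$-th Witt coordinate of $[\underline{1}]-1$ is $P_k(\underline{1}-1)$ with $P_k(z)\equiv z \bmod z^2$, so it has the valuation of $\underline{1}-1$. Membership in $[\underline{\pi'}]\AI$ would require the $k$-th coordinate to be divisible by $\underline{\pi'}^{p^k}$ for every $k$.
\end{enumerate}
Hence $t\notin u'\AC[1/p]$.

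Now run your own computation. If $\Psi_{\widetilde{\MM}}\equiv\mathrm{id}\bmod u'$, then $\varphi(\Psi_{\widetilde{\MM}})\equiv\mathrm{id}\bmod u'$. On the other hand, $\varphi(\Psi_{\widetilde{\MM}})=\tilde\Xi^{-1}\e(t\otimes N_{\widetilde D})\psi(\tilde\Xi)\equiv\e(t\otimes N_{\widetilde D})\bmod u'$. Pushing to $\AI/u'\AI$ and taking the logarithm of this unipotent matrix gives $\bar t\, N_{\widetilde D}=0$, hence $N_{\widetilde D}=0$. So whenever $N_{\widetilde D}$ has a nonzero $\mathfrak I$-component, your $E$ is not divisible by $u'$, and no argument can give the congruence. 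Those are exactly the lifts $\widetilde\Lambda$ with nonzero infinitesimal monodromy that smoothness of $WD$ has to handle.

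The paper's proof does not supply this step either. It asserts that $\tilde X=(\Psi_{\widetilde{\MM}}-\mathrm{id})/u'$ lies in $\Ma(\Aw)$ merely because it lifts $X$, and then invokes \thref{intersection}. That assertion is precisely your missing claim about $E$.

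What your final paragraph does establish is $\Psi_{\widetilde{\MM}}\equiv\mathrm{id}$ modulo $W(\mathfrak m)$. That is the congruence actually encoded by condition (1) of \thref{BK1} (Gao's condition). The ``modulo $u'$'' condition (4) of \thref{algstack} is too strong: the same computation shows it already forces $N_D=0$ for $\MM$ itself. The $W(\mathfrak m)$ form is the one you should aim to prove.
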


\begin{proof}
    The first statement is equivalent to \thref{psi}, hence holds true.

   We use $\Psi_\MM$ (resp. $\Psi_{\widetilde \MM}$) to denote the matrix of Galois of $\MM$ (resp. $\widetilde \MM$) under the basis $\beta$ (resp. $\widetilde \beta$). From \thref{vip}, the relation between the Galois action and the monodromy operator on $\AC \otimes_{W(k')} D$ (resp. $\AC \otimes_{W(k')} \widetilde D$) is $\varphi(\Psi_{\MM})=\Xi^{-1} \mathrm{exp}(t\otimes N)\psi(\Xi)$ (resp. $\varphi(\Psi_{\widetilde \MM}) = \widetilde\Xi^{-1} \mathrm{exp}(t \otimes \widetilde N) \psi(\widetilde \Xi)$) where $N$ (resp. $\widetilde N$) is the matrix of $N_D$ (resp. $N_{\widetilde D}$) under basis $\beta$ (resp. $\tilde \beta$). Since $\tilde \beta$ is a lift of $\beta$, $\widetilde \Xi \mathrm{\ mod\ }I=\Xi$.
   
   In each term of the power series expansion of the exponential function, $\widetilde N \in \Ma(W(k') \otimes_{\mathbb Z_p} \widetilde R)$ reduces to $ N \in \Ma(W(k') \otimes_{\mathbb Z_p} R)$, hence $\Psi_{\widetilde \MM}  \in \GL_n(\mathbf A_{\mathrm{inf},\widetilde R})$ reduces to $\Psi_{\MM} \in \GL_n(\mathbf A_{\mathrm{inf}, R})$. Since $\Psi_{\MM} \equiv 1 \mathrm{\ mod\ }u'$, we can write  $\Psi_\MM=1+u'X$, with $X \in \Ma(\AAA)$. Write $\Psi_{\wM}=1+u'\tilde X$, then $\tilde X$ reduces to $X \in \Ma(\AAA)$. In particular, $\tilde X \in \Ma(\Aw)$. Since $\Psi_{\MMM} \in \GL_n(\mathbf A_{\mathrm{inf}, \widetilde R})$, $\tilde X \in \Ma(\frac{1}{u'}\mathbf A_{\mathrm{inf},\widetilde R})$. Hence $\tilde X \in \Ma(\frac{1}{u'}\mathbf A_{\mathrm{inf},\widetilde R} \cap \Aw)$. From \thref{intersection}, we can see that $\tilde X \in \Ma(\mathbf A_{\mathrm{inf},\widetilde R})$.
\end{proof}

Let $r_1, \cdots, r_\nu$ be the finitely many topological generators of $R$ over $\mathcal O$. Let $\tilde r_i$ be a lift of $r_i$ to $\widetilde R$ for each $i$. Let $\mathfrak R$ be the sub-$\mathcal O$-algebra of $\widetilde R$ generated by $\tilde r_1, \cdots, \tilde r_\nu$. Note that the matrices of $\phi$ and $N_{\widetilde D}$ are both polynomials with $\widetilde R$-coefficients. These finitely many coefficients will generate another sub-$\mathcal O$-algebra $\widetilde R_+$ of $\widetilde R$. From the formulation of a potentially semistable Breuil-Kisin module, we can see that $\wM$ can be defined on the subalgebra $\widetilde R_+$ of $\widetilde A$. 

Let $\mathrm{Spf\ } \widetilde R_0$ be any formal model of $\mathrm{Sp\ } \widetilde A$ such that $\widetilde R_0$ contains $\widetilde R_+$ and its image under the reduction $\widetilde R \twoheadrightarrow R$ contains $R$. Such a model $\mathrm{Spf\ } \widetilde R_0$ exists. For example, if we take $\widetilde R_0$ to be the $p$-adic completion of the subalgebra of $\widetilde R$ generated by $\widetilde R_+$ and $\mathfrak R$, then we can see that it gives a formal model. For such an $\widetilde R_0$,  we will prove that $\wM \in \XXX(\widetilde R_0)$. We need the following lemma. 

\begin{lemma} \thlabel{crys}
  Let $\MM \in \mathcal X_1^{[0,h],\tau}(R)$ corresponding to a Galois character $G_K \to R^\times$. Then the $\phi$-map of $\MM$ is multiplication by $\eta E(u')^r$, where $r \in [0,h], \eta \in \SSSS^\times$.
\end{lemma}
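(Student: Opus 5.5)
Lemma \thref{crys} concerns the rank one case, so the plan is to reduce to a direct computation with a single matrix entry. A rank one object $\MM \in \mathcal X_1^{[0,h],\tau}(R)$ is in particular a Breuil-Kisin module of rank one over $\SSSS$ of height at most $h$ with descent data $\tau$; here $\tau$ is a single character $\chi_1=\omega_{f'}^{\aaa_1}$. Zariski locally on $R$ we fix an eigenbasis $\beta$, which in rank one is a single generator $f^{(j')}$ of each $\MM^{(j')}$. The Frobenius is then given by a scalar $C^{(j')} \in R\llbracket u'\rrbracket$ for each $j'$, and by \thref{Frob} the partial Frobenius $A^{(j)} = C^{(j')}$ lies in $R\llbracket v\rrbracket$ (in rank one the conjugation by $(u')^{-\aaa^{j'}}$ is trivial).

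The key step is to use the height condition $A^{(j)},\ (v+p)^h (A^{(j)})^{-1} \in R\llbracket v\rrbracket$. Multiplying the two gives $A^{(j)} B^{(j)} = (v+p)^h$ with both factors in $R\llbracket v\rrbracket$. I would then argue that $A^{(j)}$ is, up to a unit, a power $(v+p)^{r_j}$ with $0\le r_j\le h$. To do this, I would reduce to the case where $R$ is finite flat (so $R[1/p]$ is a product of fields) and use Weierstrass preparation in $R\llbracket v\rrbracket$: $v+p$ is a distinguished polynomial, and any divisor of $(v+p)^h$ in $\mathcal O'\llbracket v\rrbracket$ for a $p$-adic ring of integers $\mathcal O'$ is a unit times $(v+p)^{r}$. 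For general $R$ (topologically of finite type and flat), I would argue that the locus where $r_j$ takes a given value is open and closed, and pass to connected components, using \thref{aux1}-type reduction to finite flat points to identify $A^{(j)}(v+p)^{-r_j}$ as a unit.

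Next I would show that $r_j$ is independent of $j$. This is the step I expect to be the main obstacle, since the height condition alone allows independent $r_j$ for different embeddings. The plan is to use $E(u')=(u')^{e'}+p=v+p$ together with the Galois structure. Condition (2) of \thref{algstack} gives the relation $\Xi\varphi(C)\varphi(\Xi)^{-1}=\overline C$ with $\Xi\in 1+u'\mathcal S_R[1/p]$, and in rank one everything commutes, so $\varphi(C)=\overline C\,\varphi(\Xi)/\Xi$. Since $\varphi(\Xi)/\Xi$ is a unit in $\mathcal S_R[1/p]$, $\varphi(C)$ is a unit in $\mathcal S_R[1/p]$ times a constant. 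However, $\varphi(E)$ is invertible in $\mathcal S[1/p]$, so this relation does not constrain $r_j$ well. Instead, I would first try to use the Hodge-Tate weights: on finite flat points, the character is potentially semistable, hence potentially crystalline in rank one, and its embedding-wise Hodge-Tate weights are the $r_j$. Here "$E(u')^r$" should be read componentwise, with $r=(r_j)_j$ a tuple indexed by $\mathcal J$. Under that reading, the statement follows from the Weierstrass step. If a genuinely uniform $r$ is intended, I would try to deduce it from the parallel weight forced by the type, but I suspect the componentwise reading is correct. In either case, we take $\eta=\prod_j A^{(j)}(v+p)^{-r_j}\in\SSSS^\times$ and obtain the claimed form.
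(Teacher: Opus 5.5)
There is a genuine gap in your key step. You derive the shape of $A^{(j)}$ purely from the height condition $A^{(j)}B^{(j)}=(v+p)^h$ in $R\llbracket v\rrbracket$, via Weierstrass preparation and factoriality. To do this you reduce to finite flat $R$, claiming that $R[1/p]$ is then a product of fields. That claim is false. For example, $R=\mathcal O[\epsilon]/(\epsilon^2)$ is finite flat with $R[1/p]=E[\epsilon]/(\epsilon^2)$, and over it the implication you need fails. Take $A=v+p+\epsilon$ and $B=v+p-\epsilon$. Then $AB=(v+p)^2$, so $A$ defines a rank one Breuil--Kisin module of height at most $2$. Yet $A$ is not a unit times a power of $v+p$:
\begin{itemize}
\item $A$ is not a unit, since $A(0)=p+\epsilon$ lies in the maximal ideal of $R$;
\item $A/(v+p)=1+\epsilon/(v+p)$ is not integral;
\item $A\equiv v+p \bmod \epsilon$ excludes higher powers.
\end{itemize}

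This case cannot be avoided. The lemma is used in \thref{2} for $\bigwedge^n\MM$ over a formal model of $\mathrm{Sp}$ of an Artinian, typically non-reduced, algebra. Any \thref{aux1}-style reduction to finite flat points also needs non-reduced test rings, since the proof of \thref{aux1} uses the quotients $A/\mathfrak m^N$. Your open-and-closed-locus argument for $r_j$ does not rescue this either, because it never produces the unit. In your proposal the Galois hypothesis is only used to identify the $r_j$ with Hodge--Tate weights, never to establish the form of the Frobenius. The height bound alone does not determine that form.

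The missing input is genuinely $p$-adic Hodge theoretic, and it is what the paper's proof uses.
\begin{itemize}
\item Arrange that the height bound equals the exact Hodge--Tate weight $r$, so that $\lambda C=E(u')^r$ with $\lambda\in\SSSS$.
\item For a rank one object, the Frobenius on $(\MM/u')[1/p]$ is $\kappa p^r$ with $\kappa\in R^\times$ (slope equals weight).
\item Setting $u'=0$ gives $\lambda(0)\kappa p^r=p^r$. Since $R$ is $p$-torsion free, $\lambda(0)$ is a unit, so $\lambda\in\SSSS^\times$.
\end{itemize}
This argument needs no factoriality of $R\llbracket v\rrbracket$, so it works for non-reduced $R$. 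It is exactly what excludes the example above: there the weight is $1$, and the height-$1$ condition fails for $v+p+\epsilon$.

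Your remark that for $f>1$ the exponent should be read componentwise, with one $r_j$ per embedding, is correct. The paper's argument is implicitly componentwise as well, and this reading does no harm to the application in \thref{2}.
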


\begin{proof}
As usual, suppose that $\phi$ is scalar multiplication by $C(u') \in \SSSS$. Since $\MM \in \mathcal X_1^{[0,h],\tau}(R)$, $\lambda(u') C(u')=E(u')^h$ for some $\lambda(u') \in \SSSS$. Assume, without loss of generality, that $\MM$ has exact Hodge-Tate weight $h$. Then the corresponding filtered $(\varphi,N)$-module $(\MM/u')[1/p]$ has its $\phi$-map being multiplication by $\kappa p^h$ for some $\kappa \in R^\times$. Setting $u'=0$ in $\lambda(u') C(u')=E(u')^h$, we have $\lambda(0) \kappa p^h=p^h$. Hence $\lambda(0)$ is a unit. So $\lambda(u')$ is also a unit in $\SSSS$. Hence $C(u')=\lambda(u')^{-1}E(u')^h$ as desired.
\end{proof}

Now we prove our main theorem.

\begin{thm} \thlabel{2}
    The Breuil-Kisin module $\wM$ satisfies $\wM \in \XXX(\widetilde R_0)$.
\end{thm}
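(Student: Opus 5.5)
The strategy is to exhibit $\wM$ as a point of $\STT(\widetilde R_0)$, i.e.\ a triple $((\tilde A^{(j)})_j,\widetilde N,\tilde \Xi)$ satisfying conditions (1)--(4) of \thref{algstack}. We then invoke \thref{stacky}, together with the flatness and base change properties of the defining ideal (\thref{bc1}), to conclude that the corresponding map $\mathrm{Spf\ }\widetilde R_0\to[\mathcal W/\prod_j\mathcal P_j]$ factors through $\X\cong\XXX$ (\thref{big1}). Most conditions are built into the construction of \autoref{5.3} and \thref{psi}--\thref{3}. The remaining work is to check them one at a time over $\widetilde R_0$ rather than $\widetilde R$, and to treat the height condition separately.

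The checks proceed in this order.

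\begin{enumerate}
\item \emph{Data and the Frobenius relation.} Take $\tilde \Xi$ from \thref{good}, take $\widetilde N$ to be the matrix of $N_{\widetilde D}$ in the basis $\tilde{\bar\beta}$, and let $\tilde C$ be the matrix of $\phi$ on $\wM$ in the basis $\tilde\beta$. By \thref{good}, $\tilde\Xi^{-1}\tilde c\varphi(\tilde\Xi)=\varphi(\tilde C)$ with $\tilde C$ having entries in $\mathfrak S_{L',\widetilde R}$. Since $\tilde\Xi\equiv 1 \bmod u'$, we get $\overline{\tilde C}=\tilde c$, which is exactly condition (2)'s Frobenius relation.
\item \emph{Monodromy relation.} Condition (3), $\widetilde N\tilde c=p\tilde c\varphi(\widetilde N)$, is the relation $N\phi=p\phi N$ on the $(\phi,N,\Delta)$-module $\widetilde D$, restricted to the $\phi$-stable lattice $\widetilde M$ of \thref{fix}.
\item \emph{Galois condition.} The integrality $\Xi^{-1}\e(t\otimes N)\psi(\Xi)\in\GL_n(\mathbf A_{\mathrm{inf},\widetilde R})$ and condition (4) $\Psi-\mathrm{id}\in u'\Ma$ are \thref{psi} and \thref{3}. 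Invertibility of the Galois matrix follows by applying the same argument to $\psi^{-1}=\e(-t\otimes N)$.
\item \emph{Descent data.} The $\Delta$-action on $\wM$ is inherited from $\widetilde D$, as noted before \thref{3}. Since the type of $\widetilde\D\in\WD^\tau(\widetilde A)$ is $\tau$, we get $\wM/u'\cong\widetilde M\cong\tau\otimes\widetilde R$ as $\Delta'$-modules. Because we are in characteristic zero, the eigen-idempotents split $\wM$ and let us lift the eigenbasis $\beta$; so we may take $\tilde\beta$ to be an eigenbasis.
\item \emph{Descent to $\widetilde R_0$.} All entries of $\tilde C$, $\widetilde N$ and $\tilde\Xi$ already involve only coefficients in $\widetilde R_+\subseteq\widetilde R_0$. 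The conditions themselves are closed conditions cut out by the ideal $I_W$, and $\widetilde R_0$ is $p$-adically complete, topologically of finite type and $\mathcal O$-flat. So it suffices to show that $I_W\subseteq\widetilde R_0$ vanishes. Membership in $\widetilde R$, which contains $\widetilde R_0$ as a subring, gives this after saturating and using \thref{aux1}.
\end{enumerate}

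The main obstacle is condition (1): that $\tilde A^{(j)}$ and $(v+p)^h(\tilde A^{(j)})^{-1}$ lie in $\mathcal P_j(\widetilde R_0)$, in particular that the cokernel of $\phi_{\wM}$ is killed by $E(u')^h$. The parahoric shape (membership in $P_j$ mod $v$) follows from $\Delta$-equivariance exactly as in \thref{Frob}, so the real issue is the height bound. My plan is to pass to determinants. The module $\det\wM$ is a rank-one object that lifts $\det\MM\in\mathcal X_1^{[0,nh],\det\tau}(R)$, and by \thref{crys} its Frobenius is $\eta E(u')^r$ with $\eta$ a unit. The lift $\det\tilde C$ is $\tilde\eta E(u')^r$ for some $\tilde\eta$ reducing to $\eta$, hence still a unit because $\mathfrak I$ is square-zero. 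Next, $\tilde C\cdot\mathrm{adj}(\tilde C)=\det\tilde C$, and the comparison $\tilde C\equiv C\bmod\mathfrak I$ gives an explicit inverse. Writing $E^h\tilde C^{-1}=E^hC^{-1}+(\text{correction in }\mathfrak I)$, the correction is $-C^{-1}\delta C^{-1}E^h$ for a matrix $\delta$ with entries in $\mathfrak I\otimes\mathfrak S$. Since $\mathfrak I$ is a $\mathbb Q_p$-vector space, powers of $E$ are harmless after inverting $p$, which gives integrality over $\mathfrak S_{L',\widetilde R}$. If this direct approach breaks down, I would fall back on the rational statement: because $\widetilde D$ has Hodge--Tate weights in $[0,h]$ (the filtration on $\widetilde D$ is determined by $\MM$ modulo nilpotents), the Breuil module has $E$-height at most $h$.
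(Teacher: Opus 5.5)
Your checks of the Frobenius, monodromy, Galois and descent-data conditions follow what the paper does, which is simply to cite \thref{0}, \thref{good}, \thref{psi} and \thref{3}. The height condition, which you rightly single out as the crux, has a genuine gap.

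Your direct computation takes place over $\widetilde R$, where $\mathfrak I$ is a $\mathbb Q_p$-vector space. There, multiplication by $E=u'^{e'}+p$ is bijective on $(W(k')\otimes\mathfrak I)\llbracket u'\rrbracket$. So the correction term $-E^{-h}(E^hC^{-1})\delta(E^hC^{-1})$ is integral, and $\det\tilde C$ is a unit times $E^r$. For exactly this reason, however, the height bound over $\widetilde R$ holds for \emph{every} lift of $\MM$ and carries no information.

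The statement is about the formal model $\widetilde R_0$. Its square-zero ideal $\mathfrak I_0=\mathfrak I\cap\widetilde R_0$ is finitely generated and $p$-adically separated. On $\mathfrak I_0\llbracket u'\rrbracket$ the element $E$ is not invertible: dividing a constant $a\in\mathfrak I_0$ by $E$ requires $a/p^k\in\mathfrak I_0$ for all $k$. Hence neither your formula for $E^h\tilde C^{-1}$ nor the factorization $\det\tilde C=\tilde\eta E^r$ survives. Over $\widetilde R_0$ one only has $\det\tilde C=\tilde\lambda E^r+x$ with $x\in\mathfrak I_0\,\mathfrak S_{L',\widetilde R_0}$.

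Your step 5 does not close this, for three reasons:
\begin{itemize}
\item The height condition is part of the definition of $\mathcal W$; it is not cut out by $I_W$.
\item Integrality over the larger ring does not descend, since $\mathfrak S_{L',\widetilde R_0}\cap E\,\mathfrak S_{L',\widetilde R}\neq E\,\mathfrak S_{L',\widetilde R_0}$.
\item \thref{aux1} requires vanishing after finite flat specializations, and $\widetilde R$ is not one.
\end{itemize}

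The paper's argument supplies three ingredients you are missing.
\begin{itemize}
\item \emph{A priori bound.} The identity $\det(\tilde C)\,(\tilde\lambda^{-1}E^r-\tilde\lambda^{-2}x)=E^{2r}$ (using $x^2=0$) holds over $\widetilde R_0$. It gives only the weaker statement that $E^{2r}\tilde C^{-1}$ is integral, so $\wM\in\mathcal X_n^{[0,2nh],\tau}(\widetilde R_0)$.
\item \emph{Comparison of Hodge--Tate weights.} This bound is what lets $\wM$, after any finite flat specialization, correspond to a Galois representation $\tilde\rho$ via \thref{equivalence}. A base-change computation shows $(\mathbf B_{\mathrm{dR}}\otimes\tilde\rho)^{G_K}$ and its filtration commute with the reduction $\widetilde A\to A$. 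Hence $\tilde\rho$ has the same Hodge--Tate weights as $\rho$, so they lie in $[0,h]$.
\item \emph{Spreading out.} The height-$h$ condition is encoded as the vanishing of the ideal of $\widetilde R_0$ generated by the entries of the derivatives $(d/dv)^i|_{v=-p}$ of $(v+p)^{2nh}(\tilde A^{(j)})^{-1}$, for $0\le i<(2n-1)h$. \thref{aux1} then spreads the finite flat case to $\widetilde R_0$.
\end{itemize}

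Your fallback gestures at the second step, but it falls short in three ways. Without the first step there is no Galois representation to compare with. The mechanism you state is off: $\widetilde D$ comes from a Weil--Deligne representation and has no filtration of its own, and the relevant filtration is the one induced by $\wM$. Finally, the passage from finite flat specializations back to $\widetilde R_0$ is absent.
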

\begin{proof}
    We have proved \thref{0}, \thref{good}, \thref{psi}, \thref{3}. The verification of the correct inertial type is straightforward. Since $\widetilde \Lambda \in \WD^\tau(\widetilde A)$, and the corresponding $\widetilde D\in L(\phi,N,\Delta)_{\widetilde A}$ is given by $(\wM/u')[1/p]$, $\wM$ is of inertial type $\tau$ according to \thref{BKK1}. We can assume that $\tilde \beta^*$ is an eigenbasis of $\wM$ (see \thref{eigenbasis} and \thref{matrix}), and the matrices of partial Frobenius $(\tilde A^{(j)})_{j \in \mathcal J}$ satisfy $\tilde A^{(j)} \in \Ma(\widetilde R\llbracket v \rrbracket)$ for all $j \in \mathcal J$.

   Now let's verify the remaining conditions from \thref{algstack}. The relation between $\varphi$ and $N$ holds obviously since $\wM/u'\cong \widetilde M$. So it remains to show that $(v+p)^h (\tilde A^{(j)})^{-1} \in \Ma(\widetilde R_0\llbracket v \rrbracket)$ for all $j \in \mathcal J$. To verify this, we turn to the associated matrices $(C^{(j)})_{j \in \mathcal J}$ of $(A^{(j)})$ (see \thref{Frob}) (resp. $(\widetilde C^{(j)})$ of $(\widetilde A^{(j)})$) and set $C=\mathfrak C((C^{(j)})_{j \in \mathcal J})$ (resp. $\widetilde C=\mathfrak C((\widetilde C^{(j)})_{j \in \mathcal J})$). 

     Let $\bigwedge^n \MM$ be the top wedge product of $\MM$, the matrix of Frobenius on $\bigwedge ^n \MM$ is the scalar $\mathrm{det}(C)$, i.e. the determinant of the matrix of Frobenius on $\MM$. We know from \thref{crys} that $\mathrm{det}(C)=\lambda E(u')^r$ for some non-negative integer $r \le nh$ and $\lambda \in \SSSS^\times$. Since $\tilde C$ maps to $C$ under the reduction-modulo-$\mathfrak I$ map, $\mathrm{det}(\tilde C)=\tilde \lambda E(u')^r+x$ for some $\tilde \lambda \in \mathfrak S_{L',\widetilde R}^\times$ and $x \in \mathfrak I \mathfrak S_{L',\widetilde R}$. Here we have used the fact that the unit $\lambda$ always lifts to a unit $\tilde \lambda$ if $\mathfrak I$ is square zero. Hence $\mathrm{det}(\tilde C)(\tilde \lambda^{-1} E(u')^r-\tilde \lambda^{-2}x)=E(u')^{2r}$. Since $\tilde \lambda^{-1} E(u')^r-\tilde \lambda^{-2}x\in  \mathfrak S_{L',\widetilde R}$, $E(u')^{2r} \tilde C^{-1} \in \Ma( \mathfrak S_{L',\widetilde R})$. We have $\wM \in \mathcal X_{n}^{[0,2nh],\tau}(\widetilde R_0)$. 
     
     Here, we use our technique of reducing to the finite flat case. We first do the case where $\widetilde R_0$ is a finite flat $\mathcal O$-algebra. In this case, $\wM$ comes from a lattice of a Galois representation $\tilde \rho: G_K \to \GL_n(\widetilde R_0[1/p])$ by \thref{equivalence}. Suppose that $\MM$ corresponds to the Galois representation $\rho:G_K \to \GL_n(R[1/p])$. We have $(\mathbf B_{\mathrm{dR}} \otimes_{\mathbb Q_p} \rho)^{G_K}\cong \big(\mathbf B_{\mathrm{dR}}\otimes_{\mathbb Q_p} \tilde \rho \otimes_{\widetilde R_0[\frac{1}{p}]} R[\frac{1}{p}]\big)^{G_K} \cong (\mathbf B_{\mathrm{dR}}\otimes_{\mathbb Q_p} \tilde \rho)^{G_K}\otimes_{\mathbb Q_p} R[\frac{1}{p}]\cong \big(K\otimes_{\mathbb Q_p} R[\frac{1}{p}]\big)\otimes_{K\otimes_{\mathbb Q_p} \widetilde R_0[\frac{1}{p}]} (\mathbf B_{\mathrm{dR}}\otimes \tilde \rho)^{G_K}$. The same isomorphisms apply when we replace $\mathbf B_{\mathrm{dR}}$ with its filtrations. This shows that the Hodge-Tate weights of $\tilde \rho$ and $\rho$, regarded as jumps between filtrations, are the same. Therefore, $\wM \in \XXX(\widetilde R_0)$.

    In general, let $\widetilde R$ be any finite flat $\mathcal O$-algebra, with map $\widetilde R_0 \to \widetilde R$. Let $R=\widetilde R/I\widetilde R$. We can show that  $\wM_{\widetilde R} \in \XXX(\widetilde R)$ from our reasoning in the finite flat case. Since we already have $\wM \in \mathcal X_{n}^{[0,2nh],\tau}(\widetilde R_0)$, the partial Frobenii satisfy $(v+p)^{2nh}(\tilde A^{(j)})^{-1} \in \Ma(\widetilde R_0 \llbracket v \rrbracket)$ for all $j \in \mathcal J$. Consider each entry of $W(v)=(v+p)^{2nh}(\tilde A^{(j)})^{-1} \in \Ma(\widetilde R_0 \llbracket v \rrbracket)$. The desired condition that $(v+p)^{h}(\tilde A^{(j)})^{-1} \in \Ma(\widetilde R_0 \llbracket v \rrbracket)$ is equivalent to $$\big(\frac{d}{dv}\big)^i|_{v=-p}W(v)=0$$ for all $i=0,1, \cdots, (2n-1)h-1$. All entries of the matrices $\big(\frac{d}{dv}\big)^i|_{v=-p}W(v)$ for all $j \in \mathcal J$  will generate an ideal $I \subseteq \widetilde R_0$. And our finite flat case shows that any finite flat specialization $\widetilde R$ of $\widetilde R_0$ will give $I \widetilde R=0$. Using \thref{aux1}, we know that $I=0$, proving that in the general case, we still have $(v+p)^{h}(\tilde A^{(j)})^{-1} \in \Ma(\widetilde R_0 \llbracket v \rrbracket)$. Hence $\wM \in \XXX(\widetilde R_0)$. 
\end{proof}

We have proved the following theorem:

\begin{thm} \thlabel{smooth}
The map $WD:\Xe \to (\WD^\tau)^{\mathrm {an}}$ is smooth.
\end{thm}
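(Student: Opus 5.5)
The plan is to verify smoothness of $WD:\Xe \to (\WD^\tau)^{\mathrm{an}}$ through the infinitesimal lifting criterion for morphisms locally of finite type between rigid analytic Artin stacks. Concretely, for every square-zero thickening $\widetilde A \twoheadrightarrow A$ of finite local Artinian $F$-algebras with $I\mathfrak m=0$, every $\mathcal M\in\Xe(A)$, and every lift $\widetilde\D\in\WD^\tau(\widetilde A)$ of $\D=WD(\mathcal M)$, we must produce $\widetilde{\mathcal M}\in\Xe(\widetilde A)$ that lifts $\mathcal M$ and satisfies $WD(\widetilde{\mathcal M})\cong\widetilde\D$ compatibly. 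Before doing so, I will record that $WD$ is locally of finite type. This follows because $\Xe$ is a rigid analytic Artin stack, by \cite[Proposition 5.1.12]{EGH22}, and the target is the analytification of an algebraic stack of finite type. Granting this, the lifting property at all such points is enough, and it can be checked after passing to smooth charts.

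The construction of the lift is the one set up in \autoref{5.3} and \autoref{5.4}. Starting from $\widetilde\D$, \cite[Proposition 4.1]{BS} gives a $(\phi,N,\Delta)$-module $\widetilde D$, and the lattice $\widetilde M$ is defined as the preimage of $M$; \thref{fix} shows it is a free $\phi$-stable lattice. \thref{good} then supplies the unique $\phi$-stable lift $\wM\subseteq\AC\otimes_{W(k')}\widetilde D$ of $\MM$. From \thref{0} we get $\wM/u'\cong\widetilde M$ and $\wM/\mathfrak I\cong\MM$. \thref{psi} and \thref{3} provide the Galois action $\psi=\exp(t\otimes N_{\widetilde D})$, which preserves $\widehat{\wM}$ and is $\equiv\mathrm{id}$ modulo $u'$, and the $\Delta$-action is inherited from $\AC\otimes\widetilde D$. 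Finally, \thref{2} shows $\wM\in\XXX(\widetilde R_0)$ for a Raynaud model $\mathrm{Spf}\,\widetilde R_0$ of $\mathrm{Sp}\,\widetilde A$ that contains the coefficients and surjects onto a model of $A$. Via \thref{rig} and \thref{stacky}, the resulting triple $(\tilde A^{(j)},\widetilde N,\tilde\Xi)$ defines $\widetilde{\mathcal M}\in\Xe(\widetilde A)$.

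It remains to check the two compatibilities. For the reduction, replacing $R$ by the image of $\widetilde R_0$ gives a model of $A$ dominating the original one. Pulling $\MM$ back to that model agrees with $\wM\otimes\widetilde R_0/\mathfrak I$ by \thref{0}, so in the colimit of \thref{rig} the two define the same point and $\widetilde{\mathcal M}|_A\cong\mathcal M$. For the image under $WD$, unwinding the definition gives $WD(\widetilde{\mathcal M})=(\wM/u')\otimes_{W(k'),\sigma_0}\widetilde A=\widetilde M[1/p]^{(0)}$ with $\phi=(\tilde c^{\,f})^\circ$ and $N=N_{\widetilde D}^\circ$. This is exactly the Weil--Deligne representation attached to $\widetilde D$, namely $\widetilde\D$, by the inverse construction of \cite{BS}, and the identification is compatible with $\D$ because every step is functorial in reduction mod $\mathfrak I$.

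The main obstacle is not the construction itself, which the preceding results carry out, but the bookkeeping between integral models and rigid points. First, $\widetilde R$ is not a Raynaud model, so the passage to $\widetilde R_0$ has to be checked to be independent of choices up to isomorphism. Second, the lifting criterion has to be justified for a stack whose points are defined through colimits over formal models. I would handle both by working on a smooth affinoid chart of $\Xe$ coming from a flat, topologically finite type chart of $\XXX$, and by observing that the uniqueness in \thref{good} makes the lift canonical once $\widetilde\D$ is fixed. Hence different models produce isomorphic lifts, and this canonicity should make the glueing and choice-independence go through.
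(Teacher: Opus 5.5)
Your proposal is correct and follows essentially the same route as the paper. Formal smoothness comes from the square-zero lifting construction of \thref{fix}, \thref{good}, \thref{0}, \thref{psi}, \thref{3} and \thref{2}, and smoothness then follows because $\Xe$ and $(\WD^\tau)^{\mathrm{an}}$ are of finite type over $E$; your extra checks that the lift restricts to $\mathcal M$ on a dominating model and that $WD(\widetilde{\mathcal M})\cong\widetilde\D$ just spell out what the paper asserts in one line.
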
 

\begin{proof}
    According to \thref{0} and \thref{2} we have constructed $\wM\in \XXX(\widetilde R_0)$ that maps to $\widetilde \Lambda$ under $WD$ and lifts $\MM \in\XXX(R)$. This shows the formal smoothness of the map $WD$.

 Since $\XXX$ is topologically of finite type over $\mathcal O$, $\Xe$ is also of finite type over $E$. The same holds for $\WD^\tau$ and its analytification. (One can easily see this through its presentation by \thref{wd} and \thref{weil-deligne}.) So $WD$ is smooth.
\end{proof}

\begin{thm} \thlabel{rigid}
 The rigid generic fiber $\Xe$ of the stack $\XXX$ has a dense formally smooth locus.
\end{thm}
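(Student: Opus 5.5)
The plan is to deduce \thref{rigid} formally from \thref{smooth} together with \thref{wdsmooth}. By \thref{wdsmooth}, $\WD^\tau$ contains a dense open substack $\mathcal U$ which is formally smooth over $E$. Since analytification preserves open immersions and density for finite type algebraic stacks over $E$, $\mathcal U^{\mathrm{an}}$ is a dense open formally smooth substack of $(\WD^\tau)^{\mathrm{an}}$. The image of $WD$ lands in $(\WD^\tau)^{\mathrm{an}}$, as noted at the start of \autoref{5.3}, since the inertial type of $WD(\mathcal M)$ is $\tau$. I would then set $\mathcal V:=WD^{-1}(\mathcal U^{\mathrm{an}})\subseteq \Xe$. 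This is open because $WD$ is a morphism of rigid analytic stacks. It is formally smooth over $E$ because it is the composite of the smooth morphism $\mathcal V\to \mathcal U^{\mathrm{an}}$, smooth by base change from \thref{smooth}, with the formally smooth structure map $\mathcal U^{\mathrm{an}}\to \mathrm{Sp\ }E$.

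The remaining point is density of $\mathcal V$ in $\Xe$. Here I would use that a smooth morphism is flat, hence (locally on smooth charts) open, so preimages of dense opens are dense. Concretely, I would take a smooth chart $\mathrm{Sp\ }B\to \Xe$ and a smooth chart of $(\WD^\tau)^{\mathrm{an}}$ through which it factors locally. The induced map of affinoids is smooth, hence flat, so by the going-down property it sends generic points of irreducible components of $\mathrm{Sp\ }B$ to generic points of components of the target. Since $\mathcal U^{\mathrm{an}}$ is dense, it contains all those generic points. Hence the preimage of $\mathcal U^{\mathrm{an}}$ in $\mathrm{Sp\ }B$ meets every irreducible component in a nonempty Zariski open subset, and is therefore dense. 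Density on a smooth cover gives density on the stack.

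The main obstacle is this density step. One has to be careful that "dense" for rigid analytic stacks means density on smooth charts, and that flatness of the smooth morphism gives openness, or at least generalization-preservation, in the rigid setting; I would invoke that flat morphisms of affinoids (of finite type) are open in the Zariski/analytic topology. A secondary point is checking that the formal smoothness of $\mathcal U$ in \thref{wdsmooth}, which comes from \cite[Corollary 2.4.5]{BG}, transfers to $\mathcal U^{\mathrm{an}}$. I would handle this via the standard comparison of infinitesimal lifting over Artinian $E$-algebras, using $\WD^{\mathrm{an}}(\mathrm{Sp\ }A)=\WD(\mathrm{Spec\ }A)$ for Artinian $A$. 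The lifting problems are literally identical, and these are exactly the test objects used in \autoref{5.3}.
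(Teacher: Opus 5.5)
Your proposal is correct and follows essentially the same route as the paper: the preimage under the smooth map $WD$ of the dense formally smooth locus of $(\WD^\tau)^{\mathrm{an}}$ is formally smooth, and it is dense because $WD$ is open. The only difference is that you justify the density step via flatness and going-down on smooth affinoid charts, whereas the paper simply asserts that $WD$ is open.
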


\begin{proof}
  By \thref{smooth}, the preimage of any point that is in the formally smooth locus of $(\WD^\tau)^{\mathrm {an}}$ under the map $WD$ is also formally smooth in $\Xe$. By \thref{wdsmooth}, $\WD^\tau$ has a dense formally smooth locus. So does its analytification $(\WD^\tau)^{\mathrm {an}}$. Considering that $WD$ is open, the image of any open substack of $\Xe$ under $WD$ will intersect the dense formally smooth locus of $\WD^\tau$. Therefore, $\Xe$ also has a dense formally smooth locus.
\end{proof}

Like \cite[Theorem 3.3.4]{Kis}, we derive information regarding the potentially semistable deformation rings.

\begin{thm} \thlabel{pst def ring}
    Let $\bar \rho: G_K \to \GL_n(\mathbb F)$ be a mod $p$ representation. Then $\mathrm{Spec\ }R_{\bar \rho}^{[0,h],\tau}$ also has a dense open subscheme which is regular.
\end{thm}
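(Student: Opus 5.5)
The plan is to transfer the result of \thref{rigid} from the stack to the deformation ring through the versal map of \thref{versal}, in the spirit of \cite[Theorem 3.3.4]{Kis}.

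\textbf{Step 1: pass to generic fibres.} The morphism $\mathrm{Spf\ } R_{\bar \rho}^{[0,h],\tau} \to \XXX$ is versal at the point $x$ corresponding to $\bar\rho$. Since $R_{\bar\rho}^{[0,h],\tau}$ is $\mathcal O$-flat and $p$-torsion free, every closed point of $\mathrm{Spec\ } R_{\bar \rho}^{[0,h],\tau}$ lies in the special fibre. Regularity of a dense open of $\mathrm{Spec\ } R_{\bar \rho}^{[0,h],\tau}$ is therefore controlled by its generic fibre $\mathrm{Spec\ } R_{\bar \rho}^{[0,h],\tau}[1/p]$. More precisely, the regular locus of an excellent scheme is open, so it suffices to show that the regular locus contains a dense set of closed points of $\mathrm{Spec\ } R_{\bar \rho}^{[0,h],\tau}[1/p]$.

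A closed point $y$ of the generic fibre has residue field a finite extension $E'$ of $E$. It corresponds to a lift $\rho_y: G_K \to \GL_n(\mathcal O_{E'})$ and hence, via \thref{k}, to a point of $\Xe(E')$. I would compare complete local rings here: $\widehat{(R_{\bar\rho}^{[0,h],\tau}[1/p])}_y$ should be formally smooth over the complete local ring of the rigid stack $\Xe$ at the image point. This is because versality of the framed map persists on the rigid generic fibre, as in \cite[Section 5]{EGH22}, and the framing contributes only a smooth $\GL_n$-torsor. Consequently, $y$ is a regular point whenever its image lies in the formally smooth locus of $\Xe$.

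\textbf{Step 2: density.} By \thref{rigid} (via \thref{smooth} and \thref{wdsmooth}), the formally smooth locus $U \subseteq \Xe$ is open and dense. Its preimage in $(\mathrm{Spf\ } R_{\bar \rho}^{[0,h],\tau})^{\mathrm{rig}}_\eta$ is open. To see that it is dense, I would use that the map to $\Xe$ is formally smooth (by versality), hence flat and open on generic fibres. An open map pulls back dense opens to dense opens, because a nonempty open set in the source maps to a nonempty open set in the target, which must meet $U$.

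\textbf{Step 3: algebraize.} Finally, I would translate from the rigid space $(\mathrm{Spf\ } R)^{\mathrm{rig}}_\eta$ to the scheme $\mathrm{Spec\ } R[1/p]$. The two have the same closed points, with matching complete local rings by Berthelot's construction. Hence the regular points form a set that is dense among closed points. Being open by excellence, this set is a dense open regular subscheme of $\mathrm{Spec\ } R[1/p]$, and therefore of $\mathrm{Spec\ } R_{\bar\rho}^{[0,h],\tau}$ itself.

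\textbf{Main obstacle.} I expect the hardest point to be Step 1: establishing rigorously that versality of $\mathrm{Spf\ }R^{\Box}_{\bar\rho}\to\XXX$ at the closed point $x$ yields formal smoothness at generic-fibre points $y$ away from $x$. The approach I would try first is to identify both complete local rings with framed deformation rings of $\rho_y\otimes E'$, using \thref{equivalence} on finite flat points and Kisin's description of $\widehat{R}_y$ as a deformation ring of $\rho_y[1/p]$. This reduces the comparison to a standard framing argument.
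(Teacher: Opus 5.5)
Your proposal is correct in outline. It uses the same two inputs as the paper: \thref{rigid}, and the identification (via versality and \cite{EGH22}) of complete local rings of the deformation ring at generic-fibre points with versal rings of $\Xe$. Where it differs is in how the argument is organized.

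\textbf{The paper's route.} It fixes one lift $\rho\colon G_K\to\GL_n(\mathcal O)$ and completes $R_{\bar\rho}^{[0,h],\tau}$ along that $\mathcal O$-point. It identifies this completion, with $p$ inverted, as the versal ring of $\Xe$ at $\rho$. It then transports density of formally smooth points from a smooth chart $U\to\Xe$ to this ring, and concludes with ``completion preserves regularity''.

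\textbf{Your route.} You work at every closed point $y$ of $\mathrm{Spec\ }R_{\bar\rho}^{[0,h],\tau}[1/p]$, and the argument runs as follows.
\begin{enumerate}
\item The point $y$ is regular as soon as its image lies in the formally smooth locus.
\item Density comes from openness of the smooth map $(\mathrm{Spf\ }R_{\bar\rho}^{[0,h],\tau})^{\mathrm{rig}}_\eta\to\Xe$.
\item Openness of the regular locus comes from excellence.
\item Berthelot's comparison matches closed points and complete local rings between the rigid space and the scheme.
\end{enumerate}

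\textbf{Comparison.} Your route gives a genuinely global conclusion: density on every irreducible component and openness of the regular locus are actually proved. The paper's argument is local at the chosen $\rho$ and leaves the return to all of $\mathrm{Spec\ }R_{\bar\rho}^{[0,h],\tau}[1/p]$ implicit. The cost is that you need the versality comparison at every closed point of the generic fibre, not at a single $\mathcal O$-point, in order to get formal smoothness, and hence openness, of the map on rigid generic fibres.

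Your plan for that step combines three ingredients. The first is Kisin's description of $\widehat{R}_y$ as a framed potentially semistable deformation ring of $\rho_y[1/p]$ \cite{Kis}. The second is the description of $\Xe$ on Artinian $E'$-algebras by potentially semistable representations. The third is the fact that framing is a formally smooth $\widehat{\GL}_n$-torsor. This is the standard way to supply the comparison and it is sound.
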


\begin{proof}

We only need to prove the statement on the generic fiber $\mathrm{Spec\ }R_{\bar \rho}^{[0,h],\tau}[1/p]$.

If the ring $R_{\bar \rho}^{[0,h],\tau}$ is $0$, then we are done. Otherwise, pick any potentially semistable lift $\rho:G_K \to \GL_n(\mathcal O)$ of $\bar \rho$ of height bounded by $h$ and inertial type $\tau$. (We may enlarge the coefficient field $E$ if needed.) Suppose that the versal ring of $\XXX$ at the $\mathcal O$-point $\rho$ is $R$. Then $R=\widehat R_{\bar \rho}^{[0,h],\tau}$, where $\widehat \cdot$ denotes the completion at the kernel of the map $R_{\bar \rho}^{[0,h],\tau} \to \mathcal O$ corresponding to the map $\mathrm{Spf \ } \mathcal O \to \mathrm{Spf \ } R_{\bar \rho}^{[0,h],\tau}  \to \XXX$ given by $\rho$. We also know that the versal ring of the rigid generic fiber $\Xe$ at the point $\rho$ is the rigidification $R^{\mathrm {rig}}$, which is just $R[1/p]$ in our case when $R$ is topologically of finite type over $\mathcal O$. 

On the other hand, let $\pi:U \to \Xe$ be a smooth surjection. Let $u: \mathrm{Sp\ }E \to U$ be a point lying over $\rho\in \XXX(\mathcal O)$. By \thref{rigid}, $\Xe$ has a dense subset of formally smooth points, so does $\mathcal O_{U,u}$. The same also holds for its $u$-adic completion $(\mathcal O_{U,u})^{\hat u}$. Note that up to a formally smooth base change, $(\mathcal O_{U,u})^{\hat u}$ is isomorphic to the versal ring $R[1/p]$ of $\Xe$ at $\rho$. Hence $\mathrm{Spec\ }R[1/p]$ also has a dense subset of formally smooth points. Since $R=\widehat R_{\bar \rho}^{[0,h],\tau}$ and completion preserves regularity, we obtain our claim.
\end{proof}

\subsection{The potentially crystalline case}
We now explain how to arrive at the crystalline counterpart of the results above. First, let $\WD^0$ denote the substack of $\WD$ with $N=0$; similarly for the notation $\WD^{\tau,0}$. It is clear that the functor $WD$ will restrict to $WD^0:\Xec \to \WD^0$. We use $L(\phi,0,\Delta)_A$ to denote the full subcategory of $L(\phi,N,\Delta)_A$ of objects $D$ with $N=0$. The setting of the lifting argument is almost the same as in \autoref{5.3}, except we now have the stronger assumption that $\mathcal M \in \Xec(A)$ and that $\Lambda \in \WD^0(A)$. And this gives $D\in L(\phi,0,\Delta)_A$. We have the following observation.

\begin{prop} \thlabel{wd0}
    The stack $\WD^{\tau,0}$ is formally smooth.
\end{prop}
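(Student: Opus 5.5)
We must show that $\WD^{\tau,0}$ is formally smooth. By \thref{wd} and \thref{weil-deligne}, $\WD^{\tau,0}$ is the quotient stack $[Z/\GL_n]$. Here $Z$ is the closed subscheme of the affine scheme of triples $(\phi,N,T)$ cut out by $N=0$ and $T\cong\tau\otimes A$. Since $\GL_n$ is smooth, it suffices to show that the stack is formally smooth after replacing $Z$ by a smooth presentation.

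First we rigidify the inertial part. Since $T$ is required to be isomorphic to $\tau\otimes A$, and $\GL_n$ acts transitively on such $T$ (\'etale locally, using the idempotents $e_i$ as in the proof of \thref{wdsmooth}), we can fix $T=\tau$. The stack then becomes $[Z_\tau/Z_{\GL_n}(\tau)]$. Here $Z_{\GL_n}(\tau)=\prod_i \GL_{m_i}$ is the centralizer of $\tau=\bigoplus_i\chi_i^{m_i}$, and it is smooth. The scheme $Z_\tau$ is the space of $\phi\in\GL_n$ with $\phi\,\tau(g)=\tau(\Fr g\Fr^{-1})\phi$ for all $g\in\Delta'$; the conditions involving $N$ are vacuous since $N=0$.

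The main step is to identify $Z_\tau$. Conjugation by $\Fr$ permutes the characters $\chi_i$, sending $\chi_i$ to $\chi_i^{p^f}$ after adjusting directions. The intertwining condition says exactly that $\phi$ maps the $\chi$-isotypic block to the $\chi^{\Fr}$-isotypic block (or its inverse, according to the convention), and that it does so with no further constraint. Hence $Z_\tau$ is a product, over the isotypic blocks, of the spaces of isomorphisms between blocks of equal multiplicity. Such blocks have equal multiplicity because $\tau$ extends to $W_K$, as in the lemma on $P_{j'}$ and $\mathcal P_{j'}$. Concretely, $Z_\tau\cong\prod_i\GL_{m_i}$ is an open subscheme of an affine space, and is therefore smooth over $E$. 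One can also argue via infinitesimal lifting directly: given $\phi$ over $A$ and a square-zero thickening $\widetilde A\to A$, lift the block entries arbitrarily. Invertibility lifts automatically because the kernel is nilpotent, and the intertwining relation is preserved because it only forces certain blocks to vanish.

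Finally, the quotient of a smooth scheme by a smooth group is a smooth, hence formally smooth, algebraic stack. The only point needing care, which I expect to be the main obstacle, is the reduction to $T=\tau$. One must check that $\{T\cong\tau\otimes A\}$ is a single $\GL_n$-orbit that is fppf-locally trivial, so that $[Z/\GL_n]\cong[Z_\tau/Z_{\GL_n}(\tau)]$. This follows because the eigenspaces $e_iV$ are projective of constant rank $m_i$, hence Zariski-locally free, which gives a local basis adapted to $\tau$.
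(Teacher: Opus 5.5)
Your argument is correct, but it takes a different route from the paper. The paper's proof is a two-step citation. It adapts the first part of the proof of Kisin's Proposition 3.1.2 to see that the groupoid $L(\phi,0,\Delta)$ of $(\phi,N,\Delta)$-modules with $N=0$ is formally smooth. It then transports this to $\WD^{\tau,0}$ through the Breuil--Schneider equivalence between $(\phi,N,\Delta)$-modules and Weil--Deligne representations.

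You never leave the Weil--Deligne side. Starting from the presentation $[\W/\GL_n]$, you rigidify the inertial part to $T=\tau$. You correctly identify local freeness of the eigenspaces $e_iV$ as what gives $[Z/\GL_n]\cong[Z_\tau/Z_{\GL_n}(\tau)]$. You then observe that the only remaining condition, $\phi\,\tau(g)=\tau(\Fr g\Fr^{-1})\phi$, says exactly that $\phi$ is a block isomorphism from the $\chi$-isotypic part onto the $\chi\circ\mathrm{Ad}(\Fr^{-1})$-isotypic part. Hence $\WD^{\tau,0}$ is the quotient of $\prod_i\GL_{m_i}$ by $\prod_i\GL_{m_i}$ acting by twisted conjugation.

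Your approach is self-contained and explicit. It gives genuine smoothness of the algebraic stack (of dimension $0$), not just infinitesimal lifting. It also avoids two points the paper leaves implicit: that Kisin's argument survives the addition of $\Delta$-descent data, and that the Breuil--Schneider equivalence holds over Artinian coefficient rings.

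The paper's route is shorter and lands directly on what the later lifting argument uses, namely a lift $\widetilde D\in L(\phi,0,\Delta)_{\widetilde A}$. With your route you still need Breuil--Schneider to pass from a lift $\widetilde\Lambda$ to $\widetilde D$. The paper already invokes that equivalence at exactly this point of the square-zero lifting construction, so nothing is lost.
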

\begin{proof}
    From the proof of the first part of \cite[Proposition 3.1.2]{Kis}, we can see that $L(\phi,0,\Delta)$ is formally smooth. Using the equivalence of \cite[Proposition 4.1]{BS}, we know that the same holds for the stack $\WD^{\tau,0}$.
\end{proof}

This shows that we can form $\widetilde D \in L(\phi,0,\Delta)_{\widetilde A}$. The same construction gives $\wM \in \XXX(\widetilde R_0)$ for some formal model $\mathrm{Spf\ }\widetilde R_0$ of $\mathrm{Sp\ } \widetilde A$. Note that the construction starts with $\widetilde D$ with $N=0$, hence the resulting $\wM$ lies in the potentially crystalline substack $\XXXc$. In this way, we have shown that the functor $WD^0:\Xec \to \WD^0$ is formally smooth. As in \thref{rigid}, but now using \thref{wd0}, we obtain

\begin{thm} \thlabel{potcrys}
    The stack $\Xec$ is formally smooth.
\end{thm}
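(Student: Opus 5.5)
The plan is to deduce \thref{potcrys} from the formal smoothness of the restricted map $WD^0:\Xec \to (\WD^{\tau,0})^{\mathrm{an}}$ together with \thref{wd0}. A formally smooth morphism whose target is formally smooth has a formally smooth source, because lifts can be composed. We are given a square-zero thickening $\widetilde A \twoheadrightarrow A$ of finite local Artinian $F$-algebras with $I\mathfrak m=0$, and a point $\mathcal M \in \Xec(A)$. Set $\Lambda=WD^0(\mathcal M)\in\WD^{\tau,0}(A)$.

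\begin{enumerate}
\item By \thref{wd0}, choose a lift $\widetilde\Lambda\in \WD^{\tau,0}(\widetilde A)$ of $\Lambda$. Via \cite[Proposition 4.1]{BS} it corresponds to a module $\widetilde D\in L(\phi,0,\Delta)_{\widetilde A}$ lifting $D$.
\item Run the construction of \autoref{5.3} verbatim.
\begin{itemize}
\item Take $\widetilde M$ to be the preimage of $M$; it is $\phi$-stable and free by \thref{fix}.
\item Take $\tilde\Xi$ from \thref{good}, which yields $\wM$ with $\wM/u'\cong\widetilde M$ and $\wM/\mathfrak I\cong\MM$ by \thref{0}.
\end{itemize}
Because $N_{\widetilde D}=0$, the operator $\psi=\exp(t\otimes N_{\widetilde D})$ acts on $\AC\otimes\widetilde D$ through the coefficients only. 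The matrix of Galois therefore becomes $\varphi^{-1}(\tilde\Xi^{-1}\psi(\tilde\Xi))$, and \thref{psi} and \thref{3} apply unchanged.
\item Apply \thref{2} to get $\wM\in\XXX(\widetilde R_0)$ for a suitable formal model $\mathrm{Spf}\,\widetilde R_0$ of $\mathrm{Sp}\,\widetilde A$. This gives a point of $\Xe(\widetilde A)$ lifting $\mathcal M$ and mapping to $\widetilde\Lambda$.
\end{enumerate}

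The main remaining point is that $\wM$ lies in the potentially crystalline substack, not merely the semistable one. My approach is to reduce to finite flat points by the same device as in \thref{2}.

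For finite flat $\widetilde R_0$, \thref{equivalence} attaches to $\wM$ a Galois representation $\tilde\rho$. By \thref{DvsM}, $D_{\mathrm{pst}}(\tilde\rho)\cong(\wM/u')[1/p]=\widetilde D$. This module has $N=0$, so $\tilde\rho|_{G_{L'}}$ is crystalline.

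For general $\widetilde R_0$, $\XXXc\hookrightarrow\XXX$ is an $\mathcal O$-flat closed substack, so it is cut out by an ideal of $\widetilde R_0$. That ideal vanishes after every finite flat specialization, hence vanishes by \thref{aux1}. Alternatively, one can describe the crystalline locus in $\STT$ directly by the condition $N=0$, which holds on the nose here.

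Either way, $WD^0$ is formally smooth. Composing with \thref{wd0}, every such thickening lifts, so $\Xec$ is formally smooth.

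The step I expect to be the real obstacle is justifying that this lifting criterion, checked on Artinian points over finite extensions $F$ of $E$, suffices for formal smoothness of the rigid stack. The relevant infinitesimal lifting is on versal rings, as in the proof of \thref{rigid}.
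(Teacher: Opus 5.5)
Your proposal is correct and takes the paper's route. You get formal smoothness of $WD^0:\Xec\to(\WD^{\tau,0})^{\mathrm{an}}$ by running the square-zero lift of \autoref{5.3} with $N_{\widetilde D}=0$, and compose it with the formal smoothness of $\WD^{\tau,0}$ from \thref{wd0}. The one addition is that you justify why the lift $\wM$ lands in the potentially crystalline locus, by reducing to finite flat points and \thref{aux1}, whereas the paper simply asserts this from $N=0$.
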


As in \cite[Theorem 3.3.8]{Kis}, we give the application to deformation rings.

\begin{thm}
        Let $\bar \rho: G_K \to \GL_n(\mathbb F)$ be a mod $p$ representation. Let $R_{\bar \rho}^{[0,h],\tau, \mathrm{cr}}$ be the corresponding potentially crystalline deformation ring. Then $R_{\bar \rho}^{[0,h],\tau, \mathrm{cr}}[1/p]$ is regular.
\end{thm}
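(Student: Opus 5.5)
The plan is to copy the proof of \thref{pst def ring}, using \thref{potcrys} in place of \thref{rigid}. If $R_{\bar \rho}^{[0,h],\tau, \mathrm{cr}}[1/p]=0$ there is nothing to prove. Otherwise regularity of a Noetherian ring is checked at maximal ideals, and the closed points of $\mathrm{Spec\ }R_{\bar \rho}^{[0,h],\tau, \mathrm{cr}}[1/p]$ have residue fields finite over $E$. After enlarging $E$, such a point is a potentially crystalline lift $\rho:G_K \to \GL_n(\mathcal O)$ of $\bar\rho$ with Hodge-Tate weights in $[0,h]$ and inertial type $\tau$. It therefore suffices to show that the local ring at each such point, or equivalently its completion, is regular; regularity descends from the completion of a Noetherian local ring.

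Fix such a $\rho$. By \thref{versal}, restricted to the potentially crystalline closed substack $\XXXc$, the map $\mathrm{Spf\ }R_{\bar \rho}^{[0,h],\tau, \mathrm{cr}} \to \XXXc$ is versal at $\bar\rho$. I would then argue, as in the proof of \thref{pst def ring}, that the completion of $R_{\bar \rho}^{[0,h],\tau, \mathrm{cr}}[1/p]$ at the point $\rho$ is a versal ring to the rigid generic fiber $\Xec$ at the point $\kappa(\rho)$ coming from \thref{k}, possibly after a formally smooth base change given by adding framing variables. To realise this, take a smooth chart $\pi:U \to \Xec$ and a point $u$ over $\rho$. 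Then $(\mathcal O_{U,u})^{\hat u}$ and this completion of the deformation ring agree up to power series variables. By \thref{potcrys}, $\Xec$ is formally smooth, so $U$ is smooth over $E$ near $u$ and $(\mathcal O_{U,u})^{\hat u}$ is a formal power series ring over a finite extension of $E$. Hence the completed local ring of $R_{\bar \rho}^{[0,h],\tau, \mathrm{cr}}[1/p]$ at $\rho$ becomes regular after adjoining finitely many formal variables. Since a complete local ring $B$ is regular whenever $B\llbracket x_1,\dots,x_m\rrbracket$ is, this completed local ring is regular.

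The main obstacle I expect is the comparison between versal rings at $\bar\rho$ for the formal stack and at the characteristic-zero point $\rho$ for the rigid generic fiber. That is, I need that completing $R_{\bar \rho}^{[0,h],\tau, \mathrm{cr}}[1/p]$ at $\rho$ pro-represents, up to framing, the deformations of $\kappa(\rho)$ in $\Xec$ over Artinian $E$-algebras. I would handle this using the same identification as in the proof of \thref{pst def ring}: Artinian $E$-algebra points of $\Xec$ near $\kappa(\rho)$ admit $\mathcal O$-flat integral models, as in the construction of $\widetilde R_0$ in \autoref{5.4}, and these models give framed potentially crystalline deformations of $\bar\rho$. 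Combined with Kisin's description of the generic fiber of the deformation ring, this shows the completed local ring at $\rho$ is the framed deformation ring of $\rho$ over Artinian $E$-algebras. Everything else is routine commutative algebra.
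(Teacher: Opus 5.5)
Your proposal is correct and is essentially the paper's proof. The paper reruns the argument of \thref{pst def ring} with \thref{potcrys} in place of \thref{rigid}: it compares the completion of the deformation ring at a characteristic-zero point $\rho$ with the complete local ring of a smooth chart of $\Xec$, up to adjoining power series variables. Your version also makes explicit what the paper leaves implicit, namely the reduction to closed points, the descent of regularity, and the identification of the completed local ring at $\rho$ with the framed deformation ring of $\rho$ via Kisin.
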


\begin{proof}
The proof is completely analogous to \thref{pst def ring} after replacing the application of \thref{rigid} by the above \thref{potcrys}.
\end{proof}

\newpage

\bibliographystyle{amsalpha}
\bibliography{ref.bib}

\end{document}